\newcommand{\Z}{\mathbb{Z}}         
\newcommand{\Sp}{\operatorname{Sp}}
\newcommand{\SL}{\operatorname{SL}}
\newcommand{\GL}{\operatorname{GL}}
\newcommand{\SO}{\operatorname{SO}}
\newcommand{\rank}{\operatorname{rank}}
\newcommand{\Ind}{\operatorname{Ind}}
\newcommand{\Res}{\operatorname{Res}}
\newcommand{\bigperp}{%
  \mathop{\mathpalette\bigp@rp\relax}%
  \displaylimits
}
\newcommand{\bigp@rp}[2]{%
  \vcenter{
    \m@th\hbox{\scalebox{\ifx#1\displaystyle2.1\else1.5\fi}{$#1\perp$}}
  }%
}
\newtheorem{lause}{Theorem}[section]
\newtheorem{lemma}[lause]{Lemma}
\newtheorem{seur}[lause]{Corollary}
\newtheorem{prop}[lause]{Proposition}
\newtheorem{prob}[lause]{Problem}
\newtheorem*{lause*}{Theorem}
\theoremstyle{definition}
\newtheorem{maar}[lause]{Definition}
\theoremstyle{remark}
\newtheorem{remark}[lause]{Remark}
\newtheorem{esim}[lause]{Example} 
\newtheorem*{mot*}{Motivation}
\newtheorem*{acknow*}{Acknowledgements}
\numberwithin{equation}{section}
\begin{document}

\title[Hesselink normal forms of unipotent elements]{Hesselink normal forms of unipotent elements in some representations of classical groups in characteristic two}

\author{Mikko Korhonen}
\address{School of Mathematics, The University of Manchester, Manchester M13 9PL, United Kingdom}
\email{korhonen\_mikko@hotmail.com}
\thanks{The author was supported by a postdoctoral fellowship from the Swiss National Science Foundation (grant number P2ELP2\_181902).}

\subjclass[2010]{Primary 20G05}

\date{\today}

\begin{abstract}

Let $G$ be a simple linear algebraic group over an algebraically closed field $K$ of characteristic two. Any non-trivial self-dual irreducible $K[G]$-module $W$ admits a non-degenerate $G$-invariant alternating bilinear form, thus giving a representation $f: G \rightarrow \operatorname{Sp}(W)$. In the case where $G = \operatorname{SL}_n(K)$ and $W$ has highest weight $\varpi_1 + \varpi_{n-1}$, and in the case where $G = \operatorname{Sp}_{2n}(K)$ and $W$ has highest weight $\varpi_2$, we determine for every unipotent element $u \in G$ the conjugacy class of $f(u)$ in $\operatorname{Sp}(W)$. As a part of this result, we describe the conjugacy classes of unipotent elements of $\operatorname{Sp}(V_1) \otimes \operatorname{Sp}(V_2)$ in $\operatorname{Sp}(V_1 \otimes V_2)$. 
\end{abstract}

\maketitle


\section{Introduction}

Let $G$ be a simple algebraic group over an algebraically closed field $K$ of characteristic $p > 0$, and let $f: G \rightarrow \SL(W)$ be a non-trivial finite-dimensional rational irreducible representation. Recall that an element $u \in G$ is \emph{unipotent}, if its image under every rational representation of $G$ is a unipotent linear map. Equivalently $u$ is unipotent if it has order $p^\alpha$ for some $\alpha \geq 0$. 

In previous work \cite{KorhonenJordanGood}, some special cases of the following problem were solved.

\begin{prob}\label{prob:mainprobofoldpaper}
Let $u \in G$ be a unipotent element. What is the Jordan normal form of $f(u)$?
\end{prob}

There are relatively few cases where a complete answer to Problem \ref{prob:mainprobofoldpaper} is known. Computations done by Lawther \cite{Lawther, LawtherCorrection} give an answer in most cases where $G$ is simple of exceptional type and $W$ is either minimal-dimensional or the adjoint module. Consider the case where $G$ is a simple classical group ($\SL(V)$, $\Sp(V)$, or $\SO(V)$). For almost all irreducible representations $f$ with $\dim W \leq (\rank G)^3 / 8$ (see \cite[Theorem 5.1]{Lubeck}), the main results of \cite{KorhonenJordanGood} solve Problem \ref{prob:mainprobofoldpaper} in the case where $p$ is \emph{good for $G$}. (For a simple classical group $G$, the prime $p$ is \emph{good for $G$} if $G = \SL(V)$ or $p > 2$. Otherwise $p$ is \emph{bad for $G$}.)

In this paper we will extend the results of \cite{KorhonenJordanGood} to the case where $p$ is bad for $G$, but our main concern will be a somewhat more general problem in characteristic two. Suppose from now on that $p = 2$, and suppose that $W$ is self-dual. By Fong's lemma \cite{Fong}, there exists a non-degenerate $G$-invariant alternating bilinear form $b_0$ on $W$, which is unique up to scalar multiples. Thus we may consider $f$ as a representation $f: G \rightarrow \Sp(W,b_0)$. In the main results of this paper, we give a solution to the following problem in some special cases.

\begin{prob}\label{prob:mainprob}
Let $u \in G$ be a unipotent element. What is conjugacy class of $f(u)$ in $\Sp(W,b_0)$?
\end{prob}

\begin{remark}We note here that although in odd characteristic it is true that the Jordan normal form of $u \in \Sp(W,b_0)$ determines the conjugacy class of $u$ in $\Sp(W,b_0)$ \cite[Proposition 2 of Chapter II]{Gerstenhaber}, this no longer holds in characteristic two. Knowing the Jordan normal form of $f(u)$ is essential in the solution of Problem \ref{prob:mainprob}, but one also needs specific information about the action of $u$ on $W$ with respect to the bilinear form $b$.\end{remark}

\begin{mot*}One basic motivation for considering Problem \ref{prob:mainprobofoldpaper} and Problem \ref{prob:mainprob} is in the problem of determining the fusion of unipotent classes in maximal subgroups of simple algebraic groups. That is, for a simple algebraic group $Y$ and a maximal subgroup $X < Y$, what is the $Y$-conjugacy class of each unipotent element $u \in X$? Here solutions to Problem \ref{prob:mainprobofoldpaper} and Problem \ref{prob:mainprob} provide answers in the case where $Y$ is of classical type and $X$ is an irreducible simple subgroup.

Solutions of Problem \ref{prob:mainprobofoldpaper} in specific cases have found applications in various contexts, see for example \cite{Lawther}, \cite[Section 3]{LawtherFusion}, and \cite{TiepZalesski}. It seems that so far there are very few results on Problem \ref{prob:mainprob} in the literature, although some computations are contained in the PhD thesis of the present author. In this paper, we solve Problem \ref{prob:mainprob} in the smallest cases where the answer is not known. As an application of our results, in the final section of this paper we classify some simple subgroups of $\Sp(V,b)$ that contain \emph{distinguished} unipotent elements. (A unipotent element in a simple algebraic group is \emph{distinguished}, if its centralizer does not contain a non-trivial torus.)\end{mot*}

Let $\lambda$ be the highest weight in $W$. As the main result of this paper, we will solve Problem \ref{prob:mainprob} in the following cases:

\begin{itemize}
\item $G = \SL(V)$ and $\lambda = \varpi_1 + \varpi_{n-1}$, where $n = \dim V$ (Theorem \ref{thm:MAINTHMA}).
\item $G = \Sp(V,b)$ and $\lambda = \varpi_2$ (Theorem \ref{thm:MAINTHMC}).
\end{itemize}


In order to describe our main results in more detail, we will first have to describe how the unipotent conjugacy classes are classified in the symplectic groups.  Throughout we will describe the conjugacy class of $u$ in $\Sp(W, b)$ using the \emph{Hesselink normal form} described in \cite{Hesselink}. For the purposes of this introduction we will give a brief description, a more detailed exposition of the relevant results and concepts is given in Sections \ref{section:bilinearmodules} and \ref{section:unipclassesinfo}.

Let $g$ be a generator of a cyclic $2$-group of order $q$, and denote the group algebra of $\langle g \rangle$ by $K[g]$. Then there exist a total of $q$ indecomposable $K[g]$-modules $V_1$, $\ldots$, $V_q$ up to isomorphism, where $\dim V_i = i$ and $g$ acts on $V_i$ as a unipotent $i \times i$ Jordan block. For a $K[g]$-module $V$ we denote $V^0 = 0$ and $V^n = V \oplus \cdots \oplus V$ ($n$ copies) for all $n > 0$. Then notation $V \cong \oplus_{d \geq 1} V_d^{r_d}$ can be used to say that in the action of $g$ on $V$, a Jordan block of size $d$ occurs with multiplicity $r_d$.


For any finite-dimensional $K[g]$-module $V$ equipped with a $g$-invariant alternating bilinear form $b$ (not necessarily non-degenerate), we define a map $\varepsilon_{V,b}: \mathbb{Z}_{\geq 1} \rightarrow \{0,1\}$ by $$\varepsilon_{V, b}(d) = \begin{cases} 0, & \text{ if } b((g-1)^{d-1}v,v) = 0 \text{ for all } v \in V \text{ such that } (g-1)^d v = 0. \\ 1, & \text{ otherwise.}\end{cases}$$ We note that $\varepsilon_{V,b}(d) = 1$ is only possible for $d$ even (Lemma \ref{lemma:basicepsilonoddeven} (i)). 

Let $b$ be a non-degenerate alternating bilinear form on a finite-dimensional $K$-vector space $V$. For a unipotent element $u \in \Sp(V,b)$, as $K[u]$-modules we have $V \cong V_{d_1}^{n_1} \oplus \cdots \oplus V_{d_t}^{n_t}$, where $0 < d_1 < \cdots < d_t$ and $n_i > 0$ for all $1 \leq i \leq t$ (Jordan normal form). It turns out (Theorem \ref{thm:hesselinkepsilon}) that the $\Sp(V,b)$-conjugacy class of $u \in \Sp(V,b)$ is uniquely determined by the integers $d_i$, $n_i$, and $\varepsilon_{V,b}(d_i)$. In our main results, we will describe these integers for $f(u)$, thus describing the conjugacy class of $f(u)$ in $\Sp(W,b_0)$.

For our first main result, let $G = \SL(V)$ and $\lambda = \varpi_1 + \varpi_{n-1}$. To setup the statement, we need the following facts which are proven in Section \ref{section:altformtq}. One can show that $V \otimes V^*$ admits a non-zero alternating $G$-invariant bilinear form $b_V$ which is unique up to scalar multiples, and furthermore $b_V$ is non-degenerate if and only if $\dim V$ is even. We can identify $W = L_G(\varpi_1 + \varpi_{n-1}) = Z^\perp / Z$, where $Z$ is the unique $1$-dimensional $G$-submodule of $V \otimes V^*$. In all cases, the bilinear form $b_V$ induces a non-degenerate $G$-invariant alternating bilinear form on $Z^\perp / Z$.


Given a unipotent element $u \in G$, we can write $V \cong V_{d_1}^{n_1} \oplus \cdots \oplus V_{d_t}^{n_t}$ as $K[u]$-modules, where $0 < d_1 < \cdots < d_t$ and $n_i > 0$ for all $1 \leq i \leq t$. There exists a recursive algorithm --- involving only calculations with the integers $d_i$ and $n_i$ --- for computing the indecomposable summands of $V \otimes V^*$ and their multiplicities (Theorem \ref{thm:tensordecompchar2} and Remark \ref{remark:tensordecompz}). That is, we can assume $V \otimes V^* \cong \oplus_{d \geq 1} V_d^{\lambda(d)}$, where $\lambda(d) \geq 0$ are known integers. 

The following theorem is our first main result, which will be proven in Section \ref{section:mainA} of this paper. It describes the conjugacy class of $f(u)$ in $\Sp(W, b_V)$, and when $\dim V$ is even, the conjugacy class of the image of $u$ in $\Sp(V \otimes V^*, b_V)$. The result is given in terms of the integers $d_i$ and $\lambda(d)$. The theorem also includes the Jordan normal form of $f(u)$, which was described before in \cite[Theorem 6.1]{KorhonenJordanGood}.

\begin{restatable}[]{lausex}{mainresultA}
\label{thm:MAINTHMA}

Let $G = \SL(V)$, where $\dim V = n$ for some $n \geq 2$. Let $u \in G$ be unipotent and $V \cong V_{d_1} \oplus \cdots \oplus V_{d_t}$ as $K[u]$-modules, where $t \geq 1$ and $d_r \geq 1$ for all $1 \leq r \leq t$. Set $\alpha = \nu_2(\gcd(d_1, \ldots, d_t))$. Suppose that $V \otimes V^* \cong \oplus_{d \geq 1} V_d^{\lambda(d)}$ and $L_G(\varpi_1 + \varpi_{n-1}) \cong \oplus_{d \geq 1} V_d^{\lambda'(d)}$ as $K[u]$-modules, where $\lambda(d), \lambda'(d) \geq 0$ for all $d \geq 1$. Set $\varepsilon := \varepsilon_{V \otimes V^*, b_V}$ and $\varepsilon' := \varepsilon_{L_G(\varpi_1 + \varpi_{n-1}), b_V}$.

Then the values of $\lambda'$ are given in terms of $\lambda$ as follows:

\begin{enumerate}[\normalfont (i)]
\item If $2 \nmid n$, then $\lambda'(1) = \lambda(1) - 1$ and $\lambda'(d) = \lambda(d)$ for all $d > 1$.
\item If $2 \mid n$ and $\alpha = 0$, then $\lambda'(1) = \lambda(1) - 2$ and $\lambda'(d) = \lambda(d)$ for all $d > 1$.
\item If $2 \mid n$ and $\alpha > 0$:
	\begin{enumerate}[\normalfont (a)]
		\item If $2 \mid \frac{n}{2^\alpha}$, then $\lambda'(2^{\alpha}) = \lambda(2^{\alpha}) - 2$, $\lambda'(2^{\alpha}-1) = 2$, and $\lambda'(d) = \lambda(d)$ for all $d \neq 2^{\alpha}, 2^{\alpha}-1$.
		\item If $\alpha > 1$ and $2 \nmid \frac{n}{2^{\alpha}}$, then $\lambda'(2^{\alpha}) = \lambda(2^{\alpha}) - 1$, $\lambda'(2^{\alpha}-2) = 1$, and $\lambda'(d) = \lambda(d)$ for all $d \neq 2^{\alpha}, 2^{\alpha}-2$.
		\item If $\alpha = 1$ and $2 \nmid \frac{n}{2}$, then $\lambda'(2) = \lambda(2) - 1$ and $\lambda'(d) = \lambda(d)$ for all $d \neq 2$.
	\end{enumerate}
\end{enumerate}

Furthermore, the values of $\varepsilon$ and $\varepsilon'$ are given as follows:

\begin{enumerate}[\normalfont (i)]
\setcounter{enumi}{3}
\item $\varepsilon(d) = 1$ if and only if $d = 2^{\beta}$ for some $2^{\beta} > 1$ occurring in the consecutive-ones binary expansion (Definition \ref{def:consecutiveones}) of $d_r$ for some $1 \leq r \leq t$.
\item If (iii)(b) holds, then $\varepsilon(2^{\alpha}-2) = 0$, $\varepsilon'(2^{\alpha}-2) = 1$, and $\varepsilon'(d) = \varepsilon(d)$ for all $d \neq 2^{\alpha}-2$.
\item If (iii)(b) does not hold, then $\varepsilon'(d) = \varepsilon(d)$ for all $d \geq 1$.
\end{enumerate}

\end{restatable}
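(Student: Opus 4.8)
The plan is as follows. First recall (from Section~\ref{section:altformtq}) that, identifying $V\otimes V^*$ with $\operatorname{End}(V)$ on which $G$ acts by conjugation, $b_V$ is a scalar multiple of $(x,y)\mapsto \operatorname{tr}(xy)+\operatorname{tr}(x)\operatorname{tr}(y)$, and that $L_G(\varpi_1+\varpi_{n-1})$ is $\mathfrak{sl}(V)$ when $n$ is odd and $\mathfrak{sl}(V)/KI$ when $n$ is even, where $KI$ is the line of scalars. The statements (i)--(iii) about $\lambda'$ then follow from the recursive decomposition of $V\otimes V^*$ (Theorem~\ref{thm:tensordecompchar2} and Remark~\ref{remark:tensordecompz}) together with a description of the $K[u]$-module structure of the chain $KI\subseteq\mathfrak{sl}(V)\subseteq\operatorname{End}(V)$; this was obtained in \cite[Theorem~6.1]{KorhonenJordanGood}, and the phenomenon recorded in case~(iii) is precisely that when $2\mid n$ the line $KI$ need not be a direct summand of $\mathfrak{sl}(V)$.

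For (iv) I would proceed as follows. By Lemma~\ref{lemma:basicepsilonoddeven} and since $b_V$ is alternating we may assume $d\geq 2$ even. Writing $V=\bigoplus_i W_i$ with $W_i=V_{d_i}^{n_i}$ the homogeneous components, the conjugation action of $u-1$ kills traces, so $\operatorname{Im}(u-1)\subseteq\mathfrak{sl}(V)$ and hence for $d\geq 2$ one has $b_V((u-1)^{d-1}X,X)=\operatorname{tr}\!\big((u-1)^{d-1}X\cdot X\big)$; thus $\varepsilon$ in degrees $\geq 2$ is governed by the trace form alone. With respect to the trace form, $\operatorname{End}(V)=\bigoplus_{i,j}\operatorname{Hom}(W_j,W_i)$ decomposes orthogonally as $\bigperp_i\operatorname{End}(W_i)\ \perp\ \bigperp_{i<j}\big(\operatorname{Hom}(W_j,W_i)\oplus\operatorname{Hom}(W_i,W_j)\big)$. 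Each cross summand is a hyperbolic $K[u]$-module $U\oplus U^*$, and for such a module a short computation shows $b((u-1)^{d-1}v,v)=0$ whenever $(u-1)^d v=0$ — using that $1+u^{d-1}\in(u-1)K[u]$, so $u^{-(d-1)}+1\in(u-1)K[u]$ and $(u^{-(d-1)}+1)(u-1)^{d-1}\in(u-1)^dK[u]$ — hence $\varepsilon\equiv 0$ on cross summands. Finally $\operatorname{End}(W_i)\cong\operatorname{End}(V_{d_i})\otimes M_{n_i}(K)$ splits orthogonally (w.r.t. the trace form) into $n_i$ isometric copies of $\operatorname{End}(V_{d_i})$ with its trace form, plus further hyperbolic summands. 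Since $\varepsilon$ of an orthogonal direct sum is $1$ in degree $d$ iff some summand is, this reduces (iv) to computing $\varepsilon$ in degrees $\geq2$ for a single Jordan block $\operatorname{End}(V_a)=V_a\otimes V_a^*$. That last computation uses the $K[u]$-module structure of $V_a\otimes V_a^*\cong V_a\otimes V_a$ over the cyclic $2$-group, and for $a$ even the description of unipotent classes in $\Sp(V_1)\otimes\Sp(V_2)$ with $V_1=V_2=V_a$ (equipped with its $u$-invariant alternating form); it yields $\varepsilon(d)=1$ exactly for $d=2^\beta>1$ occurring in the consecutive-ones binary expansion (Definition~\ref{def:consecutiveones}) of $a$, which is (iv).

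For (v) and (vi), note that $W=Z^\perp/Z$ with $Z=KI$ and $b_V$ alternating, so $\varepsilon'$ is obtained from $\varepsilon$ by transporting the form through this subquotient; the only possible change is the local contribution near $KI$, which is dictated by the position of $KI$ inside $\operatorname{End}(V)$ as a $K[u]$-module. When $KI$ splits off as a direct summand of $\mathfrak{sl}(V)$ (in particular when $n$ is odd, where $Z$ is the radical of $b_V$), passing to $Z^\perp/Z$ only deletes $\varepsilon=0$ pieces, so $\varepsilon'=\varepsilon$, which is (vi). In case~(iii)(b), $KI$ sits two steps below the top of a $V_{2^\alpha}$-summand of $\operatorname{End}(V)$; passing to $Z^\perp/Z$ then replaces that summand together with its $b_V$-partner by a single $V_{2^\alpha-2}$ on which the induced form is of twisted type, forcing $\varepsilon'(2^\alpha-2)=1$ while $\varepsilon'$ is unchanged in all other degrees — this is (v).

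I expect the two main obstacles to be: (a) pinning down the $K[u]$-module structure of the chain $KI\subseteq\mathfrak{sl}(V)\subseteq\operatorname{End}(V)$ with enough precision — in the case $2\mid n$ this simultaneously accounts for the Jordan-type corrections of case~(iii) and the $\varepsilon'$-correction of~(v); and (b) the base case $\varepsilon_{\operatorname{End}(V_a)}$, i.e.\ showing that the degrees with $\varepsilon=1$ are exactly the powers of two appearing in the consecutive-ones expansion of $a$ — this is where the genuine combinatorics enters and where the detailed Hesselink decomposition of a tensor square is needed.
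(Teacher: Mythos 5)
Your reduction for (iv) — split $\operatorname{End}(V)$ orthogonally into diagonal blocks $\operatorname{End}(V_{d_r})$ plus cross pieces, observe that the cross pieces and the off-diagonal part of $M_{n_i}(K)$ are paired modules on which $\varepsilon$ vanishes, and note that for $d\ge 2$ the form $b_V$ agrees with the trace form on the relevant vectors — is essentially the architecture of the paper's proof. But the base case, $\varepsilon$ for a single block $V_a\otimes V_a^*$ with $b_V$, is where the entire content of (iv) lives, and your proposed substitute is wrong. For $a$ even you invoke the $\Sp\otimes\Sp$ classification for $V(a)\otimes V(a)$ with the product form; that is a \emph{different} bilinear $K[u]$-module and gives a different answer. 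Concretely, take $a=6$: by Theorem \ref{thm:formtensorproducts} (or Example \ref{example:v6tensor} with $k=3$) one has $V(6)\otimes V(6)\cong V(2)^2\perp W(8)^2$, so $\varepsilon(8)=0$ for the product form, whereas the consecutive-ones expansion $6=2^3-2^1$ forces $\varepsilon(8)=1$ for $(V_6\otimes V_6^*,b_V)$ (cf.\ the row $(6)$ of Table \ref{table:examplesofTHMA}). Theorem \ref{thm:formtensorproducts}(iii) always produces exactly one block size with $\varepsilon=1$, while (iv) typically produces several, so no re-identification of forms can rescue this. For $a$ odd you give no argument at all, and the module structure alone cannot decide $\varepsilon$ since (e.g.\ for $a=5$) all even block sizes occur with even multiplicity. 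The paper's proof of the base case is an explicit computation: the vectors $v_i^{(\beta)}=\sum_j e_{i+j2^\beta}\otimes e^*_{i-2^{\beta-1}+j2^\beta}$ of Lemma \ref{lemma:vi_values}, for which one evaluates $b_V(X^{2^\beta-1}v_i^{(\beta)},v_i^{(\beta)})$ directly via Lucas' theorem. You have not supplied this or any equivalent, and it cannot be outsourced to Section \ref{section:tensorproductbilinear}.

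The sketch of (v)--(vi) also has a gap. The claim that when $KI$ is "a direct summand" one "only deletes $\varepsilon=0$ pieces" does not cover cases (iii)(a) and (iii)(c), where $2\mid n$ and $\alpha>0$: there $\gamma_V$ lies in $\operatorname{Im}X^{2^\alpha-1}$ but not in $\operatorname{Im}X^{2^\alpha}$, the Jordan type genuinely changes at $2^\alpha$, and yet (vi) still asserts $\varepsilon'(2^\alpha)=\varepsilon(2^\alpha)$. Proving this requires both the quantitative subquotient analysis of Lemma \ref{lemma:vperpmodvdescription} (whose case division is governed by whether a preimage $\delta$ of $\gamma_V$ under $X^{2^\alpha-1}$ lies in $\langle\gamma_V\rangle^\perp$, i.e.\ by the parity of $n/2^\alpha$) and an explicit witness for $\varepsilon'(2^\alpha)=1$ lying inside $\langle\gamma_V\rangle^\perp$ — which again is the vector $v_i^{(\beta)}$. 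So both obstacles you flag at the end are real, and neither is resolved by the argument you outline.
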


For our other main result, let $G = \Sp(V,b)$ and $\lambda = \varpi_2$. To set up the statement, we need the following facts from Section \ref{section:altformaq} --- these are very much analogous to the case with $\SL(V)$ above. One can show that $\wedge^2(V)$ admits a non-zero $G$-invariant alternating  bilinear form $a_V$ which is unique up to scalar multiples, and furthermore $a_V$ is non-degenerate if and only if $\dim V/2$ is even. We can identify $W = L_G(\varpi_2) = Q^\perp / Q$, where $Q$ is the unique $1$-dimensional $G$-submodule of $\wedge^2(V)$. In all cases, the bilinear form $a_V$ induces a non-degenerate $G$-invariant alternating bilinear form on $Q^\perp / Q$.



Let $u \in G$ be a unipotent element. Let $d_1$, $\ldots$, $d_t$ be the Jordan block sizes of $u$, and let $n_i > 0$ be the multiplicity of block size $d_i$. To compute the Jordan block sizes of $u$ on $\wedge^2(V)$, there exists a recursive algorithm which only involves computations with the integers $d_i$ and $n_i$ (Theorem \ref{thm:GowLaffey} and Remark \ref{remark:computewedgez}). Hence we can assume that $\wedge^2(V) \cong \oplus_{d \geq 1} V_d^{\lambda(d)}$, where the integers $\lambda(d) \geq 0$ are known.


The following theorem is our second main result, which will be proven in Section \ref{section:mainB} of this paper. It describes the conjugacy class of $f(u)$ in $\Sp(W,a_V)$, and when $\dim V/2$ is even, the conjugacy class of the image of $u$ in $\Sp(\wedge^2(V), a_V)$. The result is given in terms of the integers $d_i$, $n_i$, and $\lambda(d)$.


\begin{restatable}[]{lausex}{mainresultC}
\label{thm:MAINTHMC}


%
	
Let $G = \Sp(V, b)$, where $\dim V = 2n$ for some $n \geq 2$. Let $u \in G$ be unipotent. For $t \geq 0$, let $d_1$, $\ldots$, $d_t$ be the Jordan block sizes $d$ of $u$ such that $\varepsilon_{V,b}(d) = 0$, and for $s \geq 0$ let $2d_{t+1}$, $\ldots$, $2d_{t+s}$ be the Jordan block sizes $d$ of $u$ such that $\varepsilon_{V,b}(d) = 1$. Write $V \cong V_{d_1}^{n_1} \oplus \cdots \oplus V_{d_t}^{n_t} \oplus V_{2d_{t+1}}^{n_{t+1}} \oplus \cdots \oplus V_{2d_{t+s}}^{n_{t+s}}$ as $K[u]$-modules, where $n_r > 0$ for all $1 \leq r \leq t+s$.

Set $\alpha = \nu_2(\gcd(d_1, \ldots, d_{t+s}))$. Suppose that $\wedge^2(V) \cong \oplus_{d \geq 1} V_d^{\lambda(d)}$ and $L_G(\varpi_2) \cong \oplus_{d \geq 1} V_d^{\lambda'(d)}$ as $K[u]$-modules, where $\lambda(d), \lambda'(d) \geq 0$ for all $d \geq 1$. Set $\varepsilon := \varepsilon_{\wedge^2(V), a_V}$ and $\varepsilon' := \varepsilon_{L_G(\varpi_2), a_V}$.

Then the values of $\lambda'$ are given in terms of $\lambda$ by the rules (i) -- (iii) of Theorem \ref{thm:MAINTHMA}. Furthermore, the values of $\varepsilon$ and $\varepsilon'$ are given as follows:

\begin{enumerate}[\normalfont (i)]
\setcounter{enumi}{3}
\item We have $\varepsilon(d) = 1$ if and only if one of the following conditions holds:
	\begin{enumerate}[\normalfont (a)]
		\item $d = 2^{\beta}$ for some $2^{\beta} > 1$ occurring in the consecutive-ones binary expansion (Definition \ref{def:consecutiveones}) of $d_r$ for some $1 \leq r \leq t$.
		\item $d > 1$ occurs as a Jordan block size of $\wedge^2(V_{2d_{r}})$ for some $t+1 \leq r \leq t+s$.
		\item $d = d'2^{\beta+1}$, where: 
			\begin{itemize} 
				\item $\beta = \nu_2(d_{r}) = \nu_2(d_{r'})$ for some $t+1 \leq r \leq r' \leq t+s$; 
				\item $n_r > 1$ if $r = r'$;
				\item $d'$ is the unique odd Jordan block size in $V_{d_{r}/2^{\beta}} \otimes V_{d_{r'} / 2^{\beta}}$ (Lemma \ref{lemma:uniqueoddchar2}).	
			\end{itemize}	
	\end{enumerate}
\item If $2 \nmid n$ or $\alpha = 0$, then $\varepsilon(d) = \varepsilon'(d)$ for all $d \geq 1$.
\item If $2 \mid n$ and $\alpha > 0$, then $\varepsilon(d) = \varepsilon'(d)$ for all $d \neq 2^{\alpha}, 2^{\alpha}-2$, and:
	\begin{enumerate}[\normalfont (a)]
		\item $\varepsilon(2^{\alpha}) = 1$.
		\item $\varepsilon'(2^{\alpha}) = 1$ if and only if $\nu_2(d_r) = \alpha$ for some $1 \leq r \leq t$.
		\item If $\alpha > 1$, then $\varepsilon(2^{\alpha}-2) = 0$.
		\item If $\alpha > 1$, then $\varepsilon'(2^{\alpha}-2) = 1$ if and only if $2 \nmid \frac{n}{2^{\alpha}}$.
	\end{enumerate}
\end{enumerate}



\end{restatable}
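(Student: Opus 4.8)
The plan is to reduce the analysis of $u$ acting on $\wedge^2(V)$ to a collection of local computations on the indecomposable summands $V_{d_r}$ of $V$ (as $K[u]$-module) and on tensor products of pairs of them, exploiting the decomposition $\wedge^2(V_{d_r} \oplus V_{d_{r'}}) = \wedge^2(V_{d_r}) \oplus (V_{d_r} \otimes V_{d_{r'}}) \oplus \wedge^2(V_{d_{r'}})$ and, more importantly, the compatibility of the $G$-invariant form $a_V$ with this splitting. The Jordan-block part of the statement (the $\lambda, \lambda'$ rules) follows from Theorem \ref{thm:GowLaffey}, Remark \ref{remark:computewedgez}, and Theorem \ref{thm:MAINTHMA}(i)--(iii) once one knows that passing from $\wedge^2(V)$ to $L_G(\varpi_2) = Q^\perp/Q$ behaves exactly as in the $\SL(V)$ case; I would verify this by locating the socle/radical series of $\wedge^2(V)$ relative to $Q$ and checking that the radical filtration of $Q^\perp/Q$ is governed by $\alpha = \nu_2(\gcd d_r)$ in the same way, so that parts (i)--(iii) transfer verbatim. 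The genuinely new content is the determination of $\varepsilon = \varepsilon_{\wedge^2(V), a_V}$ in part (iv) and the comparison $\varepsilon$ versus $\varepsilon'$ in parts (v)--(vi).

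For part (iv), the key step is a structural description of how $\varepsilon_{V,b}$-data on $V$ propagates to $\wedge^2(V)$. First I would treat the "diagonal" contributions: for each summand $V_{d_r}$ with $\varepsilon_{V,b}(d_r) = 0$, the form $a_V$ restricted to $\wedge^2(V_{d_r})$ is (up to the ambient radical) the standard alternating form, and a direct calculation of $a_V((g-1)^{d-1}x, x)$ for $x \in \wedge^2(V_{d_r})$ with $(g-1)^d x = 0$ shows that $\varepsilon$ picks up exactly the powers of two appearing in the consecutive-ones binary expansion of $d_r$ — this is case (a), and it is the same phenomenon already isolated in the $\SL(V)$ analysis of Theorem \ref{thm:MAINTHMA}(iv), so I would try to cite or mirror that argument. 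For a summand $V_{2d_r}$ with $\varepsilon_{V,b}(d_r) = 1$, the form behaves differently on $\wedge^2(V_{2d_r})$ and one gets case (b) by explicitly writing down the invariant form on $\wedge^2(V_{2d_r})$ in an adapted basis. The "off-diagonal" contributions come from $V_{d_r} \otimes V_{d_{r'}}$; here I would invoke the promised classification of unipotent classes in $\Sp(V_1) \otimes \Sp(V_2) \le \Sp(V_1 \otimes V_2)$ (advertised in the abstract and presumably proven earlier in the paper) to read off $\varepsilon$ on each tensor factor, isolating the unique odd Jordan block via Lemma \ref{lemma:uniqueoddchar2} and Lemma \ref{lemma:uniqueoddchar2}; this yields case (c) with the $d' 2^{\beta+1}$ formula. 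One must also check that these contributions do not interfere — i.e. that $\varepsilon$ on a given Jordan-block size $d$ of $\wedge^2(V)$ is determined by any single summand of the decomposition realizing that block, using Lemma \ref{lemma:basicepsilonoddeven} and the orthogonality of the pieces under $a_V$.

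For parts (v) and (vi), the comparison between $\varepsilon$ on $\wedge^2(V)$ and $\varepsilon'$ on $Q^\perp/Q$: when $2 \nmid n$ or $\alpha = 0$, the submodule $Q$ and the quotient $Q^\perp/Q$ only affect the multiplicity of small blocks (via (i)--(ii)) and sit inside the radical in a way that does not change any $\varepsilon$-value, so $\varepsilon' = \varepsilon$. When $2 \mid n$ and $\alpha > 0$, the passage $\wedge^2(V) \rightsquigarrow Q^\perp/Q$ reshuffles the blocks of size $2^\alpha$ and $2^\alpha - 1$ (or $2^\alpha - 2$), and I would compute $\varepsilon'$ on exactly these sizes by tracking representative vectors $v$ with $(g-1)^d v = 0$ through the quotient and evaluating $a_V((g-1)^{d-1}v, v)$ modulo $Q$; the four sub-items (a)--(d) of (vi) come out of this bookkeeping, where (b) detects whether some $d_r$ with $\nu_2(d_r) = \alpha$ survives to contribute a nonzero pairing, and (d) is a parity count on $n/2^\alpha$ coming from how many copies of the relevant block land in $Q^\perp$ versus $Q$. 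I expect the main obstacle to be precisely this last computation: keeping careful track of the invariant form on the quotient $Q^\perp/Q$ — in particular pinning down the correct representative vectors and showing the pairing value is independent of the choices modulo $Q$ — is delicate because $Q$ is $1$-dimensional but interacts nontrivially with the block of size $2^\alpha$, and the sign/parity of the resulting $\varepsilon'(2^\alpha - 2)$ hinges on a congruence that is easy to get wrong.
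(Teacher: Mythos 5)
Your overall strategy is the one the paper uses: split $\wedge^2(V)$ along an orthogonal decomposition of $(V,b)$, use additivity of $v \mapsto a_V(X^{d-1}v,v)$ (Lemma \ref{lemma:basicXdlinear}~(iv)) to localize $\varepsilon$ to individual summands, read off the cross terms from the $\Sp \otimes \Sp$ classification (Theorem \ref{thm:formtensorproducts}), and handle $\varepsilon$ versus $\varepsilon'$ by tracking a preimage of the fixed vector $\beta_V$ under powers of $X$ through the quotient $\langle\beta_V\rangle^\perp/\langle\beta_V\rangle$ (Lemma \ref{lemma:vperpmodvdescription}). However, there is one step that fails as written. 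You decompose along the \emph{Jordan} summands $V_{d_r}$ and claim that for a block with $\varepsilon_{V,b}(d_r)=0$ the restriction of $a_V$ to $\wedge^2(V_{d_r})$ is ``the standard alternating form,'' from which a direct calculation is supposed to produce case~(iv)(a). But the blocks with $\varepsilon_{V,b}(d_r)=0$ occur with even multiplicity and assemble into paired summands $W(d_r)=A_r\oplus B_r$ with $A_r\cong B_r\cong V_{d_r}$ \emph{totally singular}; consequently $a_V$ vanishes identically on $\wedge^2(A_r)$ (indeed $(\wedge^2(A_r)\oplus\wedge^2(B_r),a_V)$ is a paired module by Lemma \ref{lemma:pairedaltsquare}, hence contributes nothing to $\varepsilon$ by Lemma \ref{lemma:pairedjordan}). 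The consecutive-ones contribution of (iv)(a) actually lives in the cross piece $A_r\wedge B_r\cong (V_{d_r}\otimes V_{d_r}^*,b_{V_{d_r}})$, where Theorem \ref{thm:MAINTHMA}~(iv) (equivalently Lemma \ref{lemma:nonsingularsinVV}) applies. You correctly name the $\SL(V)$ phenomenon, but you locate it in a subspace where the form is zero, so the ``direct calculation'' you propose would return $\varepsilon=0$ there. The fix is to take the decomposition with respect to the Hesselink normal form $W(d_1)^{n_1/2}\perp\cdots\perp V(2d_{t+s})^{n_{t+s}}$ rather than the bare Jordan decomposition; this is also what guarantees that the cross terms $W(d)\otimes W(d')$ and $W(d)\otimes V(2d')$ are paired and hence contribute nothing, so that only $V(2d)\otimes V(2d')$ survives into (iv)(c).

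A second, smaller point: for (vi)(b) your plan stops at ``delicate bookkeeping.'' The actual mechanism in the paper is to produce, in each orthogonally indecomposable summand $W_r$ with $\nu_2(d_{\pi(r)})=\alpha$, an element $\delta_r$ with $X^{2^\alpha-1}\delta_r=\beta_{W_r}$ and $\varphi_V(\delta_r)=d_{\pi(r)}/2^\alpha$ (Lemma \ref{lemma:hesselinkmainlemma}); a general element of $\operatorname{Ker}X^{2^\alpha}_{\operatorname{Ker}\varphi_V}$ then involves a combination $\sum_r\mu_r\delta_r$ with $\sum_r\mu_r=0$, and the pairing evaluates to $\sum_r\mu_r^2=\left(\sum_r\mu_r\right)^2=0$ in characteristic two when every such $W_r$ is of type $V(2d)$, while a single $W_r$ of type $W(d)$ with $\nu_2(d)=\alpha$ supplies a nonzero pairing inside $\operatorname{Ker}\varphi_{W_r}$. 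Without some such cancellation argument the ``only if'' direction of (vi)(b) is not established.
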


As part of our main result for $G = \Sp(V,b)$, we will also have to resolve the following problem.

\begin{prob}\label{prob:mainprobtensor}
Let $u_1 \in \Sp(V_1, b_1)$ and $u_2 \in \Sp(V_2, b_2)$ be unipotent. What is the conjugacy class of $u_1 \otimes u_2$ in $\Sp(V_1 \otimes V_2, b_1 \otimes b_2)$?
\end{prob}

Here $b_1 \otimes b_2$ is the usual product form on $V_1 \otimes V_2$ given by $b_1$ and $b_2$, see Definition \ref{def:tensorprod_bilinear}. We will give a complete solution to Problem \ref{prob:mainprobtensor} in Section \ref{section:tensorproductbilinear}.

\begin{remark}
Let $G = \Sp(V,b)$, where $\dim V > 4$. In characteristic two, we always have $\SO(V,q) < G$, where $q$ is a quadratic form on $V$ such that $q(v+w) + q(v) + q(w) = b(v,w)$ for all $v,w \in V$. It follows for example from \cite[Theorem 4.1]{SeitzClassical} that the restriction of the irreducible $K[G]$-module with highest weight $\varpi_2$ to $\SO(V,q)$ remains irreducible. Furthermore, the unipotent conjugacy classes of $\SO(V,q)$ can be described in terms of unipotent conjugacy classes of $G$ \cite[Proposition 6.22]{LiebeckSeitzClass}. Thus from our main result for $G = \Sp(V,b)$ (Theorem \ref{thm:MAINTHMC}), it is straightforward to deduce the corresponding result for $\SO(V,q)$.\end{remark}

\begin{acknow*}
The author would like to acknowledge the anonymous referee for their useful comments and suggestions.
\end{acknow*}

\section{Notation}\label{section:notation}

We fix the following notation and terminology, some of which was already mentioned in the introduction. Throughout the text, let $K$ be an algebraically closed field. \emph{We will always assume that $K$ has characteristic two.} For an integer $n \in \Z$, we will denote the element $n \cdot 1_K$ of $K$ by $n$, and it will be clear from the context when $n$ is considered as an element of $K$.


For a $K$-vector space $V$ and non-negative integer $n$, we use the notation $V^n$ for the direct sum $V \oplus \cdots \oplus V$, where $V$ occurs $n$ times. Note that $V^0 = 0$.

Let $u$ be a generator of a cyclic $2$-group of order $q$. We will denote the group algebra of $\langle u \rangle$ over $K$ by $K[u]$. Recall that $K[u]$ has exactly $q$ indecomposable modules $V_1$, $\ldots$, $V_q$ up to isomorphism, where $\dim V_i = i$ and $u$ acts on $V_i$ as a full $i \times i$ Jordan block. For convenience of notation, we denote $V_0 = 0$. Any non-zero $K[u]$-module $V$ has a decomposition $V \cong V_{d_1}^{n_1} \oplus \cdots \oplus V_{d_t}^{n_t}$, where $t \geq 1$, $0 < d_1 < \cdots < d_t$, and $n_i > 0$ for all $i$ (Jordan normal form). We call the $d_i$ the \emph{Jordan block sizes of $u$ on $V$}, and $n_i$ is the \emph{multiplicity of $d_i$ in $V$}.

When considering $K[u]$-modules, we will denote by $X$ the element $u-1$ of $K[u]$. Let $Y \in K[u]$. If a $K[u]$-module $V$ has a $K[u]$-submodule $W$, we will usually use the notation $Y_W$ for the linear map $Y_W: W \rightarrow W$ induced by the action of $Y$ on $W$, and similarly $Y_{V/W}$ for the linear map $Y_{V/W} : V/W \rightarrow V/W$ induced by the action of $Y$ on $V/W$.

Throughout the text $G$ will always denote a group. Any $K[G]$-module that we consider will be finite-dimensional. If a $K[G]$-module $V$ has a filtration $V = W_1 \supset W_2 \supset \cdots \supset W_{t} \supset W_{t+1} = 0$ with $\operatorname{soc}(V / W_{i+1}) = W_i / W_{i+1} \cong Z_i$ for all $1 \leq i \leq t$, we will denote this by $V = Z_1 | Z_2 | \cdots | Z_t$. Let $G$ be a group and $H < G$ a subgroup. We denote the restriction of a $K[G]$-module $V$ to $H$ by $\Res_H^G(V)$. For a $K[H]$-module $W$, the induced module of $W$ from $H$ to $G$ is $\Ind_H^G(W) := K[G] \otimes_{K[H]} W$. 

A bilinear form $b$ on a vector space $V$ is \emph{non-degenerate}, if its \emph{radical} $\operatorname{rad} b = \{v \in V: b(v,w) = 0 \text{ for all } w \in V \}$ is zero. For a subspace $W$ of $V$, we call $W$ \emph{totally singular} with respect to $b$ if $b(w,w') = 0$ for all $w,w' \in W$. We say that $b$ is \emph{alternating}, if $b(v,v) = 0$ for all $v \in V$, and \emph{symmetric} if $b(v,w) = b(w,v)$ for all $v, w \in V$. Note that since we are working over a field of characteristic two, any alternating bilinear form is also symmetric. If $V$ is a $K[G]$-module, then $b$ is \emph{$G$-invariant} if $b(gv,gw) = b(v,w)$ for all $g \in G$ and $v, w \in V$. For a non-degenerate alternating bilinear form $b$ on $V$, we denote $\Sp(V, b) = \{ g \in \GL(V) : b(gv,gw) = b(v,w) \text{ for all } v,w \in V\}$.

Suppose that $G$ is a simple linear algebraic group over $K$. In the context of algebraic groups, the notation that we use will be as in \cite{JantzenBook}. For basic terminology and results on algebraic groups, see \cite{HumphreysGroupBook}. We note however that not much will be needed from the theory of algebraic groups. For the most part, the only algebraic groups that appear in this paper are $G = \SL(V)$ or $G = \Sp(V, b)$. 

When $G$ is an algebraic group, by a $K[G]$-module we will always mean a finite-dimensional rational $K[G]$-module. We fix a maximal torus $T$ of $G$ with character group $X(T)$, and a base $\Delta = \{ \alpha_1, \ldots, \alpha_{\ell} \}$ for the root system of $G$, where $\ell = \operatorname{rank} G$. Here we use the standard Bourbaki labeling of the simple roots $\alpha_i$, as given in \cite[11.4, p. 58]{Humphreys}. We denote the dominant weights with respect to $\Delta$ by $X(T)^+$, and the fundamental dominant weight corresponding to $\alpha_i$ is denoted by $\varpi_i$. For a dominant weight $\lambda \in X(T)^+$, we denote the rational irreducible $K[G]$-module with highest weight $\lambda$ by $L_G(\lambda)$.

For a simple linear algebraic group $G \leq \GL(V)$, an element $u \in G$ is \emph{unipotent}, if it is unipotent as a linear transformation on $V$. That is, if $(u-1_V)^n = 0$ for some $n > 0$. Since $\operatorname{char} K = 2$, an equivalent definition is that $u \in G$ is unipotent if and only if it has order $2^k$ for some $k \geq 0$.

For non-negative integers $a$ and $b$ we denote by $\binom{a}{b}$ the usual binomial coefficient, using the convention that $\binom{a}{b} = 0$ if $a < b$. We denote by $\nu_2$ the $2$-adic valuation on the integers, so $\nu_2(a)$ is the largest integer $k \geq 0$ such that $2^k$ divides $a$.

\section{Preliminaries}

In this section, we list some preliminary results needed in the paper. All of the results in this section are well known, and furthermore the results and their proofs generalize to arbitrary characteristic $p > 0$. We begin with some basic results about unipotent linear maps.

\begin{lemma}\label{lemma:restrictionpowerp}
Let $u$ be a generator of a cyclic $2$-group of order $q$, and suppose that $2^{\alpha} \leq q$. For an integer $0 < n \leq q$, write $n = a2^{\alpha} + r$ for $0 \leq r < 2^{\alpha}$ and $a \in \mathbb{Z}$. Then $$\Res_{\langle u^{2^{\alpha}} \rangle}^{\langle u \rangle}(V_n) \cong V_{a+1}^r \oplus V_a^{2^{\alpha}-r}.$$

\end{lemma}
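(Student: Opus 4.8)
The key observation is that, since $\operatorname{char} K = 2$, we have $u^{2^\alpha} = (1 + X)^{2^\alpha} = 1 + X^{2^\alpha}$ as elements of $K[u]$, so on $V_n$ the generator $u^{2^\alpha}$ of $\langle u^{2^\alpha} \rangle$ acts unipotently with $u^{2^\alpha} - 1$ acting as $Y := X^{2^\alpha}$. Thus the statement is purely about the Jordan type of the nilpotent operator $Y$ on $V_n$, a single Jordan block of size $n$. I would realize $V_n \cong K[X]/(X^n)$ with $u$ acting as multiplication by $1 + X$, so that $X$ (and hence $Y$) acts by multiplication.

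\textbf{Main step.} I would decompose $K[X]/(X^n)$ as a $K[Y]$-module. The monomials $X^i$ with $0 \le i < n$ form a $K$-basis, and each such $i$ is uniquely $i = j + m2^\alpha$ with $0 \le j < 2^\alpha$ and $m \geq 0$; grouping by the residue $j$ shows
\[
V_n = \bigoplus_{j=0}^{2^\alpha - 1} K[Y]\cdot X^j
\]
as a direct sum of $Y$-invariant subspaces. Each summand $K[Y]\cdot X^j$ has $K$-basis $X^j, X^{j + 2^\alpha}, \ldots$ (those with exponent $< n$), on which $Y$ acts by shifting, hence it is a cyclic $K[Y]$-module with $Y$ nilpotent acting as a single Jordan block; as a $K[u^{2^\alpha}]$-module it is therefore the indecomposable $V_{m_j}$ where $m_j = \lceil (n-j)/2^\alpha \rceil$ is its dimension. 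A short arithmetic check with $n = a2^\alpha + r$ gives $m_j = a+1$ for $0 \le j < r$ (since then $a2^\alpha < n - j \le (a+1)2^\alpha$) and $m_j = a$ for $r \le j < 2^\alpha$ (since then $(a-1)2^\alpha < n-j \le a2^\alpha$), yielding the claimed decomposition $V_{a+1}^r \oplus V_a^{2^\alpha - r}$.

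\textbf{Loose ends.} I would note that $\langle u^{2^\alpha}\rangle$ has order $q/2^\alpha$, and that $n \le q$ forces $a \le q/2^\alpha$, with $a + 1 \le q/2^\alpha$ whenever $r > 0$; so the modules $V_a$, $V_{a+1}$ appearing are genuinely defined over $\langle u^{2^\alpha}\rangle$ (when $r = 0$ there are no blocks of size $a+1$ and nothing to check). The only mildly delicate point is the bookkeeping identifying the ceiling $\lceil (n-j)/2^\alpha\rceil$ with $a$ or $a+1$ according to the range of $j$; everything else is routine. This is an entirely standard fact about powers of a single Jordan block, so I do not expect any real obstacle.
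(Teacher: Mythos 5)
Your proof is correct and is essentially the paper's argument: the paper likewise uses $(u-1)^{2^\alpha}=u^{2^\alpha}-1$ in characteristic two and splits $V_n$ into the $2^\alpha$ subspaces spanned by basis vectors whose index lies in a fixed residue class modulo $2^\alpha$, each of which is a single Jordan block for $u^{2^\alpha}$ of size $a$ or $a+1$. Your $K[X]/(X^n)$ phrasing and the paper's basis $(e_i)$ with $(u-1)^k e_i = e_{i-k}$ differ only by an index reversal.
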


\begin{proof}
Let $e_1$, $\ldots$, $e_n$ be a basis of $V_n$ such that $ue_1 = e_1$ and $ue_i = e_i + e_{i-1}$ for all $1 < i \leq n$. Set $e_j = 0$ for $j \leq 0$ and $j > n$. Now $(u-1)^ke_i = e_{i-k}$ for all $k \geq 1$ and $i > 0$. Since $(u-1)^{2^{\alpha}} = u^{2^{\alpha}} - 1$, it follows that \begin{equation}\label{eq:palphaeasy}u^{2^\alpha}e_i = e_i + e_{i-2^{\alpha}}\end{equation} for all $1 \leq i \leq n$. For all $1 \leq i \leq 2^{\alpha}$, define $W_i$ to be the subspace spanned by $\{ e_{i+j2^{\alpha}} \}_{j \geq 0}$. Then $V = W_1 \oplus \cdots \oplus W_{2^{\alpha}}$. Furthermore, from~\eqref{eq:palphaeasy} we find that each $W_i$ is $u^{2^{\alpha}}$-invariant and as $K[u^{2^{\alpha}}]$-modules $W_i \cong V_{a+1}$ for $1 \leq i \leq r$ and $W_i \cong V_a$ for $r < i \leq 2^{\alpha}$. From this the lemma follows.\end{proof}

\begin{lemma}\label{lemma:inductionpowerp}
Let $u$ be a generator of a cyclic $2$-group of order $q$, and suppose that $2^{\alpha} \leq q$. Then $$\Ind_{\langle u^{2^{\alpha}} \rangle}^{\langle u \rangle}(V_n) \cong V_{2^{\alpha}n}$$ for all $0 < n \leq q/2^{\alpha}$.
\end{lemma}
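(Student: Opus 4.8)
The plan is to exhibit an explicit basis for the induced module $\Ind_{\langle u^{2^{\alpha}} \rangle}^{\langle u \rangle}(V_n) = K[u] \otimes_{K[u^{2^{\alpha}}]} V_n$ on which $u$ acts as a single Jordan block of size $2^{\alpha} n$. Write $v = u^{2^{\alpha}}$, so that $\langle v \rangle$ has order $q/2^{\alpha}$ and $V_n$ is the $n$-dimensional indecomposable $K[v]$-module. Since $[\langle u \rangle : \langle v \rangle] = 2^{\alpha}$, a set of coset representatives is $1, u, u^2, \ldots, u^{2^{\alpha}-1}$, so every element of $\Ind_{\langle v \rangle}^{\langle u \rangle}(V_n)$ is uniquely written as $\sum_{i=0}^{2^{\alpha}-1} u^i \otimes w_i$ with $w_i \in V_n$; in particular $\dim \Ind_{\langle v \rangle}^{\langle u \rangle}(V_n) = 2^{\alpha} n$, which already matches $\dim V_{2^{\alpha} n}$.

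First I would fix a basis $e_1, \ldots, e_n$ of $V_n$ with $v e_1 = e_1$ and $v e_j = e_j + e_{j-1}$ for $1 < j \le n$ (equivalently $(v-1) e_j = e_{j-1}$, with $e_0 = 0$). Then I would consider the ordered family of vectors $u^i \otimes e_j$ for $0 \le i < 2^{\alpha}$, $1 \le j \le n$, listed in the order: $1\otimes e_1, u\otimes e_1, \ldots, u^{2^{\alpha}-1}\otimes e_1, 1\otimes e_2, u\otimes e_2, \ldots$ (i.e.\ lexicographic in $(j,i)$). The key computation is that $u$ acts on this basis almost like a single Jordan block: for $0 \le i < 2^{\alpha}-1$ we have $u \cdot (u^i \otimes e_j) = u^{i+1} \otimes e_j$, which is the next basis vector; and for $i = 2^{\alpha}-1$ we have $u \cdot (u^{2^{\alpha}-1} \otimes e_j) = u^{2^{\alpha}} \otimes e_j = 1 \otimes (v e_j) = 1 \otimes e_j + 1 \otimes e_{j-1}$, which is again (up to a lower basis vector $1\otimes e_{j-1}$ that vanishes when $j=1$) the ``next'' basis vector $1 \otimes e_{j-1}$ — wait, one must be careful about the direction of the chain. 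I would instead order the basis so that $X = u - 1$ shifts each basis vector to the previous one and kills only the first. Concretely, set $f_{(j-1)2^{\alpha} + i + 1} := u^i \otimes e_j$ for $0 \le i < 2^{\alpha}$ and $1 \le j \le n$; then one checks $(u-1) f_m = f_{m-1}$ for all $m > 1$ and $(u-1) f_1 = 0$, using $u \cdot (u^i \otimes e_j) = u^{i+1}\otimes e_j$ in the ``interior'' steps and the relation $u^{2^{\alpha}} \otimes e_j = 1 \otimes e_j + 1 \otimes e_{j-1}$ at the boundary to jump from the block of $e_j$ down into the block of $e_{j-1}$ after subtracting off lower-index basis vectors. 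Verifying this recursion — that after the linear change of basis absorbing the ``$+1\otimes e_j$'' correction terms, $u-1$ becomes a single nilpotent Jordan chain of length $2^{\alpha} n$ — is the technical heart of the argument.

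The main obstacle I anticipate is precisely the bookkeeping at the boundary index $i = 2^{\alpha}-1$: there $u$ sends $u^{2^{\alpha}-1}\otimes e_j$ to $1 \otimes e_j + 1 \otimes e_{j-1}$ rather than cleanly to a single basis vector, so the ``raw'' basis $\{u^i \otimes e_j\}$ is not itself a Jordan basis, and one has to either perform an explicit triangular change of basis or argue more cleverly. A clean way to finish is to observe that it suffices to show $\Ind_{\langle v \rangle}^{\langle u \rangle}(V_n)$ is indecomposable as a $K[u]$-module — since $K[u]$ is a local ring and its only indecomposable module of dimension $2^{\alpha} n$ is $V_{2^{\alpha} n}$, indecomposability together with the dimension count forces the result. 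Indecomposability in turn follows from showing the socle (equivalently, the fixed space of $u$, equivalently $\ker(u-1)$) is one-dimensional: an element $\sum_{i} u^i \otimes w_i$ is fixed by $u$ iff applying $u$ permutes-and-shifts the components back to themselves, which quickly reduces to $w_1 = \cdots = w_{2^{\alpha}-1} = 0$ and $v w_0 = w_0$, i.e.\ $w_0 \in \ker(v-1) = \langle e_1 \rangle$, a one-dimensional space. Alternatively, one can cite transitivity of induction to reduce to the case $\alpha = 1$ and handle that base case by the explicit $2n \times 2n$ matrix, but the socle argument is cleaner and avoids any matrix grinding. Either way, once $\dim \Ind = 2^{\alpha} n$ and $\dim \ker(u-1)_{\Ind} = 1$ are established, the isomorphism $\Ind_{\langle u^{2^{\alpha}}\rangle}^{\langle u \rangle}(V_n) \cong V_{2^{\alpha} n}$ is immediate.
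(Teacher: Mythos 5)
Your closing argument --- compute $\dim \ker(u-1)$ on the induced module, conclude the socle is one-dimensional, hence the module is indecomposable and must be $V_{2^{\alpha}n}$ by the dimension count --- is exactly the paper's elementary proof (the paper also notes the lemma follows at once from Green's indecomposability theorem), and it is the right way to finish; the explicit Jordan-basis bookkeeping you worry about in the first two paragraphs is indeed avoidable and is not needed. One correction to your fixed-point computation, though: for $x = \sum_{i=0}^{2^{\alpha}-1} u^i \otimes w_i$ one has $ux = \sum_{i=1}^{2^{\alpha}-1} u^i \otimes w_{i-1} + 1 \otimes vw_{2^{\alpha}-1}$, so $ux = x$ forces $w_0 = w_1 = \cdots = w_{2^{\alpha}-1} = w$ with $vw = w$, not $w_1 = \cdots = w_{2^{\alpha}-1} = 0$ as you wrote. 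In particular the fixed space is spanned by the ``diagonal'' vector $\sum_{i} u^i \otimes e_1$ (as in the paper), not by $1 \otimes e_1$, which is not $u$-fixed; the space is still one-dimensional, so your conclusion stands, but the intermediate claim as stated is false.
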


\begin{proof}
The lemma is an immediate consequence of Green's indecomposability theorem \cite[Theorem 8]{GreenIndecomposables}. For an elementary proof, let $0 < n \leq q/2^{\alpha}$ and set $V = \Ind_{\langle u^{2^{\alpha}} \rangle}^{\langle u \rangle}(V_n)$. To prove the lemma, it will suffice to show that the $u$-fixed point space of $V$ is one-dimensional. Let $W = 1 \otimes V_n$, so $$V = \bigoplus_{0 \leq i \leq 2^{\alpha}-1} u^iW,$$ where as $K[u^{2^{\alpha}}]$-modules $u^iW \cong V_n$ for all $0 \leq i \leq 2^{\alpha}-1$. The $u^{2^{\alpha}}$-fixed point space of $W$ is one-dimensional, spanned by some $w \in W$. Then for all $0 \leq i \leq 2^{\alpha}-1$, the $u^{2^{\alpha}}$-fixed point space of $u^iW$ is spanned by $u^iw$. From this it easily follows that the $u$-fixed point space of $V$ is spanned by $\sum_{0 \leq i \leq 2^{\alpha}-1} u^iw$.\end{proof}

\begin{lemma}[{\cite[Lemma 3.3]{KorhonenJordanGood}}]\label{jordanrestriction}
Let $u \in \GL(V)$ be unipotent and denote $X = u - 1$. Suppose that $W \subseteq V$ is a subspace invariant under $u$ such that $\dim V/W = 1$. Write $V \cong \oplus_{d \geq 1} V_d^{\lambda(d)}$ and $W \cong \oplus_{d \geq 1} V_d^{\lambda'(d)}$ as $K[u]$-modules, where $\lambda(d), \lambda'(d) \geq 0$ for all $d \geq 1$.

Let $m \geq 1$ be such that $\operatorname{Ker} X^{m-1} \subseteq W$ and $\operatorname{Ker} X^{m} \not\subseteq W$. Then:

\begin{enumerate}[\normalfont (i)]
\item if $m = 1$, we have $\lambda'(1) = \lambda(1) - 1$ and $\lambda'(d) = \lambda(d)$ for all $d > 1$.
\item if $m > 1$, we have $\lambda'(m) = \lambda(m) - 1$, $\lambda'(m-1) = \lambda(m-1) + 1$, and $\lambda'(d) = \lambda(d)$ for all $d \neq m,m-1$.
\end{enumerate}
\end{lemma}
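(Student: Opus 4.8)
The plan is to analyze the map $X = u - 1$ acting on $V$ and on the codimension-one submodule $W$, comparing the Jordan types via the ranks of powers of $X$. Recall that if $V \cong \oplus_{d \geq 1} V_d^{\lambda(d)}$, then $\dim \operatorname{Ker} X^k = \sum_{d \geq 1} \lambda(d) \min(d, k)$, so the Jordan type of $u$ on $V$ is determined by, and determines, the sequence $(\dim \operatorname{Ker} X^k)_{k \geq 0}$; equivalently it is determined by the second differences. Thus the whole statement reduces to understanding, for each $k$, the difference $\dim \operatorname{Ker} X^k|_V - \dim \operatorname{Ker} X^k|_W$, which is either $0$ or $1$ since $\dim V/W = 1$. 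First I would observe that $\operatorname{Ker} X^k|_W = \operatorname{Ker} X^k|_V \cap W$, so this difference is $1$ precisely when $\operatorname{Ker} X^k|_V \not\subseteq W$, and $0$ when $\operatorname{Ker} X^k|_V \subseteq W$.

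Next I would use the hypothesis: $\operatorname{Ker} X^{m-1} \subseteq W$ but $\operatorname{Ker} X^m \not\subseteq W$. The key monotonicity fact is that if $\operatorname{Ker} X^k \not\subseteq W$ then $\operatorname{Ker} X^{k+1} \not\subseteq W$ (the kernels are increasing), so there is a threshold: for $k < m$ we have $\operatorname{Ker} X^k \subseteq W$ (hence the kernel dimensions agree), and for $k \geq m$ we have $\operatorname{Ker} X^k \not\subseteq W$ (hence the $V$-kernel is exactly one larger). Therefore the sequence of differences $\delta_k := \dim\operatorname{Ker} X^k|_V - \dim\operatorname{Ker} X^k|_W$ is $0$ for $k \leq m-1$ and $1$ for $k \geq m$; it jumps from $0$ to $1$ exactly at $k = m$. (One should also check $\delta_k \leq \delta_{k+1}$ isn't violated, and that $\delta_k$ is eventually $1$, which holds since $\operatorname{Ker} X^k = V \not\subseteq W$ for $k$ large.) Converting this step-function for $\dim\operatorname{Ker} X^k$ into multiplicities via second differences: $\lambda(d) - \lambda'(d) = (\delta_d - \delta_{d-1}) - (\delta_{d+1} - \delta_d) = 2\delta_d - \delta_{d-1} - \delta_{d+1}$. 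Plugging in the step function gives $\lambda(d) = \lambda'(d)$ for $d \neq m, m-1$, while $\lambda(m) - \lambda'(m) = 2 \cdot 1 - 0 - 1 = 1$ and $\lambda(m-1) - \lambda'(m-1) = 2 \cdot 0 - 0 - 1 = -1$ when $m > 1$; when $m = 1$ the term $\delta_0$ does not contribute and one gets only $\lambda(1) - \lambda'(1) = 1$. This yields exactly cases (i) and (ii).

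Alternatively, and perhaps more transparently, I would argue structurally: pick $v \in \operatorname{Ker} X^m \setminus W$, so $V = W \oplus \langle v \rangle$ as vector spaces and $X^m v \in W$ with $X^{m-1}v \notin W$ (the latter because $\operatorname{Ker} X^{m-1} \subseteq W$ would force... wait, rather: $X^{m-1}v \neq 0$ and if $X^{m-1}v \in W$ we still proceed). The cleaner route: consider the $K[u]$-submodule $U = K[u]v \subseteq V$. Since $X^m v = 0$ would make things easy but in general $X^m v \in W$; instead replace $v$ by $v' = v - w$ for suitable $w \in W$ to arrange $X^m v' = 0$, using that $X^m v \in \operatorname{im}(X^m|_W)$ — this needs $X^m v \in X^m W$, which follows because $\operatorname{Ker} X^m \not\subseteq W$ gives $\dim(X^m V) = \dim(X^m W)$, forcing $X^m V = X^m W$. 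Then $U \cong V_m$ is a direct summand (its generator lies outside $\operatorname{Ker} X^{m-1} \supseteq$ ... need $X^{m-1}v' \neq 0$, equivalently $v' \notin \operatorname{Ker} X^{m-1} \subseteq W$, true since $v' \equiv v \pmod W$). Writing $V = U \oplus V'$ with $V' \cong \oplus V_d^{\mu(d)}$, one has $W \supseteq$ a complement and a dimension count gives $W \cong (V_m^{\lambda(m)-1} \oplus V_{m-1}) \oplus (\text{rest})$ in case $m>1$, or $W \cong V_1^{\lambda(1)-1} \oplus (\text{rest})$ in case $m = 1$. I expect the main obstacle to be this last structural step — namely justifying that $W$ contains a submodule isomorphic to $V_m^{\lambda(m)-1} \oplus V_{m-1}$ (the appearance of the $V_{m-1}$ summand, coming from the "truncation" of one $V_m$ inside a hyperplane that contains $\operatorname{Ker} X^{m-1}$ but meets $\operatorname{Ker} X^m$ properly). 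The rank-counting argument of the previous paragraph sidesteps this and is the approach I would actually write up, as it is completely elementary and avoids delicate module-splitting arguments.
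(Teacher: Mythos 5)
Your first (kernel-dimension counting) argument is correct and complete: $\operatorname{Ker}(X|_W)^k = \operatorname{Ker} X^k \cap W$, the differences $\delta_k$ form a $0$--$1$ step function jumping at $k=m$, and the second-difference formula for multiplicities gives exactly (i) and (ii). The paper itself quotes this lemma from elsewhere without proof, but your method is the same rank/kernel-counting argument the paper uses to prove the companion Lemma \ref{jordanquotient}, so you are on the intended route; the more delicate structural splitting you sketch at the end is not needed, and you are right to discard it in favour of the counting argument.
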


\begin{lemma}\label{jordanquotient}
Let $u \in \GL(V)$ be unipotent and denote $X = u - 1$. Suppose that $W \subseteq V$ is a subspace invariant under $u$ such that $\dim W = 1$. Write $V \cong \oplus_{d \geq 1} V_d^{\lambda(d)}$ and $V/W \cong \oplus_{d \geq 1} V_d^{\lambda'(d)}$ as $K[u]$-modules, where $\lambda(d), \lambda'(d) \geq 0$ for all $d \geq 1$.

Let $m \geq 1$ be such that $\operatorname{Im} X^{m-1} \supseteq W$ and $\operatorname{Im} X^{m} \not\supseteq W$. Then:

\begin{enumerate}[\normalfont (i)]
\item if $m = 1$, we have $\lambda'(1) = \lambda(1) - 1$ and $\lambda'(d) = \lambda(d)$ for all $d > 1$.
\item if $m > 1$, we have $\lambda'(m) = \lambda(m) - 1$, $\lambda'(m-1) = \lambda(m-1) + 1$, and $\lambda'(d) = \lambda(d)$ for all $d \neq m,m-1$.
\end{enumerate}
\end{lemma}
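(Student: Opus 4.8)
The plan is to deduce Lemma \ref{jordanquotient} from Lemma \ref{jordanrestriction} by dualizing. Recall that for a unipotent $u \in \GL(V)$, the dual space $V^*$ with the contragredient action $u^* = (u^{-1})^t$ is again a unipotent $K[u]$-module, and $V$ and $V^*$ have the same Jordan type: as $K[u]$-modules $V^* \cong \oplus_{d \geq 1} V_d^{\lambda(d)}$. Moreover, if $W \subseteq V$ is a $u$-invariant subspace of dimension $1$, then its annihilator $W^\circ = \{ \varphi \in V^* : \varphi(W) = 0\}$ is a $u$-invariant subspace of $V^*$ with $\dim V^*/W^\circ = 1$, and there is a natural $K[u]$-isomorphism $(V/W)^* \cong W^\circ$. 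Hence the Jordan type of $V/W$ equals the Jordan type of $W^\circ$ as a submodule of $V^*$, and the conclusion of Lemma \ref{jordanquotient} for the pair $(V, W)$ is literally the conclusion of Lemma \ref{jordanrestriction} for the pair $(V^*, W^\circ)$.

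The remaining point is to match the hypotheses: I need to check that the integer $m$ characterized in Lemma \ref{jordanquotient} by $\operatorname{Im} X^{m-1} \supseteq W$ and $\operatorname{Im} X^m \not\supseteq W$ coincides with the integer characterized in Lemma \ref{jordanrestriction} (applied to $V^*$, $W^\circ$) by $\operatorname{Ker} (X^*)^{m-1} \subseteq W^\circ$ and $\operatorname{Ker} (X^*)^m \not\subseteq W^\circ$. Here $X^* = u^* - 1$ acting on $V^*$. The key linear-algebra fact is that, under the canonical pairing, $(\operatorname{Im} X^k)^\circ = \operatorname{Ker} (X^t)^k$ where $X^t$ is the transpose of $X$; and since $u^{-1} = 1 + \sum_{i \geq 1} X^i$ is a polynomial in $X$ with the property that $X^* = u^{-1}X u^{-1}$ (or more simply, $\operatorname{Ker}(X^*)^k = \operatorname{Ker}(X^t)^k$ because $X^*$ and $X^t$ generate the same filtration: $X^* = -(X^t)(u^{-1})^t \cdot(\text{unit})$, being $X$ times a unit in $K[u]^t$), we get $\operatorname{Ker}(X^*)^k = (\operatorname{Im} X^k)^\circ$. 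Therefore $\operatorname{Im} X^k \supseteq W$ if and only if $W^\circ \supseteq (\operatorname{Im} X^k)^\circ{}^\circ{}$... more carefully: for a $1$-dimensional $W$, $\operatorname{Im} X^k \supseteq W$ is equivalent to $W^\circ \supseteq (\operatorname{Im} X^k)^\circ = \operatorname{Ker}(X^*)^k$. Thus $\operatorname{Im} X^{m-1} \supseteq W \iff \operatorname{Ker}(X^*)^{m-1} \subseteq W^\circ$ and $\operatorname{Im} X^m \not\supseteq W \iff \operatorname{Ker}(X^*)^m \not\subseteq W^\circ$, so the two values of $m$ agree.

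Putting this together: given the hypotheses of Lemma \ref{jordanquotient}, dualize to get a subspace $W^\circ \subseteq V^*$ of codimension $1$; by the hypothesis-matching above, $m$ satisfies the condition of Lemma \ref{jordanrestriction} for $(V^*, W^\circ)$; apply Lemma \ref{jordanrestriction} to conclude the statement about the Jordan type of $W^\circ$ in terms of that of $V^*$; finally translate back via $W^\circ \cong (V/W)^*$ and $V^* \cong V$ (as $K[u]$-modules) to obtain exactly the claimed relation between $\lambda'$ and $\lambda$. The only mildly delicate step is the identification $\operatorname{Ker}(X^*)^k = (\operatorname{Im} X^k)^\circ$, i.e.\ that replacing $u$ by $u^{-1}$ does not change the kernel filtration of $X$; this follows since $X^* $ and $X^t$ differ by multiplication by the unit $(u^*)^{-1}$ of $K[u^*]$ (equivalently $u^{-1}$ is a unit times a generator, so $u^{-1}-1$ and $-(u-1)$ generate the same ideal and have the same powers up to units), hence have identical kernels of all powers. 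I do not expect any real obstacle; the argument is a routine duality translation, and one could equally give a direct proof mirroring the proof of Lemma \ref{jordanrestriction}, but the dual route is shorter.
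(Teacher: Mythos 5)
Your argument is correct, but it is not the route the paper takes. The paper's proof is a direct rank computation: since $\dim W = 1$, one has $W \subseteq \operatorname{Im} X_V^i$ for $i \leq m-1$ and $\operatorname{Im} X_V^i \cap W = 0$ for $i \geq m$, and from $\operatorname{Im} X_{V/W}^i \cong \operatorname{Im} X_V^i/(\operatorname{Im} X_V^i \cap W)$ one reads off $\rank X_{V/W}^i = \rank X_V^i - 1$ for $i \leq m-1$ and $\rank X_{V/W}^i = \rank X_V^i$ for $i \geq m$; the multiplicities $\lambda'(d)$ then follow from the standard formula expressing Jordan block multiplicities in terms of ranks of powers (\cite[Lemma 3.2]{KorhonenJordanGood}). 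You instead dualize and reduce to Lemma \ref{jordanrestriction}, which requires the identification $\operatorname{Ker}(X^*)^k = (\operatorname{Im} X^k)^\circ$; your justification of this is sound (the essential point, which you state correctly, is that $u^{-1}-1 = -u^{-1}(u-1)$ is a unit multiple of $u-1$, so the two have identical kernel and image filtrations — though the intermediate formula $X^* = u^{-1}Xu^{-1}$ as written does not typecheck and should be dropped). Your route buys a clean reuse of the already-stated Lemma \ref{jordanrestriction} at the cost of the duality bookkeeping; the paper's route avoids duality entirely but leans on the external rank formula. Both are complete proofs.
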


\begin{proof}It is clear that $\operatorname{Im} X_V^i \subseteq W$ for all $0 \leq i \leq m-1$ and $\operatorname{Im} X_V^i \cap W = 0$ for all $i \geq m$. For all $i \geq 0$, we have $\operatorname{Im} X_{V/W}^i \cong \operatorname{Im} X^i / \operatorname{Im} X^i \cap W$ as vector spaces, so we conclude that $\rank X_{V/W}^i = \rank X_V^i - 1$ for all $0 \leq i \leq m-1$ and $\rank X_{V/W}^i = \rank X_V^i$ for all $i \geq m$. Now the claim follows from \cite[Lemma 3.2]{KorhonenJordanGood}.\end{proof}



The following results are used to construct the irreducible representations that we consider in our main results.

\begin{lemma}[{\cite[Proposition 4.6.10]{McNinch}}]\label{lemma:typeA_VxV}
Let $G = \SL(V)$, where $\dim V = n$ for some $n \geq 2$. Then as $K[G]$-modules, we have $$V \otimes V^* \cong \begin{cases} L_G(\varpi_1 + \varpi_{n-1}) \oplus L_G(0), &\mbox{if } 2 \nmid n, \\
L_G(0) | L_G(\varpi_1 + \varpi_{n-1}) | L_G(0), \text{ } & \mbox{if } 2 \mid n. \end{cases}$$
\end{lemma}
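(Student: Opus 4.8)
The plan is to identify the submodule structure of $V \otimes V^*$ as a $K[G]$-module directly, using the natural $G$-equivariant maps $\mathrm{ev}: V \otimes V^* \to K$ (evaluation/trace) and $\iota: K \to V \otimes V^*$ (coevaluation, sending $1$ to $\sum_i e_i \otimes e_i^*$ for dual bases). First I would recall that $V \otimes V^* \cong \mathrm{End}(V)$ as a $K[G]$-module, with $G$ acting by conjugation; under this identification $\iota(K)$ is the span of $\mathrm{id}_V$, and $\mathrm{ev}$ is the trace map. Since $\dim V = n$ and $\mathrm{ev}\circ\iota$ is multiplication by $n\cdot 1_K$, the composite is an isomorphism when $2 \nmid n$ and is zero when $2 \mid n$. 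This is the one place where the characteristic-two hypothesis (as opposed to a general field) matters, and it is the source of the dichotomy in the statement.

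In the case $2 \nmid n$, the composite $\mathrm{ev}\circ\iota \neq 0$ shows $V\otimes V^*$ splits as $\iota(K) \oplus \ker(\mathrm{ev})$, where $\iota(K) \cong L_G(0)$. It remains to see that $\ker(\mathrm{ev})$, the trace-zero endomorphisms $\mathfrak{sl}(V)$, is the irreducible module $L_G(\varpi_1 + \varpi_{n-1})$. For this I would observe that $V \otimes V^*$ has highest weight $\varpi_1 + \varpi_{n-1}$ (the weights are $\epsilon_i - \epsilon_j$ together with zero weights of multiplicity $n$), so $L_G(\varpi_1+\varpi_{n-1})$ appears in the head or socle; since it has a nonzero weight, it must lie in $\ker(\mathrm{ev})$, and a dimension/weight count ($\dim \ker(\mathrm{ev}) = n^2 - 1 = \dim L_G(\varpi_1+\varpi_{n-1})$ when $2 \nmid n$, which is the standard fact about the dimension of this irreducible in characteristic $p \nmid n$) forces equality. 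Alternatively I would just cite that the composition factors of $V \otimes V^*$ are $L_G(\varpi_1+\varpi_{n-1})$ and $L_G(0)$ (with $L_G(0)$ appearing once if $2\nmid n$, twice if $2\mid n$) as a Weyl-module/linkage computation, and then the splitting in the odd case is immediate.

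In the case $2 \mid n$ we have $\mathrm{ev}\circ\iota = 0$, so $\iota(K) \subseteq \ker(\mathrm{ev})$, giving a chain $0 \subset \iota(K) \subset \ker(\mathrm{ev}) \subset V\otimes V^*$ of submodules with successive quotients of dimensions $1$, $n^2-2$, $1$. I would then argue: (a) the subquotient $\ker(\mathrm{ev})/\iota(K) = \mathfrak{pgl}(V)$ is irreducible, equal to $L_G(\varpi_1+\varpi_{n-1})$ (here $\dim L_G(\varpi_1+\varpi_{n-1}) = n^2-2$ when $2\mid n$, the other standard dimension fact); (b) $V\otimes V^*$ is \emph{uniserial} with the displayed structure $L_G(0)\mid L_G(\varpi_1+\varpi_{n-1})\mid L_G(0)$ — i.e. the extension does not split off either trivial constituent. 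For (b) the cleanest route is self-duality: $V\otimes V^*$ is self-dual (it carries the $G$-invariant pairing $(A,B)\mapsto \mathrm{tr}(AB)$), so its radical and socle series are dual to each other; together with the fact (from the composition factors) that the head and socle are each trivial (one checks $\mathrm{Hom}_G(K, V\otimes V^*)$ and $\mathrm{Hom}_G(V\otimes V^*, K)$ are both one-dimensional — coming from $\iota$ and $\mathrm{ev}$ respectively), the three-step filtration with middle factor $L_G(\varpi_1+\varpi_{n-1})$ is forced, and it cannot split because a splitting would make the head at least two-dimensional. The main obstacle is this non-splitting/uniseriality claim in the even case; the rest is weight bookkeeping and the two characteristic-$p$ dimension formulas for $L_G(\varpi_1+\varpi_{n-1})$, both of which may simply be cited (e.g. from the reference given, \cite{McNinch}, or from standard tables). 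Since the statement is attributed to \cite[Proposition 4.6.10]{McNinch}, an acceptable write-up may be as short as citing that reference, but the argument above records the structure in case a self-contained version is wanted.
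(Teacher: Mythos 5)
The paper does not prove this lemma at all: it is stated as a quotation of \cite[Proposition 4.6.10]{McNinch}, so there is no in-paper argument to compare against. Your reconstruction is correct and is essentially the standard one. The evaluation/coevaluation maps, the identification $V\otimes V^*\cong\operatorname{End}(V)$ with $\iota(1)=\operatorname{Id}_V$ and $\operatorname{ev}=\operatorname{Tr}$, and the observation that $\operatorname{ev}\circ\iota$ is multiplication by $n$ (whence the parity dichotomy in characteristic two) are exactly the objects the paper itself sets up later in Section \ref{section:altformtq} (as $\gamma_V$, $\psi_V$, $\hat b_V$). Your handling of the even case is the only place needing care: the claim that the socle and head each contain exactly one trivial factor follows from $\operatorname{Hom}_G(K,V\otimes V^*)\cong\operatorname{End}_G(V)\cong K$ and its dual, but to rule out $L_G(\varpi_1+\varpi_{n-1})$ sitting in the socle you should add that this factor occurs with multiplicity one and is self-dual, so it would otherwise split off as a direct summand, leaving a nonsplit self-extension of the trivial module, which does not exist for a reductive group ($H^1(G,K)=0$); with that small supplement the uniseriality argument is complete. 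The dimension formulas for $L_G(\varpi_1+\varpi_{n-1})$ ($n^2-1$ if $2\nmid n$, $n^2-2$ if $2\mid n$) that you invoke are standard and may be cited, as you say.
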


\begin{lemma}[{\cite[1.14, 8.1 (c)]{SeitzClassical}, \cite[Lemma 4.8.2]{McNinch}}]\label{lemma:typeComega}
Let $G = \Sp(V,b)$, where $\dim V = 2n$ for some $n \geq 2$. Then as $K[G]$-modules, we have $$\wedge^2(V) \cong \begin{cases} L_G(\varpi_2) \oplus L_G(0), &\mbox{if } 2 \nmid n, \\
L_G(0) | L_G(\varpi_2) | L_G(0), \text{ } & \mbox{if } 2 \mid n. \end{cases}$$
\end{lemma}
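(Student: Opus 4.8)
The statement to prove is the submodule structure of $\wedge^2(V)$ for $G = \Sp(V,b)$ with $\dim V = 2n$. Since the excerpt cites \cite[1.14, 8.1(c)]{SeitzClassical} and \cite[Lemma 4.8.2]{McNinch} for this, the natural approach is to reduce the statement to known facts: the Weyl module $\wedge^2(V) = V(\varpi_2)$ has a well-understood structure, and one only needs to identify its socle and head in characteristic two. First I would observe that $\wedge^2(V)$ is precisely the Weyl module $V_G(\varpi_2)$ for $G = \Sp_{2n}$, and that the form $b$ gives a $G$-invariant vector in $\wedge^2(V)^*$, equivalently (using self-duality of $V$ for symplectic groups) a trivial submodule $Q \subseteq \wedge^2(V)$ spanned by the ``symplectic form element''. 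Dually there is a trivial quotient. So in all cases $\wedge^2(V)$ has both a trivial submodule and a trivial quotient.

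The key dichotomy is whether $Q$ is contained in the radical of the form induced on $\wedge^2(V)$, equivalently whether the trivial submodule equals the trivial quotient after passing through $L_G(\varpi_2)$, and this is governed by the parity of $n$: the composition $Q \hookrightarrow \wedge^2(V) \twoheadrightarrow \wedge^2(V)/(\text{top trivial})$ is zero exactly when $2 \mid n$. Concretely, I would compute the contraction pairing $\wedge^2(V) \times \wedge^2(V^*) \to K$ evaluated on the form element against itself, which yields (up to scalar) the quantity $n \cdot 1_K$; this is zero iff $2 \mid n$. When $2 \nmid n$ this shows $Q$ splits off, forcing $\wedge^2(V) = L_G(0) \oplus L_G(\varpi_2)$ with $L_G(\varpi_2)$ of dimension $2n-1$; when $2 \mid n$ one gets the uniserial structure $L_G(0) \mid L_G(\varpi_2) \mid L_G(0)$ with $\dim L_G(\varpi_2) = 2n-2$. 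To pin down that the middle composition factor is irreducible and that there are no other composition factors, I would invoke the general fact (from the Jantzen sum formula, or directly from \cite{SeitzClassical}) that $V_G(\varpi_2)$ for $\Sp_{2n}$ in characteristic $2$ has composition factors exactly $L_G(\varpi_2)$ and $L_G(0)$ (once if $n$ odd, twice if $n$ even), which is the standard computation of the second fundamental Weyl module.

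The main obstacle is the bookkeeping that distinguishes the split case from the uniserial case: one must verify not only that a trivial submodule and trivial quotient exist, but that in the even case they are genuinely distinct (so the module is non-semisimple and uniserial rather than $L_G(0)^2 \oplus L_G(\varpi_2)$). This comes down to showing $V_G(\varpi_2)$ is indecomposable when $2 \mid n$, which follows because its contravariant (Weyl) form has one-dimensional radical mapping onto the trivial head — i.e. the socle is simple. I would handle this by a direct argument: any trivial submodule must be spanned by a $G$-fixed vector, the space of $G$-fixed vectors in $\wedge^2(V)$ is one-dimensional (spanned by the form element, as $\wedge^2(V)$ has $V(\varpi_2)$ as a quotient with simple head and the invariants of the Weyl module are one-dimensional when $2\mid n$), hence $\operatorname{soc}$ contains a unique trivial summand; combined with the composition factor count this forces the uniserial shape. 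For full rigor I would simply cite Lemma \ref{lemma:inductionpowerp}-style reductions are not needed here, and instead lean on \cite[Lemma 4.8.2]{McNinch} for the precise module structure, treating this lemma as a recollection rather than reproving it from scratch.
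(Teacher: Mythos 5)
The paper does not prove this lemma at all: it is stated purely as a citation to Seitz and to McNinch, so there is no internal proof to compare against. Your reconstruction is the standard argument and is essentially the same machinery the paper itself sets up later in Section~\ref{section:altformaq} (the form element $\beta_V$, the contraction $\varphi_V$, and the computation $\varphi_V(\beta_V) = n\cdot 1_K$ governing the split/non-split dichotomy) --- though note the paper derives the uniqueness of the fixed vector \emph{from} this lemma, whereas you need it as an input, so you are right to source it independently (e.g.\ from the good filtration of $\wedge^2(V)$ or from \cite{SeitzClassical}). Two factual slips should be corrected, neither load-bearing. First, $\wedge^2(V)$ is \emph{not} the Weyl module $V_G(\varpi_2)$: the Weyl module has dimension $\binom{2n}{2}-1$, while $\wedge^2(V)$ has dimension $\binom{2n}{2}$ and instead admits a Weyl filtration with sections $V_G(\varpi_2)$ and $V_G(0)$. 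Your stated multiplicities of $L_G(0)$ (``once if $n$ odd, twice if $n$ even'') are correct for $\wedge^2(V)$ but not for $V_G(\varpi_2)$, so the misidentification propagates an off-by-one into the composition-factor count as you have written it. Second, the dimensions of $L_G(\varpi_2)$ should read $2n^2-n-1$ (for $n$ odd) and $2n^2-n-2$ (for $n$ even), not $2n-1$ and $2n-2$. With those repairs, and given that you ultimately defer the irreducibility of the middle layer to \cite[Lemma 4.8.2]{McNinch} exactly as the paper does, the plan is sound.
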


\section{Decomposition of tensor products and exterior squares}\label{section:tensorproducts}

In this section, we give results on the decomposition of tensor products and exterior squares of unipotent linear maps. Throughout, we let $u$ be a generator of a cyclic $2$-group of order $q > 1$, and denote the indecomposable $K[u]$-modules by $V_1$, $\ldots$, $V_q$ as defined in Section \ref{section:notation}. A recursive algorithm for calculating the decomposition of $V_m \otimes V_n$ into indecomposable summands is given by the following theorem, see for example \cite[(2.5a)]{GreenModular} and \cite[Lemma 1]{GowLaffey} for a proof.

\begin{lause}\label{thm:tensordecompchar2}
Let $0 < m \leq n \leq q$ and $2^{\alpha} \leq n < 2^{\alpha+1}$. Then the following statements hold:
\begin{enumerate}[\normalfont (i)]
\item If $m+n > 2^{\alpha+1}$, then $V_m \otimes V_n \cong V_{2^{\alpha+1}}^{m+n-2^{\alpha+1}} \oplus (V_{2^{\alpha+1}-n} \otimes V_{2^{\alpha+1}-m})$.
\item If $n = 2^{\alpha}$, then $V_m \otimes V_n = V_{2^{\alpha}}^m$.
\item If $n > 2^{\alpha}$ and $m+n \leq 2^{\alpha+1}$, then $V_m \otimes V_n \cong V_{2^{\alpha+1}-d_m} \oplus \cdots \oplus V_{2^{\alpha+1}-d_1}$, where $V_m \otimes V_{2^{\alpha+1}-n} \cong V_{d_1} \oplus \cdots \oplus V_{d_m}$.
\end{enumerate}
\end{lause}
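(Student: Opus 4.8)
The plan is to induct on $m+n$, transferring the computation into the group algebra $A := K[\langle u\rangle/\langle u^{q'}\rangle]$, where $q' := 2^{\alpha+1}$. Since $m \le n < q'$ we have $(u-1)^{q'} = u^{q'}-1 = 0$ on $V_m$ and on $V_n$, so $V_m$, $V_n$ and $V_m \otimes V_n$ are all modules for the cyclic group $\bar H := \langle u\rangle/\langle u^{q'}\rangle$ of order $q'$; here $A \cong K[X]/(X^{q'})$ is a local, self-injective algebra whose indecomposable modules are exactly $V_1, \dots, V_{q'}$, the last one being the projective-injective module $V_{q'} = A$. Two elementary observations drive everything. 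First, for any $K[\bar H]$-module $N$ the ``untwisting'' map $a \otimes n \mapsto a \otimes a^{-1}n$ gives $A \otimes_K N \cong A^{\dim N}$, so $P \otimes_K N$ is projective whenever $P$ is; this already settles (ii), since if $n = 2^\alpha$ then $u^{2^\alpha}$ also acts trivially on $V_m$, the module $V_{2^\alpha}$ is the regular module of $\langle u\rangle/\langle u^{2^\alpha}\rangle$, and hence $V_m \otimes V_{2^\alpha} \cong V_{2^\alpha}^m$. Second, taking the commuting unipotents $g = u \otimes 1$ and $h = 1 \otimes u$ on $V_m \otimes V_n$ and writing $gh - 1 = g(h-1) + (g-1)$, with $(g(h-1))^n = 0$, $(g-1)^m = 0$, and $g(h-1)$ commuting with $g-1$, a binomial expansion gives $(u \otimes u - 1)^{m+n-1} = 0$; thus every Jordan block of $V_m \otimes V_n$ has size $\le m+n-1$, and in particular $V_m \otimes V_n$ has no summand isomorphic to $V_{q'}$ as soon as $m+n \le q'$.

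The engine for (i) and (iii) is the Heller (syzygy) operator $\Omega$ over $A$. For $1 \le d < q'$ the module $V_d$ is uniserial with simple top, so its projective cover is $A \twoheadrightarrow V_d$ with kernel $X^d A \cong A/X^{q'-d}A = V_{q'-d}$; hence $\Omega V_d = V_{q'-d}$, which makes $\Omega$ an honest involution on the set $\{V_1, \dots, V_{q'-1}\}$ of nonprojective indecomposables (while $\Omega V_{q'} = 0$). Tensoring a minimal projective presentation $0 \to \Omega M \to P_M \to M \to 0$ with $N$ over $K$ keeps the sequence exact and keeps $P_M \otimes_K N$ projective, so by Schanuel's lemma $\Omega(M \otimes N)$ agrees with the nonprojective part of $\Omega M \otimes N$; by symmetry, $\Omega(M \otimes N) \equiv \Omega M \otimes N \equiv M \otimes \Omega N$ modulo projective summands.

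For (iii) we have $2^\alpha < n$ and $m+n \le q'$, so $m \le q'-n < 2^\alpha \le n$; then $V_m \otimes V_{q'-n}$ is a strictly smaller instance (its two parameters sum to $m + (q'-n) < m+n$), and by induction $V_m \otimes V_{q'-n} \cong V_{d_1} \oplus \cdots \oplus V_{d_m}$. This module is projective-free (since $m + (q'-n) \le q'$), so $\Omega(V_m \otimes V_n) \cong V_m \otimes V_{q'-n} = \bigoplus_i V_{d_i}$, and since $V_m \otimes V_n$ is itself projective-free, applying $\Omega$ once more yields $V_m \otimes V_n \cong \bigoplus_i V_{q'-d_i}$ — the assertion of (iii), with exactly $m$ summands. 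For (i) we have $m+n > q'$, hence $n > 2^\alpha$, and both $V_m \otimes V_{q'-n}$ and $V_{q'-m} \otimes V_{q'-n}$ are projective-free (their parameter sums are $\le q'$ and $< q'$, respectively). Then $\Omega(V_m \otimes V_n) \cong V_m \otimes V_{q'-n}$ and $\Omega(V_m \otimes V_{q'-n}) \cong V_{q'-m} \otimes V_{q'-n}$, so the nonprojective part of $V_m \otimes V_n$ equals $V_{q'-m} \otimes V_{q'-n}$; comparing dimensions, the multiplicity of $V_{q'}$ must be $\frac{1}{q'}\bigl(mn - (q'-m)(q'-n)\bigr) = m+n-q'$, and (i) follows.

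I expect the main obstacle to be the bookkeeping with $\Omega$: verifying $\Omega V_d = V_{q'-d}$, that $\Omega$ is a genuine involution in this cyclic situation, and that $\Omega$ commutes with $-\otimes_K N$ up to projective summands, together with pinning down precisely when $V_m \otimes V_n$ is projective-free. Everything else is either elementary or a dimension count. One can sidestep the $\Omega$-machinery in case (i) by computing the multiplicity of $V_{q'}$ directly as $\operatorname{rank}\bigl(\sum_{k=0}^{q'-1}(u \otimes u)^k\bigr)$ using $\binom{q'-1}{k} \equiv 1 \pmod 2$, but that route is combinatorially heavier; alternatively one may simply cite \cite[(2.5a)]{GreenModular} or \cite[Lemma 1]{GowLaffey}.
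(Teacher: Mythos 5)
Your argument is correct, but it is worth noting that the paper does not prove this theorem at all: it is quoted with a pointer to Green \cite[(2.5a)]{GreenModular} and Gow--Laffey \cite[Lemma 1]{GowLaffey}, so there is no in-text proof to compare against. Your syzygy argument is a legitimate self-contained alternative, and the key steps all check out: $V_m$, $V_n$ and $V_m\otimes V_n$ do descend to modules over $A=K[X]/(X^{q'})$ with $q'=2^{\alpha+1}$ because $n<q'$; the untwisting isomorphism makes $P\otimes_K N$ free, which disposes of (ii); the commuting-nilpotents bound $(u\otimes u-1)^{m+n-1}=0$ correctly shows $V_m\otimes V_n$ is projective-free whenever $m+n\le q'$; $\Omega V_d=V_{q'-d}$ is immediate from uniseriality; and the Schanuel comparison of the tensored minimal presentation with the minimal presentation of $M\otimes N$ gives $\Omega(M\otimes N)\cong$ the projective-free part of $\Omega M\otimes N$, which in your applications is all of $V_m\otimes V_{q'-n}$ (parameter sum $\le q'$) resp.\ $V_{q'-m}\otimes V_{q'-n}$ (parameter sum $<q'$). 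The induction on $m+n$ is well-founded since $q'-n\ge 1$, and the dimension count $c\cdot q'=mn-(q'-m)(q'-n)=q'(m+n-q')$ pins down the multiplicity of $V_{2^{\alpha+1}}$ in (i). Compared with the cited sources, which proceed via the representation ring (Green) or a direct combinatorial/matrix computation (Gow--Laffey), your route trades that bookkeeping for the standard machinery of the stable module category of $K[X]/(X^{q'})$; the only price is verifying the (routine) facts that $\Omega$ is an involution on projective-free modules here and commutes with $-\otimes_K N$ up to projectives, all of which you address.
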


\begin{remark}\label{remark:tensordecompz}Taking tensor products of $K[u]$-modules is an additive functor, so using Theorem \ref{thm:tensordecompchar2} one can decompose any tensor product of two $K[u]$-modules into indecomposable summands.\end{remark}

\begin{lemma}\label{lemma:uniqueoddchar2}
Let $m$ and $n$ be odd integers such that $0 < m \leq n \leq q$. Suppose that $V_m \otimes V_n \cong V_{d_1} \oplus \cdots \oplus V_{d_t}$, where $d_i > 0$ for all $i$. There exists a unique $i$ such that $d_i$ is odd.
\end{lemma}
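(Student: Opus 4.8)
For a nonzero $K[u]$-module $M \cong V_{e_1} \oplus \cdots \oplus V_{e_s}$, let $o(M)$ denote the number of indices $j$ with $e_j$ odd (equivalently, the number of odd-dimensional indecomposable summands counted with multiplicity). Since $o$ is additive on direct sums and $o(V_e)$ equals $1$ or $0$ according as $e$ is odd or even, the claim is equivalent to the statement that $o(V_m \otimes V_n) = 1$ whenever $m,n$ are odd with $0 < m \le n \le q$; this also subsumes the uniqueness of the odd block \emph{size}. I would prove this by strong induction on $n$, using Theorem \ref{thm:tensordecompchar2}.

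The one structural fact I would isolate first is a parity observation: if $c$ is even and $N \cong V_{e_1} \oplus \cdots \oplus V_{e_s}$ with $0 < e_j < c$ for all $j$, then $c - e_j$ has the same parity as $e_j$, so the module $V_{c-e_1} \oplus \cdots \oplus V_{c-e_s}$ has the same value of $o$ as $N$. The base case of the induction is $n = 1$, which forces $m = 1$ and $V_1 \otimes V_1 = V_1$, so $o = 1$. For the inductive step with $n \ge 3$ odd, note that $n$ is not a power of $2$, so writing $2^\alpha \le n < 2^{\alpha+1}$ we have $2^\alpha < n$; hence case (ii) of Theorem \ref{thm:tensordecompchar2} does not occur.

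In case (i) we have $V_m \otimes V_n \cong V_{2^{\alpha+1}}^{\,m+n-2^{\alpha+1}} \oplus (V_{2^{\alpha+1}-n} \otimes V_{2^{\alpha+1}-m})$. The first summand contributes $0$ to $o$ since $2^{\alpha+1}$ is even. For the second, both $2^{\alpha+1}-n$ and $2^{\alpha+1}-m$ are odd and positive (as $n,m < 2^{\alpha+1}$), they satisfy $2^{\alpha+1}-n \le 2^{\alpha+1}-m$, and the hypothesis $m+n > 2^{\alpha+1}$ gives $2^{\alpha+1}-m < n \le q$, so the induction hypothesis applies and yields $o = 1$; thus $o(V_m \otimes V_n) = 0 + 1 = 1$. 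In case (iii), $V_m \otimes V_n \cong V_{2^{\alpha+1}-d_m} \oplus \cdots \oplus V_{2^{\alpha+1}-d_1}$ where $V_m \otimes V_{2^{\alpha+1}-n} \cong V_{d_1} \oplus \cdots \oplus V_{d_m}$; here $2^{\alpha+1}-n$ is odd, positive, and equal to at most $2^{\alpha+1}-n$, with $m \le 2^{\alpha+1}-n$ (from $m+n \le 2^{\alpha+1}$) and $2^{\alpha+1}-n < n$ (from $n > 2^\alpha$), so the induction hypothesis gives that exactly one $d_i$ is odd; moreover each $d_i \le m + (2^{\alpha+1}-n) - 1 < 2^{\alpha+1}$, so the parity observation applies with $c = 2^{\alpha+1}$ and gives $o(V_m \otimes V_n) = o(V_m \otimes V_{2^{\alpha+1}-n}) = 1$. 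In both recursive cases the relevant second index ($2^{\alpha+1}-m$, resp. $2^{\alpha+1}-n$) is strictly smaller than $n$, so the induction is well-founded.

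The only real subtlety—and the step most prone to error—is the parity bookkeeping: it is easy to misjudge the parity of $c - d$ and conclude the recursion flips rather than preserves the count; the point is precisely that $2^{\alpha+1}$ is even, so $d \mapsto 2^{\alpha+1} - d$ preserves parity, and that the power-of-two summands $V_{2^{\alpha+1}}$ appearing in case (i) are even-dimensional and hence invisible to $o$. Everything else is routine verification that the indices produced by Theorem \ref{thm:tensordecompchar2} remain odd, positive, and in the range needed to invoke the induction hypothesis.
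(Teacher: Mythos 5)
Your proof is correct and follows essentially the same route as the paper: induction on $n$ using the recursion of Theorem \ref{thm:tensordecompchar2}, with cases (i) and (iii) handled by passing to $V_{2^{\alpha+1}-n} \otimes V_{2^{\alpha+1}-m}$ and $V_m \otimes V_{2^{\alpha+1}-n}$ respectively. Your write-up is more explicit than the paper's (which omits the parity and range checks), but the underlying argument is identical.
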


\begin{proof}We prove the lemma by induction on $n$. The case $n = 1$ is obvious. Suppose then that $0 < m \leq n$ are odd integers and $n > 1$. Let $\alpha > 0$ be such that $2^{\alpha} < n < 2^{\alpha+1}$. If $m+n > 2^{\alpha+1}$, by Theorem \ref{thm:tensordecompchar2} (i) $$V_m \otimes V_n \cong V_{2^{\alpha+1}}^{m+n-2^{\alpha+1}} \oplus (V_{2^{\alpha+1}-n} \otimes V_{2^{\alpha+1}-m})$$ so the claim follows by applying induction on the tensor product $V_{2^{\alpha+1}-n} \otimes V_{2^{\alpha+1}-m}$. The other possibility is that $m+n \leq 2^{\alpha+1}$, in which case by Theorem \ref{thm:tensordecompchar2} (iii) we have $V_m \otimes V_n \cong V_{2^{\alpha+1}-d_m} \oplus \cdots \oplus V_{2^{\alpha+1}-d_1}$, where $V_m \otimes V_{2^{\alpha+1}-n} \cong V_{d_1} \oplus \cdots \oplus V_{d_m}$. Thus the claim follows by applying induction on $V_m \otimes V_{2^{\alpha+1}-n}$.\end{proof}

\begin{remark}One can also describe the unique odd Jordan block size of Lemma \ref{lemma:uniqueoddchar2} explicitly. Let $0 < m \leq n \leq q$ be odd integers. Write $m = \sum_{i = 0}^t a_i 2^i$ and $n = \sum_{i = 0}^t b_i 2^i$, where $a_i, b_i \in \{0,1\}$ for all $0 \leq i \leq t$. We shall omit the proof from this paper, but one can show that the unique odd Jordan block size in $V_m \otimes V_n$ is equal to $n + \sum_{i = 1}^t a_i(-1)^{b_i} 2^i.$\end{remark}

\begin{esim}\label{example:tensor3n}
To give an example of Theorem \ref{thm:tensordecompchar2} in a small case, consider $V_m \otimes V_n$ for $m = 3$. This particular example will also be useful later (Example \ref{example:v6tensor}). In any case, with Theorem \ref{thm:tensordecompchar2} it is easy to show that for all $n \geq 3$, $$V_3 \otimes V_n \cong \begin{cases} V_n^3, & \mbox{if } n \equiv 0 \mod{4}. \\ 
V_{n-1}^2 \oplus V_{n+2}, & \mbox{if } n \equiv 1 \mod{4}. \\ 
V_{n-2} \oplus V_{n} \oplus V_{n+2}, & \mbox{if } n \equiv 2 \mod{4}. \\
V_{n-2} \oplus V_{n+1}^2,  & \mbox{if } n \equiv 3 \mod{4}. \end{cases}$$ It also clear from this decomposition that the conclusion of Lemma \ref{lemma:uniqueoddchar2} holds in this case. 
\end{esim}

Following \cite[p. 231]{GlasbyPraegerXiapart}, we make the following definition. 

\begin{maar}\label{def:consecutiveones} The \emph{consecutive-ones binary expansion} of an integer $n > 0$ is the alternating sum $n = \sum_{i = 1}^k (-1)^{i+1} 2^{e_i}$ such that $e_1 > \cdots > e_k \geq 0$ and $k$ is minimal.\end{maar}

The consecutive-ones binary expansion can be calculated as follows. Grouping together the blocks of consecutive ones in the binary expansion of $n$, we write $n = \sum_{i = 1}^{\ell} \sum_{j = b_i}^{a_i - 1} 2^j$, where $\ell \geq 1$ and $a_1 > b_1 > \cdots > a_{\ell} > b_{\ell} \geq 0$. Now $\sum_{j = b_i}^{a_i - 1} 2^j = 2^{a_i} - 2^{b_i}$, and the consecutive-ones binary expansion of $n$ is given by $n = \sum_{i = 1}^{\ell} (2^{a_i} - 2^{b_i})$ if $a_{\ell} > b_{\ell} + 1$ and $n = \sum_{i = 1}^{\ell-1} (2^{a_i} - 2^{b_i}) + 2^{b_{\ell}}$ if $a_{\ell} = b_{\ell}+1$. For example, we have consecutive-ones binary expansions $3 = 2^2 - 2^0$, $4 = 2^2$, $5 = 2^3 - 2^2 + 2^0$, and $6 = 2^3 - 2^1$. Note that $e_{k-1} > e_k + 1$ for any consecutive-ones binary expansion with $k > 1$.

We shall need the following result from \cite{GlasbyPraegerXiapart}, where for $0 < n \leq q$ the decomposition of $V_n \otimes V_n$ was described explicitly in terms of the consecutive-ones binary expansion of $n$. 


\begin{lause}[{\cite[Theorem 15]{GlasbyPraegerXiapart}}]\label{thm:GPXtensorsquare}
Suppose that $0 < n \leq q$, and let $n = \sum_{i = 1}^k (-1)^{i+1} 2^{e_i}$ be the consecutive-ones binary expansion of $n$, where $e_1 > \cdots > e_k \geq 0$. Then $$V_n \otimes V_n \cong \bigoplus_{1 \leq i \leq k} V_{2^{e_i}}^{d_i},$$ where $d_k = 2^{e_k}$, and $d_i = 2^{e_i} - \sum_{j = i+1}^k (-1)^{i+j+1}2^{e_j+1}$ for all $1 \leq i < k$.
\end{lause}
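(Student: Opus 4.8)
The plan is to prove this by strong induction on $n$, using the recursive structure of Theorem \ref{thm:tensordecompchar2} together with the recursive description of the consecutive-ones binary expansion. First I would dispose of the base cases: when $n = 2^{e}$ is a power of two, the expansion is $n = 2^{e}$ with $k = 1$ and $e_1 = e$, and Theorem \ref{thm:tensordecompchar2}(ii) gives $V_n \otimes V_n \cong V_{2^e}^{2^e}$, matching $d_1 = 2^{e_1}$. For general $n$ which is not a power of two, write $2^{\alpha} < n < 2^{\alpha+1}$, so that $e_1 = \alpha + 1$ in the consecutive-ones expansion (since grouping blocks of consecutive ones from the top forces a leading $2^{\alpha+1}$ term whenever $n > 2^\alpha$ is not itself a power of two). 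The key observation is that $n' := 2^{\alpha+1} - n = \sum_{i=2}^{k} (-1)^{i} 2^{e_i}$, so the consecutive-ones binary expansion of $n'$ is $n' = \sum_{i=1}^{k-1} (-1)^{i+1} 2^{e_{i+1}}$, i.e. it is obtained from that of $n$ by deleting the leading term and re-indexing; in particular $n'$ has one fewer term. Since $n + n = 2n > 2^{\alpha+1}$, Theorem \ref{thm:tensordecompchar2}(i) applies and gives
\begin{equation*}
V_n \otimes V_n \cong V_{2^{\alpha+1}}^{2n - 2^{\alpha+1}} \oplus (V_{n'} \otimes V_{n'}).
\end{equation*}

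Next I would apply the induction hypothesis to $V_{n'} \otimes V_{n'}$, which has $k - 1$ terms with exponents $e_2 > \cdots > e_k$. Writing the resulting decomposition as $\bigoplus_{2 \leq i \leq k} V_{2^{e_i}}^{d_i'}$, the induction hypothesis gives $d_k' = 2^{e_k}$ and $d_i' = 2^{e_i} - \sum_{j=i+1}^{k} (-1)^{i+j+1} 2^{e_j+1}$ for $2 \leq i < k$ — note these are exactly the formulas claimed for $d_i$ with $2 \leq i \leq k$, since the formula for $d_i$ in the statement does not involve $e_1$ or the leading term at all. So the multiplicities $d_2, \ldots, d_k$ come out correctly by pure bookkeeping. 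It remains to check the top multiplicity: we need $d_1 = 2n - 2^{\alpha+1}$, and since $e_1 = \alpha + 1$, the claimed formula reads $d_1 = 2^{e_1} - \sum_{j=2}^{k} (-1)^{j} 2^{e_j+1} = 2^{\alpha+1} + 2 \sum_{j=2}^{k} (-1)^{j+1} 2^{e_j}$. But $\sum_{j=2}^{k} (-1)^{j+1} 2^{e_j} = -(n - 2^{\alpha+1}) = 2^{\alpha+1} - n$ is negative of the tail of the consecutive-ones expansion of $n$, wait — more directly, $n = 2^{\alpha+1} - \sum_{j=2}^{k}(-1)^{j}2^{e_j}$, so $\sum_{j=2}^{k}(-1)^{j+1}2^{e_j} = n - 2^{\alpha+1}$ is wrong sign; carefully, $n = 2^{e_1} + \sum_{j=2}^k (-1)^{j+1} 2^{e_j}$, hence $\sum_{j=2}^k (-1)^{j+1} 2^{e_j} = n - 2^{\alpha+1}$, giving $d_1 = 2^{\alpha+1} + 2(n - 2^{\alpha+1}) = 2n - 2^{\alpha+1}$, as required.

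The only genuine subtlety — and the step I expect to be the main obstacle — is verifying cleanly that the leading exponent of the consecutive-ones expansion of $n$ is indeed $\alpha + 1$ whenever $2^\alpha < n < 2^{\alpha+1}$, and that passing to $n' = 2^{\alpha+1} - n$ simply strips off the first term of the alternating sum (including correctly handling the minimality of $k$ and the case $n' $ being a power of two, which terminates the recursion). This is a short combinatorial lemma about the consecutive-ones expansion that follows from the grouping-of-blocks description recalled after Definition \ref{def:consecutiveones}, and once it is in hand the rest of the argument is the mechanical induction sketched above. I would state and prove this lemma about consecutive-ones expansions first, then run the induction.
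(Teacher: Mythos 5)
Your proposal is correct, but note that the paper itself gives no proof of this statement: it is quoted verbatim from Glasby--Praeger--Xia \cite[Theorem 15]{GlasbyPraegerXiapart}, so there is nothing internal to compare against. Your derivation from Theorem \ref{thm:tensordecompchar2}(i) is a legitimate and essentially complete argument. The combinatorial lemma you flag is the only real content beyond bookkeeping, and it does hold: if $k\geq 2$ then grouping the alternating sum as $(2^{e_1}-2^{e_2})+\cdots$ gives $2^{e_1-1}\leq n<2^{e_1}$ (with equality ruled out by minimality of $k$), so $e_1=\alpha+1$; and if $n'=2^{\alpha+1}-n$ admitted an alternating expansion with fewer than $k-1$ terms, its leading exponent would be at most $\alpha$ (since $n'<2^{\alpha}$), so prepending $2^{\alpha+1}$ and flipping signs would yield an expansion of $n$ with fewer than $k$ terms, contradicting minimality. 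With that in place, the top multiplicity $2n-2^{\alpha+1}$ matches the claimed $d_1$ by the computation you carried out, and the remaining multiplicities are inherited from the induction hypothesis because the formula for $d_i$ with $i\geq 2$ involves only $e_i,\ldots,e_k$. Two minor points worth making explicit in a write-up: the hypothesis $2^{\alpha+1}\leq q$ needed to invoke Theorem \ref{thm:tensordecompchar2}(i) follows since $2^{\alpha}<n\leq q$ and $q$ is a power of two, and the summands $V_{2^{e_2}},\ldots,V_{2^{e_k}}$ have strictly smaller subscripts than $V_{2^{e_1}}$, so no multiplicities merge when the two pieces of the decomposition are combined.
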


We finish this section by discussing some results on the decomposition of $\wedge^2(V_n)$. The following recursive description of $\wedge^2(V_n)$ is due to Gow and Laffey \cite{GowLaffey}.

\begin{lause}[{\cite[Theorem 2]{GowLaffey}}]\label{thm:GowLaffey}
Suppose that $q/2 < n \leq q$. Then $$\wedge^2(V_n) \cong \wedge^2(V_{q-n}) \oplus V_q^{n-q/2-1} \oplus V_{3q/2 - n}.$$
\end{lause}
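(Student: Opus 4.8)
I would prove this by induction on $q$, the order of $\langle u\rangle$. Since $q/2 < n \le q$ forces $q-n < q/2 < n$, the term $\wedge^2(V_{q-n})$ on the right is an exterior square over a strictly smaller cyclic $2$-group, so the formula is genuinely recursive and the induction is well founded; the base case is $q=2$, where $n=2$ and $\wedge^2(V_2)=V_1=\wedge^2(V_0)\oplus V_2^{0}\oplus V_1$. The engine of the argument is the short exact sequence of $K[u]$-modules
\begin{equation*}
0 \longrightarrow V_{q-n} \longrightarrow V_q \longrightarrow V_n \longrightarrow 0,
\end{equation*}
in which $V_{q-n}$ is the submodule $X^n V_q$ of the free module $V_q=K[u]$ and $V_q/X^n V_q\cong V_n$. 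Applying $\wedge^2(-)$ to any short exact sequence $0\to A\to B\to C\to 0$ produces a filtration $0\subseteq F_2\subseteq F_1\subseteq \wedge^2(B)$ with $F_2\cong \wedge^2(A)$, $F_1/F_2\cong A\otimes C$ and $\wedge^2(B)/F_1\cong \wedge^2(C)$ (the maps are the evident natural ones, and exactness of the graded pieces is forced by the characteristic-free identity $\binom{a+b}{2}=\binom{a}{2}+ab+\binom{b}{2}$). Thus $\wedge^2(V_q)$ carries a filtration whose successive quotients are $\wedge^2(V_{q-n})$ — known by induction — then $V_{q-n}\otimes V_n$ — known by Theorem \ref{thm:tensordecompchar2} — and finally $\wedge^2(V_n)$, which we wish to extract as the top quotient $\wedge^2(V_q)/F_1$.

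The true base of the recursion is the case $n=q$, namely $\wedge^2(V_q)\cong V_q^{q/2-1}\oplus V_{q/2}$. I would settle this by restricting to $H=\langle u^2\rangle$, cyclic of order $q/2$: by Lemma \ref{lemma:restrictionpowerp} $\Res_H V_q\cong V_{q/2}^{2}$ is $K[H]$-free, so $\Res_H\wedge^2(V_q)\cong \wedge^2(V_{q/2})^{2}\oplus(V_{q/2}\otimes V_{q/2})$, and both summands are computable — $\wedge^2(V_{q/2})$ over $H$ is the $n=q$ case of the inductive hypothesis for $H$, while $V_{q/2}\otimes V_{q/2}\cong V_{q/2}^{q/2}$ since $V_{q/2}$ is the free $K[H]$-module (or use Theorem \ref{thm:GPXtensorsquare}) — giving $\Res_H\wedge^2(V_q)\cong V_{q/2}^{q-2}\oplus V_{q/4}^{2}$. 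Now Lemma \ref{lemma:restrictionpowerp} shows that $V_q$ is the only indecomposable $K[u]$-module with $K[H]$-free restriction, and that the only other indecomposable whose restriction involves a $V_{q/2}$-summand is $V_{q-1}$; the latter is ruled out here because $\Res_H\wedge^2(V_q)$ contains no $V_{q/2-1}$-block (for $q\ge 8$; the cases $q\le 4$ are checked by hand). Hence the multiplicity of $V_q$ in $\wedge^2(V_q)$ is $q/2-1$, the complementary summand has dimension $\binom q2-q(q/2-1)=q/2$ and restricts to $V_{q/4}^2$ over $H$, and a short case check from Lemma \ref{lemma:restrictionpowerp} forces it to be $V_{q/2}$.

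For the general case $q/2<n<q$ the same scheme applies — $\wedge^2(V_q)$ is now known, $V_{q-n}\otimes V_n$ is known, $\wedge^2(V_{q-n})$ is known by induction — but here the genuine obstacle appears: \emph{a filtration does not determine a module}, and knowing $\wedge^2(V_q)$ together with $\dim F_1$ does not determine $\wedge^2(V_q)/F_1$ without knowing how $F_1$ is embedded. For instance $\wedge^2(V_8)=V_8^3\oplus V_4$ has dimension-$13$ submodules with quotient $V_8\oplus V_6\oplus V_1$ and others with quotient $V_7^2\oplus V_1$, and the two are not even separated by restriction to $\langle u^2\rangle$. The way through exploits two rigidities of $K[u]$: the regular module $V_q$ is the unique indecomposable that is simultaneously projective and injective, so free summands split off from submodules and from quotients alike; and $V_q$ is the only indecomposable restricting freely to $\langle u^2\rangle$. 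Using the explicit form $\wedge^2(V_q)=V_q^{q/2-1}\oplus V_{q/2}$ one locates $F_2=\wedge^2(X^nV_q)$ and $F_1$ inside it concretely, splits the free summands of $\wedge^2(V_n)$ off against the $V_q$'s, and is left with a single non-free block of size strictly between $q/2$ and $q$, whose size is then pinned down by $\dim\wedge^2(V_n)=\binom n2$ together with the value of $\Res_H\wedge^2(V_n)$, computed as in the previous paragraph from the inductive hypothesis over $H$ and Theorems \ref{thm:tensordecompchar2} and \ref{thm:GPXtensorsquare}. This extension-control step — making the filtration of $\wedge^2(V_q)$ explicit enough to read off the top quotient — is the crux; without it the argument is only formal bookkeeping.

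An alternative route, which avoids extensions entirely and is closer in spirit to the method of Gow and Laffey, computes the Jordan type of $X=u-1$ on $\wedge^2(V_n)$ directly. One realizes $\wedge^2(V_n)\cong \operatorname{Im}(1+\tau)\subseteq V_n\otimes V_n$, where $\tau$ swaps the tensor factors; identifying $V_n\otimes V_n$ with $K[s,t]/(s^n,t^n)$, the generator $u$ acts as multiplication by $(1+s)(1+t)$, hence $X$ acts as multiplication by the $\tau$-invariant element $w=s+t+st$, and
\begin{equation*}
\rank\bigl(X^i|_{\wedge^2(V_n)}\bigr)=\dim(1+\tau)\bigl(w^i R\bigr)=\dim\bigl(w^i R\bigr)-\dim\bigl(w^i R\cap R^\tau\bigr),
\end{equation*}
where $R=K[s,t]/(s^n,t^n)$ and $R^\tau$ is the space of symmetric elements. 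Here $\dim(w^i R)$ is read off the known Jordan type of $V_n\otimes V_n$ (Theorem \ref{thm:GPXtensorsquare}), so the whole theorem reduces to computing the dimension of the symmetric part of $w^i R$ for each $i$; the Frobenius identity $w^{2^k}=s^{2^k}+t^{2^k}+(st)^{2^k}$ shows $w^{2^k}$ has the same shape as $w$ in the variables $s^{2^k},t^{2^k}$, which turns even this into a recursion in $q$. On this route the delicate point is the count of $\dim(w^i R\cap R^\tau)$ for indices $i$ that are not powers of two.
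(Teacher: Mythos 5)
This statement is imported: the paper quotes it from Gow and Laffey without proof, so there is no in-paper argument to measure you against, and your proposal has to stand on its own. Its skeleton is sound as far as it goes. The base case $n=q$ via restriction to $H=\langle u^2\rangle$ does work (for $q\geq 8$: by Lemma \ref{lemma:restrictionpowerp} only $V_q$ and $V_{q-1}$ restrict with a $V_{q/2}$-block, and the absence of $V_{q/2-1}$ in $\Res_H\wedge^2(V_q)\cong V_{q/2}^{q-2}\oplus V_{q/4}^2$ rules out $V_{q-1}$), and your second route is essentially the method Gow and Laffey actually use.

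The problem is that for $q/2<n<q$ both routes halt exactly at the step you yourself call ``the crux'', and that step is the entire content of the theorem. In the filtration route, $\wedge^2(V_{q-n})$ sits at the \emph{bottom} of $\wedge^2(V_q)$ while $\wedge^2(V_n)$ is the \emph{top} quotient, so the fact that $\wedge^2(V_{q-n})$ reappears as a direct summand of that top quotient is not something the filtration hands you --- it is precisely the assertion to be proved. ``Locating $F_1$ concretely'' and ``splitting the free summands off'' are announced but never carried out, and the claim that one is then left with ``a single non-free block of size strictly between $q/2$ and $q$'' is not even consistent with the statement: the non-projective part of the answer is $\wedge^2(V_{q-n})\oplus V_{3q/2-n}$, which generally has several blocks and blocks of size below $q/2$ (e.g.\ $\wedge^2(V_5)\cong V_3\oplus V_7$ for $q=8$, and $\wedge^2(V_6)\cong V_1\oplus V_6\oplus V_8$). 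The proposed final pin-down ``by $\dim\wedge^2(V_n)$ together with $\Res_H\wedge^2(V_n)$'' also cannot close the argument, since restriction to $\langle u^2\rangle$ plus dimension does not determine the isomorphism type of a $K[u]$-module --- your own example $V_8\oplus V_6$ versus $V_7^2$ shows exactly this. The alternative route has the same status: reducing the theorem to the computation of $\dim(w^iR\cap R^\tau)$ is correct bookkeeping, but that computation \emph{is} the proof of Gow--Laffey and is left undone. As written, the proposal is a well-informed plan with a correctly identified hole, not a proof.
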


\begin{esim}\label{esim:wedge2power2}
Applying Theorem \ref{thm:GowLaffey} with $n = q = 2^\alpha$, it is immediate that $\wedge^2(V_{2^\alpha}) \cong V_{2^{\alpha-1}} \oplus V_{2^{\alpha}}^{2^{\alpha-1}-1}$ for all $\alpha > 0$.
\end{esim}

Note that $$\wedge^2(V \oplus W) = \wedge^2(V) \oplus \wedge^2(W) \oplus \left( V \wedge W \right)$$ for all $K[u]$-modules $V$ and $W$. Since $V \wedge W \cong V \otimes W$ as $K[u]$-modules, this decomposition gives the following result.

\begin{lemma}\label{lemma:wedge2ofkumod}
Let $V$ be a $K[u]$-module such that $V \cong V_{d_1} \oplus \cdots \oplus V_{d_t}$, where $d_i > 0$ are integers. Then $$\wedge^2(V) \cong \bigoplus_{1 \leq i \leq t} \wedge^2(V_{d_i}) \oplus \bigoplus_{1 \leq i < j \leq t} V_{d_i} \otimes V_{d_j}.$$
\end{lemma}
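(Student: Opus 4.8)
The plan is to prove this by a straightforward induction on $t$, using only the two facts recalled just above the statement: that $\wedge^2(A \oplus B) = \wedge^2(A) \oplus \wedge^2(B) \oplus (A \wedge B)$ for $K[u]$-modules $A$ and $B$, and that $A \wedge B \cong A \otimes B$ as $K[u]$-modules.

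For the base case $t = 1$ there is nothing to prove, since both sides equal $\wedge^2(V_{d_1})$ (the second direct sum in the claim is empty). For the inductive step, suppose $t \geq 2$ and that the result holds for $K[u]$-modules which are direct sums of $t-1$ indecomposables. Write $V = V' \oplus V_{d_t}$ with $V' = V_{d_1} \oplus \cdots \oplus V_{d_{t-1}}$. Applying the exterior-square-of-a-sum formula gives $\wedge^2(V) \cong \wedge^2(V') \oplus \wedge^2(V_{d_t}) \oplus (V' \wedge V_{d_t})$. By the inductive hypothesis, $\wedge^2(V') \cong \bigoplus_{1 \leq i \leq t-1} \wedge^2(V_{d_i}) \oplus \bigoplus_{1 \leq i < j \leq t-1} V_{d_i} \otimes V_{d_j}$. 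Since the tensor product is additive in each argument, $V' \wedge V_{d_t} \cong V' \otimes V_{d_t} \cong \bigoplus_{1 \leq i \leq t-1} V_{d_i} \otimes V_{d_t}$. Combining these three contributions and re-indexing the pair sum, we obtain exactly $\bigoplus_{1 \leq i \leq t} \wedge^2(V_{d_i}) \oplus \bigoplus_{1 \leq i < j \leq t} V_{d_i} \otimes V_{d_j}$, completing the induction.

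There is no real obstacle here; the only point requiring a word of care is the bookkeeping when merging the two sources of cross terms $V_{d_i} \otimes V_{d_j}$ (those with $j \leq t-1$ coming from the inductive hypothesis, and those with $j = t$ coming from $V' \wedge V_{d_t}$) into the single sum over all pairs $1 \leq i < j \leq t$. Since everything takes place in the Krull--Schmidt category of finite-dimensional $K[u]$-modules, the decompositions are genuine isomorphisms of modules and the re-indexing is unambiguous.
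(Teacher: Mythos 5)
Your proof is correct and follows essentially the same route as the paper, which derives the lemma directly by iterating the two facts $\wedge^2(A \oplus B) = \wedge^2(A) \oplus \wedge^2(B) \oplus (A \wedge B)$ and $A \wedge B \cong A \otimes B$ stated just before the lemma. Your explicit induction on $t$ merely formalizes the bookkeeping that the paper leaves implicit.
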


\begin{remark}\label{remark:computewedgez}With Lemma \ref{lemma:wedge2ofkumod}, Theorem \ref{thm:GowLaffey}, and Theorem \ref{thm:tensordecompchar2}, we can compute the decomposition of $\wedge^2(V)$ for any $K[u]$-module $V$ efficiently.\end{remark}

Next we consider some results on the multiplicities of the Jordan block sizes in $\wedge^2(V_{2n})$.


\begin{lemma}\label{lemma:minblockinwedge2}
Let $0 < n \leq q/2$ and set $\alpha = \nu_2(n)$. Then the smallest Jordan block size in $\wedge^2(V_{2n})$ is $2^{\alpha}$, occurring with multiplicity one.
\end{lemma}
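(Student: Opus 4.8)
The plan is to prove Lemma~\ref{lemma:minblockinwedge2} by induction on $n$, using the recursive description of $\wedge^2(V_{2n})$ supplied by Theorem~\ref{thm:GowLaffey}. Write $n = 2^{\alpha} m$ with $m$ odd, and let $q = 2^{\beta}$ be the order of $u$, so that $2n \le q$. First I would dispose of the base-type case where $n$ is a power of two, say $n = 2^{\alpha}$: then $2n = 2^{\alpha+1}$, and by taking $u$ of order exactly $2^{\alpha+1}$ (which is legitimate, since the decomposition of $\wedge^2(V_{2n})$ does not depend on the ambient group as long as $2n \le q$), Example~\ref{esim:wedge2power2} gives $\wedge^2(V_{2^{\alpha+1}}) \cong V_{2^{\alpha}} \oplus V_{2^{\alpha+1}}^{2^{\alpha}-1}$, whose smallest block is $V_{2^{\alpha}}$ with multiplicity one, as required.

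For the inductive step, assume $n$ is not a power of two and set $2^{\gamma}$ to be the least power of two with $2n \le 2^{\gamma}$; then $2^{\gamma-1} < 2n < 2^{\gamma}$, so we may take $q = 2^{\gamma}$ and apply Theorem~\ref{thm:GowLaffey} with this $q$ and the integer $2n$ in place of $n$ (the hypothesis $q/2 < 2n \le q$ holds). This yields
\begin{equation}\label{eq:wedgerecursion}
\wedge^2(V_{2n}) \cong \wedge^2(V_{q - 2n}) \oplus V_q^{\,2n - q/2 - 1} \oplus V_{\,3q/2 - 2n}.
\end{equation}
Now $q - 2n = 2(q/2 - n)$ is even and positive, with $q/2 - n < n$, so the inductive hypothesis applies to $\wedge^2(V_{q-2n})$: its smallest block has size $2^{\alpha'}$ with multiplicity one, where $\alpha' = \nu_2(q/2 - n)$. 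The key number-theoretic point is that $\nu_2(q/2 - n) = \nu_2(n) = \alpha$: indeed $q/2 = 2^{\gamma-1}$ and $2^{\gamma-1} > n = 2^{\alpha}m$ forces $\gamma - 1 > \alpha$ (since $m$ is odd and $m \ge 3$ here, $2^{\alpha+1} \le 2n < 2^{\gamma}$ gives $\alpha + 1 < \gamma$), hence $\nu_2(2^{\gamma-1} - 2^{\alpha}m) = \alpha$. So the smallest block coming from the first summand of~\eqref{eq:wedgerecursion} is exactly $V_{2^{\alpha}}$ with multiplicity one.

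It then remains to check that the other two summands of~\eqref{eq:wedgerecursion} contribute nothing of size $\le 2^{\alpha}$ that would disturb this: the block $V_q$ has size $q = 2^{\gamma} > 2n > 2^{\alpha}$, and $3q/2 - 2n = 3 \cdot 2^{\gamma-1} - 2^{\alpha+1}m$; since $2^{\gamma-1} > 2^{\alpha}m \ge 2^{\alpha+1}$ (as $m \ge 3$ odd... more carefully $m\ge 3$ gives $2^{\alpha}m \ge 3\cdot 2^{\alpha} > 2^{\alpha+1}$, hence $\gamma - 1 \ge \alpha + 2$), we get $3q/2 - 2n > 3\cdot 2^{\alpha+1} - 2^{\gamma-1} \cdot \tfrac{2}{3}\cdots$ — rather than estimate loosely I would argue $3q/2 - 2n > q/2 - n + n > n > 2^{\alpha}$ using $q/2 > 2n - q/2$, i.e. $q > 2n$, wait $q > 2n$ is false; instead use $3q/2 - 2n = (q - 2n) + q/2 > q/2 > 2^{\alpha}$ since $q = 2^{\gamma}$ with $\gamma - 1 > \alpha$. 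Either way the size exceeds $2^{\alpha}$, so neither extra summand produces a block as small as $2^{\alpha}$, and the minimal block size of $\wedge^2(V_{2n})$ is $2^{\alpha}$ with multiplicity one.

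I expect the main obstacle to be purely bookkeeping: making the $2$-adic valuation comparisons in the inductive step airtight (in particular verifying $\gamma - 1 > \alpha$ in every sub-case so that $\nu_2(q/2 - n) = \alpha$, and confirming $3q/2 - 2n > 2^{\alpha}$), and ensuring the choice of ambient order $q$ for each invocation of Theorem~\ref{thm:GowLaffey} and of the inductive hypothesis is consistent. There is no conceptual difficulty once the recursion~\eqref{eq:wedgerecursion} is set up; the only mild subtlety is that one must not accidentally allow $q/2 - n = 0$, which is why the power-of-two case is separated out as the base of the induction.
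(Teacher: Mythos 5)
Your proposal is correct and follows essentially the same route as the paper: induction on $n$ via the recursion of Theorem \ref{thm:GowLaffey} with $q$ chosen so that $q/2 < 2n \le q$, the power-of-two case handled by Example \ref{esim:wedge2power2}, and the observation that $\nu_2(q/2-n)=\nu_2(n)$ because $\nu_2(q/2)>\alpha$. The only wrinkle is your momentary worry that ``$q>2n$ is false'' --- in the inductive step you have arranged $2n<q=2^{\gamma}$, so it is in fact true, and in any case your fallback estimate $3q/2-2n=(q-2n)+q/2>q/2>2^{\alpha}$ is exactly the bound the paper uses.
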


\begin{proof}
By induction on $n$. In the case $n = 1$, the claim holds since $\wedge^2(V_{2n}) = \wedge^2(V_2) = V_1$. Suppose then $n > 1$ and that the claim holds for all $0 < n' < n$. Without loss of generality, we can assume that $q/2 < 2n \leq q$. If $2n = q$, then the claim follows from Example \ref{esim:wedge2power2}. Suppose that $q/2 < 2n < q$. Then \begin{equation}\label{eq:wedgeof2n}\wedge^2(V_{2n}) \cong \wedge^2(V_{q-2n}) \oplus V_q^{2n-q/2-1} \oplus V_{3q/2 - 2n}\end{equation} by Theorem \ref{thm:GowLaffey}. Now $\nu_2((q-2n)/2) = \nu_2(n) = \alpha$ since $q > 2^{\alpha+1}$, so by induction the smallest Jordan block size in $\wedge^2(V_{q-2n})$ is $2^{\alpha}$, occurring with multiplicity one. Furthermore, we have $q > 3q/2 - 2n > q/2 \geq 2^{\alpha}$, so the result follows from~\eqref{eq:wedgeof2n}.
\end{proof}

%

\begin{lemma}\label{lemma:blockmultiplicitieswedge}
Let $0 < n \leq q/2$. Then every Jordan block size in $\wedge^2(V_{2n})$ has odd multiplicity.
\end{lemma}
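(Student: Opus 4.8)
The plan is to prove the statement by induction on $n$, running in parallel with (and in the same style as) the proof of Lemma \ref{lemma:minblockinwedge2}, exploiting the recursion of Theorem \ref{thm:GowLaffey}. The base case $n = 1$ is immediate: $\wedge^2(V_2) = V_1$, and the single block size $1$ occurs with multiplicity $1$, which is odd. For the inductive step, assume the result for all $0 < n' < n$. Since the Jordan block decomposition of $\wedge^2(V_{2n})$ depends only on the restriction $\Res_{\langle u^{2^{\alpha}}\rangle}^{\langle u\rangle}$-type data through the ambient cyclic group, we may without loss of generality enlarge $q$ so that $q/2 < 2n \le q$. If $2n = q$, then Example \ref{esim:wedge2power2} gives $\wedge^2(V_{2n}) \cong V_{2^{\alpha-1}} \oplus V_{2^{\alpha}}^{2^{\alpha-1}-1}$ where $\alpha$ here is $\nu_2(q)$; the block $2^{\alpha-1}$ has multiplicity $1$ (odd) and the block $2^{\alpha}$ has multiplicity $2^{\alpha-1}-1$ (odd), so the claim holds.

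The main case is $q/2 < 2n < q$, where Theorem \ref{thm:GowLaffey} gives
\[
\wedge^2(V_{2n}) \cong \wedge^2(V_{q-2n}) \oplus V_q^{2n - q/2 - 1} \oplus V_{3q/2 - 2n}.
\]
Here $q - 2n$ is even and positive, say $q - 2n = 2n'$ with $0 < n' < n$, so by induction every Jordan block size in $\wedge^2(V_{q-2n})$ has odd multiplicity. The extra summand $V_q^{2n-q/2-1}$ contributes multiplicity $2n - q/2 - 1$ to block size $q$; since $q/2$ is even (as $q/2 < 2n \le q$ forces $q \ge 4$, hence $4 \mid q$), this multiplicity is odd. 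The summand $V_{3q/2-2n}$ contributes multiplicity $1$ to block size $3q/2 - 2n$. The only way the total multiplicity of some block size could come out even is if these extra contributions collide with each other or with blocks already present in $\wedge^2(V_{q-2n})$ in a parity-breaking way; so the crux is to control these collisions.

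The hard part will therefore be the bookkeeping of overlaps among the three summands. One needs: (a) $3q/2 - 2n \ne q$, i.e. $2n \ne q/2$, which holds since $2n > q/2$; (b) if $3q/2 - 2n$ equals some block size appearing in $\wedge^2(V_{q-2n})$, then adding $1$ keeps the parity odd only if that block had even multiplicity — so one must instead argue that $3q/2 - 2n$ does \emph{not} occur in $\wedge^2(V_{q-2n})$, using the size bound $q > 3q/2 - 2n > q/2 \ge$ every block size of $\wedge^2(V_{q-2n})$ (the largest block in $\wedge^2(V_{2n'})$ is at most $2n' = q - 2n < q/2$, by e.g. Lemma \ref{lemma:minblockinwedge2}'s companion fact that block sizes do not exceed the ambient dimension $2n' < q/2 < 3q/2 - 2n$); similarly $3q/2 - 2n < q$ keeps it disjoint from the $V_q$ contribution; and (c) the block size $q$ does not appear in $\wedge^2(V_{q-2n})$ since $q - 2n < q$ bounds its blocks below $q$. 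With these three disjointness facts, the multiplicities of $\wedge^2(V_{2n})$ are obtained by simply juxtaposing the three lists, each of which has all multiplicities odd, so the result follows. I expect the only delicate point is verifying $q/2$ is even (equivalently that we may take $q$ large enough that $4 \mid q$), which is why the reduction step at the start — freely enlarging $q$ — must be set up carefully.
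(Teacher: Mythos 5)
Your overall strategy is exactly the paper's: the paper omits the proof, stating only that it is by induction on $n$ with the same steps as Lemma \ref{lemma:minblockinwedge2}, and your parity bookkeeping (oddness of $2^{\alpha-1}-1$ when $2n=q$, oddness of $2n-q/2-1$ because $4 \mid q$ in the case $q/2 < 2n < q$, and disjointness of the three summands' block sizes) is the right way to fill in those steps.

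One supporting claim you use is false as stated, though: it is not true that the Jordan block sizes of $\wedge^2(V_{2n'})$ are bounded by the ``ambient dimension'' $2n'$. For instance, $\wedge^2(V_6) \cong V_1 \oplus V_6 \oplus V_8$ (apply Theorem \ref{thm:GowLaffey} with $q=8$), which has a block of size $8 > 6$. The correct --- and still sufficient --- bound is the following: since $q - 2n < q/2$, the element $u$ acts on $V_{q-2n}$ with $(u-1)^{q/2} = u^{q/2}-1 = 0$, hence $u$ has order dividing $q/2$ on $\wedge^2(V_{q-2n})$ and every Jordan block there has size at most $q/2$. As $q/2 < 3q/2 - 2n < q$, this gives the disjointness you need of the summand $\wedge^2(V_{q-2n})$ from both $V_{3q/2-2n}$ and $V_q^{2n-q/2-1}$, and the rest of your argument goes through unchanged.
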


\begin{proof}By induction on $n$. The steps of the proof are essentially the same as in the proof of Lemma \ref{lemma:minblockinwedge2}, so we omit the details.\end{proof}


\begin{lemma}\label{lemma:multiplicityatmosttwowedge}
Let $n > 0$ and suppose that all Jordan block sizes in $\wedge^2(V_{2n})$ have multiplicity at most $2$. Then $n \in \{1,2,3,5\}$.
\end{lemma}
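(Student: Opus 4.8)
The plan is to analyze the decomposition of $\wedge^2(V_{2n})$ using Theorem \ref{thm:GowLaffey} and Lemma \ref{lemma:minblockinwedge2}, reducing the problem to a bounded check. First I would deal with small cases directly. For $n = 1$ we have $\wedge^2(V_2) = V_1$, and for $n = 2,3,5$ one can compute explicitly (via Theorem \ref{thm:GowLaffey} or by embedding $V_{2n}$ into a suitable $V_q$ and applying the recursion) that all block multiplicities are at most $2$; this establishes that these four values of $n$ do indeed have the stated property, which is the easy direction. The substance is the converse: for every $n \notin \{1,2,3,5\}$, some Jordan block size in $\wedge^2(V_{2n})$ has multiplicity at least $3$.

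For the converse I would argue as follows. Choose $q$ minimal with $2n \le q$, so $q/2 < 2n \le q$. If $2n = q = 2^{\alpha+1}$, then by Example \ref{esim:wedge2power2} we get $\wedge^2(V_{2n}) \cong V_{2^\alpha} \oplus V_{2^{\alpha+1}}^{2^\alpha - 1}$, and $2^\alpha - 1 \ge 3$ precisely when $\alpha \ge 2$, i.e. $2n \ge 8$; this handles $n = 4$ and more generally all powers of $2$ that are at least $4$. Otherwise $q/2 < 2n < q$ and Theorem \ref{thm:GowLaffey} gives
\[
\wedge^2(V_{2n}) \cong \wedge^2(V_{q-2n}) \oplus V_q^{2n - q/2 - 1} \oplus V_{3q/2 - 2n}.
\]
The multiplicity of $V_q$ here is at least $2n - q/2 - 1$. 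If this is already $\ge 3$ we are done, so the only interesting case is $2n - q/2 - 1 \le 2$, i.e. $q/2 < 2n \le q/2 + 3$, which forces $q - 2n \in \{0,1,2,3\}$ (and $q-2n$ is a multiple of $2$ minus a small correction — more precisely $q - 2n \in \{0,1,2,3\}$ with $q-2n$ even since $q$ and $2n$ are both even, so $q - 2n \in \{0,2\}$; the case $q - 2n = 0$ was already treated). When $q - 2n = 2$, Lemma \ref{lemma:minblockinwedge2} applied to $\wedge^2(V_{q-2n}) = \wedge^2(V_2) = V_1$ contributes a single $V_1$, the term $V_{3q/2-2n} = V_{q/2+2}$ contributes one more block of a size that is $< q$ (so distinct from the $V_q$'s), and $V_q$ appears with multiplicity $2n - q/2 - 1 = 2$; so here all multiplicities are $\le 2$ unless one of these block sizes coincides — which one must check. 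In this subcase $2n = q - 2$ with $q = 2^{\alpha+1}$, i.e. $2n = 2^{\alpha+1} - 2$, so $n = 2^\alpha - 1$; then $q/2 + 2 = 2^\alpha + 2$, which equals $q = 2^{\alpha+1}$ only if $2^\alpha = 2$, i.e. $n = 1$, already excluded, and equals $1$ never for $\alpha \ge 1$. So in fact $n = 2^\alpha - 1$ with $\alpha \ge 2$ gives multiplicities all $\le 2$ — but wait, this would include $n = 3$ ($\alpha = 2$) and $n = 7$ ($\alpha = 3$), so the recursion does not terminate here; one must recurse into $\wedge^2(V_{q-2n})$ more carefully when $q - 2n$ is itself not small, which is where the real bookkeeping lies.

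The main obstacle, then, is organizing the induction so that it actually closes: Theorem \ref{thm:GowLaffey} reduces $\wedge^2(V_{2n})$ to $\wedge^2(V_{q-2n})$ where $q - 2n$ can be much smaller than $2n$ but is not bounded, so a naive "small base case" argument fails. I would instead set up a clean induction on $n$: assuming $\wedge^2(V_{2m})$ has a block of multiplicity $\ge 3$ for all $m$ with $4 \le m \notin\{5\}$ and $m < n$ (handling $n=1,2,3,5$ as base cases and powers of $2$ directly via Example \ref{esim:wedge2power2}), I would use the displayed decomposition: if $2n - q/2 - 1 \ge 3$ we win directly; otherwise $q - 2n \in \{2\}$ (the $0$ case being a power of two), so $q - 2n = 2$ means $\wedge^2(V_{q-2n}) = V_1$ and one checks the three surviving block sizes $1$, $q/2 + 2$, $q$ are pairwise distinct with multiplicities $1, 1, 2n - q/2 - 1 \le 2$ — contradicting the hypothesis that $n \ge 4$, $n \ne 5$, hence this case cannot arise for such $n$, i.e. it only arises for the listed exceptional values. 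The delicate point is verifying the case analysis on $q - 2n$ is exhaustive — in particular checking parity forces $q - 2n$ even so only $q - 2n = 0$ (power of two) or $q - 2n = 2$ ($n = 2^\alpha - 1$) occur when the $V_q$-multiplicity is $\le 2$, and then pinning down exactly which such $n$ (namely $n \in \{1,3\}$, since $n = 7$ must be re-examined: there $q = 16$, $2n = 14$, $\wedge^2(V_{14}) \cong V_1 \oplus V_{16}^{5} \oplus V_{10}$, and the multiplicity $5 \ge 3$, so $n = 7$ is fine and the earlier worry dissolves once the arithmetic $2n - q/2 - 1$ is computed correctly). I expect the bulk of the write-up to be this finite case-check, with no conceptual difficulty beyond careful arithmetic with $q$, $2n$, and $3q/2 - 2n$.
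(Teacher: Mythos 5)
There is a genuine gap, and it sits exactly where you flagged "the real bookkeeping." After the first application of Theorem \ref{thm:GowLaffey}, the hypothesis that the multiplicity $2n - q/2 - 1$ of $V_q$ is at most $2$ gives $2n \le q/2 + 3$, hence by parity $2n = q/2 + 2$. This does \emph{not} force $q - 2n \in \{0,2\}$: it forces $q - 2n = q/2 - 2$, which is unbounded (e.g.\ $q=32$, $2n=18$ gives $q-2n=14$). Your case analysis conflates the two conditions "$2n$ is just above $q/2$" and "$2n$ is just below $q$", and your claimed reduction to $\wedge^2(V_{q-2n}) = \wedge^2(V_2) = V_1$ is therefore only valid for $q=8$. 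For larger $q$ the summand $\wedge^2(V_{q/2-2})$ is large, and your induction never actually engages with it in the surviving case, so the argument does not close. (Your $n=7$ spot-check is correct but irrelevant to this: there the first step already gives multiplicity $5$, which is the non-interesting branch.)

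The missing step is a \emph{second} application of Theorem \ref{thm:GowLaffey}, now to $\wedge^2(V_{q/2-2})$ relative to $q' = q/2$ (valid since $q/4 < q/2 - 2 < q/2$ once $q > 8$): this yields
\[
\wedge^2(V_{q/2-2}) \cong V_1 \oplus V_{q/4+2} \oplus V_{q/2}^{q/4-3},
\]
so the multiplicity of $V_{q/2}$ forces $q/4 - 3 \le 2$, i.e.\ $q \le 16$, and then $n>4$ with $q/2 < 2n \le q$ pins down $q=16$, $2n = q/2+2 = 10$, $n=5$. This is how the paper's proof concludes. The paper also first invokes Lemma \ref{lemma:blockmultiplicitieswedge} (all multiplicities in $\wedge^2(V_{2n})$ are odd) to upgrade "at most $2$" to "exactly $1$"; this is a convenience rather than a necessity, but it is a tool you did not use. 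Your treatment of the easy direction and of the powers of two via Example \ref{esim:wedge2power2} is fine.
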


\begin{proof}
For $n = 4$, an easy calculation with Theorem \ref{thm:GowLaffey} shows that $\wedge^2(V_4) \cong V_4 \oplus V_8^3$. Thus we may assume $n > 4$ for what follows. Let $q$ be a power of $2$ such that $q/2 < 2n \leq q$. Suppose that all Jordan block sizes in $\wedge^2(V_{2n})$ have multiplicity at most $2$. Then by Lemma \ref{lemma:blockmultiplicitieswedge} each Jordan block size in $\wedge^2(V_{2n})$ has multiplicity one. By Theorem \ref{thm:GowLaffey}, we have \begin{equation}\label{eq:anothergwform}\wedge^2(V_{2n}) \cong \wedge^2(V_{q-2n}) \oplus V_q^{2n-q/2-1} \oplus V_{3q/2 - 2n}\end{equation} so $2n-q/2-1 \leq 1$, which forces $2n = q/2+2$. Then~\eqref{eq:anothergwform} becomes $\wedge^2(V_{2n}) \cong \wedge^2(V_{q/2-2}) \oplus V_q \oplus V_{q-2}$. Now $q/4 < q/2-2 < q/2$, so applying Theorem \ref{thm:GowLaffey} we get $$\wedge^2(V_{q/2-2}) \cong V_1 \oplus V_{q/4+2} \oplus V_{q/2}^{q/4-3},$$ and therefore $q/4-3 \leq 1$, giving $q \leq 16$. Since $n > 4$ and $q/2 < 2n \leq q$, it follows that $q = 16$. In this case $2n = q/2 + 2 = 10$, so $n = 5$.\end{proof}

\section{Modules equipped with a bilinear form}\label{section:bilinearmodules}
Let $G$ be a group. It is an elementary fact in representation theory that the $\GL(V)$-conjugacy classes of homomorphisms $G \rightarrow \GL(V)$ are in bijection with the isomorphism classes of $K[G]$-module structures on $V$. Similarly, it is convenient to study the conjugacy classes of subgroups of $\Sp(V,b)$ in terms of modules equipped with a non-degenerate alternating bilinear form. In later sections of this paper, this will be useful for us when describing the conjugacy class of a unipotent element $u \in \Sp(V,b)$.

For some generalities on modules equipped with a bilinear form, see for example \cite{Willems}, \cite{Quebbemann}, and \cite{MurraySymmetricVertices}. We give the basic definitions and results needed in this paper in what follows.

\begin{maar}
A \emph{bilinear $K[G]$-module} $(V, b)$ is a $K[G]$-module $V$ with a $G$-invariant bilinear form $b: V \times V \rightarrow K$. A bilinear $K[G]$-module $(V, b)$ is said to be \emph{non-degenerate} if $b$ is non-degenerate, \emph{symmetric} if $b$ is symmetric, and \emph{alternating} if $b$ is alternating. 
\end{maar}

\begin{maar}An \emph{isomorphism} of bilinear $K[G]$-modules $(V, b)$ and $(V', b')$ is an isomorphism $\varphi: V \rightarrow V'$ of $K[G]$-modules such that $b'(\varphi(v), \varphi(w)) = b(v,w)$ for all $v, w \in V$.\end{maar}

\begin{maar}
Let $(V, b)$ be a bilinear $K[G]$-module and $W$ a $K[G]$-submodule of $V$. We denote $(W, b) := (W, b|_{W \times W})$. Furthermore, if $W$ is totally singular with respect to $b$, then we set $(W^\perp / W, b) := (W^\perp / W, b')$, where $b'(v_1+W,v_2+W) = b(v_1,v_2)$ for all $v_1, v_2 \in W^\perp$.
\end{maar}

\begin{maar}The \emph{orthogonal direct sum} of two bilinear $K[G]$-modules $(V, b)$ and $(V', b')$ is the bilinear $K[G]$-module $(V \oplus V', b \perp b')$, where $$(b \perp b')(v_1+v_1',v_2+v_2') = b(v_1, v_2) + b'(v_1', v_2')$$ for all $v_1, v_2 \in V$ and $v_1', v_2' \in V'$. We denote $(V \oplus V', b \perp b') := (V, b) \perp (V', b')$.\end{maar}

In the context of bilinear $K[G]$-modules, for $n \geq 0$ we will use $(V,b)^n$ to denote $(V,b) \perp \cdots \perp (V,b)$, where $(V,b)$ occurs $n$ times in the orthogonal direct sum. Note that $(V,b)^0 = 0$.

\begin{maar}We call a bilinear $K[G]$-module $(V, b)$ \emph{orthogonally indecomposable}, if $V \neq 0$ and whenever $V = V_1 \perp V_2$ for two $K[G]$-submodules $V_1$ and $V_2$, we have $V_1 = 0$ or $V_2 = 0$.\end{maar}


\begin{remark}It is clear that any bilinear $K[G]$-module decomposes into an orthogonal direct sum of orthogonally indecomposable bilinear $K[G]$-modules. However, there is no analogue of the Krull-Schmidt theorem in this setting, as noted in \cite[3.13]{WillemsThesis}. In fact, even the number of orthogonally indecomposable summands is not unique, see for example Lemma \ref{lemma:threeorthogonals} below.\end{remark}

\begin{maar}\label{def:tensorprod_bilinear}
The \emph{tensor product} of bilinear $K[G]$-modules $(V, b)$ and $(V', b')$ is the bilinear $K[G]$-module $(V \otimes V', b \otimes b')$, where $b \otimes b'$ is defined by $$(b \otimes b')(v_1 \otimes v_1', v_2 \otimes v_2') = b(v_1, v_2) b'(v_1', v_2')$$ for all $v_1, v_2 \in V$ and $v_1', v_2' \in V'$. We denote $(V \otimes V', b \otimes b') := (V, b) \otimes (V', b')$.
\end{maar}

We will also need to consider induction and restriction of bilinear $K[G]$-modules, as defined for example in \cite[Lemma, p. 1242]{GowWillemsGreenCorrespondence}, see also \cite[Section 4]{MurraySymmetricVertices}.

\begin{maar}
Let $H < G$ be a subgroup. For a bilinear $K[G]$-module $(V, b)$, its \emph{restriction} to $H$ is $\Res_H^G(V, b) := (\Res_H^G(V), b)$.
\end{maar}

\begin{maar}
Let $H < G$. For a bilinear $K[H]$-module $(L, b)$, the \emph{bilinear $K[G]$-module induced by $(L,b)$} is $\Ind_H^G(L, b) := (\Ind_H^G(L), a)$, where $\Ind_H^G(L) = K[G] \otimes_{K[H]} L$ is the $K[G]$-module induced by $L$ and $$a(g_1 \otimes \ell_1, g_2 \otimes \ell_2) = \begin{cases}b(g_2^{-1}g_1\ell_1,\ell_2), &\text{ if } g_1H = g_2H. \\ 0, &\text{ if } g_1H \neq g_2H. \end{cases}$$ for all $g_1,g_2 \in G$ and $\ell_1,\ell_2 \in L$.
\end{maar}

\begin{lemma}\label{lemma:bilinearindres}

Let $H < G$. Let $(L, b)$ be a bilinear $K[H]$-module and $(W, b')$ a bilinear $K[G]$-module. Then $$\Ind_H^G(L, b) \otimes (W, b') \cong \Ind_H^G(L \otimes \Res_H^G(W), b \otimes b')$$ as bilinear $K[G]$-modules. 

\end{lemma}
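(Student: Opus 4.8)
The statement to prove is the projection-formula-type isomorphism $\Ind_H^G(L,b) \otimes (W,b') \cong \Ind_H^G(L \otimes \Res_H^G(W), b \otimes b')$ of bilinear $K[G]$-modules. The plan is to first recall the underlying module-theoretic isomorphism (the ordinary tensor identity $\Ind_H^G(L) \otimes W \cong \Ind_H^G(L \otimes \Res_H^G W)$), fix a concrete choice of the isomorphism $\varphi$ realizing it, and then verify that $\varphi$ also intertwines the two bilinear forms. So first I would write down $\varphi$ explicitly: choosing left coset representatives $g_1, \ldots, g_m$ of $H$ in $G$, every element of $\Ind_H^G(L) \otimes W$ is a sum of terms $(g_i \otimes \ell) \otimes w$, and every element of $\Ind_H^G(L \otimes \Res_H^G W)$ is a sum of terms $g_i \otimes (\ell \otimes w')$; the natural map is $\varphi\bigl((g \otimes \ell) \otimes w\bigr) = g \otimes (\ell \otimes g^{-1}w)$, extended $K$-linearly. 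One checks this is well-defined (independent of the coset representative, using $G$-invariance of $b'$ only in the verification of the form, and using the $H$-module relations for well-definedness on $\Ind_H^G(L)$), $K[G]$-linear, and bijective with inverse $g \otimes (\ell \otimes w') \mapsto (g \otimes \ell) \otimes gw'$. This part is standard and I would state it briskly, citing that it is the usual tensor identity.

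\textbf{Verifying the forms agree.} The substantive step is to show $\varphi$ is an isometry. Let $a$ denote the induced form on $\Ind_H^G(L,b) \otimes (W,b')$ and let $\tilde a$ denote the induced form on $\Ind_H^G(L \otimes \Res_H^G W, b \otimes b')$. By bilinearity it suffices to compare $a$ and $\tilde a \circ (\varphi \times \varphi)$ on pairs of elementary tensors $(g_1 \otimes \ell_1) \otimes w_1$ and $(g_2 \otimes \ell_2) \otimes w_2$. On the left, by definition of the tensor product of bilinear modules and of the induced form,
\[
a\bigl((g_1 \otimes \ell_1) \otimes w_1,\ (g_2 \otimes \ell_2) \otimes w_2\bigr)
= \begin{cases} b(g_2^{-1}g_1 \ell_1, \ell_2)\, b'(w_1, w_2), & g_1 H = g_2 H,\\ 0, & g_1 H \neq g_2 H.\end{cases}
\]
On the right, $\varphi$ sends these to $g_1 \otimes (\ell_1 \otimes g_1^{-1} w_1)$ and $g_2 \otimes (\ell_2 \otimes g_2^{-1} w_2)$, so $\tilde a$ of them is $0$ unless $g_1 H = g_2 H$, in which case it equals $(b \otimes b')\bigl(g_2^{-1} g_1 (\ell_1 \otimes g_1^{-1} w_1),\ \ell_2 \otimes g_2^{-1} w_2\bigr) = b(g_2^{-1}g_1 \ell_1, \ell_2)\, b'(g_2^{-1} g_1 g_1^{-1} w_1,\ g_2^{-1} w_2) = b(g_2^{-1}g_1 \ell_1, \ell_2)\, b'(g_2^{-1} w_1, g_2^{-1} w_2)$. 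Now $G$-invariance of $b'$ gives $b'(g_2^{-1} w_1, g_2^{-1} w_2) = b'(w_1, w_2)$, so the two expressions coincide. The case $g_1 H \neq g_2 H$ is immediate since both sides vanish.

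\textbf{Main obstacle.} There is no deep obstacle here — this is a bookkeeping lemma — but the one place that needs genuine care is matching up \emph{which} coset-representative bookkeeping is baked into the definition of the induced form (the case split on $g_1 H = g_2 H$ in Definition of $\Ind_H^G(L,b)$ involves $g_2^{-1} g_1$ acting on $\ell_1$, not a passage to chosen representatives), and then checking that $\varphi$ respects this split. Concretely, one must be careful that the action of $g_2^{-1} g_1 \in H$ on the $W$-coordinate $g_1^{-1} w_1$ inside $L \otimes \Res_H^G W$ produces exactly $g_2^{-1} w_1$ and not something involving the $H$-action twisted by a representative; this is where the identity $g_2^{-1} g_1 \cdot g_1^{-1} = g_2^{-1}$ does all the work and where the hypothesis that $b'$ is $G$-invariant (not merely $H$-invariant) is used. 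Provided one writes the computation in the invariant form above, rather than first choosing representatives, the verification is transparent. I would therefore present the proof as: (1) recall $\varphi$ and that it is a $K[G]$-module isomorphism; (2) reduce the isometry check to elementary tensors; (3) carry out the short computation above, invoking $G$-invariance of $b'$ at the final step.
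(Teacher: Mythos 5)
Your proof is correct and takes essentially the same route as the paper: the paper uses exactly the map $\theta\colon (g \otimes \ell) \otimes w \mapsto g \otimes (\ell \otimes g^{-1}w)$, cites the tensor identity for the module isomorphism, and declares the isometry a "straightforward check" — the computation you carry out explicitly (including the use of $G$-invariance of $b'$ at the last step) is precisely that check.
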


\begin{proof}
The corresponding result for $K[G]$-modules is a basic result \cite[Lemma 5 (5), p. 57]{Alperin}, and one can see that the map $\theta: \Ind_H^G(L) \otimes W \rightarrow \Ind_H^G(L \otimes \Res_H^G(W))$ defined by $(g \otimes \ell) \otimes w \mapsto g \otimes (\ell \otimes g^{-1}w)$ for all $g \in G$, $\ell \in L$, and $w \in W$, is an isomorphism of $K[G]$-modules. A straightforward check shows that $\theta$ is also an isometry with respect to the bilinear forms on $\Ind_H^G(L, b) \otimes (W, b')$ and $\Ind_H^G((L, b) \otimes \Res_H^G(W, b'))$.\end{proof}

\begin{maar}\label{def:pairedmodule}
Let $M$ be a $K[G]$-module. The \emph{paired module associated with $M$} is the bilinear $K[G]$-module $(M \oplus M^*, a)$, where $$a(v+f,v'+f') = f(v')+f'(v)$$ for all $v, v' \in M$ and $f,f' \in M^*$.
\end{maar}

Note that the paired module associated with a $K[G]$-module $M$ is always a non-degenerate alternating bilinear $K[G]$-module.

\begin{lemma}\label{lemma:basicpairedproperty}
Let $(V, b)$ be a non-degenerate alternating bilinear $K[G]$-module. Then $(V, b)$ is a paired module if and only if there exists a totally singular decomposition $V = W \oplus W'$, where $W$ and $W'$ are $K[G]$-submodules of $V$. Furthermore, in this case $(V, b)$ is the paired module associated with $W$.
\end{lemma}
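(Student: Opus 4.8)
The plan is to prove the two directions of the equivalence separately, then deduce the final ``furthermore'' clause.

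\textbf{The ``only if'' direction.} Suppose $(V,b)$ is a paired module, so $(V,b) \cong (M \oplus M^*, a)$ for some $K[G]$-module $M$, with $a(v+f, v'+f') = f(v') + f'(v)$. First I would observe that both $W := M \oplus 0$ and $W' := 0 \oplus M^*$ are $K[G]$-submodules (this is immediate from the definition of the module structure on $M \oplus M^*$), that $V = W \oplus W'$ as a vector space, and that each of $W$, $W'$ is totally singular with respect to $a$: indeed $a(v,v') = 0$ for $v,v' \in M$ since there is no functional-part, and similarly $a(f,f') = 0$ for $f,f' \in M^*$. Transporting this decomposition back along the isomorphism $(V,b) \cong (M \oplus M^*, a)$ gives the required totally singular decomposition of $V$ into $K[G]$-submodules.

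\textbf{The ``if'' direction.} Conversely, suppose $V = W \oplus W'$ with $W, W'$ totally singular $K[G]$-submodules. I want to produce an isomorphism of bilinear $K[G]$-modules $(V,b) \cong (W \oplus W^*, a)$. The key step is to use non-degeneracy of $b$ together with total singularity of $W$ to show that $b$ sets up a perfect pairing between $W$ and $W'$. Concretely, define $\psi : W' \to W^*$ by $\psi(w')(w) = b(w, w')$ for $w \in W$, $w' \in W'$. I would check: (a) $\psi$ is $K[G]$-linear, using $G$-invariance of $b$ and the fact that $W$ is $G$-stable; (b) $\psi$ is injective, because if $b(w,w') = 0$ for all $w \in W$ then, since also $b(w'', w') = 0$ for all $w'' \in W'$ by total singularity of $W'$, we get $w' \in \operatorname{rad} b = 0$; (c) $\psi$ is surjective by a dimension count, since $\dim W' = \dim V - \dim W = \dim W^*$ (here one uses that $W$ totally singular in a non-degenerate space forces $\dim W \le \dim V / 2$, but actually only the equality $\dim W + \dim W' = \dim V$ is needed). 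Then the map $\varphi : V \to W \oplus W^*$ sending $w + w' \mapsto w + \psi(w')$ is a $K[G]$-module isomorphism, and it remains to verify it is an isometry: for $v = w_1 + w_1'$ and $v' = w_2 + w_2'$ in $V$, expand $b(v,v') = b(w_1,w_2) + b(w_1,w_2') + b(w_1',w_2) + b(w_1',w_2')$; the first and last terms vanish by total singularity, and the middle two equal $\psi(w_2')(w_1) + \psi(w_1')(w_2)$, which is exactly $a(w_1 + \psi(w_1'),\, w_2 + \psi(w_2'))$. This gives $(V,b) \cong (W \oplus W^*, a)$, i.e. $(V,b)$ is the paired module associated with $W$.

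\textbf{The ``furthermore'' clause.} This falls out of the ``if'' direction: the isomorphism $\varphi$ constructed above exhibits $(V,b)$ as the paired module associated with $W$. (If one starts instead from the hypothesis that $(V,b)$ is a paired module, one recovers a totally singular decomposition as in the ``only if'' direction, and then the ``if'' construction applied to the first summand $W$ of that decomposition returns the original paired module structure, so the statement is consistent.)

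I do not expect any serious obstacle here; the only point requiring a little care is the perfect-pairing argument in the ``if'' direction — specifically making sure that non-degeneracy of $b$ on all of $V$ really does force $\psi$ to be injective, where the total singularity of \emph{both} $W$ and $W'$ is used (total singularity of $W'$ is what lets us conclude $b(w', \cdot)$ already vanishes on $W'$, so that vanishing on $W$ too puts $w'$ in the radical). Everything else is a routine unwinding of definitions and a dimension count.
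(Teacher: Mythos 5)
Your proof is correct and follows essentially the same route as the paper: the forward direction reads off the totally singular decomposition $M \oplus M^*$, and the converse uses the pairing $w' \mapsto b(\,\cdot\,,w')$ to identify $W'$ with $W^*$ and checks that $w+w' \mapsto w+\psi(w')$ is an isometry (the paper simply calls this "straightforward"). The only slightly garbled spot is the surjectivity dimension count — what you actually need is $\dim W = \dim W' = \dim V/2$, which follows since each of $W, W'$ is totally singular (hence of dimension at most $\dim V/2$) and they sum to $V$ — but this is easily repaired and is not a genuine gap.
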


\begin{proof}
If $(V, b) = (M \oplus M^*, a)$ is a paired module as in Definition \ref{def:pairedmodule}, then $V = M \oplus M^*$ is a totally singular decomposition with respect to $b$. Conversely, suppose that $V$ admits a totally singular decomposition $V = W \oplus W'$ into $K[G]$-submodules $W$ and $W'$. For $w' \in W'$, define $\varphi_{w'} \in W^*$ by $\varphi_{w'}(w) = b(w',w)$ for all $w \in W$. Then it is straightforward to see that the map $w+w' \mapsto w+\varphi_{w'}$ is an isomorphism $(V,b) \rightarrow (W \oplus W^*, a)$ of bilinear $K[G]$-modules, where $(W \oplus W^*, a)$ is the paired module associated with $W$.\end{proof}

The following two lemmas are easy consequences of Lemma \ref{lemma:basicpairedproperty}.

\begin{lemma}\label{lemma:tensorpaired}
Let $(V, b)$ be a paired $K[G]$-module. Then for any bilinear $K[G]$-module $(W, b')$, the tensor product $(V, b) \otimes (W, b')$ is a paired $K[G]$-module.
\end{lemma}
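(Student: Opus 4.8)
The plan is to invoke Lemma \ref{lemma:basicpairedproperty} twice: once to unpack the hypothesis, and once to repackage the conclusion. Since $(V,b)$ is a paired $K[G]$-module, Lemma \ref{lemma:basicpairedproperty} supplies a totally singular decomposition $V = X \oplus X'$ into $K[G]$-submodules $X$ and $X'$. Tensoring with $W$ gives a direct sum decomposition of $K[G]$-modules
\[
(V \otimes W, b \otimes b') = (X \otimes W) \oplus (X' \otimes W),
\]
so by Lemma \ref{lemma:basicpairedproperty} it will be enough to check that $X \otimes W$ and $X' \otimes W$ are totally singular with respect to $b \otimes b'$, provided we also know that $(V \otimes W, b \otimes b')$ is non-degenerate and alternating.

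The first point is the substantive (but short) step. The subspace $X \otimes W$ is spanned by simple tensors $x \otimes w$ with $x \in X$ and $w \in W$, and since $b \otimes b'$ is bilinear it suffices to evaluate it on such spanning vectors: for $x_1, x_2 \in X$ and $w_1, w_2 \in W$ we have $(b \otimes b')(x_1 \otimes w_1, x_2 \otimes w_2) = b(x_1,x_2)\,b'(w_1,w_2) = 0$ because $X$ is totally singular with respect to $b$. The same computation applies verbatim to $X'$, so both summands are totally singular $K[G]$-submodules of $V \otimes W$.

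For the second point, that $(V \otimes W, b \otimes b')$ is non-degenerate and alternating is a routine direct verification: non-degeneracy is preserved under the tensor product of bilinear modules, and for the alternating property one expands $(b \otimes b')(\xi, \xi)$ on a finite sum $\xi = \sum_i v_i \otimes w_i$ and uses that $b$ is alternating (so the diagonal terms vanish) together with the fact that, in characteristic two, $b$ is also symmetric (so the off-diagonal terms cancel in pairs). Granting this, Lemma \ref{lemma:basicpairedproperty} applies to $(V \otimes W, b \otimes b')$ with the totally singular decomposition above, showing it is a paired $K[G]$-module — indeed the paired module associated with $X \otimes W$.

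I do not expect any real obstacle here; the only things to be careful about are that "totally singular" genuinely follows from evaluating on simple tensors (this is just bilinearity), and that the tensor form inherits the non-degenerate alternating properties needed to apply Lemma \ref{lemma:basicpairedproperty} in the reverse direction.
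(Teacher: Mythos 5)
Your proof is the paper's intended argument: the paper gives no proof of this lemma beyond declaring it an easy consequence of Lemma \ref{lemma:basicpairedproperty}, and your two-step application of that lemma, with the computation on simple tensors showing $X \otimes W$ and $X' \otimes W$ are totally singular, is exactly that argument. One small imprecision in your final paragraph: the cancellation of the off-diagonal terms in $(b \otimes b')(\xi,\xi) = \sum_{i,j} b(v_i,v_j)\,b'(w_i,w_j)$ requires $b'$ (not just $b$) to be symmetric, since the $(i,j)$ and $(j,i)$ terms sum to $b(v_i,v_j)\bigl(b'(w_i,w_j)+b'(w_j,w_i)\bigr)$; likewise non-degeneracy of $b \otimes b'$ requires $b'$ non-degenerate. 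Neither property is literally in the lemma's hypotheses as stated, but both hold in every application in the paper, where $(W,b')$ is always a non-degenerate alternating (hence, in characteristic two, symmetric) bilinear module --- so your argument is fine once that implicit standing assumption is made explicit.
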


\begin{lemma}\label{lemma:inducedpaired}
Let $H < G$ and let $(W, b)$ be a paired $K[H]$-module. Then $\Ind_H^G(W, b)$ is a paired $K[G]$-module. 
\end{lemma}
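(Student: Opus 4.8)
The plan is to reduce the statement to the structural criterion for paired modules established in Lemma \ref{lemma:basicpairedproperty}: a non-degenerate alternating bilinear $K[G]$-module is paired precisely when it admits a totally singular decomposition into two $K[G]$-submodules. Since $(W, b)$ is a paired $K[H]$-module, Lemma \ref{lemma:basicpairedproperty} supplies $K[H]$-submodules $L, L'$ of $W$ with $W = L \oplus L'$, both totally singular with respect to $b$. Writing $\Ind_H^G(W, b) = (\Ind_H^G(W), a)$ with $a$ as in the definition of induced bilinear module, and using that induction commutes with direct sums (and is exact, so $\Ind_H^G(L), \Ind_H^G(L')$ embed as $K[G]$-submodules), we obtain a decomposition $\Ind_H^G(W) = M \oplus M'$ into $K[G]$-submodules, where $M := \Ind_H^G(L)$ and $M' := \Ind_H^G(L')$.

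Next I would check that $M$ is totally singular with respect to $a$, the argument for $M'$ being identical. Taking spanning elements $g_1 \otimes \ell_1, g_2 \otimes \ell_2 \in M$ with $\ell_1, \ell_2 \in L$: if $g_1 H \neq g_2 H$ then $a(g_1 \otimes \ell_1, g_2 \otimes \ell_2) = 0$ by definition of $a$; if $g_1 H = g_2 H$ then $g_2^{-1} g_1 \in H$, so $g_2^{-1} g_1 \ell_1 \in L$ since $L$ is a $K[H]$-submodule, and hence $a(g_1 \otimes \ell_1, g_2 \otimes \ell_2) = b(g_2^{-1} g_1 \ell_1, \ell_2) = 0$ by total singularity of $L$. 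Bilinearity then gives that $M$ is totally singular. To close the loop I would observe that $\Ind_H^G(W, b)$ is non-degenerate and alternating: fixing a transversal for $H$ in $G$ gives $\Ind_H^G(W) = \bigoplus_g\, g \otimes W$, the definition of $a$ makes distinct summands mutually orthogonal, and $a$ restricted to $g \otimes W$ is isometric to $(W, b)$, which is non-degenerate and alternating because it is a paired module; these properties therefore pass to the orthogonal direct sum. With the totally singular decomposition $\Ind_H^G(W) = M \oplus M'$ in hand, Lemma \ref{lemma:basicpairedproperty} yields that $\Ind_H^G(W, b)$ is a paired $K[G]$-module, namely the one associated with $M$.

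The whole argument is bookkeeping, so there is no genuine obstacle; the only point requiring a moment's care is the coset case-distinction built into the induced form, together with the observation that $g_2^{-1} g_1$ preserves $L$ exactly when $g_1 H = g_2 H$, which is precisely the case in which $a$ is nonzero. (An alternative, essentially equivalent route would be to note that if $(W,b)$ is the paired module of $M$ then $\Ind_H^G$ of $M \oplus M^*$ splits as $\Ind_H^G(M) \oplus \Ind_H^G(M^*)$ and to identify $\Ind_H^G(M^*) \cong (\Ind_H^G M)^*$ using finiteness of $[G:H]$, but the totally singular decomposition argument is cleaner and avoids invoking duality of induced modules.)
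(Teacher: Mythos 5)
Your proof is correct and follows exactly the route the paper intends: the paper states this lemma (together with Lemma \ref{lemma:tensorpaired}) without proof as an ``easy consequence'' of Lemma \ref{lemma:basicpairedproperty}, and your argument --- inducing the totally singular decomposition $W = L \oplus L'$, checking total singularity of $\Ind_H^G(L)$ and $\Ind_H^G(L')$ via the coset case-distinction in the induced form, and verifying non-degeneracy and the alternating property by decomposing over a transversal --- is precisely the bookkeeping being left to the reader. No gaps.
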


We finish this section with a proof of the following lemma from \cite[Example 2.1]{MurraySymmetricVertices}.

\begin{lemma}\label{lemma:threeorthogonals}
Let $(W,b)$ be a bilinear $K[G]$-module. Then as bilinear $K[G]$-modules $$(W,b) \perp (W,b) \perp (W,b) \cong (W,b) \perp (W \oplus W^*, a),$$ where $(W \oplus W^*, a)$ is the paired module associated with $W$.
\end{lemma}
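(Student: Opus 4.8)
The plan is to exhibit an explicit isometry between the two bilinear $K[G]$-modules. The key observation is that $(W \oplus W^*, a)$ is a paired module, so by Lemma \ref{lemma:basicpairedproperty} it suffices to produce, inside $(W,b) \perp (W,b) \perp (W,b)$, a $K[G]$-submodule $U$ together with an orthogonal complement $U^\perp$ isometric to $(W,b)$, and to show that $U$ itself admits a totally singular decomposition $U = A \oplus B$ into $K[G]$-submodules with $A \cong W$. Indeed, once we have such a decomposition, $(U, b) \cong (W \oplus W^*, a)$ is the paired module associated with $A \cong W$, and $(W,b) \perp (W,b) \perp (W,b) \cong (U^\perp, b) \perp (U, b) \cong (W, b) \perp (W \oplus W^*, a)$, as required.

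Concretely, I would work with $V := W \oplus W \oplus W$ and the form $b \perp b \perp b$, writing elements as triples $(w_1, w_2, w_3)$. The natural candidate for the ``diagonal'' copy of $(W,b)$ that splits off is $\Delta := \{(w,w,w) : w \in W\}$ — but one must check what its radical and perp look like. A cleaner route: set $A := \{(w, w, 0) : w \in W\}$ and $B := \{(w, 0, w) : w \in W\}$. Both are $K[G]$-submodules isomorphic to $W$, and $A$ is totally singular because $(b \perp b \perp b)((w,w,0),(w',w',0)) = b(w,w') + b(w',w) = 2b(w,w') = 0$ in characteristic two (using that $b$ is alternating hence symmetric); similarly $B$ is totally singular, and $A \cap B = 0$. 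Then $U := A \oplus B = A + B$ and one computes $U^\perp$ with respect to $b \perp b \perp b$; a short calculation should identify $U^\perp$ with a diagonally embedded copy of $W$ on which the induced form is isometric to $b$ (up to the usual bookkeeping of which linear combination survives). Since $A \oplus B$ is a totally singular decomposition of $U$ into $K[G]$-submodules, Lemma \ref{lemma:basicpairedproperty} gives $(U, b \perp b \perp b) \cong (W \oplus W^*, a)$, the paired module associated with $A \cong W$.

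The one point requiring care is verifying that the restriction of $b \perp b \perp b$ to $U^\perp$ is \emph{non-degenerate} and isometric to $(W,b)$, and that $V = U^\perp \perp U$ as bilinear $K[G]$-modules (so that the orthogonal complement really splits off). The ambient form is non-degenerate, so $\dim U^\perp = \dim V - \dim U = 3\dim W - 2\dim W = \dim W$; the genuine content is checking $U \cap U^\perp = 0$, equivalently that $b \perp b \perp b$ restricted to $U$ has trivial radical, which follows from the pairing between $A$ and $B$ being perfect — for $(w,w,0) \in A$ and $(w',0,w') \in B$ we get $(b\perp b\perp b)((w,w,0),(w',0,w')) = b(w,w')$, nondegeneracy of $b$ doing the rest. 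Once $U \cap U^\perp = 0$ and dimensions match, $V = U \perp U^\perp$ and a direct check (or a change of coordinates, e.g. replacing the third coordinate by the difference of the three) pins down the induced form on $U^\perp$ as $b$ itself.

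I expect the main obstacle to be purely organizational: choosing coordinates for the three copies of $W$ so that the three submodules $A$, $B$, $U^\perp$ are all simultaneously transparent, and confirming $G$-invariance at each step (immediate, since all the maps involved are built from the identity on $W$ and are therefore $K[G]$-module maps). There is no deep difficulty — the characteristic-two hypothesis is what makes the ``diagonal-type'' submodules totally singular, and the rest is the elementary structure theory already packaged in Lemma \ref{lemma:basicpairedproperty} and Definition \ref{def:pairedmodule}. A one-line alternative worth mentioning: since $M^* \cong M$ is not assumed, one cannot argue via self-duality; the explicit totally singular decomposition is the cleanest argument.
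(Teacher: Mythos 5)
Your proposal is correct and is essentially the paper's own argument: the paper takes the diagonal $Z=\{(w,w,w)\}$, checks it is non-degenerate and isometric to $(W,b)$, and identifies $Z^\perp$ as the totally singular decomposition $\{(w,w,0)\}\oplus\{(w,0,w)\}$, then invokes Lemma \ref{lemma:basicpairedproperty} --- exactly your $A$, $B$, and $U^\perp$, merely approached from the other end. (One cosmetic slip: $(b\perp b\perp b)((w,w,0),(w',w',0))=b(w,w')+b(w,w')$ directly from the definition of the orthogonal sum, so no appeal to symmetry of $b$ is needed there.)
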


\begin{proof}Let $V = (W,b) \perp (W,b) \perp (W,b)$. It is straightforward to see that the diagonal subspace $Z = \{(w,w,w) : w \in W\}$ is non-degenerate, and that $Z \cong (W,b)$ as bilinear $K[G]$-modules. The orthogonal complement of $Z$ in $V$ is $Z^\perp = Z_1 \oplus Z_2$, where $Z_1 = \{(w,w,0) : w \in W \}$ and $Z_2 = \{(w,0,w) : w \in W \}$. Both $Z_1$ and $Z_2$ are totally singular and $Z_1 \cong W \cong Z_2$ and $K[G]$-modules. Thus by Lemma \ref{lemma:basicpairedproperty}, as a bilinear $K[G]$-module $Z^\perp$ is the paired module associated with $W$. Since $V = Z \perp Z^\perp$, the lemma follows.\end{proof}

\section{Unipotent classes in $\Sp(V)$}\label{section:unipclassesinfo}

Throughout this section, we denote by $u$ a generator of a cyclic $2$-group of order $q > 1$, and denote by $X$ the element $u-1$ of $K[u]$. Recall (Section \ref{section:notation}) that we denote the indecomposable $K[u]$-modules by $V_1$, $\ldots$, $V_q$, where $\dim V_i = i$ and $u$ acts on $V_i$ as a full $i \times i$ Jordan block.

For the symplectic groups $\Sp(V,b)$, the conjugacy classes of unipotent elements of order at most $q$ correspond to the isomorphism classes of non-degenerate alternating bilinear $K[u]$-modules. This is the basic approach taken in \cite{Hesselink}, where Hesselink classifies the unipotent conjugacy classes of $\Sp(V,b)$ in terms of orthogonally indecomposable bilinear $K[u]$-modules. We give an explicit construction of these modules in the following definitions.



\begin{maar}\label{def:wnmod}Let $d > 0$. We define $W(d)$ to be the paired module (Definition \ref{def:pairedmodule}) $(V_d \oplus V_d^*, a)$ associated with $V_d$.\end{maar}


\begin{maar}\label{def:vnmod}Let $d > 0$ be an even integer, say $d = 2k$. Fix a basis $e_1$, $\ldots$, $e_{d}$ of the $K[u]$-module $V_d$ such that \begin{align*}
ue_1 &= e_1, \\
ue_i &= e_i + e_{i-1} + \cdots + e_1 \text{ for all } 2 \leq i \leq k+1, \\
ue_i &= e_i + e_{i-1} \text{ for all } k+1 < i \leq d.
\end{align*} We define $V(d)$ to be the bilinear $K[u]$-module $(V_{d}, b)$ where $b(e_i, e_j) = 1$ if $i+j = d+1$ and $0$ otherwise.\end{maar}

Here $W(d)$ is orthogonally indecomposable by \cite[Section 2.3]{PforteMurray}, while $V(d)$ in Definition \ref{def:vnmod} is orthogonally indecomposable since it is indecomposable as a $K[u]$-module.

We note that Definition \ref{def:vnmod} is the same as \cite[Section 6.1, p. 91]{LiebeckSeitzClass}, and describes the action of a regular unipotent element of $\Sp(V, b)$ on the basis $(e_i)$ of $V$. More specifically, Definition \ref{def:vnmod} describes the action of the product $x_{\alpha_1}(1) \cdots x_{\alpha_k}(1)$ of fundamental root elements of $\Sp(V, b)$.


To describe the conjugacy classes in $\Sp(V,b)$, we will first need the following result from \cite{Hesselink}.

\begin{lause}[{\cite[Proposition 3.5]{Hesselink}}]\label{thm:orthogonallyindecomposables}
Up to isomorphism, the orthogonally indecomposable non-degenerate alternating bilinear $K[u]$-modules are $V(d)$ ($d$ even) and $W(d)$.
\end{lause}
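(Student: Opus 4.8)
The plan is to prove Theorem~\ref{thm:orthogonallyindecomposables} in two stages: first establish that every orthogonally indecomposable non-degenerate alternating bilinear $K[u]$-module is isomorphic to one of the modules $V(d)$ ($d$ even) or $W(d)$, and second verify that these are pairwise non-isomorphic (which is immediate since $V(d)$ and $W(d)$ are distinguished by $K[u]$-module structure: $W(d) \cong V_d \oplus V_d$ while $V(d) \cong V_d$, and within each family the dimension determines $d$). The content is entirely in the classification, so I focus there. Let $(V,b)$ be orthogonally indecomposable non-degenerate alternating. First I would reduce to the case where $V$ is \emph{homogeneous} as a $K[u]$-module, i.e. $V \cong V_d^m$ for a single $d$: using the $X$-adic filtration, one shows that the sum of all indecomposable summands of dimension $< d_{\max}$ together with $\operatorname{Im} X$ gives, via orthogonality, a splitting of $(V,b)$ unless $V$ is homogeneous. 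More precisely, if $d = d_{\max}$ is the largest block size, then $U := \operatorname{Ker} X^{d-1} \cap \operatorname{Im} X$ is a submodule, and one checks $U^\perp$ is a complement making $(V,b) = (U_0, b) \perp (U^\perp_0, b)$ nontrivially unless all blocks have size $d$; this is the standard argument (cf. the $p$-adic reduction in Hesselink or Wall). So assume $V \cong V_d^m$.

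Next, in the homogeneous case the structure is controlled by the invariant $\varepsilon_{V,b}(d)$ from the introduction together with a nondegenerate form on $\operatorname{Ker} X / \operatorname{Im} X^{d-1}$-type data. The key computation: for $v,w$ generators of top blocks, $b(X^{d-1}v, w)$ and $b(X^iv, X^jw)$ for $i+j = d-1$ all coincide up to sign (using $G$-invariance: $b(Xa,b) = b(a,Xb) + b(Xa,Xb)$, and iterating), so the form is determined on the "top layer" by the single pairing $\beta(v,w) := b(X^{d-1}v,w)$, which is a bilinear form on the $m$-dimensional space of block-generators modulo lower terms. When $d$ is odd, $\beta$ is alternating (Lemma~\ref{lemma:basicepsilonoddeven}(i), forcing $\varepsilon_{V,b}(d)=0$), hence even rank, and non-degeneracy of $b$ forces $\beta$ non-degenerate, so $m$ is even and one can pick a symplectic basis for $\beta$; each hyperbolic pair spans a copy of $W(d)$, splitting $(V,b)$ — so orthogonal indecomposability forces $m$-even-but-indecomposable, i.e. exactly $W(d)$ (when $\varepsilon = 0$) — and similarly when $d$ is even but $\varepsilon_{V,b}(d) = 0$, $\beta$ is alternating and the same argument gives $W(d)$. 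When $d$ is even and $\varepsilon_{V,b}(d) = 1$, $\beta$ has a nonzero "diagonal" value $b(X^{d-1}v,v) \neq 0$; then the cyclic submodule $\langle v \rangle \cong V_d$ is non-degenerate, hence a $\perp$-summand, so indecomposability forces $m = 1$ and $(V,b)$ to be a non-degenerate form on a single $V_d$ with $b(X^{d-1}v,v)\neq 0$ — and one checks all such are isometric to $V(d)$ by normalizing the basis (the residual freedom in choosing $v$ and rescaling lets one match the explicit basis of Definition~\ref{def:vnmod}; this uniqueness is where a short but real calculation with the $G$-invariance relations is needed).

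I expect the main obstacle to be the homogeneous-case analysis when $d$ is even: separating the $\varepsilon = 1$ "indecomposable $V_d$" situation from the $\varepsilon = 0$ "splits into $W(d)$'s" situation cleanly, and in the $\varepsilon = 1$ case proving that the non-degenerate form on a single $V_d$ is \emph{unique up to isometry}, i.e. genuinely isomorphic to the specific $V(d)$ of Definition~\ref{def:vnmod} rather than merely "some" non-degenerate form. This amounts to showing the orbit of $\Sp$-isometries (equivalently, the stabilizer/normalizer action of $\operatorname{Aut}_{K[u]}(V_d) = $ units of $K[u]$) acts transitively on non-degenerate $u$-invariant alternating forms on $V_d$; concretely one writes an arbitrary such form $b(e_i,e_j) = c_{i+j}$ (it is determined by the anti-diagonal sums by invariance, as above) with $c_{d+1} \neq 0$, and shows a change of basis by a unit of $K[u]$ brings it to the standard form. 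I would cite or adapt \cite[Section 2.3]{PforteMurray} and \cite[Section 6.1]{LiebeckSeitzClass} to shortcut this, since the explicit $V(d)$ is lifted verbatim from the latter. The remaining steps — invariance relations, the splitting lemmas via $\perp$-complements, non-degeneracy bookkeeping — are routine given Lemma~\ref{lemma:basicpairedproperty} and Lemma~\ref{lemma:basicepsilonoddeven}.
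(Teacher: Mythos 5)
The first thing to say is that the paper does not prove this statement: it is imported verbatim from Hesselink's paper (his Proposition 3.5), and even the orthogonal indecomposability of $W(d)$ is outsourced to a citation. So you are reconstructing a proof the paper deliberately omits. Your overall strategy --- work at the largest Jordan block size $d$, study the induced pairing $\beta(v,w)=b(X^{d-1}v,w)$ on $V/\operatorname{Im}X$, and split into the case where $\beta$ is alternating (yielding $W(d)$'s) versus the case where some $b(X^{d-1}v,v)\neq 0$ (yielding $V(d)$'s) --- is the standard and correct skeleton, consistent with Lemmas \ref{lemma:basicXdlinear} and \ref{lemma:epsilonequivalence}.

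However, there are genuine gaps. First, the proposed reduction to the homogeneous case is broken as stated: for $(V,b)=W(1)\perp W(2)$ one has $d=2$ and $U:=\operatorname{Ker}X^{d-1}\cap\operatorname{Im}X=\operatorname{Im}X$, a $2$-dimensional totally singular subspace, so $U\subseteq U^{\perp}$ and $V$ is certainly not $U\perp U^{\perp}$ (the symbols $U_0$, $U_0^{\perp}$ are never defined). The correct move is not a global splitting but peeling off, one summand at a time, the non-degenerate submodule $K[u]v$ (when $b(X^{d-1}v,v)\neq 0$) or $K[u]v+K[u]w$ (when $\beta$ is alternating and $\beta(v,w)\neq 0$), whose non-degeneracy follows from the anti-triangular Gram matrix of Lemma \ref{lemma:basicXdlinear}(i)--(ii) as in the proof of Lemma \ref{lemma:epsilonequivalence}. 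Second, and more seriously, the two normalization steps you wave at are the actual content of Hesselink's proposition: a hyperbolic pair for $\beta$ gives you only a non-degenerate summand isomorphic to $V_d^2$ with $\varepsilon(d)=0$, and identifying it with $W(d)$ requires producing a totally singular module decomposition (Lemma \ref{lemma:basicpairedproperty}) by correcting $v$ and $w$ modulo $\operatorname{Im}X$; likewise the uniqueness up to isometry of the non-degenerate form on a single $V_d$ must actually be carried out (note that $u$-invariance gives $b(Xa,c)=b(a,Xc)+b(Xa,Xc)$, so the form is not literally determined by anti-diagonal constants $c_{i+j}$ as you assert --- there are lower-order corrections). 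Finally, your "second stage" checks only that $V(d)\not\cong W(d')$, not that $W(d)$ is orthogonally indecomposable, which is not automatic since $W(d)$ is decomposable as a $K[u]$-module. Right architecture, but the steps that would make it a proof are exactly the ones left unproved.
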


As a consequence of Theorem \ref{thm:orthogonallyindecomposables}, each non-degenerate alternating bilinear $K[u]$-module has a certain normal form which is described in the next theorem. We will call this the \emph{Hesselink normal form}.

\begin{lause}\label{thm:hesselinkform}
Let $(V, b)$ a non-degenerate alternating bilinear $K[u]$-module. Let $0 < d_1 < \cdots < d_t$ be the Jordan block sizes of $u$ on $V$, and for $1 \leq i \leq t$ let $n_i > 0$ be the multiplicity of $d_i$ in $V$. 

There exists a unique sequence $W_1$, $\ldots$, $W_t$ of non-degenerate alternating bilinear $K[u]$-modules such that $V \cong W_1 \perp \cdots \perp W_t$ and the following hold for all $1 \leq i \leq t$:
	\begin{enumerate}[\normalfont (i)]
		\item If $d_i$ is odd, then $W_i = W(d_i)^{n_i/2}$.
		\item If $d_i$ is even, then $W_i = W(d_i)^{n_i/2}$ or $W_i = V(d_i)^{n_i}$.
	\end{enumerate}
\end{lause}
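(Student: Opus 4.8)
\textbf{Proof proposal for Theorem \ref{thm:hesselinkform}.}

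The plan is to deduce this from the existence of an orthogonal decomposition into orthogonally indecomposables (which is automatic) together with Theorem \ref{thm:orthogonallyindecomposables}, which tells us the only possible orthogonally indecomposable summands are $V(d)$ ($d$ even) and $W(d)$ (any $d > 0$). First I would fix such a decomposition $V \cong \bigperp_j U_j$ with each $U_j$ orthogonally indecomposable, hence each $U_j \in \{ V(d), W(d) \}$ for various $d$. Grouping the $U_j$ by the underlying Jordan block size, and using that as $K[u]$-modules $V(d) \cong V_d$ while $W(d) \cong V_d \oplus V_d$, one reads off immediately that for each block size $d_i$ the summands contributing Jordan blocks of size $d_i$ must together account for multiplicity $n_i$. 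So for each $i$ the part $W_i$ built from block size $d_i$ is of the form $V(d_i)^{a_i} \perp W(d_i)^{b_i}$ with $a_i + 2b_i = n_i$ (and $a_i = 0$ when $d_i$ is odd, since $V(d_i)$ is only defined for $d_i$ even). This already gives a decomposition $V \cong W_1 \perp \cdots \perp W_t$ with $W_i$ supported on block size $d_i$; the content of the theorem is that one may always reduce to $a_i \in \{0, n_i\}$ (equivalently $a_i \le 1$ forces $a_i = 0$ unless $n_i = a_i$), and that the resulting $W_i$ is unique.

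The key reduction step is the identity $V(d)^{?} \perp \cdots$ — more precisely I expect to need a lemma of the shape $V(d) \perp V(d) \cong W(d)$ as bilinear $K[u]$-modules when $d$ is even. Granting this, any summand $V(d_i)^{a_i} \perp W(d_i)^{b_i}$ with $a_i \ge 2$ can have two copies of $V(d_i)$ traded for one copy of $W(d_i)$, so we may assume $a_i \le 1$; and if $a_i = 1$ but $n_i \ge 2$, writing $n_i = a_i + 2 b_i$ is impossible with $a_i=1$ and $n_i$ even, while if $n_i$ is odd we are forced to $a_i = 1$, $b_i = (n_i-1)/2$, giving $W_i = V(d_i)^{n_i}$ only when... — here I must be careful: the statement as written has $W_i = V(d_i)^{n_i}$, not $V(d_i) \perp W(d_i)^{(n_i-1)/2}$, so the trading lemma must be used in the \emph{reverse} direction as well, i.e. $W(d) \perp V(d) \cong V(d) \perp V(d) \perp V(d) \cong V(d)^3$, so that any mixed configuration on a fixed block size collapses either entirely to $W(d_i)$'s (when $n_i$ even, forcing $a_i = 0$) or entirely to $V(d_i)$'s (when $n_i$ odd, forcing $a_i = n_i$). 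Cases (i) and (ii) then fall out: for $d_i$ odd only $W(d_i)$ is available so $n_i$ is automatically even and $W_i = W(d_i)^{n_i/2}$; for $d_i$ even both $n_i/2$ copies of $W(d_i)$ (if $n_i$ even) or $n_i$ copies of $V(d_i)$ (if $n_i$ odd) occur, and in fact for $n_i$ even one could a priori also have $V(d_i)^{n_i}$, which accounts for the ``or'' in (ii). I would verify the trading lemma $V(d) \perp V(d) \cong W(d)$ by an explicit change of basis on $V_d \oplus V_d$ exhibiting a totally singular $K[u]$-submodule complemented by another (so Lemma \ref{lemma:basicpairedproperty} identifies it as the paired module $W(d)$); the analogue $V(d)^3 \cong V(d) \perp W(d)$ is then Lemma \ref{lemma:threeorthogonals} applied with $(W,b) = V(d)$.

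For uniqueness: suppose $V \cong W_1 \perp \cdots \perp W_t \cong W_1' \perp \cdots \perp W_t'$ with each $W_i, W_i'$ of the allowed form on block size $d_i$. Since $V(d)$ and $W(d)$ differ in the value of $\varepsilon_{V(d), b}(d)$ — namely $\varepsilon = 1$ for $V(d)$ and $\varepsilon = 0$ for $W(d)$ by a direct check against Definition \ref{def:vnmod} and Definition \ref{def:wnmod} — the invariant $\varepsilon_{V,b}(d_i)$ computed from the whole module distinguishes the two allowed shapes of $W_i$ whenever $n_i \ge 1$: if any copy of $V(d_i)$ appears then $\varepsilon_{V,b}(d_i) = 1$, and if only copies of $W(d_i)$ appear then $\varepsilon_{V,b}(d_i) = 0$. (Here one uses that the contributions of summands on block sizes $d_j \ne d_i$ to $b((u-1)^{d_i-1} v, v)$ for $v \in \operatorname{Ker}(u-1)^{d_i}$ vanish, which follows from the block structure and orthogonality.) Hence $W_i$ is determined by $(d_i, n_i, \varepsilon_{V,b}(d_i))$, proving uniqueness. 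The main obstacle I anticipate is getting the trading identities exactly right — in particular keeping straight whether $V(d) \perp V(d)$ is isometric to $W(d)$ and correctly handling the parity bookkeeping so that the odd-$n_i$ case lands on $V(d_i)^{n_i}$ rather than a genuinely mixed module; this is where a careful explicit basis computation is unavoidable, and where the ``no Krull--Schmidt'' phenomenon (Lemma \ref{lemma:threeorthogonals}) is doing real work.
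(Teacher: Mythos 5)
There is a genuine gap in the existence part: the ``trading lemma'' $V(d) \perp V(d) \cong W(d)$ that your reduction hinges on is false. The two sides are distinguished by exactly the invariant you invoke for uniqueness: by Lemma \ref{lemma:epsilonequivalence} one has $\varepsilon_{V(d)^2,\,b}(d) = 1$ (take $v$ in one copy of $V(d)$ with $X^{d-1}v \neq 0$; non-degeneracy of that summand forces $b(X^{d-1}v,v) \neq 0$), whereas $W(d)$ is a paired module, so $\varepsilon_{W(d),\,a}(d) = 0$ by Lemma \ref{lemma:pairedjordan}. Since $\varepsilon$ is an isometry invariant, $V(d)^2 \not\cong W(d)$, and your proposal is internally inconsistent: the uniqueness half contradicts the existence half. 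The downstream classification you derive is correspondingly wrong --- it is not true that $n_i$ even forces $W_i = W(d_i)^{n_i/2}$; for instance $V(2)^2$ (a regular unipotent element of $\Sp_4$ restricted suitably, or just two copies of $V(2)$) has $n_1 = 2$ even and its normal form is $V(2)^2$, not $W(2)$. This is precisely why case (ii) of the theorem genuinely contains an ``or'' for even multiplicity.

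The correct reduction, which is the one the paper uses, is one-directional: from Lemma \ref{lemma:threeorthogonals} one gets $W(2d) \perp V(2d) \cong V(2d)^3$, hence $V(2d)^a \perp W(2d)^b \cong V(2d)^{a+2b}$ whenever $a > 0$, while for $a = 0$ nothing can be traded. So on a fixed even block size the configuration collapses to $V(d_i)^{n_i}$ if at least one $V(d_i)$ summand occurs, and is already $W(d_i)^{n_i/2}$ otherwise; there is no move converting $V(d_i)^2$ into $W(d_i)$. Your uniqueness argument via $\varepsilon_{V,b}(d_i)$ is essentially correct and is the paper's argument (the vanishing of cross-contributions from summands of other block sizes is Lemma \ref{lemma:basicXdlinear}~(iv) together with the equivalence of (i) and (iii) in Lemma \ref{lemma:epsilonequivalence}); once you repair the existence step by replacing the false two-for-one trade with the three-for-one trade of Lemma \ref{lemma:threeorthogonals}, the proof goes through.
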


The normal form in Theorem \ref{thm:hesselinkform} is the same as that described by Hesselink in \cite[3.7]{Hesselink}. One can see this using \cite[3.7 -- 3.9]{Hesselink}, but we will give a proof later in this section to keep this paper more self-contained.

Note that the conjugacy class of a unipotent element $u \in \Sp(V,b)$ is determined by the Hesselink normal form of $u$ on $(V,b)$. 

\begin{remark}There is also a \emph{distinguished normal form} defined in \cite[p. 61]{LiebeckSeitzClass}, which is different from the Hesselink normal form and useful for describing centralizers of unipotent elements in $\Sp(V,b)$. Translating between these two normal forms is straightforward, using the fact that $V(2d)^3 \cong W(2d) \perp V(2d)$ as bilinear $K[u]$-modules (Lemma \ref{lemma:threeorthogonals}). In this paper, we will only use the Hesselink normal form.\end{remark}


Following \cite[2.6, p. 20]{Spaltenstein}, we make the following definition.

\begin{maar}
Let $(V, b)$ be a bilinear $K[u]$-module. We define $\varepsilon_{V, b}: \Z_{\geq 1} \rightarrow \{0,1\}$ by $\varepsilon_{V, b}(d) = 0$ if $b(X^{d-1}v,v) = 0$ for all $v \in V$ such that $X^dv = 0$, and $\varepsilon_{V, b}(d) = 1$ otherwise.
\end{maar}

It turns out that the Hesselink normal form of $u \in \Sp(V,b)$ (and hence its conjugacy class in $\Sp(V,b)$) is determined by the Jordan normal form of $u$ and the values of $\varepsilon_{V, b}$ on the Jordan block sizes of $u$. This is a well known result which is stated in \cite[2.6, p. 20]{Spaltenstein}.
	

\begin{lause}\label{thm:hesselinkepsilon}
Suppose that $u \in \Sp(V, b)$, and set $\varepsilon := \varepsilon_{V, b}$. Let $0 < d_1 < \cdots < d_t$ be the Jordan block sizes of $u$ on $V$, with block size $d_i$ having multiplicity $n_i > 0$. Let $(V,b) \cong W_1 \perp \cdots \perp W_t$ be the Hesselink normal form of $u$ on $(V, b)$ as in Theorem \ref{thm:hesselinkform}. Then for all $1 \leq i \leq t$, we have $W_i \cong W(d_i)^{n_i/2}$ if $\varepsilon_{V,b}(d_i) = 0$ and $W_i \cong V(d_i)^{n_i}$ if $\varepsilon_{V,b}(d_i) = 1$.

In particular, the Hesselink normal form of $u$ on $(V,b)$ is uniquely determined by the tuple $({d_1}_{\varepsilon(d_1)}^{n_1}, \ldots, {d_t}_{\varepsilon(d_t)}^{n_t})$.
\end{lause}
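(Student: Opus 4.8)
\textbf{Proof plan for Theorem \ref{thm:hesselinkepsilon}.}

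The plan is to work orthogonal-summand by orthogonal-summand, using the uniqueness supplied by Theorem \ref{thm:hesselinkform}. Fix the Hesselink decomposition $(V,b) \cong W_1 \perp \cdots \perp W_t$ with $W_i$ as in Theorem \ref{thm:hesselinkform}. Since $\varepsilon_{V,b}(d) = 1$ forces $d$ even (Lemma \ref{lemma:basicepsilonoddeven} (i), cited in the introduction), case (i) of Theorem \ref{thm:hesselinkform} already pins down $W_i = W(d_i)^{n_i/2}$ whenever $d_i$ is odd, consistently with $\varepsilon(d_i) = 0$ which we must verify also holds in that case. So the real content is: for each Jordan block size $d = d_i$, show $\varepsilon_{V,b}(d) = 0$ when $W_i = W(d)^{n_i/2}$ and $\varepsilon_{V,b}(d) = 1$ when $W_i = V(d)^{n_i}$. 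The key reduction is that $\varepsilon_{V,b}(d)$ depends only on the orthogonal summand $W_i$ carrying Jordan blocks of size exactly $d$: blocks of size $< d$ contribute nothing because $X^{d-1}v = 0$ on them, blocks of size $> d$ contribute nothing because if $X^d v = 0$ then (restricting to an indecomposable summand of dimension $e > d$) $v$ lies in $\operatorname{Im} X^{e-d} \subseteq \operatorname{Im} X^{e-d}$, and pairing an element of $\operatorname{Ker} X^d$ with $X^{d-1}$ of it lands in a form value that vanishes by invariance and the fact that $X$ is ``skew'' for $b$ — I would make this precise with the standard identity $b(X^k v, w) = b(v, X^k w)$ up to a unit (which holds since $X^{2^\alpha} = u^{2^\alpha}-1$ and $b$ is $u$-invariant, hence $X$-invariant in the graded sense), together with the observation that on $V_e$ one has $b(X^{d-1}v, v)$ expressible via $b$ evaluated on a pair one of whose entries has $X$-height forcing it into the radical of $b|_{V_e}$ when $e \neq d$. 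The cleanest route is simply: $\varepsilon$ is additive over orthogonal direct sums in the sense that $\varepsilon_{A \perp B, b_A \perp b_B}(d) = \max\{\varepsilon_{A,b_A}(d), \varepsilon_{B,b_B}(d)\}$, which is immediate from the definition since $\operatorname{Ker} X^d$ in $A \perp B$ is the direct sum of the kernels and the form is block-diagonal; and $\varepsilon_{W_i, b}(d') = 0$ for $d' \neq d_i$ since $W_i$ has only Jordan blocks of size $d_i$. Hence $\varepsilon_{V,b}(d_i) = \varepsilon_{W_i,b}(d_i)$, and the theorem reduces to two explicit computations.

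So the two remaining tasks are the direct evaluations $\varepsilon_{W(d),a}(d) = 0$ and $\varepsilon_{V(d),b}(d) = 1$. For $W(d) = (V_d \oplus V_d^*, a)$: an element killed by $X^d$ is any $v + f$ with $v \in V_d$, $f \in V_d^*$; we need $a(X^{d-1}(v+f), v+f) = 0$. Writing $X^{d-1}(v+f) = X^{d-1}v + X^{d-1}f$ and using $a(x+g, y+h) = g(y) + h(x)$, this is $((X^*)^{d-1}f)(v) + f(X^{d-1}v)$, and since the contragredient action gives $(X^*)^{d-1}f = \pm f \circ X^{d-1}$ as functionals (in characteristic two the sign is irrelevant), the two terms are equal and sum to zero. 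That settles $\varepsilon_{W(d),a}(d) = 0$; the same computation with $d$ replaced by any $d' \le d$ gives $0$ as well, confirming the odd-block case of Theorem \ref{thm:hesselinkform}. For $V(d) = (V_d, b)$ with $d = 2k$ and the basis $(e_i)$ of Definition \ref{def:vnmod}: here $\operatorname{Ker} X^d = V_d$, so take $v = e_d$; then $X^{d-1} e_d = e_1$ (from $X e_i = e_{i-1} + \cdots + e_1$ in the lower range but $X e_i = e_{i-1}$ in the upper range, one checks $X^{d-1} e_d = e_1$), and $b(e_1, e_d) = 1$ since $1 + d = d+1$. Hence $b(X^{d-1} v, v) = 1 \neq 0$, giving $\varepsilon_{V(d),b}(d) = 1$. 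The ``In particular'' clause is then immediate: Theorem \ref{thm:hesselinkform} says the Hesselink normal form is the unique orthogonal decomposition of the stated shape, and we have just shown each $W_i$ is determined by $\varepsilon(d_i)$, so the tuple $({d_1}_{\varepsilon(d_1)}^{n_1}, \ldots, {d_t}_{\varepsilon(d_t)}^{n_t})$ determines $(V,b)$ up to isometry, hence determines the $\Sp(V,b)$-conjugacy class of $u$.

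\textbf{Main obstacle.} The one place demanding care is the reduction step — justifying that $\varepsilon_{V,b}(d)$ is computed summand-wise, i.e. that Jordan blocks of size $\neq d$ genuinely cannot contribute a nonzero value of $b(X^{d-1}v, v)$ on vectors $v$ with $X^d v = 0$, even after mixing blocks of different sizes inside one orthogonally indecomposable piece. The additivity formula $\varepsilon_{A \perp B} = \max\{\varepsilon_A, \varepsilon_B\}$ handles the mixing of the $W_i$ across the orthogonal decomposition cleanly, but one still needs that within a single $W(d)$ or $V(d)$ the value at arguments $d' < d$ is $0$ (for $W(d)$) — which the computation above gives — and that there is no value at $d' > d$ to worry about since those $W_i$ with larger blocks are separate orthogonal summands. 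I expect the referee-level subtlety is purely in phrasing this reduction so it is manifestly valid; the two concrete computations are routine once the basis and the pairing formula are in hand.
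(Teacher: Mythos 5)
Your strategy is genuinely different from the paper's and is viable: you reduce $\varepsilon_{V,b}$ summand-wise via a $\max$-formula over orthogonal direct sums and then evaluate $\varepsilon$ on $W(d)$ and $V(d)$ directly, whereas the paper first proves Lemma \ref{lemma:epsilonequivalence} (that $b(X^{d-1}v,v)\neq 0$ on $\operatorname{Ker}X^d$ is equivalent to $V(d)$ occurring in \emph{every} decomposition into orthogonally indecomposables), after which Theorem \ref{thm:hesselinkepsilon} is a one-line consequence of the uniqueness clause of Theorem \ref{thm:hesselinkform}. Your two explicit evaluations are correct ($\varepsilon_{W(d)}(d)=0$ by the paired-module cancellation, essentially Lemma \ref{lemma:pairedjordan}; $\varepsilon_{V(d)}(d)=1$ since $X^{d-1}e_d=e_1$ and $b(e_1,e_d)=1$), and the $\max$-formula is fine: the cross terms vanish by orthogonality and the diagonal terms add, which is Lemma \ref{lemma:basicXdlinear}(iv).

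The gap is in the reduction itself. Besides $\varepsilon_{W(e)}(d)=0$ for $d\leq e$, you also need $\varepsilon_{V(e)}(d)=0$ for every even $d<e$: for instance, for $V=W(4)\perp V(8)$ the theorem asserts $\varepsilon_V(4)=0$, and by your $\max$-formula this requires $\varepsilon_{V(8)}(4)=0$. Your ``Main obstacle'' paragraph records the verification only for $W(d)$ (the parenthetical ``(for $W(d)$)''), and the remark that ``there is no value at $d'>d$ to worry about'' addresses the trivial case of arguments exceeding the block size, not the case of arguments smaller than the block size inside a $V(e)$. Your first paragraph does sketch the correct idea --- inside a single Jordan block of size $e$ one has $\operatorname{Ker}X^d=\operatorname{Im}X^{e-d}$, so $v=X^{e-d}w$ and $b(X^{d-1}v,v)=b(X^{e-1}w,X^{e-d}w)$ --- but the identity you propose to finish with, $b(X^kv,w)=b(v,X^kw)$, fails in characteristic two: $u$-invariance gives $b(Xv,w)=b(v,u^{-1}Xw)$, and since $b$ is symmetric this only yields an identity that is vacuous modulo $X^dv=0$. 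The tool that actually closes this case is Lemma \ref{lemma:basicXdlinear}(ii): $b(X^iv,X^jw)=0$ for $v,w\in\operatorname{Ker}X^e$ with $i+j\geq e$, applied with $i=e-1$, $j=e-d$, where $i+j=2e-d-1\geq e$ because $e>d$. With that citation inserted (or by invoking the implication (i)$\Rightarrow$(ii) of Lemma \ref{lemma:epsilonequivalence}, which shows a nonzero value at $d$ forces a Jordan block of size $d$), your proof is complete.
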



	%
	
Since our main results rely on Theorem \ref{thm:hesselinkepsilon}, we will give a proof in what follows. First we need a few lemmas which will also be useful later for the computation of $\varepsilon_{V, b}$ for various bilinear $K[u]$-modules $(V,b)$.
							
\begin{lemma}\label{lemma:basicXdlinear}Suppose that $u \in \SL(V)$, let $b$ be a $u$-invariant alternating bilinear form on $V$, not necessarily non-degenerate. Let $d > 0$ be an integer. Then:

\begin{enumerate}[\normalfont (i)]
\item For all $v, w \in \operatorname{Ker} X^d$ and $1 \leq i,j \leq d-1$ with $i+j = d$, we have $b(X^{i-1}v,X^jw) = b(X^iv,X^{j-1}w)$.
\item For all $v, w \in \operatorname{Ker} X^d$ and $1 \leq i,j \leq d$ with $i+j \geq d$, we have $b(X^iv,X^jw) = 0$.
\item For all $v,w \in \operatorname{Ker} X^d$, we have $b(X^{d-1}v,w) = b(v, X^{d-1}w)$.
\item The map $v \mapsto b(X^{d-1}v,v)$ is additive on $\operatorname{Ker} X^d$.
\end{enumerate}
\end{lemma}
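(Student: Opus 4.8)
The plan is to prove the four assertions in a chain, each building on the previous, using only the identity $X = u-1$, the fact that $u$ preserves $b$, and that $b$ is alternating (hence symmetric in characteristic two). The key algebraic input is that $u$-invariance of $b$ means $b(uv, uw) = b(v,w)$, equivalently $b(Xv, w) + b(v, Xw) + b(Xv, Xw) = 0$ for all $v,w$; I will expand this as $b(uv,uw) = b((1+X)v, (1+X)w)$ and collect terms.

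First I would establish (i). Fix $v, w \in \operatorname{Ker} X^d$ and $i,j \geq 1$ with $i+j = d$. The trick is to apply the $u$-invariance relation to the vectors $X^{i-1}v$ and $X^{j-1}w$: from $b(u X^{i-1}v, u X^{j-1}w) = b(X^{i-1}v, X^{j-1}w)$ we get $b(X^{i-1}v, X^j w) + b(X^i v, X^{j-1}w) + b(X^i v, X^j w) = 0$. Now $X^i v$ and $X^j w$ satisfy $X^{i+j}$ applied appropriately; since $i + j = d$ we have $X^i v \in \operatorname{Ker} X^{d-i} = \operatorname{Ker} X^{j}$, and the last term $b(X^i v, X^j w)$ is a special case of (ii) (or can be killed directly: $b(X^i v, X^j w) = b(u^{-?}\ldots)$, but it is cleaner to prove (ii) first, or to note $X^{i}v, X^{j}w$ both lie in images that pair trivially). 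To avoid circularity, I would actually prove (ii) first by a downward induction on $i+j$: for $i + j \geq 2d - 1$ it is immediate since one of $X^i v, X^j w$ is zero (as $i \geq d$ or $j \geq d$ forces vanishing when $i+j$ is large enough — more precisely if $i \geq d$ then $X^i v = 0$); then the $u$-invariance relation $b(X^i v, X^j w) = b(X^{i-1}v, X^{j-1}w) + b(X^i v, X^{j-1}w) + b(X^{i-1}v, X^j w)$ expresses the $(i+j)$ case in terms of strictly larger total-degree terms, all of which vanish by the inductive hypothesis once $i+j \geq d$ and we keep $i, j \geq 1$. Once (ii) holds, (i) follows immediately by discarding the term $b(X^i v, X^j w) = 0$.

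For (iii), take $i = d-1$, so we want $b(X^{d-1}v, w) = b(v, X^{d-1}w)$ for $v,w \in \operatorname{Ker} X^d$. Starting from the chain of equalities in (i) with $i + j = d$, applied repeatedly: $b(X^{d-1}v, w) = b(X^{d-2}v, Xw) = \cdots = b(Xv, X^{d-2}w) = b(v, X^{d-1}w)$. Each step is exactly (i). This requires $v, Xw, \ldots \in \operatorname{Ker} X^d$, which holds since $\operatorname{Ker} X^d$ is $u$-invariant (hence $X$-invariant). Finally, for (iv), I compute $b(X^{d-1}(v+w), v+w) = b(X^{d-1}v, v) + b(X^{d-1}v, w) + b(X^{d-1}w, v) + b(X^{d-1}w, w)$ using bilinearity, and I must show the two cross terms cancel: $b(X^{d-1}v, w) + b(X^{d-1}w, v) = 0$. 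By (iii), $b(X^{d-1}w, v) = b(w, X^{d-1}v)$, and by symmetry of $b$ (alternating in characteristic two), $b(w, X^{d-1}v) = b(X^{d-1}v, w)$; hence the sum is $2 b(X^{d-1}v, w) = 0$ in characteristic two. That completes the proof.

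I expect no serious obstacle here — these are elementary manipulations — but the one point requiring care is getting the induction in (ii) right: choosing the correct base case (total degree large enough that $X^i v = 0$ or $X^j w = 0$) and making sure the inductive step strictly increases the total degree $i+j$ while keeping both exponents at least $1$, so that the recursion from $u$-invariance terminates. I would phrase the induction downward on $2d - (i+j)$ or equivalently as induction on the pair $(i,j)$ ordered by decreasing $i+j$, being explicit that the hypothesis $i + j \geq d$ with $i, j \geq 1$ is preserved. Everything else is a direct consequence of the $u$-invariance relation $b((1+X)v, (1+X)w) = b(v,w)$ together with symmetry of $b$.
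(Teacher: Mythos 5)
Your overall architecture---prove (ii) first, deduce (i) by discarding the vanishing term, chain (i) to get (iii), and combine (iii) with symmetry of $b$ for (iv)---is sound, and your treatment of (iii) and (iv) is correct and coincides with the paper's (which, for the record, does not prove (i) and (ii) from scratch but cites Spaltenstein, Lemme II.6.10). The gap is in your proof of (ii). First, the recursion you write down is not what $u$-invariance gives: expanding $b(uX^{i-1}v,\, uX^{j-1}w) = b(X^{i-1}v,\, X^{j-1}w)$ and cancelling $b(X^{i-1}v, X^{j-1}w)$ from both sides yields $b(X^iv, X^jw) = b(X^{i-1}v, X^jw) + b(X^iv, X^{j-1}w)$; your version carries a spurious extra term $b(X^{i-1}v, X^{j-1}w)$. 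Second, and fatally for the induction as designed, every term on the right-hand side has total degree $i+j-1$ (or $i+j-2$ for the spurious one), i.e.\ \emph{strictly smaller} than $i+j$, not strictly larger as you assert. A downward induction anchored at the base case $i+j \geq 2d-1$ therefore never connects to the general case: the recursion pushes you away from the base case, toward small total degree where nothing has been established. (A correct induction of this flavour exists, but it needs a secondary induction at fixed total degree, since the identity mixes terms of degrees $i+j-1$ and $i+j$.)

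Statement (ii) is nonetheless true and has a short direct proof you could substitute: from $b(ua,uc)=b(a,c)$ one gets $b(Xa,c) = -b(ua,Xc) = b(ua,Xc)$ in characteristic two, and iterating this (using $uX=Xu$) gives $b(X^iv, X^jw) = b(u^iv, X^{i+j}w)$, which vanishes as soon as $i+j \geq d$ because $w \in \operatorname{Ker} X^d$. With (ii) in hand, the corrected two-term identity above, specialized to $i+j=d$ with $1 \leq i,j \leq d-1$, is exactly (i), and the rest of your argument then goes through unchanged.
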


\begin{proof}Set $(e_i)_{1 \leq i \leq d} = (X^{d-i} v)_{1 \leq i \leq d}$ and $(f_j)_{1 \leq j \leq d} = (X^{d-j} w)_{1 \leq j \leq d}$. For all $1 \leq i,j \leq d-1$ such that $i+j = d$, it follows from \cite[Lemme II.6.10 b), pg. 99]{Spaltenstein} that $b(e_{d-i+1}, f_{d-j}) + b(e_{d-i}, f_{d-j+1}) = 0$, which gives (i). For all $1 \leq i,j \leq d$ with $i+j \geq d$, we have $b(e_{d-i},f_{d-j}) = 0$ by \cite[Lemme II.6.10 a), pg. 99]{Spaltenstein}, which gives (ii).

For claim (iii), using (i) repeatedly we find that $$b(X^{d-1}v,w) = b(X^{d-2}v,Xw) = \cdots = b(v, X^{d-1}w)$$ for all $v,w \in \operatorname{Ker} X^d$. Claim (iv) is an easy consequence of (iii).\end{proof}

\begin{lemma}\label{lemma:epsilonequivalence}
Let $(V, b)$ be an alternating bilinear $K[u]$-module, not necessarily non-degenerate. The following statements are equivalent:
\begin{enumerate}[\normalfont (i)]
\item $b(X^{d-1}v,v) \neq 0$ for some $v \in V$ such that $X^d v = 0$.
\item $d$ is even, and $V(d)$ occurs as an orthogonal direct summand of $V$.
\item $d$ is even, and for any decomposition $V = W_1 \perp \cdots \perp W_t$ into orthogonally indecomposable $K[u]$-submodules, we have $(W_i,b) \cong V(d)$ for some $i$.
\end{enumerate}
\end{lemma}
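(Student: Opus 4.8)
The plan is to prove the equivalence by a cycle of implications, say (iii) $\Rightarrow$ (ii) $\Rightarrow$ (i) $\Rightarrow$ (iii), with the main effort concentrated on the last implication. Recall first that any alternating bilinear $K[u]$-module decomposes into an orthogonal direct sum of orthogonally indecomposable $K[u]$-submodules; so all three statements are statements about such decompositions, and the challenge is that these are not unique (no Krull--Schmidt).

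For the easy directions: (iii) $\Rightarrow$ (ii) is immediate since a decomposition into orthogonally indecomposables exists. For (ii) $\Rightarrow$ (i), assume $V = V(d) \perp V'$ for some bilinear $K[u]$-submodule $V'$. In the module $V(d) = (V_d, b)$ with basis $(e_i)$ as in Definition \ref{def:vnmod}, take $v = e_d$. Then $X^d v = 0$, and I claim $b(X^{d-1} v, v) = b(e_1, e_d) \ne 0$: this requires checking that $X^{d-1} e_d = e_1$ with respect to the given (non-standard) action of $u$ on $V_d$, which follows by a short induction on the formulas $u e_i = e_i + e_{i-1} + \cdots$ showing $X e_i \equiv e_{i-1} \pmod{\langle e_1, \ldots, e_{i-2}\rangle}$, hence $X^{d-1} e_d$ lies in $\langle e_1 \rangle$ and equals $e_1$. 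Since $v \in V(d) \subseteq V$ and $b$ restricted to $V(d)$ agrees with the form on the summand, (i) holds; the evenness of $d$ is part of the hypothesis (ii).

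The substantive implication is (i) $\Rightarrow$ (iii). Here I would first invoke Lemma \ref{lemma:basicepsilonoddeven}(i) (referenced in the introduction) to conclude that $d$ must be even, since $v \mapsto b(X^{d-1} v, v)$ vanishing identically on $\operatorname{Ker} X^d$ for $d$ odd is exactly that lemma. Now fix an arbitrary decomposition $V = W_1 \perp \cdots \perp W_t$ into orthogonally indecomposable $K[u]$-submodules; by Theorem \ref{thm:orthogonallyindecomposables} each $W_i$ is either some $W(d_i)$ or some $V(d_i)$ with $d_i$ even. Suppose for contradiction that no $W_i$ is isometric to $V(d)$. Pick $v \in \operatorname{Ker} X^d$ with $b(X^{d-1} v, v) \ne 0$ and write $v = w_1 + \cdots + w_t$ with $w_i \in W_i$; since each $W_i$ is $u$-invariant we have $X^d w_i = 0$ for all $i$, and by Lemma \ref{lemma:basicXdlinear}(iv) the map $x \mapsto b(X^{d-1}x, x)$ is additive on $\operatorname{Ker} X^d$, so $b(X^{d-1} v, v) = \sum_i b(X^{d-1} w_i, w_i)$ (the cross terms $b(X^{d-1} w_i, w_j)$ with $i \ne j$ vanish because the $W_i$ are mutually orthogonal). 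Hence $b(X^{d-1} w_i, w_i) \ne 0$ for some $i$, and $w_i \in \operatorname{Ker} X^d$. The task is then to show this is impossible unless $W_i \cong V(d)$: first, if $W_i \cong W(d_i)$ is a paired module, then $W_i = L \oplus L^*$ with $L, L^*$ totally singular $K[u]$-submodules, and a direct computation (splitting $w_i = \ell + f$, using that $X^{d-1}\ell \in L$, $X^{d-1} f \in L^*$, and $b(X^{d-1}\ell, \ell) = b(X^{d-1} f, f) = 0$ by total singularity, together with Lemma \ref{lemma:basicXdlinear}(iii)) shows $b(X^{d-1} w_i, w_i) = b(X^{d-1}\ell, f) + b(X^{d-1} f, \ell) = 2 b(X^{d-1}\ell, f) = 0$ in characteristic two; so $W_i$ is not a paired module. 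Second, if $W_i \cong V(d_i)$ with $d_i \ne d$: if $d_i < d$ then $X^{d-1} W_i = 0$ so the quantity is zero, contradiction; if $d_i > d$ then $\operatorname{Ker} X^d \cap W_i$ is the span of $e_{d_i - d + 1}, \ldots, e_{d_i}$ (in the basis of $V(d_i)$), and for $w_i = \sum_{j > d_i - d} c_j e_j$ one computes $X^{d-1} w_i \in \langle e_1, \ldots, e_{?}\rangle$ with all indices $\le d_i - d + 1$ too small to pair nontrivially against the indices $> d_i - d$ appearing in $w_i$ — i.e.\ $b(X^{d-1} w_i, w_i) = 0$ by Lemma \ref{lemma:basicXdlinear}(ii) since the relevant index sums fall below $d_i + 1$. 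Thus $W_i \cong V(d)$, which is (iii). I expect the main obstacle to be carefully verifying the vanishing $b(X^{d-1} w_i, w_i) = 0$ in the $V(d_i)$, $d_i > d$, case and in the paired-module case, where one has to be precise about which powers of $X$ land in which subspaces and apply parts (ii) and (iii) of Lemma \ref{lemma:basicXdlinear} correctly; the additivity step and the reduction to a single summand are routine.
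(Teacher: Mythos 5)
Your easy implications are fine, and the additivity-plus-localization step in (i) $\Rightarrow$ (iii) matches the paper. But the case analysis that follows has a genuine gap: you classify the orthogonally indecomposable summands $W_i$ via Theorem \ref{thm:orthogonallyindecomposables}, which only classifies \emph{non-degenerate} orthogonally indecomposable alternating bilinear $K[u]$-modules. The lemma explicitly allows $(V,b)$ to be degenerate, so the $W_i$ need not be of the form $W(d_i)$ or $V(d_i)$ at all (already a totally singular indecomposable $V_m$ is an orthogonally indecomposable summand of neither type), and your two-case argument simply does not cover the summand on which $b(X^{d-1}w_i,w_i)\neq 0$ might live. The paper's proof avoids this by arguing in the opposite direction: from $b(X^{d-1}v,v)\neq 0$ and $X^dv=0$ it shows, using Lemma \ref{lemma:basicXdlinear} (i)--(ii), that the cyclic subspace $W=\langle v,Xv,\ldots,X^{d-1}v\rangle$ carries a non-degenerate (hence $d$ even) restriction of $b$ with anti-triangular Gram matrix, so $W\cong V(d)$ splits off orthogonally from whatever contains $v$; applied to the localized $w_i$, orthogonal indecomposability of $W_i$ then forces $W_i\cong V(d)$ with no classification theorem needed. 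You should adopt that construction in place of the case analysis.

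Two smaller points. First, invoking Lemma \ref{lemma:basicepsilonoddeven}(i) to get that $d$ is even is circular: in the paper that lemma is \emph{deduced from} the present one (and it is stated only for non-degenerate modules). Fortunately you do not actually need it, since $V(d)$ exists only for $d$ even, so evenness falls out of the conclusion. Second, in your $V(d_i)$, $d_i>d$ case the indices are off (with the paper's convention $X$ lowers indices, so $\operatorname{Ker}X^d\cap W_i=\langle e_1,\ldots,e_d\rangle$), and Lemma \ref{lemma:basicXdlinear}(ii) does not literally apply to $b(X^{d-1}w_i,w_i)$ because it requires both exponents to be at least $1$; the vanishing is still true by a direct computation ($X^{d-1}w_i\in\langle e_1\rangle$ and $e_1$ pairs nontrivially only with $e_{d_i}$, which does not occur in $w_i$), but once you use the paper's cyclic-subspace argument this case disappears anyway.
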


\begin{proof}
We first show that (i) and (ii) are equivalent. Suppose that $b(X^{d-1}v,v) \neq 0$ for some $v \in V$ such that $X^d v = 0$. Let $W$ be the subspace of $V$ spanned by $v, Xv, \ldots, X^{d-1}v$, so now $W \cong V_d$ as $K[u]$-modules. It follows from Lemma \ref{lemma:basicXdlinear} (i) -- (ii) that the matrix of $b|_{W \times W}$ with respect to the basis $v, Xv, \ldots, X^{d-1}v$ is of the form \begin{equation}\label{eq:matrixofV(d)}\begin{pmatrix}* &  & \lambda \\  & \reflectbox{$\ddots$} & \\ \lambda & & 0 \end{pmatrix}\end{equation} where $\lambda = b(X^{d-1}v,v)$. Since $\lambda \neq 0$, it follows that $b|_{W \times W}$ is non-degenerate, so $d$ must be even since $b$ is alternating. Furthermore, we have $V = W \perp W^\perp$ and $W \cong V(d)$ since $W$ is non-degenerate, so (ii) holds. 

Conversely, suppose that $d$ is even and $V = W \perp W'$ with $W \cong V(d)$. Choose some $v \in W$ such that $X^{d-1}v \neq 0$. Then $v, Xv, \ldots, X^{d-1}v$ is a basis of $W$, and the matrix of $b|_{W \times W}$ with respect to this basis is as in~\eqref{eq:matrixofV(d)}, with $\lambda = b(X^{d-1}v,v)$. Thus we must have $b(X^{d-1}v,v) \neq 0$ since $W$ is a non-degenerate subspace. We conclude then that (i) and (ii) are equivalent.

It is obvious that (iii) implies (ii). Next we will show that (i) implies (iii), which will complete the proof. Suppose that (i) holds, and let $v \in V$ be such that $b(X^{d-1}v,v) \neq 0$ and $X^d v = 0$. Let $V = W_1 \perp \cdots \perp W_t$ be a decomposition into orthogonally indecomposable $K[u]$-submodules. We can write $v = w_1 + \cdots + w_t$ with $w_i \in W_i$ for all $1 \leq i \leq t$. Now $X^d w_i = 0$ for all $1 \leq i \leq t$, so by Lemma \ref{lemma:basicXdlinear} (iv) $$b(X^{d-1}v,v) = b(X^{d-1}w_1,w_1) + \cdots + b(X^{d-1}w_t,w_t).$$ Thus $b(X^{d-1}w_i,w_i) \neq 0$ for some $1 \leq i \leq t$. From the equivalence of (i) and (ii), it follows that $d$ is even and $(W_i,b)$ has $V(d)$ as an orthogonal direct summand. Since $(W_i, b)$ is orthogonally indecomposable, this proves that $(W_i, b) \cong V(d)$.\end{proof}

We can now prove Theorem \ref{thm:hesselinkform} and Theorem \ref{thm:hesselinkepsilon}.

\begin{proof}[Proof of Theorem {\ref{thm:hesselinkform}}]
Let $(V,b)$ be a non-degenerate alternating bilinear $K[u]$-module. One can write $V$ as an orthogonal direct sum $V = Z_1 \perp \cdots \perp Z_t$ of orthogonally indecomposable bilinear $K[u]$-modules, and by Theorem \ref{thm:orthogonallyindecomposables} each $Z_i$ is isomorphic to $V(d_i)$ ($d_i$ even) or $W(d_i)$ for some $d_i > 0$.

Note that $V(2d)^3 \cong W(2d) \perp V(2d)$ by Lemma \ref{lemma:threeorthogonals}. From this it follows that for $a, b \geq 0$, we have $$V(2d)^a \perp W(2d)^b \cong \begin{cases} V(2d)^{a+2b}, & \text{ if } a > 0. \\ W(2d)^b, & \text{ if } a = 0.\end{cases}$$ as bilinear $K[u]$-modules. Thus by collecting the orthogonal direct summands $Z_i$ with equal Jordan block sizes, we get the Hesselink normal form on $V$.

For uniqueness, let $0 < d_1 < \cdots < d_t$ be the Jordan block sizes of $u$ on $V$, with block size $d_i$ having multiplicity $n_i > 0$. Write $V = W_1 \perp \cdots \perp W_t$, where for all $i$ we have $W_i \cong W(d_i)^{n_i/2}$ or $W_i \cong V(d_i)^{n_i}$. By the equivalence of (i) and (iii) Lemma \ref{lemma:epsilonequivalence}, we see that $W_i \cong V(d_i)^{n_i}$ if and only if $d_i$ is even and $b(X^{d_i-1}v,v) \neq 0$ for some $v \in V$ such that $X^{d_i}v = 0$. From this we get the uniqueness of the Hesselink normal form.\end{proof}

\begin{proof}[Proof of Theorem {\ref{thm:hesselinkepsilon}}]The proof follows from the argument at the end of the previous proof. Indeed, by Lemma \ref{lemma:epsilonequivalence}, in Theorem \ref{thm:hesselinkform} we have $W_i \cong W(d_i)^{n_i/2}$ if $\varepsilon_{V,b}(d_i) = 0$ and $W_i \cong V(d_i)^{n_i}$ if $\varepsilon_{V,b}(d_i) = 1$.\end{proof}

\begin{lemma}\label{lemma:basicepsilonoddeven}
Let $(V, b)$ be a non-degenerate alternating bilinear $K[u]$-module. Then the following statements hold:
\begin{enumerate}[\normalfont (i)]
\item If $d$ is odd, then $\varepsilon_{V, b}(d) = 0$.
\item If $V_d$ occurs with odd multiplicity in $V$, then $d$ is even and $\varepsilon_{V, b}(d) = 1$.
\end{enumerate}
\end{lemma}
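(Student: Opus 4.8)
The plan is to prove both parts directly from the defining property of $\varepsilon_{V,b}$ together with the orthogonal-decomposition machinery already established, in particular Lemma~\ref{lemma:epsilonequivalence} and Lemma~\ref{lemma:basicXdlinear}. For part (i), suppose $d$ is odd and take any $v \in V$ with $X^d v = 0$. I want to show $b(X^{d-1}v, v) = 0$. Applying Lemma~\ref{lemma:basicXdlinear}(iii) with this $d$ gives $b(X^{d-1}v, v) = b(v, X^{d-1}v)$, and since $b$ is alternating (hence symmetric in characteristic two, as noted in Section~\ref{section:notation}), we also have $b(X^{d-1}v, v) = b(v, X^{d-1}v)$ trivially — so that identity alone is not enough. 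Instead I would use Lemma~\ref{lemma:basicXdlinear}(i): for odd $d = 2k+1$, repeatedly applying the identity $b(X^{i-1}v, X^j v) = b(X^i v, X^{j-1} v)$ moves the exponents toward the "middle", and one reaches $b(X^k v, X^k v)$, which is $0$ because $b$ is alternating. Thus $b(X^{d-1}v, v) = b(X^k v, X^k v) = 0$ for all such $v$, so $\varepsilon_{V,b}(d) = 0$. (Equivalently, this is just the observation, already flagged right after the definition of $\varepsilon_{V,b}$, that $\varepsilon_{V,b}(d) = 1$ forces $d$ even; but here I would spell it out via Lemma~\ref{lemma:basicXdlinear}(i).)

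For part (ii), suppose $V_d$ occurs with odd multiplicity $n$ in $V$. By Lemma~\ref{lemma:basicepsilonoddeven}(i) — or rather its contrapositive, which we will have just proved — if $\varepsilon_{V,b}(d) = 1$ then $d$ is automatically even, so it suffices to show $\varepsilon_{V,b}(d) = 1$. Consider the Hesselink normal form $(V,b) \cong W_1 \perp \cdots \perp W_t$ from Theorem~\ref{thm:hesselinkform}, where the summand $W_i$ carrying block size $d$ is either $W(d)^{n/2}$ or $V(d)^n$. The first option is impossible because $n$ is odd, so $n/2 \notin \Z$; hence $W_i \cong V(d)^n$. In particular $V(d)$ occurs as an orthogonal direct summand of $V$, so by the equivalence of (i) and (ii) in Lemma~\ref{lemma:epsilonequivalence} we get $\varepsilon_{V,b}(d) = 1$ and $d$ is even, as desired.

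I expect no real obstacle here; the statement is essentially a bookkeeping consequence of the structure theory set up in Sections~\ref{section:bilinearmodules} and~\ref{section:unipclassesinfo}. The only point requiring a little care is making sure the argument for part (i) does not silently assume $d$ even: the clean route is the telescoping in Lemma~\ref{lemma:basicXdlinear}(i) landing on the diagonal entry $b(X^{(d-1)/2}v, X^{(d-1)/2}v)$, which vanishes by the alternating property. An alternative, slightly slicker, packaging of part (i) is to invoke Lemma~\ref{lemma:epsilonequivalence} directly: condition (i) there failing is exactly $\varepsilon_{V,b}(d) = 0$, and condition (ii) there forces $d$ even, so $d$ odd $\Rightarrow$ $\varepsilon_{V,b}(d) = 0$. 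I would present part (i) via this appeal to Lemma~\ref{lemma:epsilonequivalence} (keeping the proof short) and part (ii) via the Hesselink normal form plus the parity obstruction $n/2 \notin \Z$, then conclude.
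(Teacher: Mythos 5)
Your proposal is correct and follows essentially the same route as the paper: part (i) is read off from Lemma \ref{lemma:epsilonequivalence} (whose condition (ii) forces $d$ even), and part (ii) comes from the orthogonal decomposition into $W(d)$'s and $V(d)$'s, where the odd multiplicity rules out a pure $W(d)$-contribution (each $W(d)$ contributing $V_d$ with multiplicity two) and hence forces a $V(d)$ summand, giving $\varepsilon_{V,b}(d)=1$ again by Lemma \ref{lemma:epsilonequivalence}. Your alternative telescoping argument for (i) via Lemma \ref{lemma:basicXdlinear}(i), landing on $b(X^{(d-1)/2}v,X^{(d-1)/2}v)=0$, is also valid but unnecessary given the packaging you chose.
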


\begin{proof}Claim (i) is immediate from Lemma \ref{lemma:epsilonequivalence}. For (ii), suppose that $V_d$ occurs with odd multiplicity in $V$. Since $V_d$ has multiplicity $2$ in $W(d)$, it follows that $d$ is even and $(V,b)$ must have $V(d)$ as an orthogonal direct summand. Thus $\varepsilon_{V, b}(d) = 1$ by Lemma \ref{lemma:epsilonequivalence}.\end{proof}

Let $G = \SL(V)$ and set $n = \dim V$. In one of our main results, Theorem \ref{thm:MAINTHMA}, we describe the Hesselink normal form of any unipotent element $u \in G$ on the irreducible $K[G]$-module $L_{G}(\varpi_1 + \varpi_{n-1})$. In the proof, we make use of the fact that up to scalar multiples there is a unique non-zero alternating bilinear form on $V \otimes V^*$, and an isomorphism $L_G(\varpi_1 + \varpi_{n-1}) \cong \langle v \rangle^\perp / \langle v \rangle$ where $v \in V \otimes V^*$ is a $G$-fixed point --- see Section \ref{section:altformtq}. A natural approach then is to first consider the action of $u$ on $V \otimes V^*$ and use it to deduce information about the action of $u$ on $\langle v \rangle^\perp / \langle v \rangle$. For this we need the following general lemma, which will also be useful in the proof of our main result concerning Hesselink normal forms on $L_G(\varpi_2)$ for $G = \Sp(V,b)$ (Theorem \ref{thm:MAINTHMC}).

\begin{lemma}\label{lemma:vperpmodvdescription}
Let $(V, b)$ be a non-degenerate alternating bilinear $K[u]$-module and let $v \in V$ be a non-zero vector fixed by $u$. Write $V \cong \oplus_{d \geq 1} V_d^{\lambda(d)}$ and $\langle v \rangle^\perp / \langle v \rangle \cong \oplus_{d \geq 1} V_d^{\lambda'(d)}$ as $K[u]$-modules, where $\lambda(d), \lambda'(d) \geq 0$ for all $d \geq 1$. Set $\varepsilon := \varepsilon_{V, b}$ and $\varepsilon' := \varepsilon_{\langle v \rangle^\perp / \langle v \rangle, b}$.

Let $m \geq 1$ be such that $\operatorname{Ker} X_V^{m-1} \subseteq \langle v \rangle^\perp$ and $\operatorname{Ker} X_V^m \not\subseteq \langle v \rangle^\perp$. The following statements hold:

\begin{enumerate}[\normalfont (i)]
\item For all $d \geq 0$, we have $\operatorname{Ker} X_V^d \subseteq \langle v \rangle^\perp$ if and only if $\langle v \rangle \subseteq \operatorname{Im} X_V^d$. In particular, there exists $\delta \in V$ such that $X^{m-1} \delta = v$, and $v \not\in \operatorname{Im} X_V^d$ for $d \geq m$.

\item If $m = 1$, then:
	\begin{enumerate}[\normalfont (a)]
		\item $\lambda'(1) = \lambda(1) - 2$, and $\lambda'(d) = \lambda(d)$ for all $d > 1$.
		\item $\varepsilon'(d) = \varepsilon(d)$ for all $d \geq 1$.
	\end{enumerate}
	
\item If $m > 1$ and $\delta \in \langle v \rangle^\perp$, then: 
	\begin{enumerate}[\normalfont (a)]
		\item $\lambda'(m) = \lambda(m) - 2$, $\lambda'(m-1) = \lambda(m-1)+2$, and $\lambda'(d) = \lambda(d)$ for all $d \neq m-1,m$.
		\item $\varepsilon'(d) = \varepsilon(d)$ for all $d \neq m$.
	\end{enumerate}

\item If $m > 1$ and $\delta \not\in \langle v \rangle^\perp$, then: 
	\begin{enumerate}[\normalfont (a)]
		\item $\lambda'(m) = \lambda(m) - 1$, $\lambda'(m-2) = \lambda(m-2) + 1$ (if $m > 2$), and $\lambda'(d) = \lambda(d)$ for all $d \neq m-2,m$.
		\item $\varepsilon(d) = \varepsilon'(d)$ for all $d \neq m-2,m$. 
		\item $\varepsilon(m) = 1$.
		\item $\varepsilon'(m-2) = 1$ (if $m > 2$).		
	\end{enumerate}
\end{enumerate}
\end{lemma}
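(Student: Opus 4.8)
The plan is to study the chain of $K[u]$-submodules $\langle v\rangle \subseteq \langle v\rangle^\perp \subseteq V$: prove the duality statement (i) first, then read off the Jordan data from Lemmas \ref{jordanrestriction} and \ref{jordanquotient}, and finally compute the $\varepsilon$-invariants by a case analysis matching the three cases (ii)--(iv). For (i), the key observation is that the adjoint of $X = u-1$ with respect to $b$ is $u^{-1}X$: from $b(uw_1,uw_2) = b(w_1,w_2)$ one gets $b(Xw_1,w_2) = b(w_1,(u^{-1}-1)w_2) = b(w_1,u^{-1}Xw_2)$, and since $u^{-1}$ commutes with $X$ we have $(u^{-1}X)^d = u^{-d}X^d$, so $(\operatorname{Im}X_V^d)^\perp = \operatorname{Ker}(u^{-d}X^d) = \operatorname{Ker}X_V^d$. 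Taking perpendiculars in the non-degenerate module $V$ gives the equivalence in (i); the ``in particular'' follows because $v \in \operatorname{Im}X_V^{m-1}$ exhibits $\delta$, and images of powers of $X$ decrease. Note also $X^m\delta = Xv = 0$, so $\delta \in \operatorname{Ker}X_V^m$ and $\langle \delta, X\delta, \dots, X^{m-1}\delta = v\rangle \cong V_m$ as a $K[u]$-module.

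I would next dispose of the two cases in which a non-degenerate orthogonal summand can be peeled off, which settles (ii) and (iv) completely (both the Jordan and the $\varepsilon$ parts). If $m = 1$, pick a $u$-fixed $w$ with $b(w,v) \neq 0$; then $P := \langle v,w\rangle \cong W(1)$ is non-degenerate, $V = P \perp P^\perp$, and $\langle v\rangle^\perp \cap P = \langle v\rangle$ gives $\langle v\rangle^\perp/\langle v\rangle \cong P^\perp$ as bilinear $K[u]$-modules; since $\varepsilon_{W(1)} \equiv 0$ and $P \cong V_1^2$, this is (ii). If $m > 1$ and $\delta \notin \langle v\rangle^\perp$, then by Lemma \ref{lemma:basicXdlinear}(i)--(ii) the Gram matrix of $b$ on $U := \langle \delta, X\delta, \dots, X^{m-1}\delta = v\rangle \cong V_m$ is non-degenerate, its anti-diagonal being $b(\delta,v) \neq 0$ and its entries below the anti-diagonal vanishing; hence $m$ is even, $U \cong V(m)$, and $V = U \perp U^\perp$, with $\langle v\rangle^\perp \cap U = XU$ and $\langle v\rangle = \operatorname{rad}(b|_{XU})$ (a dimension count, using that $b|_{U^\perp}$ is non-degenerate). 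Thus $\langle v\rangle^\perp/\langle v\rangle \cong (XU/\langle v\rangle) \perp U^\perp$ with $XU/\langle v\rangle \cong V(m-2)$ when $m > 2$, and Lemma \ref{lemma:epsilonequivalence} — using that $\varepsilon_{V(m)}$ is nonzero precisely at $m$ — yields all of (iv).

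There remains case (iii), $m > 1$ with $\delta \in \langle v\rangle^\perp$. For (iii)(a): apply Lemma \ref{jordanrestriction} to the codimension-one submodule $\langle v\rangle^\perp \subseteq V$, whose threshold is exactly $m$, and then Lemma \ref{jordanquotient} to the one-dimensional submodule $\langle v\rangle \subseteq \langle v\rangle^\perp$; since $v = X^{m-1}\delta \in \operatorname{Im}X_{\langle v\rangle^\perp}^{m-1}$ while $v \notin \operatorname{Im}X_V^m \supseteq \operatorname{Im}X_{\langle v\rangle^\perp}^m$ by (i), the second threshold is again $m$, and composing the two block-moves gives (iii)(a). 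For (iii)(b) I would route through the intermediate (degenerate) bilinear $K[u]$-module $\langle v\rangle^\perp$: by (i), for $d \geq m$ there is no $w \in \langle v\rangle^\perp$ with $X^d w \in \langle v\rangle \setminus \{0\}$, so $\varepsilon_{\langle v\rangle^\perp/\langle v\rangle}(d) = \varepsilon_{\langle v\rangle^\perp,b}(d)$; and for $d < m$ we have $\operatorname{Ker}X_V^d \subseteq \langle v\rangle^\perp$, so $\varepsilon_{\langle v\rangle^\perp,b}(d) = \varepsilon(d) \le \varepsilon'(d)$. Hence it suffices to prove $\varepsilon_{\langle v\rangle^\perp,b}(d) = \varepsilon(d)$ for $d > m$ and $\varepsilon'(d) \le \varepsilon(d)$ for $d < m$.

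For $d > m$: the map $w \mapsto b(X^{d-1}w,w)$ on $\operatorname{Ker}X_V^d$ is additive (Lemma \ref{lemma:basicXdlinear}(iv)) and scales by squares, so over the algebraically closed field $K$ its zero set is a $K$-subspace of codimension $\le 1$; if $\varepsilon(d) = 1$ this cannot equal the codimension-one subspace $\langle v\rangle^\perp \cap \operatorname{Ker}X_V^d$, since an element $w_1 \in \operatorname{Ker}X_V^m \setminus \langle v\rangle^\perp$ lies outside the former ($X^{d-1}w_1 = 0$ as $d-1 \ge m$), so a witness for $\varepsilon(d)$ already lies in $\langle v\rangle^\perp$. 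For $d < m$: a witness $w \in \langle v\rangle^\perp$ with $X^d w = v$ (after rescaling) and $b(X^{d-1}w,w) \neq 0$ may be replaced by $\eta := w + X^{m-1-d}\delta \in \operatorname{Ker}X_V^d \subseteq \langle v\rangle^\perp$; using the adjoint identity from (i) and $w,\delta \in \langle v\rangle^\perp$, the cross-terms in $b(X^{d-1}\eta,\eta)$ cancel in characteristic two, leaving $b(X^{d-1}\eta,\eta) = b(X^{d-1}w,w) + b(X^{m-2}\delta,X^{m-1-d}\delta)$, and by Lemma \ref{lemma:basicXdlinear} the last term vanishes for $d \le m-2$ (using $b(\delta,v) = 0$ when $d = m-2$), giving a genuine witness for $\varepsilon(d) = 1$. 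I expect the \emph{main obstacle} to be the leftover value $d = m-1$ (where $b(X^{m-2}\delta,X^{m-1-d}\delta) = b(X^{m-2}\delta,\delta)$ need not vanish outright): this needs a separate argument exploiting the freedom in choosing $\delta$ modulo $\operatorname{Ker}X_V^{m-1}$ together with the relation $w - \delta \in \operatorname{Ker}X_V^{m-1}$; one should also keep in mind that $d = m$ genuinely must be excluded in (iii)(b), since $V \cong V(m) \perp V(m)$ with $v$ the diagonal socle vector has $\varepsilon(m) = 1$ but $\langle v\rangle^\perp/\langle v\rangle \cong W(m)$, so $\varepsilon'(m) = 0$.
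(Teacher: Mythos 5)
Most of your argument is correct, and in several places it takes a genuinely different route from the paper. Your proof of (i) via the adjoint identity $b(Xw_1,w_2)=b(w_1,u^{-1}Xw_2)$, hence $(\operatorname{Im}X_V^d)^\perp=\operatorname{Ker}X_V^d$, replaces the paper's argument through self-duality of $K[u]$-modules and Lemmas \ref{jordanrestriction}--\ref{jordanquotient}. More substantially, your treatment of (ii) and (iv) by splitting off a non-degenerate orthogonal summand ($P\cong W(1)$, respectively $U=\langle\delta,X\delta,\dots,X^{m-1}\delta\rangle\cong V(m)$) and identifying $\langle v\rangle^\perp/\langle v\rangle$ with $P^\perp$, respectively with $(XU/\langle v\rangle)\perp U^\perp$, is cleaner than the paper's element computations and delivers the Jordan data and the $\varepsilon$-data in one stroke. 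Your hyperplane argument for $d>m$ and the substitution $\eta=w+X^{m-1-d}\delta$ for $d\le m-2$ are also correct: the cross terms reduce to $\binom{d-1}{1}b(uw,v)=0$, and the term $b(X^{m-2}\delta,X^{m-1-d}\delta)$ vanishes by Lemma \ref{lemma:basicXdlinear}~(ii) for $d\le m-3$ and slides to $b(v,\delta)=0$ for $d=m-2$.

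The value $d=m-1$ in case (iii), which you flag as the main obstacle, is a genuine gap, and for $m$ odd no choice of $\delta$ will close it, because statement (iii)(b) is actually false there. Take $(V,b)=W(3)=A\oplus A^*$ with $A=\langle a_1,a_2,a_3\rangle$, $Xa_i=a_{i-1}$, and dual basis $a_1^*,a_2^*,a_3^*$ of $A^*$ (so $Xa_1^*=a_2^*+a_3^*$, $Xa_2^*=a_3^*$, $Xa_3^*=0$), and set $v=a_1+a_3^*$. Then $\operatorname{Ker}X^2=\langle a_1,a_2,a_2^*,a_3^*\rangle\subseteq\langle v\rangle^\perp$ while $b(a_3,v)=a_3^*(a_3)=1$, so $m=3$; and $\delta=a_3+a_1^*$ satisfies $X^2\delta=v$ and $b(\delta,v)=a_1^*(a_1)+a_3^*(a_3)=0$, so we are in case (iii). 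But $X\delta=a_2+a_2^*+a_3^*$ and $b(X\delta,\delta)=(a_2^*+a_3^*)(a_3)+a_1^*(a_2)=1$, so $\bar\delta$ lies in $\operatorname{Ker}X^2$ of $\langle v\rangle^\perp/\langle v\rangle$ and witnesses $\varepsilon'(2)=1$, whereas $\varepsilon(2)=0$ since $W(3)$ is a paired module. Thus $\varepsilon'(m-1)\neq\varepsilon(m-1)$ here. The quantity governing whether $\bar\delta$ creates a new witness for $\varepsilon'(m-1)$ is $b(X^{m-2}\delta,\delta)$, and the paper's own proof of this step instead establishes $b(X^{m-1}\delta,\delta)=b(v,\delta)=0$, which is an off-by-one; the relation $b(X^{m-2}\delta,X\delta)=b(v,\delta)=0$ obtained from Lemma \ref{lemma:basicXdlinear}~(i) controls $d=m-2$ but not $d=m-1$.

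Two mitigating remarks. First, if $m$ is even then $m-1$ is odd and $\varepsilon(m-1)=\varepsilon'(m-1)=0$ automatically, so your proof of (iii)(b) is already complete in that case; the failure occurs only for $m$ odd, $m>1$. Second, in every application of this lemma in the paper (Theorems \ref{thm:MAINTHMA} and \ref{thm:MAINTHMC}) one has $m=2^{\alpha}$, so the problematic configuration never arises and the main results are unaffected. But as a freestanding statement, (iii)(b) should either exclude $d=m-1$ or carry the hypothesis that $m$ is even, and your write-up should say this rather than promise a fix via the freedom in $\delta$.
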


\begin{proof}
Since $b$ is non-degenerate, we have $\langle v \rangle^\perp \cong (V/\langle v \rangle)^*$ as $K[u]$-modules. Every $K[u]$-module is self-dual, so in fact $\langle v \rangle^\perp \cong V/\langle v \rangle$ as $K[u]$-modules. Then with Lemma \ref{jordanrestriction} and Lemma \ref{jordanquotient}, we conclude that for all $d \geq 0$, we have $\operatorname{Ker} X^d \subseteq \langle v \rangle^\perp$ if and only if $\langle v \rangle \subseteq \operatorname{Im} X^d$, which proves (i). Let $\delta \in V$ be such that $X^{m-1} \delta = v$.

For claims (ii) -- (iv), we first consider the description of $\lambda'$. By (i) and Lemma \ref{jordanquotient}, we find that $V/\langle v \rangle \cong \oplus_{d \geq 1}V_d^{\mu(d)}$ as $K[u]$-modules, where $\mu(d) \geq 0$ are given as follows:
	\begin{itemize}
		\item If $m = 1$, then $\mu(1) = \lambda(1) - 1$ and $\mu(d) = \lambda(d)$ for all $d > 1$.
		\item If $m > 1$, then $\mu(m) = \lambda(m) - 1$, $\mu(m-1) = \lambda(m-1)+1$, and $\mu(d) = \lambda(d)$ for all $d \neq m,m-1$.
	\end{itemize}
	
We shall apply Lemma \ref{jordanrestriction} to $V/\langle v \rangle$ and $\langle v \rangle^\perp / \langle v \rangle$ in order to describe $\lambda'$. First note that $v \not\in \operatorname{Im}X^d$ for all $d \geq m$ by (i), so \begin{equation}\label{eq:kernelXbigd}\operatorname{Ker} X_{V/\langle v \rangle}^d = \operatorname{Ker} X_{V}^d / \langle v \rangle\end{equation} for all $d \geq m$. If $0 \leq d \leq m-1$, then $X^d(X^{m-1-d} \delta) = v$ and any solution to $X^dv' = v$ is unique modulo $\operatorname{Ker} X^d$, so \begin{equation}\label{eq:kernelXsmalld}\operatorname{Ker} X_{V/\langle v \rangle}^d = \left(\operatorname{Ker} X_{V}^d \oplus \langle X^{m-1-d}\delta \rangle \right) / \langle v \rangle\end{equation} for all $0 \leq d \leq m-1$.

Note that $u$ acts trivially on the $1$-dimensional $K[u]$-module $V/\langle v \rangle^\perp$, so $\operatorname{Im} X \subseteq \langle v \rangle^\perp$. Thus~\eqref{eq:kernelXsmalld} implies that
\begin{align}\label{eq:statementeq1}\operatorname{Ker} X_{V/\langle v \rangle}^d \subseteq \langle v \rangle^\perp / \langle v \rangle & \text{ for all } 0 \leq d < m-1.\end{align}
Furthermore, by~\eqref{eq:kernelXsmalld} and~\eqref{eq:kernelXbigd} the following hold: \begin{align}\label{eq:statementeq2}\operatorname{Ker} X_{V/\langle v \rangle}^{m-1} \subseteq \langle v \rangle^\perp / \langle v \rangle & \text{ if and only if } \delta \in \langle v \rangle^\perp. \\
\label{eq:statementeq3}\operatorname{Ker} X_{V/\langle v \rangle}^{m} \not\subseteq \langle v \rangle^\perp / \langle v \rangle &.\end{align}

Now combining Lemma \ref{jordanrestriction}, statements~\eqref{eq:statementeq1} --~\eqref{eq:statementeq3}, and the description of $\mu(d)$ above, it follows easily that $\lambda'$ is given as described in (ii) -- (iv). This completes the proof of the claims for $\lambda'$.

For the rest of the proof we will consider the claims about $\varepsilon$ and $\varepsilon'$ in (ii) -- (iv). Let $\hat{\delta}$ be such that $X^m \hat{\delta} = 0$ and $\hat{\delta} \not\in \langle v \rangle^\perp$. Then \begin{equation}\label{eq:epsilonXsmalld}\operatorname{Ker} X_{V}^d = \operatorname{Ker} X_{\langle v \rangle^\perp}^d \oplus \langle \hat{\delta} \rangle\end{equation} for all $d \geq m$. If $d > m$, then $b(X^{d-1}\hat{\delta}, \hat{\delta}) = 0$ since $X^m\hat{\delta} = 0$. Thus it follows from~\eqref{eq:epsilonXsmalld} and Lemma \ref{lemma:basicXdlinear} (iv) that $\varepsilon'(d) = \varepsilon(d)$ for all $d > m$.

We always have $\varepsilon'(1) = \varepsilon(1) = 0$, so if $m = 1$, then $\varepsilon'(d) = \varepsilon(d)$ for all $d \geq 1$. This proves (ii), so we will assume for the rest of the proof that $m > 1$. 

We have $\operatorname{Im} X \subseteq \langle v \rangle^\perp$, and $\operatorname{Ker} X_V^d \subseteq \langle v \rangle^\perp$ for all $1 \leq d \leq m-1$, so \begin{equation}\label{eq:vperpvsmalldKERX}\operatorname{Ker} X_{\langle v \rangle^\perp / \langle v \rangle}^d = \left( \operatorname{Ker} X_{V}^d \oplus \langle X^{m-d-1} \delta \rangle \right) / \langle v \rangle \end{equation} for all $1 \leq d \leq m-2$. 

Note that $X^m \delta = 0$ since $v$ is fixed by $u$. Thus if $1 \leq d < m-2$, then $$b(X^{d-1}(X^{m-d-1}\delta), X^{m-d-1}\delta) = b(X^{m-2}\delta, X^{m-d-1}\delta) = 0$$ by Lemma \ref{lemma:basicXdlinear} (ii). It follows then from~\eqref{eq:vperpvsmalldKERX} and Lemma \ref{lemma:basicXdlinear} (iv) that $\varepsilon'(d) = \varepsilon(d)$ for all $1 \leq d < m-2$.

So far we have shown that $\varepsilon'(d) = \varepsilon(d)$ for all $d \neq m-2,m-1,m$, as claimed by (iii) and (iv). For $d = m-2,m-1,m$, we will consider the two cases (iii) and (iv) separately.\newline

\noindent \emph{Case (iii): $\delta \in \langle v \rangle^\perp$.}\newline

\noindent In this case $b(X^{m-1}\delta, \delta) = b(v, \delta) = 0$. Since $$\operatorname{Ker} X_{\langle v \rangle^\perp / \langle v \rangle}^{m-1} = \left( \operatorname{Ker} X_{V}^{m-1} \oplus \langle \delta \rangle \right) / \langle v \rangle$$ by~\eqref{eq:vperpvsmalldKERX}, it follows from Lemma \ref{lemma:basicXdlinear} (iv) that $\varepsilon'(m-1) = \varepsilon(m-1)$.

If $m > 2$, with Lemma \ref{lemma:basicXdlinear} (i) we get \begin{equation}\label{eq:eqofdeltav}b(X^{m-3}(X\delta), X\delta) = b(X^{m-2}\delta, X\delta) = b(X^{m-1}\delta, \delta) = b(v, \delta).\end{equation} Thus $b(X^{m-3}(X\delta), X\delta) = 0$. Since $$\operatorname{Ker} X_{\langle v \rangle^\perp / \langle v \rangle}^{m-2} = \left( \operatorname{Ker} X_{V}^{m-2} \oplus \langle X \delta \rangle \right) / \langle v \rangle$$ by~\eqref{eq:vperpvsmalldKERX}, it follows from Lemma \ref{lemma:basicXdlinear} (iv) that $\varepsilon'(m-2) = \varepsilon(m-2)$. Hence $\varepsilon'(d) = \varepsilon(d)$ for all $d \neq m$, as claimed in (iii).\newline

\noindent \emph{Case (iv): $\delta \not\in \langle v \rangle^\perp$.}\newline 

\noindent In this case $b(X^{m-1}\delta, \delta) = b(v, \delta) \neq 0$, so $\varepsilon(m) = 1$. Thus $m$ must be even and $\varepsilon'(m-1) = \varepsilon(m-1) = 0$ by Lemma \ref{lemma:basicepsilonoddeven}. 

If $m > 2$, then we see from~\eqref{eq:eqofdeltav} that $b(X^{m-3}(X\delta), X\delta) \neq 0$. Thus $\varepsilon'(m-2) = 1$, since $X\delta + \langle v \rangle \in \operatorname{Ker} X_{\langle v \rangle^\perp / \langle v \rangle}^{m-2}$. This completes the proof of (iv) and the lemma.\end{proof}

We finish this section by describing the induction and restriction of orthogonally indecomposable bilinear $K[u]$-modules. First we need a small lemma, which will also be useful later.

\begin{lemma}\label{lemma:pairedjordan}
Let $(Z, b)$ be a non-degenerate alternating bilinear $K[u]$-module. Then the following statements are equivalent:

\begin{enumerate}[\normalfont (i)]
\item $(Z,b)$ is a paired module.
\item There exists a totally singular decomposition $Z = W \oplus W'$ where $W$ and $W'$ are $K[u]$-submodules of $Z$.
\item For all $d > 0$ and $v \in Z$ such that $X^d v = 0$, we have $b(X^{d-1}v,v) = 0$. 
\end{enumerate}

Furthermore, if $Z = W \oplus W'$ as in (ii) and $W \cong V_{d_1} \oplus \cdots \oplus V_{d_t}$ as $K[u]$-modules, then $$(Z, b) \cong W(d_1) \perp \cdots \perp W(d_t)$$ as bilinear $K[u]$-modules.\end{lemma}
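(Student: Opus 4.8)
The plan is to prove the three implications (i) $\Rightarrow$ (ii) $\Rightarrow$ (iii) $\Rightarrow$ (i), together with the final isomorphism statement, using the machinery already established in this section. The equivalence of (i) and (ii) is in fact immediate from Lemma \ref{lemma:basicpairedproperty}, which is stated for arbitrary groups $G$ and hence applies verbatim to $G = \langle u \rangle$; so the only work is to incorporate (iii) into the chain and to handle the final assertion about the Hesselink normal form of a paired module.

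For (ii) $\Rightarrow$ (iii): if $Z = W \oplus W'$ is a totally singular decomposition into $K[u]$-submodules, then by Theorem \ref{thm:hesselinkform} (applied to $(Z,b)$) the Hesselink normal form of $Z$ is $W_1 \perp \cdots \perp W_t$ with each $W_i$ of the form $W(d_i)^{n_i/2}$ or $V(d_i)^{n_i}$; I would argue that no $V(d)$ summand can occur, because a totally singular decomposition of $Z$ restricts (by intersecting with the relevant orthogonal summand, using that totally singular subspaces have dimension at most half) to force each orthogonally indecomposable piece to itself admit a totally singular decomposition into submodules, which $V(d)$ does not — in $V(d)$ the full Jordan block is the unique maximal submodule, so it has no complement. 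Equivalently and more cleanly: by Lemma \ref{lemma:epsilonequivalence}, condition (iii) of the present lemma is exactly the negation of the statement that some $V(d)$ occurs as an orthogonal summand, so it suffices to show a paired module has no $V(d)$ summand. That follows because if $V(d) \perp Z' = Z \cong W \oplus W^*$ (paired module on $W$), then $\dim W = \frac{1}{2}\dim Z$ and $W$ is totally singular, whereas any orthogonal decomposition into orthogonally indecomposables refines compatibly — here I would instead just invoke Lemma \ref{lemma:epsilonequivalence} directly: (iii) of this lemma says $\varepsilon_{Z,b}(d) = 0$ for all $d$, which by that lemma is equivalent to no orthogonal summand being $V(d)$ for any $d$, hence by Theorem \ref{thm:hesselinkform} all summands in the normal form are $W(d_i)^{n_i/2}$, and then (ii) holds since $W(d) = V_d \oplus V_d^*$ is a paired module and orthogonal sums of paired modules are paired (take the direct sum of the respective maximal totally singular submodules). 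This simultaneously gives (iii) $\Rightarrow$ (ii) and the last sentence of the lemma, since the $d_i$ appearing are precisely the Jordan block sizes of $W$.

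Concretely the cleanest route is: (i) $\Leftrightarrow$ (ii) by Lemma \ref{lemma:basicpairedproperty}; (ii) $\Rightarrow$ (iii) because if $Z = W \oplus W'$ with $W, W'$ totally singular submodules, then any $v \in Z$ with $X^d v = 0$ decomposes as $v = w + w'$, and by Lemma \ref{lemma:basicXdlinear}(iv) together with $b(X^{d-1}w, w) = b(X^{d-1}w', w') = 0$ (both $W, W'$ totally singular) we get $b(X^{d-1}v, v) = b(X^{d-1}w, w') + b(X^{d-1}w', w) = b(X^{d-1}w, w') + b(w', X^{d-1}w)$ by Lemma \ref{lemma:basicXdlinear}(iii), and since $b$ is alternating (hence symmetric in characteristic two) this is $2 b(X^{d-1}w, w') = 0$; (iii) $\Rightarrow$ (i): condition (iii) says $\varepsilon_{Z,b}(d) = 0$ for all $d$, so by Lemma \ref{lemma:epsilonequivalence} no $V(d)$ occurs as an orthogonal summand, hence by Theorem \ref{thm:hesselinkform} $(Z,b) \cong W(d_1) \perp \cdots \perp W(d_t)$ where the $d_i$ (with multiplicities, i.e. the list with $d_i$ repeated $n_i/2$ times) are the Jordan block sizes, and each $W(d_i) = (V_{d_i} \oplus V_{d_i}^*, a)$ is a paired module with maximal totally singular submodule $V_{d_i}$, so the orthogonal direct sum is the paired module associated with $\bigoplus V_{d_i} \cong W$. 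The final displayed isomorphism then drops out of the same computation.

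The main obstacle is a bookkeeping one rather than a conceptual one: I must make sure the multiplicities match up correctly — i.e. that when $W \cong V_{d_1} \oplus \cdots \oplus V_{d_t}$ (here allowing repeats), the Jordan block sizes of $Z = W \oplus W^*$ on $(Z,b)$ are exactly those of $W$ each doubled, so that the Hesselink normal form genuinely reads $W(d_1) \perp \cdots \perp W(d_t)$ with the $d_i$ as listed, and to be careful that (iii) $\Rightarrow$ (i) uses the uniqueness part of Theorem \ref{thm:hesselinkform} to rule out the $V(d_i)$ option for every even $d_i$. The one subtlety worth a sentence in the write-up is that in (ii) $\Rightarrow$ (iii) I am using that $b$ alternating implies $b$ symmetric, which holds since $\operatorname{char} K = 2$; this is what makes the cross-term vanish.
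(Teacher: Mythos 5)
Your proposal is correct and, in its final ``cleanest route'' form, is essentially the paper's proof: (i) $\Leftrightarrow$ (ii) via Lemma \ref{lemma:basicpairedproperty}, (ii) $\Rightarrow$ (iii) by splitting $v = w + w'$ and killing the terms using total singularity, and (iii) $\Rightarrow$ (i) plus the normal-form statement via Lemma \ref{lemma:epsilonequivalence} and the classification of orthogonally indecomposables. The only cosmetic difference is that the paper applies Lemma \ref{lemma:basicXdlinear}(iv) directly to get $b(X^{d-1}v,v) = b(X^{d-1}w,w) + b(X^{d-1}w',w')$, whereas you re-derive the cancellation of the cross terms by hand; both are fine.
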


\begin{proof}
The equivalence of (i) and (ii) is given by Lemma \ref{lemma:basicpairedproperty}. We show that (ii) implies (iii). Let $Z = W \oplus W'$ be a totally singular decomposition, where $W$ and $W'$ are $K[u]$-submodules of $Z$. For any $v \in \operatorname{Ker} X^d$, we can write $v = w + w'$ where $w \in \operatorname{Ker} X_W^d$ and $w' \in \operatorname{Ker} X_{W'}^d$. Then $b(X^{d-1}v,v) = b(X^{d-1}w,w) + b(X^{d-1}w',w')$ by Lemma \ref{lemma:basicXdlinear} (iv). Since $W$ and $W'$ are totally singular, it follows that $b(X^{d-1}v,v) = 0$.

Next we show that (iii) implies (i). If (iii) holds, then $(Z,b)$ does not have any orthogonal direct summands of the form $V(m)$ by Lemma \ref{lemma:epsilonequivalence}. It follows from Theorem \ref{thm:orthogonallyindecomposables} that $(Z, b) \cong W(d_1) \perp \cdots \perp W(d_t)$ for some integers $d_i > 0$, and consequently $(Z,b)$ is a paired module since each $W(d_i)$ is.

For the last statement of the lemma, suppose that $Z = W \oplus W'$ as in (ii) and $W \cong V_{d_1} \oplus \cdots \oplus V_{d_t}$ as $K[u]$-modules. Then $W = W^\perp$ since $Z = W \oplus W'$ is a totally singular decomposition, so $W' \cong V/W = V/W^\perp \cong W^*$. Hence $$Z \cong W \oplus W^* \cong V_{d_1}^2 \oplus \cdots \oplus V_{d_t}^2.$$ As in the previous paragraph, as a bilinear $K[u]$-module $(Z,b)$ decomposes into an orthogonal direct sum involving only summands of the form $W(d)$, so we must have $$(Z, b) \cong W(d_1) \perp \cdots \perp W(d_t)$$ as bilinear $K[u]$-modules.\end{proof}

\begin{lemma}\label{lemma:indresforcyclicALPHA} Let $\alpha > 0$ be such that $2^{\alpha} \leq q$ and let $0 < d \leq q/2^{\alpha}$. Then:
\begin{enumerate}[\normalfont (i)]
\item If $d$ is even, then $\Ind_{\langle u^{2^{\alpha}} \rangle}^{\langle u \rangle}(V(d)) \cong V(2^{\alpha}d)$.
\item $\Ind_{\langle u^{2^{\alpha}} \rangle}^{\langle u \rangle}(W(d)) \cong W(2^{\alpha}d)$.
\item $\Res_{\langle u^2 \rangle}^{\langle u \rangle}(V(2d)) \cong \begin{cases} V(d)^2, &\text{ if } d \text{ is even}.\\ W(d), &\text{ if } d \text{ is odd}.\end{cases}$
\item Write $d = a2^{\alpha-1} + r$ for $0 \leq r < 2^{\alpha-1}$. Then $$\Res_{\langle u^{2^{\alpha}} \rangle}^{\langle u \rangle}(V(2d)) \cong \begin{cases} V(d/2^{\alpha-1})^{2^{\alpha}}, &\text{ if } 2^{\alpha} \mid d.\\ W(a+1)^r \perp W(a)^{2^{\alpha-1}-r}, &\text{ if } 2^{\alpha} \nmid d.\end{cases}$$ where we define $W(0) = 0$.
\end{enumerate}
\end{lemma}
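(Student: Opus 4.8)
\textbf{Proof proposal for Lemma \ref{lemma:indresforcyclicALPHA}.}

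The plan is to establish the four statements by combining the abstract characterizations already available --- Lemma \ref{lemma:inducedpaired} (induction preserves paired modules), Lemma \ref{lemma:pairedjordan} (a non-degenerate alternating module is paired iff $\varepsilon \equiv 0$, with a formula for its Hesselink form in terms of Jordan blocks), and Lemma \ref{lemma:epsilonequivalence}/Theorem \ref{thm:hesselinkepsilon} (the Hesselink normal form is determined by the Jordan block data together with the values of $\varepsilon$) --- with the module-theoretic computations of Jordan blocks under induction and restriction from Lemma \ref{lemma:inductionpowerp} and Lemma \ref{lemma:restrictionpowerp}. In each case the strategy is the same: first determine the underlying $K[u]$-module (hence the Jordan block sizes and their multiplicities) via the earlier lemmas, then determine the value of $\varepsilon_{?,b}$ on each Jordan block size, and finally read off the Hesselink normal form from Theorem \ref{thm:hesselinkepsilon}.

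For (ii), $W(d)$ is a paired module by Definition \ref{def:wnmod}, so $\Ind(W(d))$ is paired by Lemma \ref{lemma:inducedpaired}; by Lemma \ref{lemma:inductionpowerp} its underlying module is $\Ind_{\langle u^{2^\alpha}\rangle}^{\langle u\rangle}(V_d \oplus V_d) \cong V_{2^\alpha d}^{\,2}$, so by the last statement of Lemma \ref{lemma:pairedjordan} it is $W(2^\alpha d)$. For (i), $V(d)$ with $d$ even has underlying module $V_d$, so $\Ind(V(d))$ has underlying module $V_{2^\alpha d}$ by Lemma \ref{lemma:inductionpowerp}; since $2^\alpha d$ is even and $V_{2^\alpha d}$ is indecomposable, by Theorem \ref{thm:orthogonallyindecomposables} the induced bilinear module is either $V(2^\alpha d)$ or else not non-degenerate --- but non-degeneracy of the induced form is clear from the definition of the induced form --- and it cannot be a sum of $W(\cdot)$'s for dimension reasons, so it is $V(2^\alpha d)$. (Equivalently: $\varepsilon(2^\alpha d) = 1$ because $V(d)$ already forces $b(X^{d-1}v,v)\neq 0$ for some $v$ with $X^d v = 0$, and this element, viewed inside the induced module, witnesses $\varepsilon(2^\alpha d) = 1$ after pushing it up by $X^{(2^\alpha-1)d}$; I would spell out whichever argument is cleanest.) Statement (iii) is the special case $\alpha = 1$ of (iv), but it also follows directly: if $d$ is odd then $W(d)$ is the unique option since $\varepsilon(d) = 0$ for $d$ odd by Lemma \ref{lemma:basicepsilonoddeven}(i), and $\Res_{\langle u^2\rangle}(V_{2d}) \cong V_d^2$ by Lemma \ref{lemma:restrictionpowerp}, so Lemma \ref{lemma:pairedjordan} gives $W(d)$; if $d$ is even then $\Res_{\langle u^2\rangle}(V_{2d}) \cong V_d^2$ again, and I must show $\varepsilon(d) = 1$ for the restricted form --- this should follow by restricting the explicit basis $(e_i)$ in Definition \ref{def:vnmod} and checking that $b(X^{d-1}v,v)\neq 0$ for a suitable $v$ in the restricted module, or by noting $V(2d)^3 \cong W(2d)\perp V(2d)$ and tracking restrictions through Lemma \ref{lemma:threeorthogonals}.

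For (iv): by Lemma \ref{lemma:restrictionpowerp}, $\Res_{\langle u^{2^\alpha}\rangle}^{\langle u\rangle}(V_{2d})$ has Jordan blocks $V_{a+1}^{2r} \oplus V_a^{\,2^\alpha - 2r}$ when writing $2d = a\cdot 2^\alpha + 2r$ with $0 \le 2r < 2^\alpha$, i.e. $d = a\cdot 2^{\alpha-1} + r$ with $0 \le r < 2^{\alpha-1}$. When $2^\alpha \nmid d$, not every block has even multiplicity, but the key point is that every block has \emph{even} multiplicity here ($2r$ and $2^\alpha - 2r$ are both even), so this does not immediately force $\varepsilon = 0$; instead I would argue that $\varepsilon$ vanishes on all these blocks by showing the restricted module admits a totally singular $K[u^{2^\alpha}]$-submodule decomposition --- this comes from restricting the totally singular decomposition $V_{2d}\oplus V_{2d}^* $ hidden in $V(2d)^3 \cong W(2d) \perp V(2d)$, or more directly by producing the totally singular halves explicitly from the $W_i$ of the proof of Lemma \ref{lemma:restrictionpowerp}. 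Then Lemma \ref{lemma:pairedjordan} delivers $W(a+1)^r \perp W(a)^{2^{\alpha-1}-r}$. When $2^\alpha \mid d$, the restriction is $V_{d/2^{\alpha-1}}^{2^\alpha}$; here $d/2^{\alpha-1}$ is even and I expect $\varepsilon(d/2^{\alpha-1}) = 1$, which would give $V(d/2^{\alpha-1})^{2^\alpha}$ --- this should again be checked from the explicit basis in Definition \ref{def:vnmod} restricted to $\langle u^{2^\alpha}\rangle$.

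\textbf{Main obstacle.} The genuinely delicate part is computing $\varepsilon_{V(2d),b}$ on the \emph{restricted} module: unlike the induced case, where Lemma \ref{lemma:inducedpaired} hands the answer to us, for restriction I cannot appeal to a clean functorial statement and must argue about the quadratic expression $b(X^{d-1}v,v)$ on kernels of powers of $X$ for $X = u^{2^\alpha} - 1 = (u-1)^{2^\alpha}$. I expect the cleanest route is to leverage $V(2d)^3 \cong W(2d)\perp V(2d)$ (Lemma \ref{lemma:threeorthogonals}) so that restriction of $V(2d)$ is pinned down modulo paired summands, and then use a dimension/parity count together with Lemma \ref{lemma:basicepsilonoddeven} to decide between $V(\cdot)$ and $W(\cdot)$ on each block; the odd/even dichotomy in the $d$ odd versus $d$ even subcases of (iii) is exactly where Lemma \ref{lemma:basicepsilonoddeven}(i) forces the answer in the odd case and a direct witness is needed in the even case.
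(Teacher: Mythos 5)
Your treatment of (i) and (ii) matches the paper's: induction of the underlying module via Lemma \ref{lemma:inductionpowerp}, then Theorem \ref{thm:orthogonallyindecomposables} forces $V(2^{\alpha}d)$ in (i) by indecomposability, and Lemma \ref{lemma:inducedpaired} plus Lemma \ref{lemma:pairedjordan} give (ii). For (iii), the paper simply cites an external reference (Lawther); your direct plan for the odd case is fine, and the even case genuinely does require an explicit witness.

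The gap is in (iii) (even case) and especially (iv), where you defer the decisive computations to ``should be checked'' and where one of your two fallback routes is circular: restricting $V(2d)^3 \cong W(2d)\perp V(2d)$ only yields $\Res(V(2d))^3 \cong \Res(W(2d)) \perp \Res(V(2d))$, which is Lemma \ref{lemma:threeorthogonals} again and pins down nothing about $\Res(V(2d))$ itself. The other route (explicit totally singular halves from the $W_i$ of Lemma \ref{lemma:restrictionpowerp}) is not immediate either, because that proof uses the standard Jordan basis while $V(2d)$ is defined on the different basis of Definition \ref{def:vnmod}. What you are missing is the mechanism the paper uses to avoid all of these direct computations in (iv): an induction on $\alpha$ through the tower $\langle u\rangle \supset \langle u^2\rangle \supset \langle u^4\rangle \supset \cdots$, using (iii) as the inductive step. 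Concretely, when $2^{\alpha}\mid d$ one restricts one index-$2$ step at a time and applies the even case of (iii) repeatedly to get $V(d/2^{\alpha-1})^{2^{\alpha}}$; when $2^{\alpha}\nmid d$, one restricts down to $\langle u^{2^{\beta+1}}\rangle$ where $2^{\beta}\,\|\,d$, at which point the odd case of (iii) produces $W(d/2^{\beta})^{2^{\beta}}$, a paired module --- and pairedness is inherited by all further restrictions, so Lemma \ref{lemma:restrictionpowerp} together with Lemma \ref{lemma:pairedjordan} finishes. Without this bootstrapping (or a completed basis computation), your proof of (iv), and of the even case of (iii) on which it rests, is incomplete.
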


\begin{proof}For (i), note that by Lemma \ref{lemma:inductionpowerp} we have $\Ind_{\langle u^{2^{\alpha}} \rangle}^{\langle u \rangle}(V_d) \cong V_{2^{\alpha}d}$ as $K[u]$-modules. Thus from Theorem \ref{thm:orthogonallyindecomposables} it is clear that $\Ind_{\langle u^{2^{\alpha}} \rangle}^{\langle u \rangle}(V(d)) \cong V(2^{\alpha}d)$. 

By Lemma \ref{lemma:inducedpaired} the induced module $\Ind_{\langle u^{2^{\alpha}} \rangle}^{\langle u \rangle}(W(d))$ is a paired $K[u]$-module, so (ii) follows from Lemma \ref{lemma:pairedjordan} and the fact that $\Ind_{\langle u^{2^{\alpha}} \rangle}^{\langle u \rangle}(V_d) \cong V_{2^{\alpha}d}$ as $K[u]$-modules.

Claim (iii) is \cite[Lemma 4.1]{LawtherOuter}. For claim (iv), note that the case $\alpha = 1$ is the same as (iii). For $\alpha > 1$, we prove by induction on $\alpha$ that $\Res_{\langle u^{2^{\alpha}} \rangle}^{\langle u \rangle}(V(2d)) \cong V(d/2^{\alpha-1})^{2^{\alpha}}$ if $2^{\alpha} \mid d$. If $\alpha > 1$ and $2^{\alpha} \mid d$, then $\Res_{\langle u^{2^{\alpha-1}} \rangle}^{\langle u \rangle}(V(2d)) \cong V(d/2^{\alpha-2})^{2^{\alpha-1}}$ by induction. On the other hand $\Res_{\langle u^{2^{\alpha}} \rangle}^{\langle u^{2^{\alpha-1}} \rangle}(V(d/2^{\alpha-2})) \cong V(d/2^{\alpha-1})^2$ by (iii), so we conclude that $\Res_{\langle u^{2^{\alpha}} \rangle}^{\langle u \rangle}(V(2d)) \cong V(d/2^{\alpha-1})^{2^{\alpha}}$.

Next consider the case where $2^{\alpha} \nmid d$. We show first that $\Res_{\langle u^{2^{\alpha}} \rangle}^{\langle u \rangle}(V(2d))$ is a paired module. To this end, let $0 \leq \beta < \alpha$ be such that $2^{\beta} \mid d$ and $2^{\beta+1} \nmid d$. Then we have already shown that $\Res_{\langle u^{2^{\beta}} \rangle}^{\langle u \rangle}(V(2d)) \cong V(d/2^{\beta-1})^{2^{\beta}}$. Since $d/2^{\beta}$ is odd, it follows from (iii) that $\Res_{\langle u^{2^{\beta+1}} \rangle}^{\langle u^{2^{\beta}} \rangle} V(d/2^{\beta-1}) \cong W(d/2^{\beta})$ and so $\Res_{\langle u^{2^{\beta+1}} \rangle}^{\langle u \rangle}(V(2d)) \cong W(d/2^{\beta})^{2^{\beta}}$. Thus $\Res_{\langle u^{2^{\alpha}} \rangle}^{\langle u \rangle}(V(2d))$ is a paired module since $\Res_{\langle u^{2^{\beta+1}} \rangle}^{\langle u \rangle}(V(2d))$ is. Now $2d = a2^{\alpha} + 2r$, so by Lemma \ref{lemma:restrictionpowerp} $$\Res_{\langle u^{2^{\alpha}} \rangle}^{\langle u \rangle}(V_{2d}) \cong V_{a+1}^{2r} \oplus V_a^{2^{\alpha}-2r}$$ and (iv) follows from Lemma \ref{lemma:pairedjordan}.\end{proof}

\section{Tensor products of bilinear $K[u]$-modules}\label{section:tensorproductbilinear}

We keep the setup of the previous section, so let $u$ be a generator of a cyclic $2$-group of order $q > 1$ and denote $X = u-1$. 

In this section, we describe how to decompose tensor products of non-degenerate alternating bilinear $K[u]$-modules into orthogonally indecomposable summands. Clearly, it suffices to do this for the orthogonally indecomposable bilinear $K[u]$-modules, which (up to isomorphism) are of the form $V(2d)$ or $W(d)$ for some integer $d > 0$ (Theorem \ref{thm:orthogonallyindecomposables}). For tensor products with $W(d)$, the following proposition is an easy consequence of Lemma \ref{lemma:tensorpaired} and Lemma \ref{lemma:pairedjordan}.

\begin{prop}\label{prop:pairedtensorcase}
Let $0 < d,d' \leq q$. Suppose that $V_d \otimes V_{d'} \cong V_{d_1} \oplus \cdots \oplus V_{d_t}$. Then:
\begin{enumerate}[\normalfont (i)]
\item If $d'$ is even, then $W(d) \otimes V(d') \cong W(d_1) \perp \cdots \perp W(d_t)$.
\item $W(d) \otimes W(d') \cong W(d_1)^2 \perp \cdots \perp W(d_t)^2$.
\end{enumerate}
\end{prop}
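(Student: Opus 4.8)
The statement concerns tensor products involving the paired module $W(d)$, so the natural strategy is to reduce everything to the structural characterizations of paired modules already established in Lemma~\ref{lemma:tensorpaired} and Lemma~\ref{lemma:pairedjordan}. For part (i), I would start from the fact that $W(d) = (V_d \oplus V_d^*, a)$ is a paired module by definition; by Lemma~\ref{lemma:tensorpaired}, the tensor product $W(d) \otimes V(d')$ is then also a paired module, no matter what $(V(d'),b)$ is. So it remains only to identify \emph{which} paired module it is, and here the last statement of Lemma~\ref{lemma:pairedjordan} does all the work: a paired module is determined up to isomorphism by its Jordan type, and if the underlying $K[u]$-module of the ``totally singular half'' is $V_{d_1} \oplus \cdots \oplus V_{d_t}$ then the form is $W(d_1) \perp \cdots \perp W(d_t)$. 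Thus I need to compute the Jordan type of $W(d) \otimes V(d')$ as a $K[u]$-module and then halve it.

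The underlying $K[u]$-module of $W(d)$ is $V_d \oplus V_d^* \cong V_d^2$ (every $K[u]$-module is self-dual), and the underlying $K[u]$-module of $V(d')$ is $V_{d'}$. Hence the underlying $K[u]$-module of $W(d) \otimes V(d')$ is $(V_d \otimes V_{d'})^2 \cong (V_{d_1} \oplus \cdots \oplus V_{d_t})^2$. Since $W(d) \otimes V(d')$ is a paired module, Lemma~\ref{lemma:pairedjordan} (applied with the totally singular half having Jordan type $V_{d_1} \oplus \cdots \oplus V_{d_t}$, which is consistent with the total Jordan type being this doubled) yields $W(d) \otimes V(d') \cong W(d_1) \perp \cdots \perp W(d_t)$. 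Note the hypothesis that $d'$ is even is only needed so that $V(d')$ is defined at all. Part (ii) is handled identically: $W(d')$ is a paired module, so $W(d) \otimes W(d')$ is a paired module by Lemma~\ref{lemma:tensorpaired}; its underlying $K[u]$-module is $V_d^2 \otimes V_{d'}^2 \cong (V_{d_1} \oplus \cdots \oplus V_{d_t})^4$, so the totally singular half has Jordan type $(V_{d_1} \oplus \cdots \oplus V_{d_t})^2$, and Lemma~\ref{lemma:pairedjordan} gives $W(d) \otimes W(d') \cong W(d_1)^2 \perp \cdots \perp W(d_t)^2$.

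The only genuinely delicate point — and the one I would be most careful about — is the bookkeeping in invoking the final clause of Lemma~\ref{lemma:pairedjordan}: that clause takes as input a totally singular decomposition $Z = W \oplus W'$ and reads off the form from the Jordan type of $W$. So strictly I should exhibit such a decomposition rather than just cite the Jordan type of $Z$. But this is immediate: being a paired module \emph{means} admitting such a decomposition (Lemma~\ref{lemma:basicpairedproperty} / Lemma~\ref{lemma:pairedjordan}(ii)), and in a totally singular decomposition $Z = W \oplus W'$ one always has $W' \cong W^*$ and hence the Jordan type of $W$ is exactly half that of $Z$. So the two halves have the same Jordan type, the total is the doubled type computed above, and the conclusion follows. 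No serious obstacle remains; the proof is essentially a two-line deduction from the cited lemmas once the self-duality observation $V_d^* \cong V_d$ is in place.
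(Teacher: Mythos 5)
Your proof is correct and follows exactly the route the paper intends: the paper gives no detailed argument, stating only that the proposition ``is an easy consequence of Lemma~\ref{lemma:tensorpaired} and Lemma~\ref{lemma:pairedjordan},'' and your write-up (paired module via Lemma~\ref{lemma:tensorpaired}, Jordan type of the totally singular half via self-duality and Krull--Schmidt, then the final clause of Lemma~\ref{lemma:pairedjordan}) is precisely that easy consequence spelled out. The one point you flagged as delicate is handled correctly, since in any totally singular decomposition $Z = W \oplus W'$ one has $W' \cong W^* \cong W$, so the half is determined by halving the Jordan type of $Z$.
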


Here Proposition \ref{prop:pairedtensorcase} describes the tensor products $W(d) \otimes V(d')$ and $W(d) \otimes W(d')$ in terms of indecomposable summands of $V_d \otimes V_{d'}$, which can be calculated with Theorem \ref{thm:tensordecompchar2}. 

For the rest of this section, we will consider the decomposition of $V(2d) \otimes V(2d')$ into orthogonally indecomposable summands, and Theorem \ref{thm:formtensorproducts} gives a complete answer in terms of the decomposition of $V_{2d} \otimes V_{2d'}$. We begin with a series of lemmas that deal with the case where $d$ and $d'$ are odd.

\begin{lemma}\label{lemma:inducedsingulardecomp}
Let $0 < \ell \leq q$ be an odd integer. Then there exists a non-degenerate alternating $u$-invariant bilinear form $a$ on $\Ind_{\langle u^2 \rangle}^{\langle u \rangle}(V_\ell)$ such that $(\Ind_{\langle u^2 \rangle}^{\langle u \rangle}(V_\ell), a) \cong V(2\ell)$ and $$\Ind_{\langle u^2 \rangle}^{\langle u \rangle}(V_\ell) = (1 \otimes V_\ell) \oplus (u \otimes V_\ell)$$ is a totally singular decomposition with respect to $a$.
\end{lemma}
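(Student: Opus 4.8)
The plan is to exhibit the bilinear form $a$ explicitly and then verify the claimed properties directly. First I would recall that $\Ind_{\langle u^2 \rangle}^{\langle u \rangle}(V_\ell) = (1 \otimes V_\ell) \oplus (u \otimes V_\ell)$ as $K$-vector spaces, where each summand is a copy of $V_\ell$ as a $K[u^2]$-module, and $u$ swaps the two summands (up to the $u^2$-action). The natural candidate is to start from the paired module $W(\ell) = (V_\ell \oplus V_\ell^*, a_0)$ over $K[u^2]$ and set $a = $ the induced form $\Ind_{\langle u^2 \rangle}^{\langle u \rangle}(a_0)$; by Lemma \ref{lemma:inducedpaired} this is a paired $K[u]$-module, hence non-degenerate alternating, and by construction the decomposition into $1 \otimes (\text{stuff})$ and $u \otimes (\text{stuff})$ coming from the paired structure is totally singular. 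However, to match the statement — where the two totally singular summands are $1 \otimes V_\ell$ and $u \otimes V_\ell$ with $V_\ell$ a plain $K[u^2]$-module, not $V_\ell \oplus V_\ell^*$ — I would instead realize $W(\ell)$ over $K[u^2]$ as $V_\ell \oplus V_\ell'$ for two totally singular $K[u^2]$-submodules (possible since $\ell$ is odd: $\operatorname{Res}_{\langle u^2\rangle}(V_{2\ell}) \cong V_\ell^2$ by Lemma \ref{lemma:restrictionpowerp} type reasoning, and one checks $V_{2\ell}$ has a totally singular $K[u^2]$-submodule), or more cleanly just take $a$ to be an induced form from a single copy of $V_\ell$ paired against its $u$-translate.

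Concretely, the key steps would be: (1) Define $a$ on $\Ind_{\langle u^2 \rangle}^{\langle u \rangle}(V_\ell)$ by declaring $1 \otimes V_\ell$ and $u \otimes V_\ell$ to be totally singular and setting $a(1 \otimes x,\, u \otimes y) := b_\ell(x, y)$ for a suitable $u^2$-invariant pairing; then extend by $u$-invariance, which forces $a(u \otimes x,\, 1 \otimes y) = a(1 \otimes x,\, u^2 \otimes y)$-type relations to be consistent — this is essentially the formula in the definition of $\Ind_H^G(L,b)$ with $H = \langle u^2\rangle$, $G = \langle u \rangle$. (2) Check that $a$ is alternating: on $1 \otimes V_\ell$ and $u \otimes V_\ell$ this holds by total singularity, and for a general vector $v = 1 \otimes x + u \otimes y$ one computes $a(v,v) = a(1\otimes x, u \otimes y) + a(u \otimes y, 1 \otimes x) = 0$ by symmetry of $a$ (all forms here are symmetric since $\operatorname{char} K = 2$) — this needs the pairing between the two slots to be symmetric in the appropriate sense, which one arranges by choosing $b_\ell$ symmetric; but one must be slightly careful since $\ell$ is odd and the relevant self-pairing on $V_\ell$ coming back via $u^2$ could be the "wrong" one, so I would verify $a(u \otimes x,\, 1 \otimes y) = a(1 \otimes y,\, u \otimes x)$ using $u^2$-invariance of $b_\ell$. (3) Check $a$ is non-degenerate: this is immediate from the fact that the pairing between $1 \otimes V_\ell$ and $u \otimes V_\ell$ is perfect (take $b_\ell$ to be a perfect pairing on $V_\ell$, e.g. the one with $b_\ell(e_i, e_j) = [i+j = \ell+1]$). (4) Finally, identify the isometry type: since $a$ is non-degenerate alternating and $\Ind_{\langle u^2 \rangle}^{\langle u \rangle}(V_\ell) \cong V_{2\ell}$ as a $K[u]$-module by Lemma \ref{lemma:inductionpowerp}, the module is indecomposable, hence orthogonally indecomposable, so by Theorem \ref{thm:orthogonallyindecomposables} it must be $V(2\ell)$ (it cannot be $W(2\ell)$, which has Jordan type $V_{2\ell}^2$) — note $2\ell$ is even so $V(2\ell)$ exists.

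The main obstacle I anticipate is step (2)/(3): making sure the induced form is genuinely alternating and non-degenerate rather than, say, degenerate or merely symmetric. The subtlety is that inducing along $\langle u^2 \rangle \leq \langle u \rangle$ means the "diagonal" contribution $a(u \otimes x, \, u \otimes y)$ (which must vanish) interacts with the off-diagonal pairing via the $u^2$-action, and for $\ell$ odd one has to confirm that $b_\ell$ being $u^2$-invariant and perfect on $V_\ell$ is compatible with $1 \otimes V_\ell$ being totally singular — equivalently, that $V_{2\ell}$ really does admit a $K[u^2]$-submodule decomposition into two totally singular pieces. This is exactly where oddness of $\ell$ is used: for $\ell$ odd, $\operatorname{Res}_{\langle u^2 \rangle}^{\langle u \rangle} V(2\ell) \cong W(\ell)$ (this is Lemma \ref{lemma:indresforcyclicALPHA}(iii)), which is a paired module and hence has such a totally singular decomposition; running this backwards gives the desired description. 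So an alternative, cleaner route is: invoke Lemma \ref{lemma:indresforcyclicALPHA}(iii) to know $\operatorname{Res}_{\langle u^2\rangle}^{\langle u\rangle} V(2\ell) \cong W(\ell)$, use that $W(\ell)$ has a totally singular $K[u^2]$-decomposition $V_\ell \oplus V_\ell^*$, and check that the $\langle u \rangle$-module $V(2\ell)$ is induced from either summand — then $a$ is just the restriction of the form on $V(2\ell)$ transported along this identification, and all the properties are inherited for free. I would likely present this second route as the main argument and relegate the explicit formula to a remark.
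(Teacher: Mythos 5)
Your overall strategy --- realize $V(2\ell)$ as $W \oplus u(W)$ for a totally singular $K[u^2]$-submodule $W \cong V_\ell$ and transport the form to the induced module --- is the same as the paper's, but the decisive step is missing from both of your routes. In the route you designate as the main argument, Lemma \ref{lemma:indresforcyclicALPHA}(iii) gives $\Res_{\langle u^2\rangle}^{\langle u\rangle}V(2\ell) \cong W(\ell)$ and hence \emph{some} totally singular decomposition $V(2\ell) = A \oplus B$ into $K[u^2]$-submodules with $A \cong B \cong V_\ell$; but the lemma needs the two summands to correspond to $1 \otimes V_\ell$ and $u \otimes V_\ell$, i.e.\ you must produce such an $A$ with $A \cap u(A) = 0$, so that $V(2\ell) = A \oplus u(A)$. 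Nothing in the abstract isomorphism forces $B = u(A)$ (note that $A \cap u(A)$ is a $u$-invariant subspace, so a priori it could contain the socle $\langle e_1 \rangle$), and your phrase ``check that the $\langle u\rangle$-module $V(2\ell)$ is induced from either summand'' names exactly the content that has to be proved without indicating how. The paper settles it by an explicit choice: with the basis of Definition \ref{def:vnmod} it takes $W = \langle e_2, e_4, \ldots, e_{2\ell}\rangle$, checks that $W$ is $u^2$-invariant and isomorphic to $V_\ell$ over $K[u^2]$, that $W$ is totally singular (two even indices never sum to $2\ell+1$), and that $W \cap u(W) = 0$ because this intersection is $u$-invariant yet contains no nonzero $u$-fixed vector.

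Your first route contains a concrete error rather than just a gap. If $a$ is symmetric, $u$-invariant, vanishes on each of $1\otimes V_\ell$ and $u \otimes V_\ell$, and satisfies $a(1\otimes x, u\otimes y) = b_\ell(x,y)$, then applying $u$ and using $u^2 \otimes y = 1 \otimes u^2y$ gives $b_\ell(x,y) = a(u\otimes x, 1\otimes u^2y) = b_\ell(u^2y, x)$; for a symmetric, $u^2$-invariant, non-degenerate $b_\ell$ this forces $b_\ell(x,(u^2-1)y)=0$ for all $x,y$, hence $u^2 = 1$ on $V_\ell$, which fails for every odd $\ell \geq 3$. The pairing that actually arises between the two summands is the twisted one $c(x,y) = b(x,uy)$, which is not symmetric; and it does not come from the paper's induced-form construction, since $\Ind_H^G(L,b)$ pairs vectors lying in the same coset and makes distinct cosets orthogonal --- the opposite of what is needed here. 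Your step (4), identifying the isometry type via orthogonal indecomposability and Theorem \ref{thm:orthogonallyindecomposables}, is correct and would be a legitimate alternative to the paper's method of transporting the form from $V(2\ell)$, but it only becomes available once a valid form has actually been constructed.
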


\begin{proof}
Consider first $V = (V(2\ell), b)$ with a basis $e_1$, $\ldots$, $e_{2\ell}$ such that $b(e_i,e_j) = 1$ if $i+j = 2\ell+1$ and $0$ otherwise, and with the action of $u$ on the $e_i$ as in Definition \ref{def:vnmod}. 

Let $W = \langle e_2, e_4, \ldots, e_{2\ell} \rangle$. Then $W$ is $u^2$-invariant, since $u^2e_{2i} = e_{2i} + e_{2i-2} + \cdots + e_{2}$ for $2 \leq 2i \leq \ell+1$ and $u^2e_{2i} = e_{2i} + e_{2i-2}$ for $\ell+3 \leq 2i \leq 2\ell$. Furthermore, we claim that $W \cap u(W) = 0$. To this end, note that $W \cap u(W)$ is $u$-invariant since $u^2(W) \subseteq W$. On the other hand, there are no non-zero $u$-fixed points in $W \cap u(W)$ since there are none in $W$, so we must have $W \cap u(W) = 0$.

Therefore $V = W \oplus u(W)$ and this is a totally singular decomposition. We have $W \cong V_\ell$ as $\langle u^2 \rangle$-modules, so it follows from a basic property of induced modules \cite[Proof of Lemma 4, pp. 56-57]{Alperin} that there exists an isomorphism $\varphi: \Ind_{\langle u^2 \rangle}^{\langle u \rangle}(V_\ell) \rightarrow V$ of $K[u]$-modules with $\varphi(1 \otimes V_\ell) = W$ and $\varphi(u \otimes V_\ell) = u(W)$. Now we can define a non-degenerate $u$-invariant alternating bilinear form $a$ on $\Ind_{\langle u^2 \rangle}^{\langle u \rangle}(V_\ell)$ via $a(x,y) = b(\varphi(x), \varphi(y))$ for all $x,y \in \Ind_{\langle u^2 \rangle}^{\langle u \rangle}(V_\ell)$. It is clear that $(\Ind_{\langle u^2 \rangle}^{\langle u \rangle}(V_\ell), a) \cong V(2\ell)$, and furthermore $\Ind_{\langle u^2 \rangle}^{\langle u \rangle}(V_\ell) = (1 \otimes V_\ell) \oplus (u \otimes V_\ell)$ is a totally singular decomposition since $V = W \oplus u(W)$ is.\end{proof}

\begin{lemma}\label{lemma:keylemmaoddind}
Let $0 < \ell,k \leq q/2$ be odd integers. Then we have an orthogonal decomposition $V(2\ell) \otimes V(2k) = W \perp W'$, where $W$ and $W'$ are $K[u]$-submodules of $V(2\ell) \otimes V(2k)$ such that $$W \cong \Ind_{\langle u^2 \rangle}^{\langle u \rangle}(V_\ell \otimes V_k) \cong W'$$ as $K[u]$-modules.
\end{lemma}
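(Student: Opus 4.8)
The plan is to exploit the totally singular decomposition coming from induction, exactly as set up in Lemma \ref{lemma:inducedsingulardecomp}. By that lemma we may realize $V(2\ell) = \Ind_{\langle u^2 \rangle}^{\langle u \rangle}(V_\ell)$ with the totally singular decomposition $V(2\ell) = A_0 \oplus A_1$, where $A_0 = 1 \otimes V_\ell$ and $A_1 = u \otimes V_\ell$, and similarly $V(2k) = \Ind_{\langle u^2 \rangle}^{\langle u \rangle}(V_k) = B_0 \oplus B_1$ with $B_0 = 1 \otimes V_k$, $B_1 = u \otimes V_k$ totally singular. Tensoring, $V(2\ell) \otimes V(2k)$ decomposes as the direct sum of the four $K[u^2]$-submodules $A_i \otimes B_j$ (note these are $u^2$-invariant, not $u$-invariant, since $u$ swaps $A_0 \leftrightarrow A_1$ and $B_0 \leftrightarrow B_1$). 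With respect to the product form $a \otimes a'$, each $A_i \otimes B_j$ is totally singular, and $A_i \otimes B_j$ pairs nontrivially only with $A_{1-i} \otimes B_{1-j}$. So the form pairs $A_0 \otimes B_0$ with $A_1 \otimes B_1$, and $A_0 \otimes B_1$ with $A_1 \otimes B_0$.

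Next I would set $W = (A_0 \otimes B_0) \oplus (A_1 \otimes B_1)$ and $W' = (A_0 \otimes B_1) \oplus (A_1 \otimes B_0)$. Since $u$ interchanges $A_0 \otimes B_0 \leftrightarrow A_1 \otimes B_1$ and $A_0 \otimes B_1 \leftrightarrow A_1 \otimes B_0$, both $W$ and $W'$ are $K[u]$-submodules of $V(2\ell) \otimes V(2k)$. From the pairing pattern above, $W$ and $W'$ are orthogonal to each other: $(a \otimes a')$ vanishes on $(A_0\otimes B_0, A_0 \otimes B_1)$, $(A_0 \otimes B_0, A_1 \otimes B_0)$, etc. Hence $V(2\ell)\otimes V(2k) = W \perp W'$. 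It remains to identify $W$ and $W'$ as $K[u]$-modules with $\Ind_{\langle u^2\rangle}^{\langle u\rangle}(V_\ell \otimes V_k)$. For this I would argue that $W = (A_0 \otimes B_0) \oplus u(A_0 \otimes B_0)$ with $A_0 \otimes B_0 \cong V_\ell \otimes V_k$ as $K[u^2]$-modules, and then invoke the standard characterization of induced modules (as in \cite[Proof of Lemma 4, pp. 56--57]{Alperin}): if $V$ is a $K[u]$-module, $Z$ a $K[u^2]$-submodule with $V = Z \oplus uZ$, then $V \cong \Ind_{\langle u^2\rangle}^{\langle u\rangle}(Z)$; the same applies to $W' = (A_0 \otimes B_1) \oplus u(A_0 \otimes B_1)$ with $A_0 \otimes B_1 \cong V_\ell \otimes V_k$ as $K[u^2]$-modules (here one uses that $\langle u^2 \rangle$ acts on $B_1 = u \otimes V_k$ the same way as on $V_k$, since $u^2$ is central in $\langle u \rangle$). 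This gives $W \cong \Ind_{\langle u^2\rangle}^{\langle u\rangle}(V_\ell \otimes V_k) \cong W'$, as required.

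The main obstacle is the bookkeeping of the orthogonality pattern: one must check carefully that $(a \otimes a')(x_i \otimes y_j,\, x_{i'} \otimes y_{j'}) = a(x_i, x_{i'})\, a'(y_j, y_{j'})$ vanishes unless $\{i,i'\} = \{0,1\}$ and $\{j,j'\} = \{0,1\}$, which follows immediately from the fact that $A_0, A_1$ are each totally singular for $a$ and $B_0, B_1$ are each totally singular for $a'$ — so this is genuinely routine once the decomposition is in place. A secondary point that needs a line of care is confirming that the two four-term pieces $W$ and $W'$ really are $u$-stable and not just $u^2$-stable; this is exactly the observation that $u$ permutes the four summands $A_i \otimes B_j$ by simultaneously flipping both indices, so it preserves the partition $\{(0,0),(1,1)\}$ and $\{(0,1),(1,0)\}$. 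No hard input beyond Lemma \ref{lemma:inducedsingulardecomp} and the elementary induced-module criterion is needed; the parity hypotheses ($\ell, k$ odd) enter only through the availability of that lemma.
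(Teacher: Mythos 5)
Your proposal is correct and follows essentially the same route as the paper: both arguments start from the totally singular decompositions of Lemma \ref{lemma:inducedsingulardecomp} and take for $W$ exactly the subspace $\bigl((1\otimes V_\ell)\otimes(1\otimes V_k)\bigr)\oplus\bigl((u\otimes V_\ell)\otimes(u\otimes V_k)\bigr)$. The one genuine (minor) difference is at the end: the paper defines $W'$ abstractly as the orthogonal complement of $W$ and identifies its isomorphism type via the tensor identity $\Ind\otimes{}\cong\Ind(\cdot\otimes\Res)$ together with Krull--Schmidt, whereas you write $W'$ down explicitly as $(A_0\otimes B_1)\oplus(A_1\otimes B_0)$ and apply the $Z\oplus uZ$ criterion for induced modules directly to both summands --- a slightly cleaner finish that avoids the Krull--Schmidt step.
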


\begin{proof}
It will suffice to prove the lemma for $(\Ind_{\langle u^2 \rangle}^{\langle u \rangle}(V_\ell), a) \otimes (\Ind_{\langle u^2 \rangle}^{\langle u \rangle}(V_{k}), a')$ where $a$ and $a'$ are as in Lemma \ref{lemma:inducedsingulardecomp}. Now $\Ind_{\langle u^2 \rangle}^{\langle u \rangle}(V_\ell) = (1 \otimes V_\ell) \oplus (u \otimes V_\ell)$ is a totally singular decomposition, so there exist bases $e_1$, $\ldots$, $e_\ell$ and $f_1$, $\ldots$, $f_\ell$ of $V_\ell$ such that $a(1 \otimes e_i, u \otimes f_j) = \delta_{i,j}$ for all $1 \leq i,j \leq \ell$. Similarly, one finds bases $e_1'$, $\ldots$, $e_{k}'$ and $f_1'$, $\ldots$, $f_{k}'$ of $V_{k}$ such that $a(1 \otimes e_i', u \otimes f_j') = \delta_{i,j}$ for all $1 \leq i,j \leq k$.

Consider the map $\theta: \Ind_{\langle u^2 \rangle}^{\langle u \rangle}(V_\ell \otimes V_{k}) \rightarrow \Ind_{\langle u^2 \rangle}^{\langle u \rangle}(V_\ell) \otimes \Ind_{\langle u^2 \rangle}^{\langle u \rangle}(V_{k})$ defined by $\theta(g \otimes (x \otimes y)) = (g \otimes x) \otimes (g \otimes y)$ for all $g \in \langle u \rangle$, $x \in V_\ell$, and $y \in V_{k}$. It is straightforward to see that $\theta$ is an injective map of $K[u]$-modules. 

We claim that $W = \operatorname{Im} \theta$ is a non-degenerate subspace of $\Ind_{\langle u^2 \rangle}^{\langle u \rangle}(V_\ell) \otimes \Ind_{\langle u^2 \rangle}^{\langle u \rangle}(V_{k})$ with respect to the tensor product form $b = a \otimes a'$. For this, first note that $W$ has as a basis the elements $v_{i,i_0} = (1 \otimes e_i) \otimes (1 \otimes e_{i_0}')$ and $w_{j,j_0} = (u \otimes f_j) \otimes (u \otimes f_{j_0}')$ for $1 \leq i,j \leq \ell$ and $1 \leq i_0,j_0 \leq k$. We have $b(v_{i,i_0}, w_{j,j_0}) = \delta_{i,j} \delta_{i_0,j_0}$ for all $1 \leq i,j \leq \ell$ and $1 \leq i_0,j_0 \leq k$, so it follows that $W$ is non-degenerate. Therefore $$\Ind_{\langle u^2 \rangle}^{\langle u \rangle}(V_\ell) \otimes \Ind_{\langle u^2 \rangle}^{\langle u \rangle}(V_{k}) = W \perp W',$$ where $W'$ is the orthogonal complement to $W$ with respect to $b$.


Now $W \cong \Ind_{\langle u^2 \rangle}^{\langle u \rangle}(V_\ell \otimes V_{k})$, and by \cite[Lemma 5 (5), p. 57]{Alperin} \begin{align*}\Ind_{\langle u^2 \rangle}^{\langle u \rangle}(V_\ell) \otimes \Ind_{\langle u^2 \rangle}^{\langle u \rangle}(V_{k}) &\cong \Ind_{\langle u^2 \rangle}^{\langle u \rangle}(V_\ell \otimes \Res_{\langle u^2 \rangle}^{\langle u \rangle}\Ind_{\langle u^2 \rangle}^{\langle u \rangle}(V_{k}))\\&\cong \Ind_{\langle u^2 \rangle}^{\langle u \rangle}(V_\ell \otimes V_{k}^2)\\&\cong W \oplus W\end{align*} as $K[u]$-modules. From the Krull-Schmidt theorem for $K[u]$-modules, we conclude that $W' \cong W$.\end{proof}

With the lemmas above, we can now prove the main result of this section.

\begin{lause}\label{thm:formtensorproducts}
Let $0 < \ell,k \leq q/2$. Then: 
\begin{enumerate}[\normalfont (i)]
\item $V_{2\ell} \otimes V_{2k} \cong V_{2d_1}^{2k_1} \oplus \cdots \oplus V_{2d_t}^{2k_t}$ for some integers $0 < d_1 < \cdots < d_t$ and $k_i > 0$.
\item If $\nu_2(\ell) \neq \nu_2(k)$, then as a bilinear $K[u]$-module $V(2\ell) \otimes V(2k)$ is isomorphic to $$\bigperp_{1 \leq i \leq t} W(2d_i)^{k_i}.$$
\item If $\nu_2(\ell) = \nu_2(k) = \alpha$, then there a unique $j$ such that $\nu_2(d_j) = \alpha$, and $d_j/2^{\alpha}$ is the unique odd Jordan block size in $V_{\ell/2^{\alpha}} \otimes V_{k/2^{\alpha}}$. Furthermore $k_j = 2^{\alpha}$, and as a bilinear $K[u]$-module $V(2\ell) \otimes V(2k)$ is isomorphic to $$V(2d_j)^{2^{\alpha+1}} \perp \bigperp_{\substack{1 \leq i \leq t \\ i \neq j}} W(2d_i)^{k_i}.$$
\end{enumerate}
\end{lause}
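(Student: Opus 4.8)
\textbf{Proof plan for Theorem \ref{thm:formtensorproducts}.}

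\medskip

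The plan is to reduce everything to the two lemmas just proven (Lemma \ref{lemma:keylemmaoddind} and Lemma \ref{lemma:inducedsingulardecomp}) together with the restriction/induction formulas for $V(2d)$ from Lemma \ref{lemma:indresforcyclicALPHA}, using $\nu_2(\ell)$ as the induction parameter. Part (i) is immediate from Theorem \ref{thm:tensordecompchar2}: if $0 < \ell, k \leq q/2$, one checks directly that every summand of $V_{2\ell} \otimes V_{2k}$ has even Jordan block size and even multiplicity --- the cleanest route is to note $V_{2\ell} \otimes V_{2k} \cong \operatorname{Res}_{\langle u^2 \rangle}^{\langle u \rangle}\operatorname{Ind}_{\langle u^2 \rangle}^{\langle u \rangle}(\cdots)$ type manipulations are not needed; instead observe that $V_{2\ell} = \operatorname{Ind}_{\langle u^2\rangle}^{\langle u\rangle}(V_\ell)$ when $\ell \leq q/2$ (Lemma \ref{lemma:inductionpowerp}), so by Lemma \ref{lemma:bilinearindres}-type reasoning $V_{2\ell}\otimes V_{2k} \cong \operatorname{Ind}_{\langle u^2\rangle}^{\langle u\rangle}(V_\ell \otimes \operatorname{Res} V_{2k})$, and since an induced module from an index-two subgroup has each Jordan block doubled in size (Lemma \ref{lemma:inductionpowerp}), all block sizes are even; the even multiplicity then follows by restricting back down and using Lemma \ref{lemma:restrictionpowerp}. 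Write $V_{2\ell}\otimes V_{2k} \cong \bigoplus_i V_{2d_i}^{2k_i}$ accordingly.

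\medskip

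For part (ii), I would argue by induction on $\min(\nu_2(\ell), \nu_2(k))$, say $\nu_2(\ell) < \nu_2(k)$ after swapping. The base case $\nu_2(\ell) = 0$ (so $\ell$ odd, $k$ even) is where I would use a different decomposition: then I want to show $V(2\ell)\otimes V(2k)$ is a \emph{paired} module. By Lemma \ref{lemma:inducedsingulardecomp}, $V(2\ell) \cong \operatorname{Ind}_{\langle u^2\rangle}^{\langle u\rangle}(V_\ell, b')$ with $(V_\ell, b')$ a paired $K[u^2]$-module (the totally singular decomposition $1\otimes V_\ell \oplus u\otimes V_\ell$ exhibits it as such via Lemma \ref{lemma:basicpairedproperty}); hence $V(2\ell)$ is a paired $K[u]$-module, so by Lemma \ref{lemma:tensorpaired} the tensor product $V(2\ell)\otimes V(2k)$ is a paired $K[u]$-module, and Lemma \ref{lemma:pairedjordan} then gives $V(2\ell)\otimes V(2k)\cong \bigperp_i W(2d_i)^{k_i}$ as claimed. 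Actually this argument works whenever one of $\ell, k$ is odd without any induction. For the general case $0 < \nu_2(\ell) < \nu_2(k)$, write $\alpha = \nu_2(\ell)$ and use Lemma \ref{lemma:indresforcyclicALPHA}(iv): restricting $V(2\ell)$ and $V(2k)$ to $\langle u^2 \rangle$ gives $V(\ell)^2$ (since $\ell$ even) and $V(k)^2$, so $\operatorname{Res}_{\langle u^2\rangle}^{\langle u\rangle}(V(2\ell)\otimes V(2k)) \cong (V(\ell)\otimes V(k))^4$ with $\nu_2(\ell/2) < \nu_2(k/2)$ or eventually $\ell/2^{\alpha}$ odd; combining the inductive conclusion for the restriction with the fact that $V(2\ell)\otimes V(2k)$ has all block sizes even and $V(2m)$ is the \emph{only} orthogonally indecomposable with even block sizes not of the form $W(\cdot)$, plus a counting/parity argument using Lemma \ref{lemma:indresforcyclicALPHA}(iii) to rule out $V(2d_i)$ summands, forces the answer to be a perp-sum of $W(2d_i)$'s. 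Alternatively --- and more cleanly --- I would try to prove directly that $V(2\ell)\otimes V(2k)$ is paired whenever $\nu_2(\ell)\neq\nu_2(k)$ by exhibiting a totally singular $K[u]$-submodule decomposition coming from Lemma \ref{lemma:keylemmaoddind}: pull $2^{\min}$ powers out via induction from $\langle u^{2^{\alpha}}\rangle$ and apply Lemma \ref{lemma:inducedpaired}.

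\medskip

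For part (iii), suppose $\nu_2(\ell) = \nu_2(k) = \alpha$. Write $\ell = 2^{\alpha}\ell_0$, $k = 2^{\alpha}k_0$ with $\ell_0, k_0$ odd. By Lemma \ref{lemma:indresforcyclicALPHA}(iv), $\operatorname{Res}_{\langle u^{2^{\alpha}}\rangle}^{\langle u\rangle}(V(2\ell)) \cong V(2\ell_0)^{2^{\alpha}}$ and similarly for $k$ (here $2^\alpha \mid \ell$ since $\ell = 2^\alpha \ell_0$). So the restriction of $V(2\ell)\otimes V(2k)$ to $\langle u^{2^\alpha}\rangle$ is $(V(2\ell_0)\otimes V(2k_0))^{4^{\alpha}}$. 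Now I apply Lemma \ref{lemma:keylemmaoddind} (the $\ell_0, k_0$ odd case): $V(2\ell_0)\otimes V(2k_0) = W\perp W'$ with $W \cong W' \cong \operatorname{Ind}_{\langle u^{2}\rangle}^{\langle u^{2^\alpha}\rangle}(\cdots)$ wait --- I should instead directly induce $V(2\ell_0)\otimes V(2k_0)$ back up from $\langle u^{2^\alpha}\rangle$ to $\langle u\rangle$. The key point: $V_{\ell_0}\otimes V_{k_0}$ has a unique odd block $V_{d_0}$ by Lemma \ref{lemma:uniqueoddchar2}; then $V_{2\ell_0}\otimes V_{2k_0}$ has $V_{2d_0}$ with odd multiplicity, so $\operatorname{Ind}_{\langle u^{2^\alpha}\rangle}^{\langle u\rangle}$ of it contains $V_{2^\alpha\cdot 2d_0} = V_{2d_j}$ where $d_j = 2^\alpha d_0$ is the unique block with $\nu_2(d_j) = \alpha$; the $W\perp W'$ part of Lemma \ref{lemma:keylemmaoddind} is paired (being of the form $\operatorname{Ind}$ applied to a module $\oplus\oplus$) and handles all summands except the "diagonal" one, which must be $V(2d_j)^{2^{\alpha+1}}$ by the odd-multiplicity criterion of Lemma \ref{lemma:basicepsilonoddeven}(ii) combined with $\varepsilon$-additivity from Lemma \ref{lemma:basicXdlinear}(iv). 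I expect \textbf{the main obstacle} to be in part (iii): carefully tracking that the single $V(2d_j)$-type orthogonal summand survives with exactly the right multiplicity $2^{\alpha+1}$ and that all the \emph{other} block sizes $d_i$ ($i\neq j$) contribute only $W(2d_i)$'s --- this requires combining the "paired" half of Lemma \ref{lemma:keylemmaoddind} with a dimension count and the fact (Lemma \ref{lemma:basicepsilonoddeven}(ii)) that a block of even multiplicity need not be paired, so one genuinely needs the structural input from Lemma \ref{lemma:keylemmaoddind} and cannot argue by multiplicities alone.
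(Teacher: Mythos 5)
Your overall architecture (induct/restrict along subgroups of $\langle u\rangle$, use Lemma \ref{lemma:keylemmaoddind} for the ``diagonal'' case, detect $V(2d)$ summands via odd multiplicity) matches the paper's, but the proposal contains one outright error and one unresolved gap. The error is in part (ii): Lemma \ref{lemma:inducedsingulardecomp} does \emph{not} exhibit $V(2\ell)$ as a paired $K[u]$-module. The decomposition $(1\otimes V_\ell)\oplus(u\otimes V_\ell)$ is into $\langle u^2\rangle$-submodules, not $\langle u\rangle$-submodules ($u$ interchanges them), so Lemma \ref{lemma:basicpairedproperty} applies only after restriction to $\langle u^2\rangle$. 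Indeed $V(2\ell)$ is a single Jordan block, while every paired module has all Jordan multiplicities even, so $V(2\ell)$ is never a paired $K[u]$-module; and your remark that the argument ``works whenever one of $\ell,k$ is odd'' would give $V(2)\otimes V(2)\cong W(2)$, contradicting part (iii) of the theorem and Example \ref{example:v2tensor}, which give $V(2)^2$. The pairedness in case (ii) must come from the factor with the \emph{smaller} $2$-adic valuation: with $\alpha=\nu_2(\ell)>\nu_2(k)$ and $\ell=2^{\alpha}\ell'$, one has $V(2\ell)\otimes V(2k)\cong \Ind_{\langle u^{2^{\alpha}}\rangle}^{\langle u\rangle}\bigl(V(2\ell')\otimes\Res_{\langle u^{2^{\alpha}}\rangle}^{\langle u\rangle}(V(2k))\bigr)$ by Lemma \ref{lemma:bilinearindres}, and $\Res_{\langle u^{2^{\alpha}}\rangle}^{\langle u\rangle}(V(2k))$ is paired by Lemma \ref{lemma:indresforcyclicALPHA}(iv) precisely because $2^{\alpha}\nmid k$; Lemmas \ref{lemma:tensorpaired}, \ref{lemma:inducedpaired} and \ref{lemma:pairedjordan} then finish. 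This also supplies the missing base case of your induction (whose inductive step via restriction to $\langle u^2\rangle$ could otherwise be made to work).

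In part (iii) you do have a viable route to $\varepsilon(2d_j)=1$: $V_{2d_0}$ has multiplicity one in the non-degenerate summand $W$ of Lemma \ref{lemma:keylemmaoddind} (not ``odd multiplicity in $V_{2\ell_0}\otimes V_{2k_0}$'', where it is $2$), so Lemma \ref{lemma:basicepsilonoddeven}(ii) forces a $V(2d_0)$ summand of $W$, and inducing up gives one of $V(2d_j)$; once $\varepsilon(2d_j)=1$ is known, Theorem \ref{thm:hesselinkepsilon} automatically yields $V(2d_j)^{2^{\alpha+1}}$, so the multiplicity-tracking you flag is not the real obstacle. What is genuinely missing is the proof that $\varepsilon(2d_i)=0$ for $i\neq j$. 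Your phrase ``the $W\perp W'$ part is paired and handles all summands except the diagonal one'' is not an argument: $W\perp W'$ is \emph{all} of $V(2\ell_0)\otimes V(2k_0)$, and it is an orthogonal (not totally singular) decomposition, so it is not paired. The missing step is to restrict to $\langle u^{2^{\alpha+1}}\rangle$: by Lemma \ref{lemma:indresforcyclicALPHA}(iv) and Lemma \ref{lemma:tensorpaired} the restriction of $V(2\ell)\otimes V(2k)$ there is a paired module, whereas for $i\neq j$ one has $2^{\alpha+1}\mid d_i$ and $\Res_{\langle u^{2^{\alpha+1}}\rangle}^{\langle u\rangle}(V(2d_i))$ is a perpendicular sum of copies of $V(d_i/2^{\alpha})$, which is not paired; hence no $V(2d_i)$ can occur as an orthogonal summand. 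Without this (or an equivalent) argument, part (iii) is not proved.
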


\begin{proof}We have $V_{2\ell} \otimes V_{2k} \cong \Ind_{\langle u^2 \rangle}^{\langle u \rangle}(V_\ell \otimes V_{2k}) \cong \Ind_{\langle u^2 \rangle}^{\langle u \rangle}(V_\ell \otimes \Res_{\langle u^2 \rangle}^{\langle u \rangle}(V_{2k}))$ by \cite[Lemma 5 (5), p. 57]{Alperin}, so \begin{equation}\label{eq:indreseq}V_{2\ell} \otimes V_{2k} \cong \Ind_{\langle u^2 \rangle}^{\langle u \rangle}(V_\ell \otimes V_k)^2\end{equation} as $K[u]$-modules. Thus $V_{2\ell} \otimes V_{2k} \cong V_{2d_1}^{2k_1} \oplus \cdots \oplus V_{2d_t}^{2k_t}$, where $V_\ell \otimes V_k \cong V_{d_1}^{k_1} \oplus \cdots \oplus V_{d_t}^{k_t}$ for some integers $0 < d_1 < \cdots < d_t$ and $k_i > 0$, which proves (i). We note here that (i) follows also from \cite[Theorem 5]{GlasbyPraegerXia}.

For (ii), we assume without loss of generality that $\alpha = \nu_2(\ell) > \nu_2(k)$. Write $\ell = 2^{\alpha}\ell'$. We have $V(2\ell) \cong \Ind_{\langle u^{2^{\alpha}} \rangle}^{\langle u \rangle}(V(2\ell'))$ by Lemma \ref{lemma:indresforcyclicALPHA} (i), so it follows with Lemma \ref{lemma:bilinearindres} that \begin{align}V(2\ell) \otimes V(2k) &\cong \Ind_{\langle u^{2^{\alpha}} \rangle}^{\langle u \rangle}(V(2\ell')) \otimes V(2k) \nonumber \\ \label{eq:indreseq2} &\cong \Ind_{\langle u^{2^{\alpha}} \rangle}^{\langle u \rangle}(V(2\ell') \otimes \Res_{\langle u^{2^{\alpha}} \rangle}^{\langle u \rangle}(V(2k)))\end{align} as bilinear $K[u]$-modules.

Write $k = 2^{\alpha-1}k' + r$ for $0 \leq r < 2^{\alpha-1}$. Since $2^{\alpha} \nmid k$, by Lemma \ref{lemma:indresforcyclicALPHA} (iv) $$\Res_{\langle u^{2^{\alpha}} \rangle}^{\langle u \rangle}(V(2k))) \cong W(2k'+1)^{r} \perp W(2k')^{2^{\alpha-1}-r}$$ as bilinear $K[u]$-modules. Thus~\eqref{eq:indreseq2} is a paired module by Lemma \ref{lemma:tensorpaired} and Lemma \ref{lemma:inducedpaired}, which combined with Lemma \ref{lemma:pairedjordan} gives (ii).

Next we consider (iii), so suppose that $\alpha = \nu_2(\ell) = \nu_2(k)$, and write $\ell = 2^{\alpha}\ell'$, $k = 2^{\alpha}k'$, where $\ell',k'$ are odd integers. Similarly to~\eqref{eq:indreseq}, we see that \begin{equation}\label{eq:indreseq0}V_{2\ell} \otimes V_{2k} \cong \Ind_{\langle u^{2^{\alpha+1}} \rangle}^{\langle u \rangle}(V_{\ell'} \otimes V_{k'})^{2^{\alpha+1}}\end{equation} as $K[u]$-modules. By Lemma \ref{lemma:uniqueoddchar2} the tensor product $V_{\ell'} \otimes V_{k'}$ has a unique Jordan block of odd size $d'$, occurring with multiplicity $1$. Hence we conclude from~\eqref{eq:indreseq0} that $\nu_2(d_j) = \alpha$ for a unique $j$, and $k_j = 2^{\alpha}$. Note that $d_j = 2^{\alpha}d'$ and $\nu_2(d_i) > \alpha$ for all $i \neq j$.

We will now proceed to show that $V(2d_i)$ occurs as an orthogonal direct summand of $V(2\ell) \otimes V(2k)$ if and only if $i = j$, which will complete the proof of (iii) and the theorem. First note that $$\Res_{\langle u^{2^{\alpha+1}} \rangle}^{\langle u \rangle}(V(2\ell) \otimes V(2k)) \cong W(\ell') \otimes W(k')$$ is paired module by Lemma \ref{lemma:indresforcyclicALPHA} (iv) and Lemma \ref{lemma:tensorpaired}. On the other hand, by Lemma \ref{lemma:indresforcyclicALPHA} (iv) we have $\Res_{\langle u^{2^{\alpha+1}} \rangle}^{\langle u \rangle} V(2d_i) \cong V(d_i/2^{\alpha})$ for $i \neq j$ (since $\nu_2(d_i) > \alpha$). Thus we conclude that if $i \neq j$, then $V(2d_i)$ cannot be an orthogonal direct summand of $V(2\ell) \otimes V(2k)$.

What remains is to show that $V(2d_j)$ occurs as an orthogonal direct summand of $V(2\ell) \otimes V(2k)$. For this, first note that $\Res_{\langle u^{2^{\alpha}} \rangle}^{\langle u \rangle}(V(2k))) \cong V(2k')^{2^{\alpha}}$ by Lemma \ref{lemma:indresforcyclicALPHA} (iv). Thus by Lemma \ref{lemma:bilinearindres} \begin{equation}\label{eqref:case3formula}V(2\ell) \otimes V(2k) \cong \Ind_{\langle u^{2^{\alpha}} \rangle}^{\langle u \rangle}(V(2\ell') \otimes V(2k'))^{2^{\alpha}}\end{equation} as bilinear $K[u]$-modules, as in~\eqref{eq:indreseq2}. 

By Lemma \ref{lemma:keylemmaoddind}, we have $V(2\ell') \otimes V(2k') = W \perp W'$, where $$W \cong \Ind_{\langle u^2 \rangle}^{\langle u \rangle}(V_{\ell'} \otimes V_{k'})$$ as $K[u]$-modules. Since $V_{d'}$ occurs with multiplicity $1$ in $V_{\ell'} \otimes V_{k'}$, we conclude that $V_{2d'}$ occurs with multiplicity $1$ in $W$. In this case $V(2d')$ must occur as an orthogonal direct summand of $W$ by Lemma \ref{lemma:basicepsilonoddeven} (ii) and Lemma \ref{lemma:epsilonequivalence}. Now it follows from~\eqref{eqref:case3formula} and Lemma \ref{lemma:indresforcyclicALPHA} that $V(2^{\alpha+1}d') = V(2d_j)$ occurs as an orthogonal direct summand of $V(2\ell) \otimes V(2k)$.\end{proof}

We finish this section by giving some examples that illustrate Theorem \ref{thm:formtensorproducts}.

\begin{esim}\label{example:v2tensor}
For any $0 < k \leq q/2$, it follows from~\eqref{eq:indreseq} that $V_2 \otimes V_{2k} \cong V_{2k}^2$. Thus we conclude from Theorem \ref{thm:formtensorproducts} that $$V(2) \otimes V(2k) \cong \begin{cases}W(2k),&\text{ if } k \equiv 0 \mod{2}. \\ V(2k)^2,&\text{ if } k \equiv 1 \mod{2}.\end{cases}$$
\end{esim}

\begin{esim}\label{example:v4tensor}
For $1 < k \leq q/2$, it is well known that $V_2 \otimes V_k \cong V_k^2$ if $k \equiv 0 \mod{2}$ and $V_2 \otimes V_k \cong V_{k-1} \oplus V_{k+1}$ if $k \equiv 1 \mod{2}$. It follows then from~\eqref{eq:indreseq} that $V_4 \otimes V_{2k} \cong V_{2k}^4$ if $k \equiv 0 \mod{2}$ and $V_4 \otimes V_{2k} \cong V_{2k-2}^2 \oplus V_{2k+2}^2$ if $k \equiv 1 \mod{2}$. Hence $$V(4) \otimes V(2k) \cong 
\begin{cases} W(2k)^2,&\text{ if } k \equiv 0 \mod{4}. \\
W(2k-2) \perp W(2k+2),&\text{ if } k \equiv 1 \mod{4}. \\
V(2k)^4,&\text{ if } k \equiv 2 \mod{4}. \\
W(2k-2) \perp W(2k+2),&\text{ if } k \equiv 3 \mod{4}. \\\end{cases}$$ by Theorem \ref{thm:formtensorproducts}.\end{esim}

\begin{esim}\label{example:v6tensor}
For $2 < k \leq q/2$, similarly to Examples \ref{example:v2tensor} and Example \ref{example:v4tensor}, from~\eqref{eq:indreseq} and the decomposition of $V_3 \otimes V_{k}$ (Example \ref{example:tensor3n}) one finds using Theorem \ref{thm:formtensorproducts} that $$V(6) \otimes V(2k) \cong 
\begin{cases} W(2k)^3,&\text{ if } k \equiv 0 \mod{4}. \\
W(2k-2)^2 \perp V(2k+4)^2,&\text{ if } k \equiv 1 \mod{4}. \\
W(2k-4) \perp W(2k) \perp W(2k+4),&\text{ if } k \equiv 2 \mod{4}. \\
V(2k-4)^2 \perp W(2k+2)^2,&\text{ if } k \equiv 3 \mod{4}. \\\end{cases}$$ 
\end{esim}

\section{An alternating bilinear form on $V \otimes V^*$}\label{section:altformtq}

Let $V$ be a finite-dimensional vector space over $K$ with $n = \dim V$ and set $G = \SL(V)$. The purpose of this section is to describe a non-zero alternating $G$-invariant bilinear form on $V \otimes V^*$ explicitly, and to give some of its basic properties.

Fix a basis $e_1$, $\ldots$, $e_n$ of $V$ and the corresponding dual basis $e_1^*$, $\ldots$, $e_n^*$ of $V^*$, so $e_i^*(e_j) = \delta_{i,j}$ for all $1 \leq i,j \leq n$. 

There is a natural bilinear form $\hat{b}_V$ on $V \otimes V^*$ defined by $$\hat{b}_V(v \otimes f, v' \otimes f') = f(v')f'(v)$$ for all $v, v' \in V$ and $f, f' \in V^*$. A straightforward calculation shows that $\hat{b}_V$ is a non-degenerate $G$-invariant symmetric bilinear form. Note that $\hat{b}_V(e_i \otimes e_i^*, e_i \otimes e_i^*) = 1$ for all $1 \leq i \leq n$, so $\hat{b}_V$ is not alternating. However, as in \cite[Lemma 4.1]{KorhonenUP}, one can use $\hat{b}_V$ to define an alternating bilinear form on $V \otimes V^*$.

Let $$\gamma_V = \sum_{1 \leq i \leq n} e_i \otimes e_i^*.$$ It is well known that the choice of $\gamma_V$ does not depend on the choice of the basis $(e_i)$, and furthermore $\gamma_V$ spans the fixed point space of $G$ on $V \otimes V^*$, see for example \cite[Lemma 3.7]{KorhonenJordanGood}.

We have a morphism of $G$-modules $\psi_V: V \otimes V^* \rightarrow K$ defined by $\psi_V(v \otimes f) = f(v)$ for all $v \in V$ and $f \in V^*$. By calculating $\psi_V(x)$ on basis elements $e_i \otimes e_j^*$ of $V \otimes V^*$, one finds that $\psi_V(x) = \hat{b}_V(x, \gamma_V)$ for all $x \in V \otimes V^*$. 

We define $\tau: V \otimes V^* \rightarrow V \otimes V^*$ by $\tau(x) = x + \psi_V(x)\gamma_V$ for all $x \in V \otimes V^*$. Then $\tau$ is a morphism of $G$-modules, so $b_V$ defined by \begin{equation}\label{eq:betaVdef}b_V(x,y) = \hat{b}_V(\tau(x), y) = \hat{b}_V(x,y) + \psi_V(x)\psi_V(y)\end{equation} for all $x, y \in V \otimes V^*$ is a $G$-invariant bilinear form on $V \otimes V^*$. For calculations, it is useful to note that $$b_V(e_i \otimes e_i^*, e_j \otimes e_j^*) = \begin{cases} 1, &\text{ if } i \neq j. \\ 0, &\text{ if } i = j.\end{cases}$$ for all $1 \leq i,j \leq n$. Furthermore, for $i \neq j$ $$b_V(e_i \otimes e_j^*, e_{i_0} \otimes e_{j_0}^*) = \begin{cases} 1, &\text{ if } i = j_0 \text { and } j = i_0. \\ 0, &\text{ otherwise.}\end{cases}$$

We will now make some basic observations about $b_V$.

\begin{lemma}\label{lemma:basiclemmaonBV}The following statements hold:

\begin{enumerate}[\normalfont (i)]
\item The bilinear form $b_V$ is alternating.
\item If $n$ is even, then $\operatorname{rad} b_V = 0$ and $\operatorname{Ker} \psi_V = \langle \gamma_V \rangle^\perp$.
\item If $n$ is odd, then $\operatorname{rad} b_V = \langle \gamma_V \rangle$ and $V \otimes V^* = \operatorname{Ker} \psi_V \oplus \langle \gamma_V \rangle$.
\item We have $\langle \gamma_V \rangle^\perp / \langle \gamma_V \rangle \cong L_G(\varpi_1 + \varpi_{n-1})$ as $G$-modules.
\end{enumerate}

\end{lemma}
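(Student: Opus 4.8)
\textbf{Proof proposal for Lemma \ref{lemma:basiclemmaonBV}.}

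The plan is to verify the four statements in order, leaning on the explicit formulas for $b_V$ on the natural basis elements $e_i \otimes e_j^*$ together with the standard facts about $\gamma_V$, $\psi_V$, and $\hat{b}_V$ recalled just above. For (i), I would argue that it suffices to check $b_V(x,x) = 0$ on a spanning set by additivity and the characteristic-two identity $b_V(x+y,x+y) = b_V(x,x) + b_V(y,y) + b_V(x,y) + b_V(y,x)$, noting $\hat{b}_V$ (and hence $b_V$) is symmetric. So it is enough to see $b_V(e_i \otimes e_j^*, e_i \otimes e_j^*) = 0$ for all $i,j$: when $i \neq j$ this is immediate from the displayed formula (the off-diagonal form vanishes on a single basis vector since it pairs $e_i \otimes e_j^*$ with $e_j \otimes e_i^*$), and when $i = j$ it is exactly the vanishing recorded in the displayed formula $b_V(e_i \otimes e_i^*, e_i \otimes e_i^*) = 0$. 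Hence $b_V$ is alternating.

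For (ii) and (iii), the key computation is $b_V(\gamma_V, y) = \hat{b}_V(\tau(\gamma_V), y) = \hat{b}_V(\gamma_V, y) + \psi_V(\gamma_V)\psi_V(y)$, and since $\psi_V(\gamma_V) = \hat{b}_V(\gamma_V,\gamma_V) = \sum_i \hat{b}_V(e_i \otimes e_i^*, e_i \otimes e_i^*) = n \cdot 1_K$, one gets $b_V(\gamma_V, y) = (1 + n)\psi_V(y) = (n-1)\psi_V(y)$ in characteristic two. When $n$ is even this equals $\psi_V(y)$; so $\gamma_V \in \operatorname{rad} b_V$ would force $\psi_V \equiv 0$, false, hence $\gamma_V \notin \operatorname{rad} b_V$. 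More is needed: to get $\operatorname{rad} b_V = 0$ I would observe that $\hat{b}_V$ is non-degenerate and $b_V(x,y) = \hat{b}_V(\tau(x),y)$, so $\operatorname{rad} b_V = \operatorname{Ker} \tau$; since $\tau(x) = x + \psi_V(x)\gamma_V$, we have $x \in \operatorname{Ker}\tau$ iff $x = \psi_V(x)\gamma_V$, which (applying $\psi_V$ and using $\psi_V(\gamma_V) = n$) forces $\psi_V(x)(1 - n \cdot 1_K) = 0$; for $n$ even $1 - n \cdot 1_K = 1 \neq 0$, so $\psi_V(x) = 0$ and thus $x = 0$. The identity $\operatorname{Ker}\psi_V = \langle \gamma_V\rangle^\perp$ then follows from $b_V(\gamma_V, y) = \psi_V(y)$ (so $y \perp \gamma_V$ iff $\psi_V(y) = 0$), using that both sides are hyperplanes. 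For (iii), when $n$ is odd we have $n \cdot 1_K = 1$, so $b_V(\gamma_V, y) = 0$ for all $y$, giving $\langle \gamma_V \rangle \subseteq \operatorname{rad} b_V$; conversely if $x \in \operatorname{Ker}\tau$ then $x = \psi_V(x)\gamma_V \in \langle \gamma_V \rangle$, so $\operatorname{rad} b_V = \langle\gamma_V\rangle$ (note $\tau(\gamma_V) = \gamma_V + n\gamma_V = (1+n)\gamma_V = 0$ here, consistent). Finally $\psi_V(\gamma_V) = n \cdot 1_K = 1 \neq 0$ shows $\gamma_V \notin \operatorname{Ker}\psi_V$, and since $\operatorname{Ker}\psi_V$ is a hyperplane, $V \otimes V^* = \operatorname{Ker}\psi_V \oplus \langle\gamma_V\rangle$.

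For (iv), I would combine Lemma \ref{lemma:typeA_VxV} with the submodule structure just established. When $n$ is odd, $V \otimes V^* \cong L_G(\varpi_1 + \varpi_{n-1}) \oplus L_G(0)$, and the unique trivial submodule is $\langle\gamma_V\rangle$; since $n$ is odd we showed $V \otimes V^* = \operatorname{Ker}\psi_V \oplus \langle\gamma_V\rangle$ with $\operatorname{Ker}\psi_V$ a complement, so $\operatorname{Ker}\psi_V \cong L_G(\varpi_1 + \varpi_{n-1})$, and $\langle\gamma_V\rangle^\perp = \langle\gamma_V\rangle \oplus (\operatorname{Ker}\psi_V \cap \langle\gamma_V\rangle^\perp)$ — but here, by a dimension count, one checks $\langle\gamma_V\rangle^\perp$ has codimension $0$ (since $\operatorname{rad} b_V = \langle\gamma_V\rangle$ means $\langle\gamma_V\rangle^\perp = V \otimes V^*$), so $\langle\gamma_V\rangle^\perp/\langle\gamma_V\rangle \cong (V\otimes V^*)/\langle\gamma_V\rangle \cong L_G(\varpi_1+\varpi_{n-1})$. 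When $n$ is even, $V \otimes V^* = L_G(0) \mid L_G(\varpi_1+\varpi_{n-1}) \mid L_G(0)$ is uniserial with socle and head both trivial; the unique trivial submodule is $\langle\gamma_V\rangle$, and by (ii) $\langle\gamma_V\rangle^\perp = \operatorname{Ker}\psi_V$, which is the unique maximal submodule (the preimage of the socle's complement), so $\langle\gamma_V\rangle^\perp/\langle\gamma_V\rangle \cong L_G(\varpi_1+\varpi_{n-1})$ is the middle composition factor. The main obstacle I anticipate is pinning down, in the $n$ even case, that $\langle\gamma_V\rangle^\perp$ is precisely the unique maximal submodule $\operatorname{Ker}\psi_V$ rather than some other hyperplane — this is handled by (ii), so in fact the whole argument is essentially bookkeeping once the radical computation is done; the only genuinely delicate point is keeping the parity of $n \cdot 1_K$ straight throughout.
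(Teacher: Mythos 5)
Your proof is correct and follows essentially the same route as the paper: checking $b_V(x,x)=0$ on the basis $e_i\otimes e_j^*$, identifying $\operatorname{rad} b_V=\operatorname{Ker}\tau$ via the non-degeneracy of $\hat{b}_V$ together with $\tau(\gamma_V)=(n+1)\gamma_V$, and then deducing (iv) from Lemma \ref{lemma:typeA_VxV} and the fact that $\langle\gamma_V\rangle$ is the unique one-dimensional submodule. The only blemish is the parenthetical ``the preimage of the socle's complement'' in the $n$ even case (the socle has no complement in a non-split uniserial module); what you actually need and correctly use is that $\operatorname{Ker}\psi_V$ is the unique maximal submodule and equals $\langle\gamma_V\rangle^\perp$ by (ii).
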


\begin{proof}
For (i), first note that $b_V$ is symmetric since $\hat{b}_V$ is. It is easy to verify that $b_V(x,x) = 0$ for all basis elements $x = e_i \otimes e_j^*$, so $b_V$ is alternating.

The bilinear form $\hat{b}_V$ is non-degenerate, so $\operatorname{rad} b_V = \operatorname{Ker} \tau$. Clearly $\operatorname{Ker} \tau \subseteq \langle \gamma_V \rangle$, and $\tau(\gamma_V) = \gamma_V + \psi_V(\gamma_V)\gamma_V = (n+1) \gamma_V$. Thus $\gamma_V \in \operatorname{Ker} \tau$ if and only if $n$ is odd, from which the claims about $\operatorname{rad} b_V$ in (ii) and (iii) follow. For other claim in (ii), note that $\operatorname{Ker} \psi_V \subseteq \langle \gamma_V \rangle^\perp$. If $n$ is even, then $\langle \gamma_V \rangle^\perp \neq V \otimes V^*$ and so equality holds since both subspaces have codimension one. The other claim in (iii) follows since $\gamma_V \not\in \operatorname{Ker} \psi_V$ when $n$ is odd.

Since $\gamma_V$ spans the unique $1$-dimensional $G$-submodule of $V \otimes V^*$, claim (iv) follows easily from (ii), (iii), and Lemma \ref{lemma:typeA_VxV}.\end{proof}

\begin{lemma}\label{lemma:formonVxVunique}
Every $G$-invariant alternating bilinear form on $V \otimes V^*$ is a scalar multiple of $b_V$.
\end{lemma}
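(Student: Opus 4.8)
The plan is to show that the space of $G$-invariant alternating bilinear forms on $V \otimes V^*$ is at most one-dimensional; since $b_V$ is a nonzero such form (Lemma \ref{lemma:basiclemmaonBV}(i)), this will finish the proof. The starting point is the standard identification of bilinear forms on a module $M$ with elements of $(M \otimes M)^*$, and of $G$-invariant ones with $((M \otimes M)^*)^G \cong \operatorname{Hom}_{K[G]}(M \otimes M, K)$. With $M = V \otimes V^*$ we have $M \otimes M \cong (V \otimes V^*) \otimes (V \otimes V^*) \cong (V \otimes V^*) \otimes (V \otimes V^*)$, and more usefully $M \cong \operatorname{End}_K(V)$ as a $G$-module, so $M \otimes M \cong \operatorname{End}_K(V) \otimes \operatorname{End}_K(V)$. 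Thus I would first compute $\dim \operatorname{Hom}_{K[G]}(V \otimes V^* \otimes V \otimes V^*, K) = \dim (V \otimes V^* \otimes V \otimes V^*)^G$, i.e. the multiplicity of the trivial module in the fourfold tensor product. Equivalently this is $\dim \operatorname{Hom}_{K[G]}(V \otimes V^*, V \otimes V^*)$, the dimension of the endomorphism algebra of $V \otimes V^*$ as a $K[G]$-module.

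Next I would pin down that endomorphism algebra using Lemma \ref{lemma:typeA_VxV}. If $n$ is odd, $V \otimes V^* \cong L_G(\varpi_1 + \varpi_{n-1}) \oplus L_G(0)$ is a direct sum of two non-isomorphic simple modules, so $\operatorname{End}_{K[G]}(V \otimes V^*)$ is two-dimensional. If $n$ is even, $V \otimes V^*$ is uniserial with composition factors $L_G(0), L_G(\varpi_1+\varpi_{n-1}), L_G(0)$; here $\operatorname{Hom}_{K[G]}(V\otimes V^*, V\otimes V^*)$ has dimension at most $3$ (spanned by the identity, the nilpotent map killing the top and bottom factor composed into the socle through the radical quotient — actually the composite $V\otimes V^* \twoheadrightarrow L_G(0) \hookrightarrow V\otimes V^*$ — and at most one more), and a short argument with the uniserial structure shows it is exactly $3$-dimensional: the endomorphisms are $a \cdot \mathrm{id} + (\text{the rank-one map through the socle})\,K$, so dimension $2$, unless there is an extra endomorphism. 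I would check this carefully, but in either case the dimension of $(V\otimes V^*\otimes V\otimes V^*)^G$ comes out to a small number $\le 3$.

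The point, however, is that not all invariant bilinear forms are alternating: among the invariant forms sit $\hat b_V$ (symmetric but not alternating) and $b_V$, and I want to show the \emph{alternating} ones form a line. So the cleaner route is: decompose $V\otimes V^*\otimes V\otimes V^* \cong (V \otimes V^*) \otimes (V\otimes V^*)$ and identify invariant bilinear forms with $\operatorname{Hom}_{K[G]}(V\otimes V^*,(V\otimes V^*)^*) = \operatorname{Hom}_{K[G]}(V\otimes V^*, V\otimes V^*)$ (using $(V\otimes V^*)^* \cong V\otimes V^*$). Under this identification, a form $b'$ corresponds to an endomorphism $\varphi$ of $V\otimes V^*$ with $b'(x,y) = \hat b_V(\varphi(x),y)$, and $b'$ is alternating iff $\varphi$ lies in a certain subspace cut out by the condition $\hat b_V(\varphi(x),x)=0$ for all $x$; equivalently $\varphi + \varphi^{*}$ (adjoint w.r.t. $\hat b_V$) acts as zero, a codimension-one-or-so linear condition. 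Running through the two cases ($n$ odd: $\operatorname{End}$ is $2$-dimensional, spanned by $\mathrm{id}$ and the projection $\pi$ onto $\langle\gamma_V\rangle$; $\mathrm{id}$ gives the symmetric $\hat b_V$, and the only alternating combination is the one corresponding to $\mathrm{id}+\pi$, which is exactly $\tau$, giving $b_V$) and ($n$ even: similar, with the radical filtration) shows the alternating invariant forms are spanned by $b_V$.

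The main obstacle I expect is the even case: computing $\operatorname{End}_{K[G]}(V\otimes V^*)$ precisely when $V\otimes V^*$ is uniserial $L_G(0)|L_G(\varpi_1+\varpi_{n-1})|L_G(0)$, and then correctly identifying which linear combinations of endomorphisms yield an \emph{alternating} form rather than merely a symmetric or arbitrary invariant one. One must be careful that $\operatorname{Hom}_{K[G]}(V\otimes V^*, V\otimes V^*)$ could a priori have dimension $2$ or $3$, and that the adjoint (transpose w.r.t. $\hat b_V$) preserves this algebra and acts on it in a way one can describe; the alternating condition is then $\varphi^* = \varphi$ together with $\hat b_V(\varphi(x),x)=0$, and one checks directly on the $e_i\otimes e_j^*$ basis that the unique (up to scalar) solution is $\varphi = \tau$. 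An alternative, possibly slicker argument avoiding endomorphism-ring bookkeeping: observe that $\operatorname{rad}$ and $\langle\gamma_V\rangle$ are characteristic submodules, so any $G$-invariant form restricts to $\langle\gamma_V\rangle^\perp$ and descends, and use Lemma \ref{lemma:basiclemmaonBV}(iv) together with the fact that the simple module $L_G(\varpi_1+\varpi_{n-1})$ carries a unique-up-to-scalar invariant alternating form (Fong's lemma, already invoked in the introduction) to force uniqueness by induction on the filtration length; I would present whichever of these comes out shortest.
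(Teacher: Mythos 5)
Your endomorphism-ring route is sound and genuinely different from the paper's. The paper's proof disposes of the even case by citing an external lemma (\cite[Lemma 4.2]{KorhonenUP}) and, for $n$ odd, argues directly that $\gamma_V$ lies in the radical of any invariant alternating form $b$: the map $x \mapsto b(x,\gamma_V)$ is a morphism to the trivial module, hence vanishes on the nontrivial irreducible summand $W$, and it vanishes on $\gamma_V$ because $b$ is alternating; uniqueness then follows from uniqueness of the invariant form on the irreducible $W$. (This is essentially the "alternative" you sketch in your last paragraph.) Your primary route instead identifies the full space of invariant bilinear forms with $\operatorname{End}_{K[G]}(V\otimes V^*)$ via the non-degenerate $\hat{b}_V$ and then cuts out the alternating line. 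This is more work, but it makes the even case self-contained rather than delegated to a citation, and it yields the stronger fact that \emph{all} invariant forms on $V \otimes V^*$ form the two-dimensional space spanned by $\hat{b}_V$ and $\psi_V \otimes \psi_V$.

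The two points you leave open do resolve, and you should pin them down. First, $\operatorname{End}_{K[G]}(V\otimes V^*)$ is exactly $2$-dimensional in the even (uniserial) case, not $3$: the image of a non-injective endomorphism is a proper nonzero submodule, hence $\operatorname{soc}$ or $\operatorname{rad}$; image equal to $\operatorname{rad}$ would force $(V\otimes V^*)/\operatorname{soc}(V \otimes V^*) \cong \operatorname{rad}(V\otimes V^*)$, which fails because these uniserial modules have non-isomorphic socles ($L_G(\varpi_1+\varpi_{n-1})$ versus $L_G(0)$). So the only endomorphisms are $a\cdot\mathrm{id} + c\,\pi$ with $\pi: x\mapsto \psi_V(x)\gamma_V$, and the corresponding forms are $a\hat{b}_V + c\,\psi_V\otimes\psi_V$. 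Second, the alternating condition can legitimately be checked on the basis $e_i\otimes e_j^*$: for a symmetric form $b'$ in characteristic two the map $x\mapsto b'(x,x)$ is additive, so it vanishes identically once it vanishes on a basis. This gives $a=c$, i.e.\ the line spanned by $\hat{b}_V+\psi_V\otimes\psi_V = b_V$, which is the form attached to $\mathrm{id}+\pi=\tau$, exactly as you predicted.
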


\begin{proof}
It will suffice to show that $V \otimes V^*$ has a unique $G$-invariant alternating bilinear form up to a scalar multiple. If $n$ is even, this follows from \cite[Lemma 4.2]{KorhonenUP}. If $n$ is odd, it follows from Lemma \ref{lemma:typeA_VxV} that $V \otimes V^* = W \oplus \langle \gamma_V \rangle$, where $W \cong L_G(\varpi_1 + \varpi_{n-1})$. 

Let $b$ be a $G$-invariant alternating bilinear form on $V \otimes V^*$. Then the map $f: V \otimes V^* \rightarrow K$ defined by $f(v) = b(v, \gamma_V)$ is a morphism of $G$-modules, where $G$ acts trivially on $K$. The map $f$ must vanish on $W$ since $W$ is a non-trivial irreducible $K[G]$-module, and furthermore $f$ vanishes on $\gamma_V$ since $b$ is alternating. Thus $f$ is zero, which means that $\gamma_V \in \operatorname{rad} b$. Now the claim follows, since $W$ is irreducible and thus has a unique $G$-invariant bilinear form up to a scalar multiple.\end{proof}

\begin{remark}An alternative point of view that could have been used in this section is the following. Recall that there is a natural isomorphism $V \otimes V^* \rightarrow \operatorname{End}(V)$ of $G$-modules, where for $v \in V$ and $f \in V^*$ the image of $v \otimes f$ is the linear map $V \rightarrow V$ defined by $w \mapsto f(w)v$. Here $G$ acts on $\operatorname{End}(V)$ by conjugation. 

Under this isomorphism, the element $\gamma_V$ corresponds to the identity map $\operatorname{Id}_V$ on $V$, and the map $\psi_V$ corresponds to the trace map $\operatorname{End}(V) \rightarrow K$. The bilinear form $\hat{b}_V$ corresponds to the bilinear form on $\operatorname{End}(V)$ defined by $(A,B) \mapsto \operatorname{Tr}(AB)$. The map $\tau$ corresponds to $A \mapsto A + \operatorname{Tr}(A)\operatorname{Id}_V$, so the alternating bilinear form $b_V$ corresponds to the bilinear form defined by $(A,B) \mapsto \operatorname{Tr}(AB) - \operatorname{Tr}(A)\operatorname{Tr}(B)$.\end{remark}

\section{An alternating bilinear form on $\wedge^2(V)$}\label{section:altformaq}

Let $b$ be a non-degenerate alternating bilinear form on a vector space $V$ over $K$ with $\dim V = 2n$. Set $G = \Sp(V,b)$. This section is analogous to the previous one, and we will be concerned with a non-zero alternating $G$-invariant bilinear form on $\wedge^2(V)$ and its basic properties.

The $G$-invariant bilinear form $b$ on $V$ induces a $G$-invariant bilinear form $\hat{a}_V$ on $\wedge^2(V)$ via $\hat{a}_V(v_1 \wedge v_2, w_1 \wedge w_2) = \det(b(v_i, w_j))_{1 \leq i,j \leq 2}$ for all $v_i,w_j \in V$. That is, $$\hat{a}_V(v_1 \wedge v_2, w_1 \wedge w_2) = b(v_1,w_1)b(v_2,w_2) + b(v_1,w_2)b(v_2,w_1)$$ for all $v_1, v_2, w_1,w_2 \in V$. The bilinear form $\hat{a}_V$ is a non-degenerate $G$-invariant symmetric bilinear form. Now $\hat{a}_V$ is not alternating, but as in Section \ref{section:altformtq}, with a small modification we can construct a $G$-invariant alternating bilinear form.

Fix a basis $e_1$, $\ldots$, $e_{2n}$ of $V$ such that $b(e_i, e_j) = 1$ if $i+j = 2n+1$ and $b(e_i,e_j) = 0$ otherwise. Define $$\beta_V = \sum_{1 \leq i \leq n} e_i \wedge e_{2n+1-i}.$$ It follows from \cite[3.4]{DeBruyn} that $\beta_V$ does not depend on the choice of the basis $(e_i)$, and thus it is fixed by the action of $\Sp(V,b)$ on $\wedge^2(V)$. Furthermore, it is clear from Lemma \ref{lemma:typeComega} that $\beta_V$ is the unique $\Sp(V,b)$-fixed point in $\wedge^2(V)$, up to scalar multiples.

We have a morphism of $G$-modules $\varphi_V: \wedge^2(V) \rightarrow K$ defined by $\varphi_V(v \wedge v') = b(v,v')$ for all $v,v' \in V$. Similarly to $\psi_V$ in Section \ref{section:altformtq}, we see that $\varphi_V(x) = \hat{a}_V(x, \beta_V)$ for all $x \in \wedge^2(V)$. 

Define $\sigma: \wedge^2(V) \rightarrow \wedge^2(V)$ by $\sigma(x) = x + \varphi_V(x)\beta_V$ for all $x \in \wedge^2(V)$. Then $\sigma$ is a morphism of $G$-modules, and so $a_V$ defined by $$a_V(x,y) = \hat{a}_V(\sigma(x),y) = \hat{a}_V(x,y) + \varphi_V(x)\varphi_V(y)$$ for all $x, y \in \wedge^2(V)$ is a $G$-invariant bilinear form on $\wedge^2(V)$.

\begin{lemma}\label{lemma:basiclemmaonAV}The following statements hold:

\begin{enumerate}[\normalfont (i)]
\item The bilinear form $a_V$ is alternating.
\item If $n$ is even, then $\operatorname{rad} a_V = 0$ and $\operatorname{Ker} \varphi_V = \langle \beta_V \rangle^\perp$.
\item If $n$ is odd, then $\operatorname{rad} a_V = \langle \beta_V \rangle$ and $\wedge^2(V) = \operatorname{Ker} \varphi_V \oplus \langle \beta_V \rangle$.
\item We have $\langle \beta_V \rangle^\perp / \langle \beta_V \rangle \cong L_G(\varpi_2)$ as $G$-modules.
\end{enumerate}

\end{lemma}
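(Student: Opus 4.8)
The plan is to follow exactly the same strategy used for Lemma~\ref{lemma:basiclemmaonBV} in Section~\ref{section:altformtq}, replacing $V \otimes V^*$, $\hat{b}_V$, $b_V$, $\psi_V$, $\gamma_V$, $\tau$ throughout by $\wedge^2(V)$, $\hat{a}_V$, $a_V$, $\varphi_V$, $\beta_V$, $\sigma$, and Lemma~\ref{lemma:typeA_VxV} by Lemma~\ref{lemma:typeComega}. The one input specific to this setting that I would record first is the value $\varphi_V(\beta_V) = \sum_{1 \leq i \leq n} b(e_i, e_{2n+1-i}) = n$; combined with $\varphi_V(x) = \hat{a}_V(x,\beta_V)$ it yields the identity $a_V(x,\beta_V) = \hat{a}_V(x,\beta_V) + \varphi_V(x)\varphi_V(\beta_V) = (1+n)\varphi_V(x)$ for all $x \in \wedge^2(V)$, and this is what drives claims (ii) and (iii).

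For (i), $a_V$ is symmetric because $\hat{a}_V$ is, and over a field of characteristic two a symmetric bilinear form is alternating as soon as its Gram matrix has zero diagonal; so it suffices to evaluate $a_V(e_i \wedge e_j, e_i \wedge e_j)$ for $i < j$. Here $\hat{a}_V(e_i \wedge e_j, e_i \wedge e_j) = b(e_i,e_i)b(e_j,e_j) + b(e_i,e_j)^2 = b(e_i,e_j)^2 = \varphi_V(e_i \wedge e_j)^2$, so the correction term cancels it and $a_V(e_i \wedge e_j, e_i \wedge e_j) = 0$. For (ii) and (iii), since $\hat{a}_V$ is non-degenerate we have $\operatorname{rad} a_V = \operatorname{Ker}\sigma$, and $\operatorname{Ker}\sigma \subseteq \langle \beta_V \rangle$ directly from $\sigma(x) = x + \varphi_V(x)\beta_V$; evaluating $\sigma(\beta_V) = (1+n)\beta_V$ then shows $\operatorname{rad} a_V = \langle\beta_V\rangle$ when $n$ is odd and $\operatorname{rad} a_V = 0$ when $n$ is even. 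The identity $a_V(x,\beta_V) = (1+n)\varphi_V(x)$ shows that $\langle\beta_V\rangle^\perp = \operatorname{Ker}\varphi_V$ when $n$ is even; when $n$ is odd, $\varphi_V$ is surjective (e.g.\ $\varphi_V(e_1 \wedge e_{2n}) = 1$) and $\varphi_V(\beta_V) = n \neq 0$, so $\wedge^2(V) = \operatorname{Ker}\varphi_V \oplus \langle\beta_V\rangle$.

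For (iv) I would use that $\beta_V$ spans the unique one-dimensional $G$-submodule of $\wedge^2(V)$ (noted in Section~\ref{section:altformaq} via Lemma~\ref{lemma:typeComega}) together with (ii) and (iii). When $n$ is odd, $\operatorname{rad} a_V = \langle\beta_V\rangle$ gives $\langle\beta_V\rangle^\perp = \wedge^2(V)$, and the $G$-module decomposition $\wedge^2(V) = \operatorname{Ker}\varphi_V \oplus \langle\beta_V\rangle$ from (iii) together with Lemma~\ref{lemma:typeComega} identifies $\operatorname{Ker}\varphi_V \cong L_G(\varpi_2)$, whence $\langle\beta_V\rangle^\perp/\langle\beta_V\rangle = \wedge^2(V)/\langle\beta_V\rangle \cong L_G(\varpi_2)$. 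When $n$ is even, $\langle\beta_V\rangle^\perp = \operatorname{Ker}\varphi_V$ is the unique codimension-one $G$-submodule of the uniserial module $L_G(0)\,|\,L_G(\varpi_2)\,|\,L_G(0)$, so its quotient by the socle $\langle\beta_V\rangle$ is $L_G(\varpi_2)$. I do not expect a genuine obstacle here: every step is routine linear algebra or a direct appeal to Lemma~\ref{lemma:typeComega}, and the only place needing care is bookkeeping — in particular remembering that $\perp$ in (ii) is taken with respect to $a_V$, and that $\hat{a}_V$, unlike $a_V$, is not alternating, which is precisely why the correction by $\sigma$ is required.
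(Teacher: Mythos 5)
Your proposal is correct and follows exactly the paper's intended argument: the paper's proof of this lemma is literally ``Same as Lemma \ref{lemma:basiclemmaonBV}'', and you have carried out that transport to $\wedge^2(V)$ faithfully, including the key computation $\varphi_V(\beta_V) = n$ (with $n = \dim V/2$) that drives the parity dichotomy in (ii) and (iii).
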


\begin{proof}Same as Lemma \ref{lemma:basiclemmaonBV}.\end{proof}

\begin{lemma}\label{lemma:formonwedgeVunique}
Every $G$-invariant alternating bilinear form on $\wedge^2(V)$ is a scalar multiple of $a_V$.
\end{lemma}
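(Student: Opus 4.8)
The plan is to mirror the proof of Lemma \ref{lemma:formonVxVunique} for $V \otimes V^*$, using the two cases supplied by Lemma \ref{lemma:typeComega}, but replacing the appeal to \cite[Lemma 4.2]{KorhonenUP} by a direct argument in the case $2 \mid n$. The starting observation in all cases is that $b$ identifies $V$ with $V^*$ as $K[G]$-modules, so $\wedge^2(V)$ is self-dual, and hence the space of $G$-invariant bilinear forms on $\wedge^2(V)$ is isomorphic to $\operatorname{End}_{K[G]}(\wedge^2(V))$.

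First I would dispose of the case $2 \nmid n$. Here Lemma \ref{lemma:typeComega} gives $\wedge^2(V) = L_G(\varpi_2) \oplus L_G(0)$, with the trivial summand spanned by $\beta_V$. Given a $G$-invariant alternating form $a$ on $\wedge^2(V)$, the map $x \mapsto a(x,\beta_V)$ is a homomorphism of $K[G]$-modules into the trivial module; it vanishes on the non-trivial irreducible constituent $L_G(\varpi_2)$, and it vanishes on $\beta_V$ since $a$ is alternating, so $\beta_V \in \operatorname{rad} a$. Since $\operatorname{rad} a_V = \langle \beta_V \rangle$ by Lemma \ref{lemma:basiclemmaonAV}(iii), both $a$ and $a_V$ descend to $G$-invariant forms on $\wedge^2(V)/\langle \beta_V \rangle \cong L_G(\varpi_2)$, and the form coming from $a_V$ is non-zero. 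As $L_G(\varpi_2)$ is irreducible it carries a unique $G$-invariant bilinear form up to scalars, so $a$ is a scalar multiple of $a_V$.

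For the case $2 \mid n$, Lemma \ref{lemma:typeComega} gives that $\wedge^2(V)$ is uniserial with successive quotients $L_G(0)$, $L_G(\varpi_2)$, $L_G(0)$. Using that every endomorphism of a uniserial module preserves each term of its composition series, together with Schur's lemma and $\operatorname{Hom}_{K[G]}(L_G(0), L_G(\varpi_2)) = 0$, one checks that $\operatorname{End}_{K[G]}(\wedge^2(V))$ is two-dimensional, spanned by the identity and the nilpotent endomorphism factoring as $\wedge^2(V) \twoheadrightarrow L_G(0) \xrightarrow{\sim} \langle \beta_V \rangle \hookrightarrow \wedge^2(V)$; translating back, the space of $G$-invariant bilinear forms on $\wedge^2(V)$ is two-dimensional. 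It is spanned by $a_V$ and the form $c_V$ defined by $c_V(x,y) = \varphi_V(x)\varphi_V(y)$: these are linearly independent because $a_V$ is non-degenerate by Lemma \ref{lemma:basiclemmaonAV}(ii) whereas $c_V$ has one-dimensional radical complement, and $\varphi_V \neq 0$. Any $G$-invariant alternating form is then $\alpha a_V + \gamma c_V$, and evaluating at a pair $(x,x)$ with $\varphi_V(x) \neq 0$ gives $0 = \gamma \varphi_V(x)^2$, forcing $\gamma = 0$; hence the form is a scalar multiple of $a_V$.

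The only genuine work is the endomorphism-ring computation in the even case, namely that a uniserial $K[G]$-module with composition factors $L_G(0)$, $L_G(\varpi_2)$, $L_G(0)$ has a two-dimensional endomorphism ring; I expect this to be the main obstacle, though it is routine (for instance one can compute $\operatorname{Hom}_{K[G]}(\wedge^2(V), M_2) = K$ and $\operatorname{Hom}_{K[G]}(\wedge^2(V), L_G(0)) = K$, where $M_2$ is the codimension-one submodule, and chase the short exact sequence $0 \to M_2 \to \wedge^2(V) \to L_G(0) \to 0$). Alternatively, one could simply invoke the exact-square analogue of \cite[Lemma 4.2]{KorhonenUP}, which is proved by the same kind of argument.
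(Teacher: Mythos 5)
Your proof is correct. For $2 \nmid n$ it coincides with the paper's argument, which simply reruns the proof of Lemma \ref{lemma:formonVxVunique}: the map $x \mapsto a(x,\beta_V)$ vanishes, so $\beta_V \in \operatorname{rad} a$, and uniqueness on the irreducible quotient finishes it.

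For $2 \mid n$ the paper does no work of its own: it just cites \cite[Lemma 4.2]{KorhonenUP}, which covers modules of the shape $L_G(0)\,|\,L\,|\,L_G(0)$. Your replacement is a legitimate self-contained argument. Self-duality of $\wedge^2(V)$ identifies the space of $G$-invariant bilinear forms with $\operatorname{End}_{K[G]}(\wedge^2(V))$, and the uniserial structure from Lemma \ref{lemma:typeComega} does force this endomorphism ring to be two-dimensional (the identity together with the nilpotent map factoring through the trivial head into the trivial socle; your homomorphism count along the short exact sequence is the right way to see it). The two-dimensional space of invariant forms visibly contains the linearly independent forms $a_V$ and $c_V = \varphi_V \otimes \varphi_V$ --- equivalently $a_V$ and $\hat{a}_V$, since $a_V = \hat{a}_V + c_V$ in characteristic two --- and only the line $K a_V$ consists of alternating forms because $c_V(x,x) = \varphi_V(x)^2$ is not identically zero. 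What your route buys is independence from the external reference; what it costs is the small endomorphism-ring computation, which you have set up correctly.
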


\begin{proof}
If $n$ is even, the claim follows from \cite[Lemma 4.2]{KorhonenUP}. If $n$ is odd, the lemma follows with the same proof as Lemma \ref{lemma:formonVxVunique}.
\end{proof}

\begin{lemma}\label{lemma:pairedaltsquare}
Let $H < G$, and let $(V, b) = (W \oplus W^*, b)$ be the paired module associated with some $K[H]$-module $W$. Then $$\wedge^2(W \oplus W^*) = \wedge^2(W) \oplus \wedge^2(W^*) \oplus \left( W \wedge W^* \right),$$ where $(\wedge^2(W) \oplus \wedge^2(W^*), a_V)$ is the paired module associated with $\wedge^2(W)$, and $(W \wedge W^*, a_V) \cong (W \otimes W^*, b_W)$ as bilinear $K[H]$-modules.
\end{lemma}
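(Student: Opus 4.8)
The plan is to decompose $\wedge^2(W \oplus W^*)$ using the standard identity $\wedge^2(A \oplus B) = \wedge^2(A) \oplus \wedge^2(B) \oplus (A \wedge B)$, and then to identify the bilinear form $a_V$ on each of the three summands. First I would observe that the three summands $\wedge^2(W)$, $\wedge^2(W^*)$, and $W \wedge W^*$ are indeed $K[H]$-submodules of $\wedge^2(W \oplus W^*)$, and that $W \wedge W^* \cong W \otimes W^*$ as $K[H]$-modules via $w \wedge f \mapsto w \otimes f$. Next I would check that this decomposition is orthogonal with respect to $a_V$: since $\varphi_V$ vanishes on pure wedges $w_1 \wedge w_2$ with $w_i \in W$ (because $W$ is totally singular, so $b(w_1,w_2) = 0$), and likewise on $\wedge^2(W^*)$, the form $a_V$ restricted to $\wedge^2(W) \oplus \wedge^2(W^*)$ agrees with $\hat{a}_V$ there; and a direct computation with $\hat{a}_V$ on pure wedges shows $\wedge^2(W)$, $\wedge^2(W^*)$, $W \wedge W^*$ are pairwise orthogonal (for instance $\hat{a}_V(w_1 \wedge w_2, w_3 \wedge f) = b(w_1,w_3)b(w_2,f) + b(w_1,f)b(w_2,w_3)$, which need not vanish — so care is needed here; see below).

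To handle the possibly subtle orthogonality, I would argue as follows. On $\wedge^2(W)$, the form $a_V$ coincides with $\hat{a}_V$, and $\hat{a}_V(w_1 \wedge w_2, w_3 \wedge w_4) = \det(b(w_i,w_j)) = 0$ since all $b(w_i,w_j) = 0$; so $\wedge^2(W)$ is totally singular for $a_V$, and similarly $\wedge^2(W^*)$ is totally singular. For a cross term between $\wedge^2(W)$ and $\wedge^2(W^*)$, $\hat{a}_V(w_1 \wedge w_2, f_1 \wedge f_2) = b(w_1,f_1)b(w_2,f_2) + b(w_1,f_2)b(w_2,f_1)$; using that in the paired module $b$ restricted to $W \times W^*$ is the natural pairing, this equals $f_1(w_1)f_2(w_2) + f_2(w_1)f_1(w_2)$, which is generally nonzero. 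So the naive decomposition is \emph{not} orthogonal on the nose — instead the correct statement is that $\wedge^2(W) \oplus \wedge^2(W^*)$ is itself a \emph{totally singular decomposition} of a non-degenerate submodule, hence a paired module by Lemma \ref{lemma:basicpairedproperty} (or Lemma \ref{lemma:pairedjordan}). Concretely, I would show $\wedge^2(W) \oplus \wedge^2(W^*)$ is non-degenerate and orthogonal to $W \wedge W^*$, so $\wedge^2(W \oplus W^*) = (\wedge^2(W) \oplus \wedge^2(W^*)) \perp (W \wedge W^*)$. Orthogonality of $W \wedge W^*$ to $\wedge^2(W)$: $\hat{a}_V(w_1 \wedge w_2, w_3 \wedge f) = b(w_1,w_3)b(w_2,f) + b(w_1,f)b(w_2,w_3) = 0$ since $b(w_i,w_j) = 0$; similarly orthogonal to $\wedge^2(W^*)$. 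And $\varphi_V$ vanishes on $\wedge^2(W) \oplus \wedge^2(W^*)$ as noted, so the correction term in $a_V$ does not disturb these orthogonality relations — wait, one must also check the correction term $\varphi_V(x)\varphi_V(y)$ does not couple $W \wedge W^*$ to itself in a bad way, but that is exactly what produces the form $b_W$ below, so it is fine.

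Having established $\wedge^2(W \oplus W^*) = (\wedge^2(W) \oplus \wedge^2(W^*)) \perp (W \wedge W^*)$ with $\wedge^2(W)$ and $\wedge^2(W^*)$ both totally singular, Lemma \ref{lemma:basicpairedproperty} immediately gives that $(\wedge^2(W) \oplus \wedge^2(W^*), a_V)$ is the paired module associated with $\wedge^2(W)$ (the two summands being $K[H]$-submodules, totally singular, with $\wedge^2(W^*) \cong (\wedge^2(W))^*$ forced by non-degeneracy). It remains to identify $(W \wedge W^*, a_V) \cong (W \otimes W^*, b_W)$. Under the isomorphism $w \wedge f \mapsto w \otimes f$, I would compute $a_V(w_1 \wedge f_1, w_2 \wedge f_2) = \hat{a}_V(w_1 \wedge f_1, w_2 \wedge f_2) + \varphi_V(w_1 \wedge f_1)\varphi_V(w_2 \wedge f_2)$. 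Here $\hat{a}_V(w_1 \wedge f_1, w_2 \wedge f_2) = b(w_1,w_2)b(f_1,f_2) + b(w_1,f_2)b(f_1,w_2) = 0 + f_2(w_1)f_1(w_2)$ and $\varphi_V(w_i \wedge f_i) = b(w_i,f_i) = f_i(w_i)$, so $a_V(w_1 \wedge f_1, w_2 \wedge f_2) = f_2(w_1)f_1(w_2) + f_1(w_1)f_2(w_2)$. Comparing with the defining formula for $b_W = \hat{b}_W + \psi_W \otimes \psi_W$ in Section \ref{section:altformtq} — namely $b_W(w_1 \otimes f_1, w_2 \otimes f_2) = f_1(w_2)f_2(w_1) + f_1(w_1)f_2(w_2)$ — these agree exactly. (I would double-check the characteristic-two sign bookkeeping, but all signs are $+$.) Since the map is an $H$-module isomorphism and an isometry, we get $(W \wedge W^*, a_V) \cong (W \otimes W^*, b_W)$, completing the proof.

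The main obstacle, as flagged above, is getting the orthogonality relations exactly right: the naive three-term decomposition is not a full orthogonal direct sum (the $\wedge^2(W)$–$\wedge^2(W^*)$ pairing is nonzero), so one has to see the first two summands as a paired-module block and verify carefully that $W \wedge W^*$ really is orthogonal to both of them and that the form there matches $b_W$ on the nose. Everything else is routine verification on pure wedges using that $W$ and $W^*$ are totally singular in the paired module and that $b$ restricts to the natural pairing between them.
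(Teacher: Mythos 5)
Your proposal is correct and follows essentially the same route as the paper: the paper likewise observes that $a_V$ restricted to $\wedge^2(W)\oplus\wedge^2(W^*)$ is non-degenerate with both summands totally singular, applies Lemma \ref{lemma:basicpairedproperty}, and leaves the isometry $w\wedge f\mapsto w\otimes f$ as a "straightforward verification," which you have carried out explicitly and correctly. Your extra care about the non-orthogonality of $\wedge^2(W)$ and $\wedge^2(W^*)$ to each other (they pair via the natural perfect pairing) is exactly the right reading of the statement.
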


\begin{proof}
The restriction of $a_V$ to $\wedge^2(W) \oplus \wedge^2(W^*)$ is non-degenerate, and furthermore $\wedge^2(W) \oplus \wedge^2(W^*)$ is a totally singular decomposition with respect to $a_V$. Thus $(\wedge^2(W) \oplus \wedge^2(W^*), a_V)$ is the paired module associated with $\wedge^2(W)$ by Lemma \ref{lemma:basicpairedproperty}.

For $W \wedge W^*$, a straightforward verification shows that $w \wedge f \mapsto w \otimes f$ defines an isomorphism $(W \wedge W^*, a_V) \rightarrow (W \otimes W^*, b_W)$ of bilinear $K[H]$-modules.\end{proof}

\section{Hesselink normal forms on $V \otimes V^*$}\label{section:mainA}

In this section, we will prove Theorem \ref{thm:MAINTHMA}, one of the main results of this paper. At the end of this section, we will also give some examples which illustrate how Theorem \ref{thm:MAINTHMA} can be applied.

Let $V$ be a vector space over $K$ with $n = \dim V$. Set $G = \SL(V)$. Recall (Lemma \ref{lemma:formonVxVunique}) that we have an alternating $G$-invariant bilinear form $b_V$ on $V \otimes V^*$ which is unique up to scalar multiples. By Lemma \ref{lemma:basiclemmaonBV}, the bilinear form $b_V$ induces a non-degenerate $G$-invariant bilinear form on $\langle \gamma_V \rangle^\perp / \langle \gamma_V \rangle \cong L_G(\varpi_1 + \varpi_{n-1})$, giving us a representation $f: G \rightarrow \Sp(L_G(\varpi_1 + \varpi_{n-1}), b_V)$. Furthermore, the bilinear form $b_V$ is non-degenerate if and only if $n$ is even (Lemma \ref{lemma:basiclemmaonBV}), in which case we also get a representation $f': G \rightarrow \Sp(V \otimes V^*, b_V)$. 

For each unipotent element $u \in G$, Theorem \ref{thm:MAINTHMA} describes the Hesselink normal form of $f(u)$. Furthermore when $n$ is even, Theorem \ref{thm:MAINTHMA} also gives the Hesselink normal form of $f'(u)$. We state the Hesselink normal forms in terms of the Jordan normal form of $u$ on $V \otimes V^*$, which one can calculate using Theorem \ref{thm:tensordecompchar2}. 

We will first need two lemmas, and to setup their statements we fix a basis $e_1$, $\ldots$, $e_n$ of $V$ and the corresponding dual basis $e_1^*$, $\ldots$, $e_n^*$ of $V^*$. For convenience of notation, we set $e_i = 0$ and $e_i^* = 0$ for all $i \leq 0$ and $i > n$. Let $u \in G$ be a unipotent Jordan block with respect to the basis $(e_i)$, that is, $$ue_i = e_i + e_{i-1}$$ for all $1 \leq i \leq n$. As usual, we denote by $X$ the element $u-1$ of $K[u]$.

Let $\alpha > 0$ be such that $2^\alpha \leq n < 2^{\alpha+1}$. For all $1 \leq \beta \leq \alpha+1$ and $2^{\beta-1}+1 \leq i \leq 2^{\beta}$, we define \begin{equation}\label{eq:defofvi}v_i^{(\beta)} = \sum_{j \geq 0} e_{i+j2^{\beta}} \otimes e_{i-2^{\beta-1}+j2^{\beta}}^*.\end{equation}

The key lemma in this section is the following.

\begin{lemma}\label{lemma:vi_values}
Let $1 \leq \beta \leq \alpha+1$ and write $n = k2^{\beta}+r$, where $0 \leq r < 2^\beta$. Then for all $2^{\beta-1}+1 \leq i \leq 2^{\beta}$:

\begin{enumerate}[\normalfont (i)]
\item We have $X^{2^\beta} \cdot v_i^{(\beta)} = 0$ if and only if $0 \leq r < i-2^{\beta-1}$ or $r \geq i$.
\item If $r < i$, then $b_V(X^{2^\beta-1}v_i^{(\beta)},v_i^{(\beta)}) = k$.
\item If $r \geq i$, then $b_V(X^{2^\beta-1}v_i^{(\beta)},v_i^{(\beta)}) = k+1$.
\end{enumerate}
\end{lemma}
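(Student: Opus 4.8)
The plan is to compute everything explicitly from the definition of $v_i^{(\beta)}$ and the formula for $b_V$. First I would understand the $K[u]$-module structure on the span of the vectors $e_{i+j2^\beta} \otimes e_{i-2^{\beta-1}+j2^\beta}^*$. Writing $X = u-1$ and using $u e_i = e_i + e_{i-1}$, $u e_i^* = e_i^* + e_{i+1}^*$ (the dual action, so that $u$ preserves $\gamma_V$), one finds $u^{2^\beta}$ acts on $V$ by $e_i \mapsto e_i + e_{i-2^\beta}$ (as in Lemma~\ref{lemma:restrictionpowerp}), and dually $u^{2^\beta} e_i^* = e_i^* + e_{i+2^\beta}^*$. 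Since $(u-1)^{2^\beta} = u^{2^\beta}-1$ in characteristic two, I expect $X^{2^\beta}(e_a \otimes e_b^*) = e_{a-2^\beta}\otimes e_b^* + e_a \otimes e_{b+2^\beta}^* + e_{a-2^\beta}\otimes e_{b+2^\beta}^*$. Applying this term-by-term to $v_i^{(\beta)} = \sum_{j\geq 0} e_{i+j2^\beta}\otimes e_{i-2^{\beta-1}+j2^\beta}^*$ should produce a telescoping/cancelling sum whose survival depends only on the boundary terms, i.e. on whether the top index $i+k2^\beta$ or $i-2^{\beta-1}+k2^\beta$ falls in the range $\{1,\dots,n\}$ — this is exactly the dichotomy $0\le r< i-2^{\beta-1}$ or $r\ge i$ in part~(i). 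So part~(i) is a bookkeeping computation: write $n = k2^\beta + r$, determine for which $j$ both indices $i+j2^\beta$ and $i-2^{\beta-1}+j2^\beta$ lie in $[1,n]$, and check when the leftover terms after applying $X^{2^\beta}$ vanish.

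For parts~(ii) and~(iii), I would use the explicit values of $b_V$ on basis elements recorded in Section~\ref{section:altformtq}: $b_V(e_a\otimes e_b^*, e_{a_0}\otimes e_{b_0}^*) = 1$ if $\{a,b\}$ and $\{b_0,a_0\}$ match in the crossed way ($a = b_0$, $b = a_0$) with $a\neq b$, plus the diagonal correction $b_V(e_a\otimes e_a^*, e_c\otimes e_c^*) = 1$ for $a\neq c$. I need $X^{2^\beta - 1} v_i^{(\beta)}$. Since $X^{2^\beta-1} = X^{2^{\beta-1}}\cdot X^{2^{\beta-1}-1}$ and lower powers of $X$ act by shifting indices down, I would compute $X^{2^\beta-1}(e_a\otimes e_b^*)$ as a sum of terms $e_{a-p}\otimes e_{b+p'}^*$ with $p+p' = 2^\beta-1$. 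The crucial observation is that for a term of $b_V(X^{2^\beta-1}v_i^{(\beta)}, v_i^{(\beta)})$ to be nonzero, I need a term $e_{a}\otimes e_{c}^*$ from $X^{2^\beta-1}v_i^{(\beta)}$ paired against $e_c\otimes e_a^*$ from $v_i^{(\beta)}$ (crossed) — and here is where the offset $2^{\beta-1}$ in the definition of $v_i^{(\beta)}$ is engineered so that exactly one shift $X^{2^{\beta-1}}$ applied to the left index and $0$ to the right (or some such split) produces the required index matching. I expect each surviving $j$ in the sum defining $v_i^{(\beta)}$ to contribute exactly $1$, so the answer is the number of valid indices $j$, which is $k$ or $k+1$ according to whether the top block is "complete", i.e. whether $r \ge i$ or $r < i$. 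The diagonal correction term in $b_V$ needs separate care: a term $e_a\otimes e_a^*$ appears in $X^{2^\beta-1}v_i^{(\beta)}$ only if some shift collapses the two indices, which given the fixed offset $i - 2^{\beta-1} \neq i$ and the shift splits should only happen in controlled ways; I'd check this contributes either $0$ or shifts the count by an amount already absorbed into the $k$ versus $k+1$ distinction.

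The main obstacle I anticipate is the careful index accounting in parts~(ii) and~(iii): tracking which pairs $(j, j')$ of summands of $X^{2^\beta-1}v_i^{(\beta)}$ and $v_i^{(\beta)}$ actually pair nontrivially under $b_V$, ensuring no double-counting, and correctly handling the diagonal term $e_a\otimes e_a^*\mapsto$ contributions through $\psi_V(x)\psi_V(y)$ in~\eqref{eq:betaVdef}. In particular $\psi_V(e_a\otimes e_b^*) = \delta_{a,b}$, so the correction $\psi_V(x)\psi_V(y)$ contributes only via diagonal basis vectors appearing in $X^{2^\beta-1}v_i^{(\beta)}$; I'd identify precisely when $X^{2^\beta-1}$ maps a term $e_{i+j2^\beta}\otimes e_{i-2^{\beta-1}+j2^\beta}^*$ to something with a diagonal component, which requires the index drop on the left to exceed the index rise on the right by exactly $2^{\beta-1}$, i.e. the split is forced, giving a single diagonal vector. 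Reconciling its contribution with the main $\hat b_V$-count to land exactly on $k$ or $k+1$ is the delicate endgame. I would organize the computation by fixing $\beta$, expanding $v_i^{(\beta)}$ with the precise range of $j$ (from $j=0$ up to the largest $j$ with $i+j2^\beta \le n$), and handling the "last block" boundary case — governed by the size of $r$ relative to $i$ and $i - 2^{\beta-1}$ — as the source of the $k$ versus $k+1$ split.
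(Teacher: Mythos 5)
Your overall strategy --- a telescoping computation for (i), then an index-matching count of surviving summands for (ii)--(iii) --- is the same as the paper's, and your endgame is right: each valid $j$ contributes $1$, the $k$ versus $k{+}1$ split is governed by whether the last block is complete, and the diagonal correction $\psi_V(x)\psi_V(y)$ in~\eqref{eq:betaVdef} contributes nothing because the offset $2^{\beta-1}$ keeps the two tensor indices distinct. However, two of the formulas you build the computation on are wrong, and with them the argument does not go through.

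First, the action of $u$ on the dual basis is not $u e_i^* = e_i^* + e_{i+1}^*$. The contragredient action is $(u\cdot f)(v) = f(u^{-1}v)$, and since $u^{-1}e_j = e_j + e_{j-1} + \cdots + e_1$ one gets $u e_i^* = \sum_{j \ge i} e_j^*$, hence $X e_i^* = \sum_{j>i}e_j^*$ and $X^{2^\beta}e_i^* = \sum_{j'>0} e_{i+j'2^\beta}^*$ (the formula the paper imports from \cite{KorhonenJordanGood}). Your proposed action does not even fix $\gamma_V$: the cross terms cancel but $\sum_i e_{i-1}\otimes e_{i+1}^*$ survives, so it cannot be the restriction of the $\SL(V)$-action. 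With your three-term formula for $X^{2^\beta}(e_a\otimes e_b^*)$ the sum over $j$ does not telescope to a single boundary term --- an entire family of terms $e_{i+(j-1)2^\beta}\otimes e^*_{i-2^{\beta-1}+(j+1)2^\beta}$ is left uncancelled --- so part (i) would come out false. Second, $X^{2^\beta-1}$ does not act on a tensor product by a Leibniz-type rule producing only terms $e_{a-p}\otimes e_{b+p'}^*$ with $p+p'=2^\beta-1$: since $X=u-1$ and $u$ acts diagonally, $X(v\otimes f)=Xv\otimes f+v\otimes Xf+Xv\otimes Xf$, and iterating gives the double-binomial expansion $\sum_{t,s}\binom{2^\beta-1}{t}\binom{t}{s}X^tv\otimes X^{2^\beta-1-s}f$, which must then be reduced with Lucas' theorem to isolate the single shift $t=2^{\beta-1}$ that you correctly guess is responsible for the nonzero pairings. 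So the skeleton of your plan matches the paper, but both the $K[u]$-module structure on $V^*$ and the expansion of powers of $X$ on $V\otimes V^*$ must be replaced by the correct ones before the bookkeeping can be carried out.
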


\begin{proof}For (i), note first that $X^{2^{\beta}} = u^{2^{\beta}}-1$, so $X^{2^{\beta}} \cdot (e_{i+j2^{\beta}} \otimes e_{i-2^{\beta-1}+j2^{\beta}}^*)$ equals \begin{equation}\label{eq:firstj}(X^{2^{\beta}}e_{i+j2^{\beta}} \otimes u^{2^{\beta}}e_{i-2^{\beta-1}+j2^{\beta}}^*) + (e_{i+j2^{\beta}} \otimes X^{2^{\beta}}e_{i-2^{\beta-1}+j2^{\beta}}^*)\end{equation} for all $j \geq 0$. Now by \cite[(5.1)]{KorhonenJordanGood} we have $X^{2^{\beta}}e_{i'} = e_{i'-2^{\beta}}$ and $X^{2^{\beta}}e_{i'}^* = \sum_{j' > 0} e_{i'+j2^{\beta}}^*$ for all $i'$, so~\eqref{eq:firstj} equals $q_{j-1} + q_j$, where we define $$q_j = e_{i+j2^{\beta}} \otimes \sum_{j' > j} e_{i-2^{\beta-1}+j'2^{\beta}}^*$$ for all $j \geq -1$. Note that $q_{-1} = 0$ since $e_{i-2^{\beta}} = 0$. Therefore \begin{equation}\label{eq:qmXkernel}X^{2^{\beta}} \cdot v_i^{(\beta)} = \sum_{j \geq 0} (q_{j-1} + q_j) = q_{-1} + q_m = q_m,\end{equation} where $m \geq 0$ is maximal such that $i+m2^{\beta} \leq n$. 

It is clear that $q_m = 0$ if and only if $i-2^{\beta-1}+(m+1)2^{\beta} > n$. If $r < i$, then $m = k-1$ and so $q_m = 0$ if and only if $i - 2^{\beta-1} + k2^{\beta} > n$, which is equivalent to $r < i-2^{\beta-1}$. Similarly if $r \geq i$, then $m = k$ and so $q_m = 0$ if and only if $i - 2^{\beta-1} + (k+1)2^{\beta} > n$, which is equivalent to $r < i+2^{\beta-1}$, and this always holds since $i > 2^{\beta-1}$. We have shown that $q_m = 0$ if and only if $0 \leq r < i-2^{\beta-1}$ or $r \geq i$, which together with~\eqref{eq:qmXkernel} completes the proof of (i).

For (ii) and (iii), we proceed to calculate $b_V(X^{2^\beta-1}v_i^{(\beta)},v_i^{(\beta)})$. By \cite[Lemma 5.2]{KorhonenJordanGood}, for all $j \geq 0$ we have $X^{2^{\beta}-1} \cdot (e_{i+j2^{\beta}} \otimes e_{i-2^{\beta-1}+j2^{\beta}}^*)$ equal to \begin{align}&\sum_{\substack{0 \leq t \leq 2^{\beta}-1 \\ 0 \leq s \leq t}} \binom{2^{\beta}-1}{t} \binom{t}{s} X^t e_{i+j2^{\beta}} \otimes X^{2^{\beta}-1-s} e_{i-2^{\beta-1}+j2^{\beta}}^* \nonumber \\ =\ & \sum_{\substack{0 \leq t \leq 2^{\beta}-1 \\ 0 \leq s \leq t}} \binom{t}{s} e_{i-t+j2^{\beta}} \otimes X^{2^{\beta}-1-s} e_{i-2^{\beta-1}+j2^{\beta}}^* \label{eq:tensoract1}\end{align} where~\eqref{eq:tensoract1} holds since $\binom{2^{\beta}-1}{t} \equiv 1 \mod{2}$ for all $0 \leq t \leq 2^{\beta}-1$ by Lucas' theorem. 

The summands in~\eqref{eq:tensoract1} that have non-zero product with $v_i^{(\beta)}$ with respect to $b_V$ occur only for $0 \leq t \leq 2^{\beta}-1$ such that $i-t+j2^{\beta} = i-2^{\beta-1}+j'2^{\beta}$, which is only possible for $t = 2^{\beta-1}$. Furthermore, by Lucas' theorem for $0 \leq s \leq 2^{\beta-1}$ we have $\binom{2^{\beta-1}}{s} \equiv 0 \mod{2}$ except for $s = 0$ and $s = 2^{\beta-1}$. Thus~\eqref{eq:tensoract1} equals \begin{equation}\label{eq:tensoractmoduloperp1}e_{i-2^{\beta-1}+j2^{\beta}} \otimes (X^{2^{\beta}-1}e_{i-2^{\beta-1}+j2^{\beta}}^* + X^{2^{\beta-1}-1}e_{i-2^{\beta-1}+j2^{\beta}}^*) \mod{\langle v_i^{(\beta)} \rangle^\perp}.\end{equation} We show next that~\eqref{eq:tensoractmoduloperp1} equals \begin{equation}\label{eq:tensoractmoduloperp2}e_{i-2^{\beta-1}+j2^{\beta}} \otimes e_{i+j2^{\beta}}^* \mod{\langle v_i^{(\beta)} \rangle^\perp}.\end{equation}

We divide the proof of~\eqref{eq:tensoractmoduloperp2} into two cases.\newline

\noindent \emph{Case 1:} Suppose that $\beta = 1$.\newline

\noindent In this case~\eqref{eq:tensoractmoduloperp1} equals \begin{align}&e_{i-1+2j} \otimes (Xe_{i-1+2j}^* + e_{i-1+2j}^*) \nonumber \\ =\ &e_{i-1+2j} \otimes \sum_{j' \geq i-1+2j} e_{j'}^* \label{eq:tensoractmoduloperp3} \\ \equiv\ & e_{i-1+2j} \otimes e_{i+2j}^* \mod{\langle v_i^{(\beta)} \rangle^\perp} \nonumber \end{align} where~\eqref{eq:tensoractmoduloperp3} is given by \cite[Lemma 5.1 (ii)]{KorhonenJordanGood}.\newline

\noindent \emph{Case 2:} Suppose that $\beta > 1$.\newline

\noindent In this case, by \cite[Lemma 5.1 (ii)]{KorhonenJordanGood} we have $e_{i-2^{\beta-1}+j2^{\beta}} \otimes X^{2^{\beta}-1} e_{i-2^{\beta-1}+j2^{\beta}}^*$ equal to a sum of some basis elements $e_{i-2^{\beta-1}+j2^{\beta}} \otimes e_{j'}^*$ such that $j' \geq i+2^{\beta-1}-1+j2^{\beta}$. Here $j' > i+j2^{\beta}$ since $\beta > 1$, so we conclude that $e_{i-2^{\beta-1}+j2^{\beta}} \otimes X^{2^{\beta}-1} e_{i-2^{\beta-1}+j2^{\beta}}^*$ has zero product with $v_i^{(\beta)}$ with respect to $b_V$. Thus~\eqref{eq:tensoractmoduloperp1} equals \begin{align}&e_{i-2^{\beta-1}+j2^{\beta}} \otimes X^{2^{\beta-1}-1}e_{i-2^{\beta-1}+j2^{\beta}}^* \mod{\langle v_i^{(\beta)} \rangle^\perp} \nonumber \\ \label{eq:tensoractmoduloperp4} =\ &e_{i-2^{\beta-1}+j2^{\beta}} \otimes \sum_{j' \geq i-1+j2^{\beta}} \binom{j'-i+2^{\beta-1}-j2^{\beta}-1}{2^{\beta-1}-2} e_{j'}^* \mod{\langle v_i^{(\beta)} \rangle^\perp} \\ =\ &e_{i-2^{\beta-1}+j2^{\beta}} \otimes \binom{2^{\beta-1}-1}{2^{\beta-1}-2} e_{i+j2^{\beta}}^* \mod{\langle v_i^{(\beta)} \rangle^\perp} \nonumber \\ =\ &e_{i-2^{\beta-1}+j2^{\beta}} \otimes e_{i+j2^{\beta}}^* \mod{\langle v_i^{(\beta)} \rangle^\perp} \nonumber\end{align} where~\eqref{eq:tensoractmoduloperp4} is given by \cite[Lemma 5.1 (ii)]{KorhonenJordanGood}. This completes the proof of~\eqref{eq:tensoractmoduloperp2}.\newline

\noindent We have shown that $X^{2^{\beta}-1} \cdot (e_{i+j2^{\beta}} \otimes e_{i-2^{\beta-1}+j2^{\beta}}^*)$ equals~\eqref{eq:tensoractmoduloperp2} modulo $\langle v_i^{(\beta)} \rangle^\perp$. From this we conclude that \begin{align*}b_V(X^{2^{\beta}-1}v_i^{(\beta)}, v_i^{(\beta)}) &=\ \sum_{j \geq 0} b_V(e_{i-2^{\beta-1}+j2^{\beta}} \otimes e_{i+j2^{\beta}}^*, v_i^{(\beta)})\\
 &=\ \sum_{\substack{j \geq 0 \\ i+j2^{\beta} \leq n}} 1\end{align*} which equals $k$ if $r < i$ and $k+1$ if $r \geq i$, proving (ii) and (iii).\end{proof}

\begin{lemma}\label{lemma:nonsingularsinVV}
Let $1 \leq \beta \leq \alpha+1$. If $V_n \otimes V_n$ has a Jordan block of size $2^{\beta}$, then $X^{2^\beta} \cdot v_i^{(\beta)} = 0$ and $b_V(X^{2^\beta-1}v_i^{(\beta)},v_i^{(\beta)}) \neq 0$ for some $2^{\beta-1}+1 \leq i \leq 2^{\beta}$.
\end{lemma}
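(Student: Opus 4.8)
The plan is to feed the hypothesis into Lemma \ref{lemma:vi_values}, after translating it through Theorem \ref{thm:GPXtensorsquare}. Since $V \cong V_n$ as $K[u]$-modules and every $K[u]$-module is self-dual, $V \otimes V^* \cong V_n \otimes V_n$, so by Theorem \ref{thm:GPXtensorsquare} the hypothesis that $V_n \otimes V_n$ has a Jordan block of size $2^\beta$ is equivalent to $\beta$ occurring among the exponents of the consecutive-ones binary expansion of $n$. Write $n = k2^\beta + r$ with $0 \le r < 2^\beta$ as in Lemma \ref{lemma:vi_values}. If $k$ is odd and $r < 2^{\beta-1}$, I would pick any $i$ with $r + 2^{\beta-1} < i \le 2^\beta$ (this exists, e.g. $i = 2^\beta$, and satisfies $2^{\beta-1} < i$); then Lemma \ref{lemma:vi_values}(i) gives $X^{2^\beta} v_i^{(\beta)} = 0$ and Lemma \ref{lemma:vi_values}(ii) gives $b_V(X^{2^\beta-1} v_i^{(\beta)}, v_i^{(\beta)}) = k \ne 0$. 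If $k$ is even and $r > 2^{\beta-1}$, I would take $i = 2^{\beta-1}+1$ (which is $\le 2^\beta$ since $\beta \ge 1$); then $r \ge i$, so Lemma \ref{lemma:vi_values}(i) and (iii) give $X^{2^\beta} v_i^{(\beta)} = 0$ and $b_V(X^{2^\beta-1} v_i^{(\beta)}, v_i^{(\beta)}) = k+1 \ne 0$. So the lemma reduces to the purely combinatorial claim: if $\beta$ occurs in the consecutive-ones binary expansion of $n$, then either $k$ is odd and $r < 2^{\beta-1}$, or $k$ is even and $r > 2^{\beta-1}$; equivalently, in terms of the binary digits of $n$, either bit $\beta$ is $1$ and bit $\beta-1$ is $0$, or bit $\beta$ is $0$, bit $\beta-1$ is $1$, and $n \not\equiv 0 \bmod 2^{\beta-1}$.

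To prove the combinatorial claim I would group the binary digits of $n$ into its maximal runs of ones, writing $n = \sum_{m=1}^\ell(2^{a_m} - 2^{b_m})$ with $a_1 > b_1 > \cdots > a_\ell > b_\ell \ge 0$ as in the paragraph following Definition \ref{def:consecutiveones}, so that bit $p$ of $n$ is $1$ precisely when $b_m \le p < a_m$ for some $m$. Recall from that paragraph that the consecutive-ones exponents of $n$ are $\{a_m, b_m\}_{1 \le m \le \ell}$ when $a_\ell > b_\ell + 1$, and $\{a_m, b_m\}_{1 \le m \le \ell-1} \cup \{b_\ell\}$ when $a_\ell = b_\ell + 1$. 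Hence the hypothesis places $\beta$ in one of two cases, each handled by inspecting the bits around position $\beta$: if $\beta = b_m$, then bit $\beta$ is $1$ while bit $\beta - 1$ is $0$ (since $b_m - 1$ lies strictly below run $m$, and is $\ge a_{m+1}$, hence in no run $-$ or below the lowest run when $m = \ell$), giving $k$ odd and $r < 2^{\beta-1}$; if $\beta = a_m$ with $\beta$ an exponent $-$ which forces run $m$ to have size $\ge 2$ or $m < \ell$ $-$ then bit $\beta$ is $0$ and bit $\beta - 1 = a_m - 1$ is $1$, and $n$ has a further $1$-bit below position $\beta - 1$, namely bit $a_m - 2 \ge b_m$ if run $m$ has size $\ge 2$, or a bit of a lower run if run $m$ has size $1$ (which forces $m < \ell$, since $a_\ell$ is an exponent only when run $\ell$ has size $\ge 2$); thus $n \not\equiv 0 \bmod 2^{\beta-1}$, giving $k$ even and $r > 2^{\beta-1}$. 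This establishes the claim and hence the lemma.

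I expect the reduction in the first paragraph and the digit count in the second to both be essentially routine; the points that need genuine care are the exceptional role of a lowest run of size $1$, whose top exponent $a_\ell$ is deliberately absent from the consecutive-ones expansion $-$ this is exactly what excludes the remaining ``bad'' configuration $k$ even with $r = 2^{\beta-1}$ $-$ and the degenerate small cases (the value $\beta = 1$, where $2^{\beta-1}+1 = 2^\beta$, and the conditions $\beta \ge 2$, respectively $\beta \ge 3$, in the sub-cases that invoke bit $\beta - 2$), all of which are harmless under the standing hypothesis $\beta \ge 1$.
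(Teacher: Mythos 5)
Your proof is correct and follows essentially the same route as the paper: translate the hypothesis via Theorem \ref{thm:GPXtensorsquare} into a statement about the consecutive-ones expansion of $n$, and then feed the resulting bit pattern around position $\beta$ into Lemma \ref{lemma:vi_values}. You are in fact slightly more careful than the paper at one point: in the case $k$ even, the paper only records $r \geq 2^{\beta-1}$ before choosing $i$ in the range $[2^{\beta-1}+1, r]$, whereas you explicitly verify $r > 2^{\beta-1}$ (via the exclusion of a lowest run of size one from the consecutive-ones expansion), which is exactly what is needed for that range to be nonempty.
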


\begin{proof}
Suppose that $V_n \otimes V_n$ has a Jordan block of size $2^{\beta}$. By Theorem \ref{thm:GPXtensorsquare}, this means that $2^{\beta}$ occurs in the consecutive-ones binary-expansion of $n$. Equivalently, either (a) $2^{\beta}$ occurs in the binary expansion of $n$ and $2^{\beta-1}$ does not; or (b) $2^{\beta-1}$ occurs in the binary expansion of $n$ and $2^{\beta}$ does not.

If (a) holds, then $n = k2^{\beta} + r$, where $0 \leq r < 2^{\beta-1}$ and $k$ is odd. By Lemma \ref{lemma:vi_values} (i) and (ii), for any $r+2^{\beta-1}+1 \leq i \leq 2^{\beta}$ we have $X^{2^\beta} \cdot v_i^{(\beta)} = 0$ and $b_V(X^{2^\beta-1}v_i^{(\beta)},v_i^{(\beta)}) = k \neq 0$. For example, one can choose $i = 2^{\beta}$.

If (b) holds, then $n = k2^{\beta} + r$ where $2^{\beta-1} \leq r < 2^{\beta}$ and $k$ is even. By Lemma \ref{lemma:vi_values} (i) and (ii), for any $2^{\beta-1}+1 \leq i \leq r$ we have $X^{2^\beta} \cdot v_i^{(\beta)} = 0$ and $b_V(X^{2^\beta-1}v_i^{(\beta)},v_i^{(\beta)}) = k+1 \neq 0$. For example, we could choose $i = r$.\end{proof}

With Lemma \ref{lemma:nonsingularsinVV}, we will be able to prove Theorem \ref{thm:MAINTHMA}, our first main result. We refer the reader to the introduction for the statement of the theorem.

\begin{proof}[Proof of Theorem \ref{thm:MAINTHMA}] We first recall the setup of the theorem. Let $G = \SL(V)$, where $\dim V = n$ for some $n \geq 2$. Let $u \in G$ be unipotent and $V \cong V_{d_1} \oplus \cdots \oplus V_{d_t}$ as $K[u]$-modules, where $t \geq 1$ and $d_r \geq 1$ for all $1 \leq r \leq t$. Set $\alpha = \nu_2(\gcd(d_1, \ldots, d_t))$. Suppose that $V \otimes V^* \cong \oplus_{d \geq 1} V_d^{\lambda(d)}$ and $L_G(\varpi_1 + \varpi_{n-1}) \cong \oplus_{d \geq 1} V_d^{\lambda'(d)}$ as $K[u]$-modules, where $\lambda(d), \lambda'(d) \geq 0$ for all $d \geq 1$. We identify $L_G(\varpi_1 + \varpi_{n-1})$ as the subquotient $(\langle \beta_V \rangle^\perp / \langle \beta_V \rangle, b_V)$ of $(V \otimes V^*, b_V)$ --- see Lemma \ref{lemma:basiclemmaonBV} (iv). Set $\varepsilon := \varepsilon_{V \otimes V^*, b_V}$ and $\varepsilon' := \varepsilon_{L_G(\varpi_1 + \varpi_{n-1}), b_V}$.

For statements (i) -- (iii) of the theorem, the description of $\lambda'$ is just \cite[Theorem 6.1]{KorhonenJordanGood} in characteristic $p = 2$.

For the proof of claims (iv) -- (vi) concerning $\varepsilon$ and $\varepsilon'$, we first setup some more notation. Let $V = W_1 \oplus \cdots \oplus W_t$, where $W_r$ are $u$-invariant subspaces and $W_r \cong V_{d_r}$ for all $1 \leq r \leq t$. For each $r$, choose a basis $(e_j^{(r)})_{1 \leq j \leq d_{r}}$ of $W_r$ such that $ue_j^{(r)} = e_j^{(r)} + e_{j-1}^{(r)}$ for all $1 \leq j \leq d_r$, where we set $e_j^{(r)} = 0$ for all $j \leq 0$. For the basis $(e_j^{(r)})$ of $V$, we let $(e_j^{(r)^*})$ be the corresponding dual basis of $V^*$. 



We consider (iv). Suppose first that $\varepsilon(d) = 1$, so now $d$ is even by Lemma \ref{lemma:epsilonequivalence}. For each $1 \leq r \leq t$, we identify $W_r^*$ with the subspace spanned by the $(e_j^{(r)^*})$. Then $$V \otimes V^* = \bigoplus_{1 \leq r \leq t} \left( W_r \otimes W_r^* \right) \oplus \bigoplus_{1 \leq r < s \leq t} Z_{rs}$$ where $Z_{rs} = \left( W_r \otimes W_s^* \right) \oplus \left( W_s \otimes W_r^* \right)$ for all $1 \leq r < s \leq t$. The restriction of $b_V$ to $Z_{rs}$ is non-degenerate, and furthermore $W_r \otimes W_s^*$ and $W_s \otimes W_r^*$ are totally singular subspaces. Thus it follows from Lemma \ref{lemma:basicpairedproperty} that $(Z_{rs}, b_V)$ is the paired module associated with $W_r \otimes W_s^*$. Consequently by Lemma \ref{lemma:pairedjordan} and Lemma \ref{lemma:basicXdlinear} (iv), there exists $1 \leq r \leq t$ and $v \in \operatorname{Ker} X_{W_r \otimes W_r^*}^d$ such that $b_V(X^{d-1}v,v) \neq 0$. Then by Lemma \ref{lemma:epsilonequivalence} there is a Jordan block of size $d$ in $W_r \otimes W_r^*$, so by Theorem \ref{thm:GPXtensorsquare} we have $d = 2^{\beta}$ for some $2^{\beta} > 1$ occurring in the consecutive-ones binary expansion of $d_r$.


Conversely, suppose that $d = 2^{\beta} > 1$ occurs in the consecutive-ones binary expansion of $d_r$ for some $1 \leq r \leq t$. By Lemma \ref{lemma:nonsingularsinVV} and Theorem \ref{thm:GPXtensorsquare}, for a suitable choice of $2^{\beta-1}+1 \leq i \leq 2^{\beta}$ the element \begin{equation}\label{eq:vivectorfortsq}v_i^{(\beta)} = \sum_{j \geq 0} e_{i+j2^{\beta}}^{(r)} \otimes e_{i-2^{\beta-1}+j2^{\beta}}^{(r)^*}\end{equation} is such that $X^{2^{\beta}} v_i^{(\beta)} = 0$ and $b_V(X^{2^{\beta}-1}v_i^{(\beta)},v_i^{(\beta)}) \neq 0$. Thus $\varepsilon(d) = 1$, which completes the proof of (iv). Note that here $v_i^{(\beta)} \in \langle \gamma_V \rangle^\perp$, so this also shows that $\varepsilon'(d) = 1$ in this case.

For (v) and (vi), we proceed to calculate $\varepsilon'$. Suppose first that $2 \nmid n$. Then $\gamma_V \in \operatorname{rad} b_V$ by Lemma \ref{lemma:basiclemmaonBV} (iii), so $\langle \gamma_V \rangle^\perp / \langle \gamma_V \rangle = V / \langle \gamma_V \rangle$. In this case it is clear that $\varepsilon'(d) = \varepsilon(d)$ for all $d \geq 1$, as claimed.

We assume next that $2 \mid n$, so now $b_V$ is non-degenerate and $\langle \gamma_V \rangle^\perp = \operatorname{Ker} \psi_V$ by Lemma \ref{lemma:basiclemmaonBV} (ii). By \cite[(6.2)]{KorhonenJordanGood}, there exists a $\hat{\delta} \in V \otimes V^*$ such that $X^{2^{\alpha}}\hat{\delta} = 0$ and $\hat{\delta} \not\in \langle \gamma_V \rangle^\perp$. In the case where $\alpha = 0$, it follows from Lemma \ref{lemma:vperpmodvdescription} (ii) that $\varepsilon'(d) = \varepsilon(d)$ for all $d \geq 1$, proving the theorem in this case. Assume then for the rest of the proof that $\alpha > 0$. 

Note that $\varepsilon'(d) = \varepsilon(d)$ for all $d \neq 2^{\alpha}-2, 2^{\alpha}$ by Lemma \ref{lemma:vperpmodvdescription} (iii) -- (iv). We show that $\varepsilon(2^{\alpha}) = \varepsilon'(2^{\alpha})$. To this end, pick some $1 \leq r \leq t$ such that $\nu_2(d_r) = \alpha$. Then $2^{\alpha}$ occurs in the consecutive-ones binary expansion of $d_r$, so $\varepsilon(2^{\alpha}) = \varepsilon'(2^{\alpha}) = 1$ as shown at the end of the proof of (iv). In the case where $\alpha = 1$, it follows that $\varepsilon(d) = \varepsilon'(d)$ for all $d \geq 1$, as claimed. Thus we can assume $\alpha > 1$ for the rest of the proof.

So far we have shown that $\varepsilon(d) = \varepsilon'(d)$ for all $d \neq 2^{\alpha}-2$, as is claimed by (v) and (vi). For $\varepsilon(2^{\alpha}-2)$ and $\varepsilon'(2^{\alpha}-2)$, note that by \cite[Lemma 4.3]{KorhonenJordanGood} the smallest Jordan block size of $u$ in $V \otimes V^*$ is $2^{\alpha}$. Thus $u$ has no Jordan blocks of size $2^{\alpha}-2$ in $V \otimes V^*$, and so $\varepsilon(2^{\alpha}-2) = 0$ by Lemma \ref{lemma:epsilonequivalence}. If (iii)(b) holds, then $2^{\alpha}-2$ occurs in $L_G(\varpi_1 + \varpi_{n-1})$ with multiplicity one, so $\varepsilon'(2^{\alpha}-2) = 1$ by Lemma \ref{lemma:basicepsilonoddeven} (ii). If (iii)(b) does not hold, then $u$ has no Jordan blocks of size $2^{\alpha}-2$ on $L_G(\varpi_1 + \varpi_{n-1})$, and so $\varepsilon'(2^{\alpha}-2) = 0$.\end{proof}

In the following we show with small examples how Theorem \ref{thm:MAINTHMA} is applied. Let $G = \SL(V)$ and $n = \dim V$, and let $u \in G$ be a unipotent element. Let $\alpha = \nu_2(\gcd(d_1, \ldots, d_t))$ as in Theorem \ref{thm:MAINTHMA}, so $2^{\alpha}$ is the largest power of two dividing every Jordan block size of $u$ on $V$. Set $\varepsilon := \varepsilon_{V \otimes V^*, b_V}$ and $\varepsilon' := \varepsilon_{L_G(\varpi_1 + \varpi_{n-1}), b_V}$. In what follows we shall use Theorem \ref{thm:hesselinkepsilon} to describe the decomposition of $(V \otimes V^*, b_V)$ and $(L_G(\varpi_1 + \varpi_{n-1}), b_V)$ into bilinear $K[u]$-modules.

\begin{esim}When $n = 2$ and $V \cong V_2$ as $K[u]$-modules, we have $\alpha = 1$ and by Theorem \ref{thm:tensordecompchar2} we get $V \otimes V^* \cong V_2^2$ as $K[u]$-modules. We have a consecutive-ones binary expansion $n = 2^1$, so $\varepsilon(2) = 1$ and $\varepsilon(d) = 0$ for all $d \neq 2$ by Theorem \ref{thm:MAINTHMA} (iv). Thus $(V \otimes V^*, b_V) \cong V(2)^2$ as bilinear $K[u]$-modules. In this case Theorem \ref{thm:MAINTHMA} (iii)(c) and (vi) apply, giving $L_G(\varpi_1 + \varpi_{n-1}) \cong V_2$ and $\varepsilon'(d) = \varepsilon(d)$ for all $d \geq 1$. Hence $(L_G(\varpi_1 + \varpi_{n-1}), b_V) \cong V(2)$ as bilinear $K[u]$-modules.\end{esim}

\begin{esim}When $n = 4$ and $V \cong V_4$ as $K[u]$-modules, we have $\alpha = 2$ and by Theorem \ref{thm:tensordecompchar2} we get $V \otimes V^* \cong V_4^4$ as $K[u]$-modules. We have a consecutive-ones binary expansion $n = 2^2$, so $\varepsilon(4) = 1$ and $\varepsilon(d) = 0$ for all $d \neq 4$ by Theorem \ref{thm:MAINTHMA} (iv). Thus $(V \otimes V^*, b_V) \cong V(4)^4$ as bilinear $K[u]$-modules. In this case Theorem \ref{thm:MAINTHMA} (iii)(b) and (vi) apply, giving $L_G(\varpi_1 + \varpi_{n-1}) \cong V_2 \oplus V_4^3$, $\varepsilon'(2) = 1$, and $\varepsilon'(d) = \varepsilon(d)$ for all $d \neq 2$. Hence $(L_G(\varpi_1 + \varpi_{n-1}), b_V) \cong V(2) \perp V(4)^3$ as bilinear $K[u]$-modules.\end{esim}

\begin{esim}For $n = 6$ and $V \cong V_1 \oplus V_5$ as $K[u]$-modules, we have $\alpha = 0$ and by Theorem \ref{thm:tensordecompchar2} we get $V \otimes V^* \cong V_1^2 \oplus V_4^2 \oplus V_5^2 \oplus V_8^2$. We have a consecutive-ones binary expansion $5 = 2^3 - 2^2 + 2^0$, so $\varepsilon(8) = 1$, $\varepsilon(4) = 1$ and $\varepsilon(d) = 0$ for all $d \neq 4,8$ by Theorem \ref{thm:MAINTHMA} (iv). Hence $(V \otimes V^*, b_V) \cong W(1) \perp V(4)^2 \perp W(5) \perp V(8)^2$ as bilinear $K[u]$-modules. In this case Theorem \ref{thm:MAINTHMA} (ii) and (vi) apply, giving $(L_G(\varpi_1 + \varpi_{n-1}), b_V) \cong V(4)^2 \perp W(5) \perp V(8)^2$ as bilinear $K[u]$-modules.\end{esim}

\begin{esim}\label{esim:EXTHMA}In Table \ref{table:examplesofTHMA}, we illustrate Theorem \ref{thm:MAINTHMA} for all $2 \leq n \leq 7$. In the first column we use notation $(d_1^{n_1}, \ldots, d_t^{n_t})$ to denote that $V \cong V_{d_1}^{n_1} \oplus \cdots \oplus V_{d_t}^{n_t}$ as $K[u]$-modules. In the third and second columns, we use notation as in Theorem \ref{thm:hesselinkepsilon}. That is, for an alternating bilinear $K[u]$-module $(W,b)$, we use $({d_1}_{\varepsilon_1}^{n_1}, \ldots, {d_t}_{\varepsilon_t}^{n_t})$ to denote that $W \cong V_{d_1}^{n_1} \oplus \cdots \oplus V_{d_t}^{n_t}$ as $K[u]$-modules and $\varepsilon_{W, b}(d_i) = \varepsilon_i$ for $1 \leq i \leq t$.\end{esim}

\begin{table}[!htbp]
\centering
\caption{}\label{table:examplesofTHMA}
\footnotesize
\begin{tabular}{| c | l | l | l |}
\hline
$n$              & $V \downarrow K[u]$       & $(V \otimes V^*, b_V)$         & $(L_G(\varpi_1 + \varpi_{n-1}), b_V)$ \\ \hline
$n = 2$ & $(2)$ & $(2_1^{2})$ & $(2_1)$ \\
& & & \\
$n = 3$ & $(3)$ & $(1_0, 4_1^{2})$ & $(4_1^{2})$ \\
        & $(1, 2)$ & $(1_0, 2_1^{4})$ & $(2_1^{4})$ \\
& & & \\
$n = 4$ & $(4)$ & $(4_1^{4})$ & $(2_1, 4_1^{3})$ \\
        & $(1, 3)$ & $(1_0^{2}, 3_0^{2}, 4_1^{2})$ & $(3_0^{2}, 4_1^{2})$ \\
        & $(2^{2})$ & $(2_1^{8})$ & $(1_0^{2}, 2_1^{6})$ \\
        & $(1^{2}, 2)$ & $(1_0^{4}, 2_1^{6})$ & $(1_0^{2}, 2_1^{6})$ \\
& & & \\
$n = 5$ & $(5)$ & $(1_0, 4_1^{2}, 8_1^{2})$ & $(4_1^{2}, 8_1^{2})$ \\
        & $(1, 4)$ & $(1_0, 4_1^{6})$ & $(4_1^{6})$ \\
        & $(2, 3)$ & $(1_0, 2_1^{4}, 4_1^{4})$ & $(2_1^{4}, 4_1^{4})$ \\
        & $(1^{2}, 3)$ & $(1_0^{5}, 3_0^{4}, 4_1^{2})$ & $(1_0^{4}, 3_0^{4}, 4_1^{2})$ \\
        & $(1, 2^{2})$ & $(1_0, 2_1^{12})$ & $(2_1^{12})$ \\
        & $(1^{3}, 2)$ & $(1_0^{9}, 2_1^{8})$ & $(1_0^{8}, 2_1^{8})$ \\
& & & \\
$n = 6$ & $(6)$ & $(2_1^{2}, 8_1^{4})$ & $(2_1, 8_1^{4})$ \\
        & $(1, 5)$ & $(1_0^{2}, 4_1^{2}, 5_0^{2}, 8_1^{2})$ & $(4_1^{2}, 5_0^{2}, 8_1^{2})$ \\
        & $(2, 4)$ & $(2_1^{2}, 4_1^{8})$ & $(2_1, 4_1^{8})$ \\
        & $(1^{2}, 4)$ & $(1_0^{4}, 4_1^{8})$ & $(1_0^{2}, 4_1^{8})$ \\
        & $(3^{2})$ & $(1_0^{4}, 4_1^{8})$ & $(1_0^{2}, 4_1^{8})$ \\
        & $(1, 2, 3)$ & $(1_0^{2}, 2_1^{6}, 3_0^{2}, 4_1^{4})$ & $(2_1^{6}, 3_0^{2}, 4_1^{4})$ \\
        & $(1^{3}, 3)$ & $(1_0^{10}, 3_0^{6}, 4_1^{2})$ & $(1_0^{8}, 3_0^{6}, 4_1^{2})$ \\
        & $(2^{3})$ & $(2_1^{18})$ & $(2_1^{17})$ \\
        & $(1^{2}, 2^{2})$ & $(1_0^{4}, 2_1^{16})$ & $(1_0^{2}, 2_1^{16})$ \\
        & $(1^{4}, 2)$ & $(1_0^{16}, 2_1^{10})$ & $(1_0^{14}, 2_1^{10})$ \\
& & & \\				
$n = 7$ & $(7)$ & $(1_0, 8_1^{6})$ & $(8_1^{6})$ \\
        & $(1, 6)$ & $(1_0, 2_1^{2}, 6_0^{2}, 8_1^{4})$ & $(2_1^{2}, 6_0^{2}, 8_1^{4})$ \\
        & $(2, 5)$ & $(1_0, 2_1^{2}, 4_1^{4}, 6_0^{2}, 8_1^{2})$ & $(2_1^{2}, 4_1^{4}, 6_0^{2}, 8_1^{2})$ \\
        & $(1^{2}, 5)$ & $(1_0^{5}, 4_1^{2}, 5_0^{4}, 8_1^{2})$ & $(1_0^{4}, 4_1^{2}, 5_0^{4}, 8_1^{2})$ \\
        & $(3, 4)$ & $(1_0, 4_1^{12})$ & $(4_1^{12})$ \\
        & $(1, 2, 4)$ & $(1_0, 2_1^{4}, 4_1^{10})$ & $(2_1^{4}, 4_1^{10})$ \\
        & $(1^{3}, 4)$ & $(1_0^{9}, 4_1^{10})$ & $(1_0^{8}, 4_1^{10})$ \\
        & $(1, 3^{2})$ & $(1_0^{5}, 3_0^{4}, 4_1^{8})$ & $(1_0^{4}, 3_0^{4}, 4_1^{8})$ \\
        & $(2^{2}, 3)$ & $(1_0, 2_1^{12}, 4_1^{6})$ & $(2_1^{12}, 4_1^{6})$ \\
        & $(1^{2}, 2, 3)$ & $(1_0^{5}, 2_1^{8}, 3_0^{4}, 4_1^{4})$ & $(1_0^{4}, 2_1^{8}, 3_0^{4}, 4_1^{4})$ \\
        & $(1^{4}, 3)$ & $(1_0^{17}, 3_0^{8}, 4_1^{2})$ & $(1_0^{16}, 3_0^{8}, 4_1^{2})$ \\
        & $(1, 2^{3})$ & $(1_0, 2_1^{24})$ & $(2_1^{24})$ \\
        & $(1^{3}, 2^{2})$ & $(1_0^{9}, 2_1^{20})$ & $(1_0^{8}, 2_1^{20})$ \\
        & $(1^{5}, 2)$ & $(1_0^{25}, 2_1^{12})$ & $(1_0^{24}, 2_1^{12})$ \\
\hline		
\end{tabular}
\end{table}

\section{Hesselink normal forms on $\wedge^2(V)$}\label{section:mainB}

In this section, we will give a proof of Theorem \ref{thm:MAINTHMC}. At the end of this section we have included a number of examples to illustrate how Theorem \ref{thm:MAINTHMC} can be used.

Let $G = \Sp(V,b)$, where $V$ is a $K$-vector space of dimension $2n$ and $b$ is a non-degenerate alternating bilinear form on $V$. We recall the following which is analogous to the setup of the previous section. By Lemma \ref{lemma:formonwedgeVunique}, we have an alternating $G$-invariant bilinear form $a_V$ on $\wedge^2(V)$ which is unique up to scalar multiples. By Lemma \ref{lemma:basiclemmaonAV}, the bilinear form $a_V$ induces a non-degenerate $G$-invariant bilinear form on $\langle \beta_V \rangle^\perp / \langle \beta_V \rangle \cong L_G(\varpi_2)$, which gives us a representation $f: G \rightarrow \Sp(L_G(\varpi_2), a_V)$. Furthermore, by Lemma \ref{lemma:basiclemmaonAV} the bilinear form $a_V$ is non-degenerate if and only if $n$ is even, in which case we get a representation $f': G \rightarrow \Sp(\wedge^2(V), a_V)$. 

In Theorem \ref{thm:MAINTHMC}, we describe the Hesselink normal form of $f(u)$ for each unipotent element $u \in G$. When $n$ is even, Theorem \ref{thm:MAINTHMC} also describes the Hesselink normal form of $f'(u)$.

We begin with some observations in case where $u \in \Sp(V,b)$ and $(V,b) \cong V(2n)$ or $(V,b) \cong W(n)$ as bilinear $K[u]$-modules. Let $n \geq 2$ and consider $(V, b) = V(2n)$ with a basis $e_1$, $\ldots$, $e_{2n}$ as in Definition \ref{def:vnmod}. That is, we define $b(e_i, e_j) = 1$ if $i+j = 2n+1$ and $0$ otherwise. Furthermore, the action of $u$ on the $e_i$ is given by \begin{align*}
ue_1 &= e_1, \\
ue_i &= e_i + e_{i-1} + \cdots + e_1 \text{ for all } 2 \leq i \leq n+1, \\
ue_i &= e_i + e_{i-1} \text{ for all } n+1 < i \leq 2n.
\end{align*}Throughout we will denote $e_j = 0$ for all $j \leq 0$ and $j > n$.

Suppose that $2^{\alpha} \mid n$, where $\alpha \geq 0$. We define \begin{equation}\label{eq:deltaalphadef}\delta_\alpha = \sum_{1 \leq i \leq n} e_i \wedge e_{2n-2^{\alpha}t_i^{(\alpha)}} \end{equation} where $t_i^{(\alpha)} = \left \lfloor{\frac{i-1}{2^{\alpha}}} \right\rfloor$ for all $1 \leq i \leq n$, cf. \cite[(5.6)]{KorhonenJordanGood}. Note that $\delta_0 = \beta_V$.

\begin{prop}\label{prop:mainpropaltdelta}
Suppose that $2^{\alpha} \mid n$, where $\alpha > 0$. Then $X^{2^{\alpha-1}} \delta_{\alpha} = \delta_{\alpha-1}$.
\end{prop}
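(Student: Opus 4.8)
The statement is a direct computation in $\wedge^2(V)$, and I would attack it by expanding $X^{2^{\alpha-1}} \delta_\alpha$ term by term using the explicit action of $u$ on the basis $(e_i)$ of $V(2n)$. The key point is that $X = u-1$ behaves very differently on the two halves of the basis: for $i \le n+1$ we have $X e_i = e_{i-1} + e_{i-2} + \cdots + e_1$ (so $X$ is ``lower triangular with all ones''), while for $i > n+1$ we have $X e_i = e_{i-1}$. A clean way to organize this is to note that $u^{2^{\alpha-1}} = (1+X)^{2^{\alpha-1}}$ and $X^{2^{\alpha-1}} = u^{2^{\alpha-1}} - 1$, and to use formulas of the type already invoked in the proof of Lemma~\ref{lemma:vi_values} (via Lucas' theorem) for the action of $X^{2^{\alpha-1}}$ and $u^{2^{\alpha-1}}$ on basis vectors in the ``regular'' range $i > n+1$, where $u$ acts as a genuine Jordan block.

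First I would write $X^{2^{\alpha-1}}(e_i \wedge e_{2n-2^{\alpha}t_i^{(\alpha)}}) = (X^{2^{\alpha-1}} e_i) \wedge (u^{2^{\alpha-1}} e_{2n-2^{\alpha}t_i^{(\alpha)}}) + e_i \wedge (X^{2^{\alpha-1}} e_{2n-2^{\alpha}t_i^{(\alpha)}})$, using the Leibniz-type identity $X(a \wedge b) = Xa \wedge ub + a \wedge Xb$ for $X = u-1$ acting on $\wedge^2$. Then I would analyze the two summands. Writing $i = 2^\alpha q + s + 1$ with $0 \le s < 2^\alpha$, we have $t_i^{(\alpha)} = q$, so the second-factor index $2n - 2^\alpha q$ is large (it lies well above $n+1$ for the relevant range), and $X^{2^{\alpha-1}} e_{2n-2^\alpha q} = e_{2n - 2^\alpha q - 2^{\alpha-1}}$, which should exactly match what is needed for $\delta_{\alpha-1}$: indeed $2^{\alpha-1} \mid n$, and the index $2n - 2^{\alpha-1} t_j^{(\alpha-1)}$ for suitable $j$ equals $2n - 2^\alpha q - 2^{\alpha-1}$ precisely when $t_j^{(\alpha-1)} = 2q+1$, i.e.\ when $j$ is in the ``upper half'' $2^{\alpha-1}(2q+1) < j \le 2^{\alpha-1}(2q+2)$ of the block of size $2^\alpha$ starting at $2^\alpha q + 1$. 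So the contribution $\sum_i e_i \wedge X^{2^{\alpha-1}} e_{2n - 2^\alpha t_i^{(\alpha)}}$ should produce exactly the ``upper-half'' terms of $\delta_{\alpha-1}$.

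The remaining task is then to show that $\sum_i (X^{2^{\alpha-1}} e_i) \wedge (u^{2^{\alpha-1}} e_{2n - 2^\alpha t_i^{(\alpha)}})$ contributes exactly the ``lower-half'' terms of $\delta_{\alpha-1}$ (those $j$ with $2^{\alpha-1} \cdot 2q < j \le 2^{\alpha-1}(2q+1)$), and that all other terms cancel. Here the subtlety is that for small $i$ (roughly $i \le n+1$) the map $X^{2^{\alpha-1}} e_i$ is a sum of several $e_k$'s rather than a single one, so many cross terms appear; I expect massive cancellation modulo $2$, exactly of the kind handled by Lucas' theorem and by the antisymmetry $e_k \wedge e_k = 0$, $e_k \wedge e_\ell = e_\ell \wedge e_k$ in characteristic $2$. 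The cleanest route is probably to reindex: substitute $j = i - 2^{\alpha-1}$ in the sum and pair up the index $j$ from one batch of terms with the index $i$ from another, using the fact (from $\delta_\alpha$ being well-defined independently of the basis, Lemma analogous to \cite[3.4]{DeBruyn}) that both $\delta_\alpha$ and $\delta_{\alpha-1}$ are canonical. \textbf{The main obstacle} is precisely this bookkeeping of which wedge terms survive: one must carefully track, for each $i$ in the ranges $i \le n+1$ and $i > n+1$, the full expansion of $X^{2^{\alpha-1}} e_i$ and of $u^{2^{\alpha-1}} e_{2n-2^\alpha t_i^{(\alpha)}}$, and verify that after collecting terms and cancelling in characteristic $2$ exactly the $n$ wedge pairs $e_j \wedge e_{2n - 2^{\alpha-1} t_j^{(\alpha-1)}}$ remain, each with coefficient $1$. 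I would split into the two cases ``$i$ in the upper half of its $2^\alpha$-block'' and ``$i$ in the lower half'' to keep the case analysis manageable, and lean on the formulas from \cite[Lemma 5.1, Lemma 5.2]{KorhonenJordanGood} (already used above) for the explicit action of powers of $X$ on Jordan-block basis vectors.
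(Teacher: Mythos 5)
Your plan matches the paper's proof essentially step for step: expand $X^{2^{\alpha-1}}$ on wedges via $u^{2^{\alpha-1}} = 1 + X^{2^{\alpha-1}}$, use $X^{2^{\alpha-1}}e_{2n-2^{\alpha}t_i^{(\alpha)}} = e_{2n-2^{\alpha}t_i^{(\alpha)}-2^{\alpha-1}}$ on the high-index factors and the telescoping identity $(u-1)e_i = ue_{i-1}$ (giving $X^{2^{\alpha-1}}e_i = \sum_{j\geq 1} e_{i-2^{\alpha-1}j}$) on the low-index ones, then collect terms by first factor and split according to whether $i$ lies in the lower or upper half of its $2^{\alpha}$-block. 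The ``main obstacle'' you flag is exactly the characteristic-two cancellation the paper carries out with that case split (the cross-term sums telescope to a single term in one case and to zero in the other), so your outline is correct and complete in strategy; the aside about basis-independence of $\delta_{\alpha}$ is unnecessary (and only $\delta_0 = \beta_V$ is actually canonical), but nothing in your argument depends on it.
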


\begin{proof}We first note that $X^{2^{\alpha-1}} = u^{2^{\alpha-1}} - 1$, so it follows that \begin{equation}\label{eq:wedgeformulaalpha}X^{2^{\alpha-1}} \cdot (v \wedge w) = (X^{2^{\alpha-1}} v) \wedge (X^{2^{\alpha-1}} w) + v \wedge (X^{2^{\alpha-1}} w) + (X^{2^{\alpha-1}} v) \wedge w\end{equation} for all $v, w \in V$. 

It is clear from the definition that $(u-1)e_i = ue_{i-1}$ for all $1 \leq i \leq n+1$. Thus $(u-1)^ke_i = u^ke_{i-k}$ for all $k \geq 1$ and $1 \leq i \leq n+1$. With $k = 2^{\alpha-1}$, we see that $X^{2^{\alpha-1}}e_i = e_{i-2^{\alpha-1}} + X^{2^{\alpha-1}}e_{i-2^{\alpha-1}}$ for all $1 \leq i \leq n+1$. Consequently \begin{equation}\label{eq:eqalpha1Xfirst}X^{2^{\alpha-1}} e_i = \sum_{j \geq 1} e_{i-2^{\alpha-1}j}\end{equation} for all $1 \leq i \leq n+1$. 

It is clear that $X^{2^{\alpha-1}}e_i = e_{i-2^{\alpha-1}}$ for all $i > n+2^{\alpha-1}$. Since $2n-2^{\alpha}t_i^{(\alpha)} > n + 2^{\alpha-1}$, it follows that \begin{equation}\label{eq:eqalpha1Xsecond}X^{2^{\alpha-1}} e_{2n-2^{\alpha}t_i^{(\alpha)}} = e_{2n-2^{\alpha}t_i^{(\alpha)} - 2^{\alpha-1}}\end{equation} for all $1 \leq i \leq n$. 

Applying~\eqref{eq:eqalpha1Xfirst} and~\eqref{eq:eqalpha1Xsecond} on~\eqref{eq:wedgeformulaalpha}, we see that $X^{2^{\alpha-1}} \delta_{\alpha}$ equals $$\sum_{1 \leq i \leq n} e_i \wedge e_{2n-2^{\alpha}t_i^{(\alpha)} - 2^{\alpha-1}} + \sum_{\substack{1 \leq i \leq n \\ j \geq 1}} e_{i-2^{\alpha-1}j} \wedge \left( e_{2n-2^{\alpha}t_i^{(\alpha)}} + e_{2n-2^{\alpha}t_i^{(\alpha)}-2^{\alpha-1}} \right),$$ where collecting the terms of the form $e_i \wedge v$, we get \begin{equation}\label{eq:firsteqwithsj}\sum_{1 \leq i \leq n} e_i \wedge \left( e_{2n-2^{\alpha}t_{i}^{(\alpha)}-2^{\alpha-1}} + \sum_{j \geq 1} s_j \right),\end{equation} with $s_j = e_{2n-2^{\alpha}t_{i+2^{\alpha-1}j}^{(\alpha)}} + e_{2n-2^{\alpha}t_{i+2^{\alpha-1}j}^{(\alpha)} - 2^{\alpha-1}}$ for all $j \geq 1$.


For $1 \leq i \leq n$, we have $i-1 = t_i^{(\alpha)}2^{\alpha} + r_i$, for some $0 \leq r_i < 2^{\alpha}$. Consider first the case where $0 \leq r_i < 2^{\alpha-1}$. Then $t_{i+2^{\alpha-1}j}^{(\alpha)} = t_i^{(\alpha)} + \frac{j}{2}$ if $j$ is even, and $t_{i+2^{\alpha-1}j}^{(\alpha)} = t_i^{(\alpha)} + \frac{j-1}{2}$ if $j$ is odd. Hence if $j > 1$ is even, then $s_{j} = s_{j+1}$, which implies that $\sum_{j \geq 1} s_j = s_1$. Now $t_{i+2^{\alpha-1}}^{(\alpha)} = t_i^{(\alpha)}$, so $e_{2n-2^{\alpha}t_{i}^{(\alpha)}-2^{\alpha-1}} + \sum_{j \geq 1} s_j = e_{2n-2^{\alpha}t_i^{(\alpha)}}$. Because $0 \leq r_i < 2^{\alpha-1}$, we have $t_i^{(\alpha-1)} = 2t_i^{(\alpha)}$, so the summand in~\eqref{eq:firsteqwithsj} corresponding to $i$ is $e_i \wedge e_{2n - 2^{\alpha-1}t_i^{(\alpha-1)}}$.

If $2^{\alpha-1} \leq r_i < 2^{\alpha}$, a similar calculation shows that $s_j = s_{j+1}$ for all $j \geq 1$ odd, and thus $\sum_{j \geq 1} s_j = 0$. In this case the summand in~\eqref{eq:firsteqwithsj} corresponding to $i$ is $e_i \wedge e_{2n - 2^{\alpha}t_i^{(\alpha)} - 2^{\alpha-1}} = e_i \wedge e_{2n - 2^{\alpha-1}t_i^{(\alpha-1)}}$, since $t_i^{(\alpha-1)} = 2t_i^{(\alpha)} + 1$ when $2^{\alpha-1} \leq r_i < 2^{\alpha}$. Thus we conclude that~\eqref{eq:firsteqwithsj} equals $$\sum_{1 \leq i \leq n} e_i \wedge e_{2n - 2^{\alpha-1}t_i^{(\alpha-1)}} = \delta_{\alpha-1}$$ as claimed.\end{proof}




\begin{seur}\label{cor:deltaimg}
Suppose that $2^{\alpha} \mid n$, where $\alpha \geq 0$. Then $X^{2^{\alpha}-1} \delta_{\alpha} = \delta_0 = \beta_V$.
\end{seur}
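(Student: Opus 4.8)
The plan is to deduce Corollary \ref{cor:deltaimg} from Proposition \ref{prop:mainpropaltdelta} by a straightforward induction on $\alpha$. The base case $\alpha = 0$ is trivial, since then $X^{2^{\alpha}-1} = X^0 = \operatorname{id}$ and $\delta_0 = \beta_V$ by the remark immediately following the definition \eqref{eq:deltaalphadef} of $\delta_\alpha$.

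For the inductive step, suppose $\alpha > 0$ and $2^{\alpha} \mid n$. Then also $2^{\alpha-1} \mid n$, so the inductive hypothesis applies to $\alpha - 1$ and gives $X^{2^{\alpha-1}-1} \delta_{\alpha-1} = \beta_V$. On the other hand, Proposition \ref{prop:mainpropaltdelta} gives $X^{2^{\alpha-1}} \delta_{\alpha} = \delta_{\alpha-1}$. Composing these, I would write
\[
X^{2^{\alpha}-1} \delta_{\alpha} = X^{(2^{\alpha-1}-1) + 2^{\alpha-1}} \delta_{\alpha} = X^{2^{\alpha-1}-1} \left( X^{2^{\alpha-1}} \delta_{\alpha} \right) = X^{2^{\alpha-1}-1} \delta_{\alpha-1} = \beta_V,
\]
using that $2^{\alpha} - 1 = (2^{\alpha-1} - 1) + 2^{\alpha-1}$. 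This completes the induction and hence the proof.

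There is no real obstacle here: the only thing to be careful about is the exponent bookkeeping $2^{\alpha}-1 = 2^{\alpha-1} + (2^{\alpha-1}-1)$, and checking that the divisibility hypothesis $2^{\alpha}\mid n$ needed to invoke Proposition \ref{prop:mainpropaltdelta} indeed passes down to $2^{\alpha-1}\mid n$ so that the inductive hypothesis is applicable. Both are immediate. So the corollary is essentially just iterating Proposition \ref{prop:mainpropaltdelta} $\alpha$ times, and I would present it exactly as the short induction above.
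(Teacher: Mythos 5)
Your proof is correct and is essentially the paper's argument: the paper simply notes that $2^{\alpha}-1 = \sum_{0 \leq \beta \leq \alpha-1} 2^{\beta}$ and iterates Proposition \ref{prop:mainpropaltdelta}, which is exactly what your induction does when unrolled. The exponent bookkeeping and the observation that $2^{\alpha}\mid n$ implies $2^{\alpha-1}\mid n$ are both handled correctly.
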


\begin{proof}If $\alpha = 0$, the claim follows since $\delta_{\alpha} = \delta_0 = \beta_V$. If $\alpha > 0$, then $2^{\alpha}-1 = \sum_{0 \leq \beta \leq \alpha-1} 2^{\beta}$, so the claim follows using Proposition \ref{prop:mainpropaltdelta}.\end{proof}

\begin{lemma}\label{lemma:hesselinkmainlemma}
Let $(V, b)$ be a bilinear $K[u]$-module such that $(V, b) \cong V(2n)$ or $(V, b) \cong W(n)$, where $n > 0$. Assume that $2^{\alpha} \mid n$, where $\alpha \geq 0$. Then: 
\begin{enumerate}[\normalfont (i)]
\item There exists $\delta \in \wedge^2(V)$ such that $X^{2^{\alpha}-1} \delta = \beta_V$ and $\varphi_V(\delta) = n/2^{\alpha}$.
\item We have $\operatorname{Ker} X_{\wedge^2(V)}^{2^{\alpha}-1} \subseteq \operatorname{Ker} \varphi_V$.
\item If $\alpha = \nu_2(n)$, then $\operatorname{Ker} X_{\wedge^2(V)}^{2^{\alpha}} \not\subseteq \operatorname{Ker} \varphi_V$.
\item If $\alpha < \nu_2(n)$, then $a_V(X^{2^{\alpha}-1}v,v) = 0$ for all $v \in \operatorname{Ker} X_{\wedge^2(V)}^{2^{\alpha}}$.
\item If $\alpha = \nu_2(n)$ and $(V, b) \cong V(2n)$, then $a_V(X^{2^{\alpha}-1}v,v) = 0$ for all $v \in \operatorname{Ker} X_{\operatorname{Ker} \varphi_V}^{2^{\alpha}}$.
\item If $\alpha = \nu_2(n) > 0$ and $(V, b) \cong W(n)$, then $a_V(X^{2^{\alpha}-1}v,v) \neq 0$ for some $v \in \operatorname{Ker} X_{\operatorname{Ker} \varphi_V}^{2^{\alpha}}$.
\end{enumerate}

\end{lemma}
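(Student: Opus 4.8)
\textbf{Proof plan for Lemma \ref{lemma:hesselinkmainlemma}.}

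The plan is to prove all six parts in tandem, since they share the same computational setup; the key device throughout is the explicit element $\delta_\alpha \in \wedge^2(V)$ from~\eqref{eq:deltaalphadef} together with Corollary \ref{cor:deltaimg}, which already gives $X^{2^\alpha-1}\delta_\alpha = \beta_V$. First, for (i), I would take $\delta = \delta_\alpha$; it remains to compute $\varphi_V(\delta_\alpha) = a_V(\delta_\alpha, \beta_V)$. Since $\varphi_V(v \wedge w) = b(v,w)$, this is $\sum_{1 \le i \le n} b(e_i, e_{2n - 2^\alpha t_i^{(\alpha)}})$, and $b(e_i, e_j) = 1$ iff $i + j = 2n+1$; the term for $i$ contributes iff $2n - 2^\alpha t_i^{(\alpha)} = 2n + 1 - i$, i.e. $i - 1 = 2^\alpha t_i^{(\alpha)}$, which holds exactly when $2^\alpha \mid i-1$, i.e. for $i \in \{1, 2^\alpha+1, 2\cdot 2^\alpha + 1, \ldots\}$. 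Counting these gives $n/2^\alpha$ values, establishing $\varphi_V(\delta_\alpha) = n/2^\alpha$. (In characteristic two, $n/2^\alpha$ is odd precisely when $\alpha = \nu_2(n)$, which will be what distinguishes cases (iii)/(v)/(vi) from (ii)/(iv).)

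For (ii), I would argue that $\langle \beta_V \rangle \subseteq \operatorname{Im} X_{\wedge^2(V)}^{2^\alpha - 1}$ by part (i), and then use the duality $\operatorname{Ker} \varphi_V = \langle \beta_V \rangle^\perp$ (Lemma \ref{lemma:basiclemmaonAV}, valid since $\beta_V \ne 0$) combined with Lemma \ref{lemma:vperpmodvdescription}(i): for a non-degenerate alternating $K[u]$-module, $\operatorname{Ker} X^d \subseteq \langle v \rangle^\perp$ iff $\langle v \rangle \subseteq \operatorname{Im} X^d$. However, one must be careful: when $\alpha = \nu_2(n)$ and $a_V$ happens to be degenerate on $\wedge^2(V)$ this duality is with respect to $\hat a_V$ rather than $a_V$; I would instead phrase (ii) directly via $a_V(\operatorname{Ker} X^{2^\alpha-1}, \beta_V) = 0$, which follows from Lemma \ref{lemma:basicXdlinear}(iii) applied to $\delta_\alpha$ (as $X^{2^\alpha-1}\delta_\alpha = \beta_V$ and $X^{2^\alpha}\delta_\alpha = X\beta_V = 0$): for any $v \in \operatorname{Ker} X^{2^\alpha - 1}$ (hence $v \in \operatorname{Ker} X^{2^\alpha}$), we get $a_V(v, \beta_V) = a_V(v, X^{2^\alpha-1}\delta_\alpha) = a_V(X^{2^\alpha-1}v, \delta_\alpha) = 0$. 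For (iii), when $\alpha = \nu_2(n)$ I would check $\delta_\alpha \notin \operatorname{Ker}\varphi_V$ (since $\varphi_V(\delta_\alpha) = n/2^\alpha$ is odd, hence nonzero in $K$) while $X^{2^\alpha}\delta_\alpha = X\beta_V = 0$ because $\beta_V$ is $u$-fixed; so $\delta_\alpha$ witnesses $\operatorname{Ker} X^{2^\alpha} \not\subseteq \operatorname{Ker}\varphi_V$.

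The substance is in parts (iv)--(vi), which concern the quadratic-type map $v \mapsto a_V(X^{2^\alpha-1}v, v)$. For (iv) ($\alpha < \nu_2(n)$), I would use Theorem \ref{thm:orthogonallyindecomposables} / Lemma \ref{lemma:epsilonequivalence}: $a_V(X^{d-1}v,v) \ne 0$ for some $v \in \operatorname{Ker} X^d$ iff $V(d)$ occurs as an orthogonal direct summand of $(\wedge^2(V), a_V)$ and $d$ is even; since $2^\alpha < 2^{\nu_2(n)}$ divides all Jordan block sizes of $u$ on $\wedge^2(V)$ (a fact I would extract from the known Jordan structure — Example \ref{esim:wedge2power2}, Lemma \ref{lemma:minblockinwedge2}, and Lemma \ref{lemma:indresforcyclicALPHA}, noting $\nu_2(n) = \nu_2$ of the relevant block parameter), no block of size exactly $2^\alpha$ occurs, so $\operatorname{Ker} X^{2^\alpha}$ carries only blocks killed by a smaller power of $X$ and Lemma \ref{lemma:basicXdlinear}(ii),(iv) forces the form to vanish. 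For (v) and (vi), I would pass to the restriction $\Res_{\langle u^{2^\alpha}\rangle}^{\langle u\rangle}$: by Lemma \ref{lemma:indresforcyclicALPHA}(iii)--(iv), $\Res V(2n) \cong V(2n/2^\alpha)^{2^\alpha}$ with $n/2^\alpha$ odd in case (v)... but more directly I would apply Lemma \ref{lemma:pairedaltsquare} and Lemma \ref{lemma:pairedjordan}: if $(V,b) \cong W(n)$ is a paired module $W \oplus W^*$, then $\wedge^2(V) = \wedge^2 W \oplus \wedge^2 W^* \oplus (W \wedge W^*)$ with the first two summands forming a paired module and $W \wedge W^* \cong (W \otimes W^*, b_W)$. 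Case (vi) then reduces to showing $(W \otimes W^*, b_W)$ has a $V(2^\alpha)$ summand when $W \cong V_n$ with $\nu_2(n) = \alpha > 0$ — precisely the content of Lemma \ref{lemma:nonsingularsinVV} with $\beta = \alpha$ (note $2^\alpha$ occurs in the consecutive-ones binary expansion of $n$ since $\nu_2(n) = \alpha$), exhibiting the explicit vector $v_{2^\alpha}^{(\alpha)} \in \operatorname{Ker}\varphi_V$ (it lies in $W \otimes W^*$, which is contained in $\operatorname{Ker}\varphi_V$ since $\wedge^2 W \oplus \wedge^2 W^*$ is $a_V$-non-degenerate and orthogonal to it) with $a_V(X^{2^\alpha-1}v_{2^\alpha}^{(\alpha)}, v_{2^\alpha}^{(\alpha)}) \ne 0$. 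Case (v) with $(V,b) \cong V(2n)$, $\nu_2(n) = \alpha$: here I expect the main obstacle — one must show that despite $V(2^\alpha)$ possibly occurring in $\wedge^2(V)$ (it will, with multiplicity one, by Lemma \ref{lemma:minblockinwedge2} and Lemma \ref{lemma:epsilonequivalence}), the unique such summand is \emph{not} contained in $\operatorname{Ker}\varphi_V$; by part (iii) it is detected by $\delta_\alpha \notin \operatorname{Ker}\varphi_V$, and removing the line $\langle \beta_V \rangle$ and passing to $\operatorname{Ker}\varphi_V = \langle\beta_V\rangle^\perp \supseteq$ the radical, the $V(2^\alpha)$ block "uses up" its singularity against $\beta_V$, so that on $\operatorname{Ker} X_{\operatorname{Ker}\varphi_V}^{2^\alpha}$ the form $a_V(X^{2^\alpha-1}\cdot, \cdot)$ vanishes — I would make this precise by choosing a $K[u^{2^\alpha}]$-decomposition of $\operatorname{Ker}\varphi_V$ compatible with $\beta_V$ and applying Lemma \ref{lemma:basicXdlinear}(iv) blockwise, using that every element of $\operatorname{Ker} X_{\operatorname{Ker}\varphi_V}^{2^\alpha}$ is, modulo elements on which the form provably vanishes, a multiple of $\beta_V$ plus $X$-lower terms.
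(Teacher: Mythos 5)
Your overall architecture matches the paper's, but there are genuine gaps, all centred on the case $(V,b)\cong W(n)$. First, part (i): the element $\delta_\alpha$ of~\eqref{eq:deltaalphadef} and Corollary \ref{cor:deltaimg} are constructed only for $(V,b)\cong V(2n)$ (they use the basis of Definition \ref{def:vnmod}, on which $u$ acts as a single Jordan block of size $2n$); for $W(n)$ the action of $u$ is two blocks of size $n$ and no such $\delta_\alpha$ has been defined, so your proof of (i) --- and with it (ii) and (iii), which rely on the existence of $\delta$ --- covers only half the lemma. The paper handles $W(n)$ by writing $V=A\oplus B$ totally singularly, identifying $(A\wedge B,a_V)\cong(A\otimes A^*,b_A)$ via Lemma \ref{lemma:pairedaltsquare}, and importing an explicit $X^{2^\alpha-1}$-preimage of $\gamma_A$ from the earlier paper. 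Second, your argument for (iv) in the $W(n)$ case rests on a false claim: it is not true that $\wedge^2(V)$ has no Jordan block of size $2^\alpha$ when $\alpha<\nu_2(n)$. For example $\wedge^2(W(4))\cong V_2^2\oplus V_4^6$ has blocks of size $2=2^1<2^{\nu_2(4)}$. The conclusion survives because those small blocks sit inside the paired summand $\wedge^2(A)\oplus\wedge^2(B)$, on which $a_V(X^{d-1}\cdot,\cdot)$ vanishes by Lemmas \ref{lemma:pairedaltsquare} and \ref{lemma:pairedjordan}; the ``no small blocks'' count is valid only for the remaining summand $A\wedge B\cong A\otimes A^*$, so you must split $v=v'+v''$ accordingly.

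Two further points. In (vi) your justification that the witness lies in $\operatorname{Ker}\varphi_V$ is incorrect: $W\wedge W^*$ is \emph{not} contained in $\operatorname{Ker}\varphi_V=\langle\beta_V\rangle^\perp$, since $\beta_V=\sum_i e_i\wedge f_i$ itself lies in $W\wedge W^*$ and $\varphi_V(e_i\wedge f_i)=1$. The chosen vector $\sum_j e_{i+j2^\alpha}\wedge f_{i-2^{\alpha-1}+j2^\alpha}$ does lie in $\operatorname{Ker}\varphi_V$, but only because none of its summands has the form $e_a\wedge f_a$; this must be checked directly rather than deduced from the decomposition. Finally, (v) is left as a sketch precisely at the step that needs an argument. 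The clean route is to note that $\delta\notin\langle\beta_V\rangle^\perp$ (as $2\nmid n/2^\alpha$), apply Lemma \ref{lemma:vperpmodvdescription} (iv) together with Lemma \ref{lemma:minblockinwedge2} to conclude that $\operatorname{Ker}\varphi_V/\langle\beta_V\rangle$ has no Jordan block of size $2^\alpha$ at all, and then invoke Lemma \ref{lemma:epsilonequivalence}; your proposed ``$K[u^{2^\alpha}]$-decomposition compatible with $\beta_V$'' is not developed far enough to count as a proof.
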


\begin{proof}

For (i), we first consider the case where $(V, b) \cong V(2n)$. Take a basis $e_1$, $\ldots$, $e_{2n}$ as above and $\delta_\alpha$ as in~\eqref{eq:deltaalphadef}. Then $X^{2^{\alpha}-1} \delta_\alpha = \beta_V$ by Corollary \ref{cor:deltaimg}. For the summands in~\eqref{eq:deltaalphadef}, for $1 \leq i \leq n$ we have $\varphi_V(e_i \wedge e_{2n-2^{\alpha}t_i^{(\alpha)}}) = 1$ if $2n-2^{\alpha}t_i^{(\alpha)}+i = 2n+1$ and $0$ otherwise. Now $2n-2^{\alpha}t_i^{(\alpha)}+i = 2n+1$ if and only if $2^{\alpha} \mid i-1$, so we deduce that $\varphi_V(\delta_\alpha) = n/2^{\alpha}$.

Next consider (i) in the case where $(V, b) \cong W(n)$. Fix a basis $e_1$, $\ldots$, $e_n$, $f_1$, $\ldots$, $f_n$ of $V$ such that the subspaces $A = \langle e_1, \ldots, e_n \rangle$ and $B = \langle f_1, \ldots, f_n \rangle$ are $u$-invariant and totally singular, and $b(e_i, f_j) = \delta_{i,j}$. With a suitable choice of basis, we also arrange $ue_1 = e_1$ and $ue_i = e_{i} + e_{i-1}$ for all $1 < i \leq n$. Then $B \cong A^*$, with an isomorphism of $K[u]$-modules defined by $f_i \mapsto e_i^*$. Recall that $\wedge^2(V)$ decomposes as $$\wedge^2(V) = \wedge^2(A) \oplus \wedge^2(B) \oplus A \wedge B.$$ By the proof of Lemma \ref{lemma:pairedaltsquare}, we have $(A \wedge B, a_V) \cong (A \otimes A^*, b_A)$, with an isomorphism of bilinear $K[u]$-modules given by $e_i \wedge f_j \mapsto e_i \otimes e_j^*$. Thus by \cite[Corollary 5.5]{KorhonenJordanGood}, with $$\delta = \sum_{1 \leq i \leq 2^{\alpha}} \sum_{0 \leq j \leq n/2^{\alpha} - 1} e_{j2^{\alpha}+i} \wedge f_{j2^{\alpha}+1}$$ we get $X^{2^{\alpha}-1} \delta = \sum_{1 \leq i \leq n} e_i \wedge f_i = \beta_V$. Since $\varphi_V(\delta) = n/2^{\alpha}$, this completes the proof of (i).

Claim (ii) is obvious in the case where $\alpha = 0$. If $\alpha > 0$, then $n$ is even and $\operatorname{Ker} \varphi_V = \langle \beta_V \rangle^\perp$. In this case (ii) follows from (i) and Lemma \ref{lemma:vperpmodvdescription} (i).

For (iii), suppose that $\alpha = \nu_2(n)$. If $\delta$ is as in (i), then $\delta \in \operatorname{Ker} X_{\wedge^2(V)}^{2^{\alpha}}$ since $\beta_V$ is a fixed point. Since $\varphi_V(\delta) = n/2^{\alpha} \neq 0$, claim (iii) follows.

In the case where $V \cong V(2n)$, claim (iv) follows from Lemma \ref{lemma:epsilonequivalence}, since the smallest Jordan block size of $u$ on $\wedge^2(V)$ is $2^{\nu_2(n)}$ (Lemma \ref{lemma:minblockinwedge2}). Consider then (iv) in the case where $V \cong W(n)$, and let $V = A \oplus B$ as in the proof of (i) above. For $v \in \operatorname{Ker} X_{\wedge^2(V)}^{2^{\alpha}}$, we can write $v = v' + v''$, where $v' \in \operatorname{Ker} X_{\wedge^2(A) \oplus \wedge^2(B)}^{2^{\alpha}}$ and $v'' \in \operatorname{Ker} X_{A \wedge B}^{2^{\alpha}}$. We have $a_V(X^{2^{\alpha}-1}v',v') = 0$ since $(\wedge^2(A) \oplus \wedge^2(B), a_V)$ is a paired module (Lemma \ref{lemma:pairedaltsquare}). Furthermore, by Lemma \ref{lemma:epsilonequivalence} we get $a_V(X^{2^{\alpha}-1}v'',v'') = 0$, since the smallest Jordan block size of $u$ in $A \wedge B \cong A \otimes A^*$ is $2^{\nu_2(n)} > 2^{\alpha}$ \cite[Lemma 4.3]{KorhonenJordanGood}. Thus $a_V(X^{2^{\alpha}-1}v,v) = 0$ by Lemma \ref{lemma:basicXdlinear} (iv).


Claim (v) is clear if $\alpha = 0$, so suppose that $\alpha > 0$ and let $\delta \in \wedge^2(V)$ be as in (i). Now $2 \nmid \frac{n}{2^{\alpha}}$, so $\delta \not\in \langle \beta_V \rangle^\perp$ and from Lemma \ref{lemma:vperpmodvdescription} (iv) and Lemma \ref{lemma:minblockinwedge2} we conclude that $u$ has no Jordan blocks of size $2^{\alpha}$ on $\langle \beta_V \rangle^\perp / \langle \beta_V \rangle = \operatorname{Ker} \varphi_V / \langle \beta_V \rangle$. Thus $a_V(X^{2^{\alpha}-1}v,v) = 0$ for all $v \in \operatorname{Ker} X_{\operatorname{Ker} \varphi_V}^{2^{\alpha}}$ by Lemma \ref{lemma:epsilonequivalence}.

For (vi), suppose that $(V, b) \cong W(n)$ and let $V = A \oplus B$ as in the proof of (i) above. For $2^{\alpha-1} + 1 \leq i \leq 2^{\alpha}$, similarly to~\eqref{eq:defofvi} we define $$w_i = \sum_{j \geq 0} e_{i+j2^{\alpha}} \wedge f_{i-2^{\alpha-1}+j2^{\alpha}}.$$ Now $2^{\alpha}$ occurs as a Jordan block size in $V_n \otimes V_n$ for example by \cite[Lemma 4.3]{KorhonenJordanGood}, so from $(A \wedge B, a_V) \cong (A \otimes A^*, b_A)$ and Lemma \ref{lemma:nonsingularsinVV} we conclude that $X^{2^\alpha} \cdot w_i = 0$ and $a_V(X^{2^\alpha-1}w_i,w_i) \neq 0$ for a suitable choice of $i$. Since $w_i \in \operatorname{Ker} \varphi_V$, this proves (vi).\end{proof}


We will now be able to prove Theorem \ref{thm:MAINTHMC}, our second main result. We refer the reader to the introduction for the statement of the theorem.

\begin{proof}[Proof of Theorem \ref{thm:MAINTHMC}]
We first recall the setup of the theorem, as stated in the introduction. Let $G = \Sp(V, b)$, where $\dim V = 2n$ for some $n \geq 2$. Let $u \in G$ be unipotent. For $t \geq 0$, let $d_1$, $\ldots$, $d_t$ be the Jordan block sizes $d$ of $u$ such that $\varepsilon_{V,b}(d) = 0$, and for $s \geq 0$ let $2d_{t+1}$, $\ldots$, $2d_{t+s}$ be the Jordan block sizes $d$ of $u$ such that $\varepsilon_{V,b}(d) = 1$. Write $V \cong V_{d_1}^{n_1} \oplus \cdots \oplus V_{d_t}^{n_t} \oplus V_{2d_{t+1}}^{n_{t+1}} \oplus \cdots \oplus V_{2d_{t+s}}^{n_{t+s}}$ as $K[u]$-modules, where $n_r > 0$ for all $1 \leq r \leq t+s$.

Set $\alpha = \nu_2(\gcd(d_1, \ldots, d_{t+s}))$. Suppose that $\wedge^2(V) \cong \oplus_{d \geq 1} V_d^{\lambda(d)}$ and $L_G(\varpi_2) \cong \oplus_{d \geq 1} V_d^{\lambda'(d)}$ as $K[u]$-modules, where $\lambda(d), \lambda'(d) \geq 0$ for all $d \geq 1$. We identify $L_G(\varpi_2)$ as the subquotient $(\langle \beta_V \rangle^\perp / \langle \beta_V \rangle, a_V)$ of $(\wedge^2(V), a_V)$, which is justified by Lemma \ref{lemma:basiclemmaonAV} (iv). Set $\varepsilon := \varepsilon_{\wedge^2(V), a_V}$ and $\varepsilon' := \varepsilon_{L_G(\varpi_2), a_V}$.

Before the beginning the actual proof, we still need some additional notation. By Theorem \ref{thm:hesselinkepsilon}, we have $$(V,b) \cong W(d_1)^{n_1/2} \perp \cdots \perp W(d_t)^{n_t/2} \perp V(2d_{t+1})^{n_{t+1}} \perp \cdots \perp V(2d_{t+s})^{n_{t+s}}$$ as bilinear $K[u]$-modules. Then writing $V$ as an orthogonal direct sum of indecomposables, we have $$V = W_1 \perp \cdots \perp W_{t_0} \perp W_{t_0+1} \perp \cdots \perp W_{t_0+s_0}$$ where $W_r$ is $u$-invariant for all $1 \leq r \leq t_0+s_0$, and furthermore for all $1 \leq r \leq t_0$ we have $(W_r, b) \cong W(d_{\pi(r)})$ for some $1 \leq \pi(r) \leq t$, and for all $t_0+1 \leq r \leq t_0+s_0$ we have $(W_r, b) \cong V(2d_{\pi(r)})$ for some $t+1 \leq \pi(r) \leq t+s$. For $1 \leq r \leq t_0$, let $W_r = A_r \oplus B_r$ be a totally singular decomposition into two $K[u]$-submodules $A_r \cong B_r \cong V_{d_{\pi(r)}}$.

It is easy to see that \begin{equation}\label{eq:thmCdirsum}\wedge^2(V) = \bigoplus_{1 \leq r \leq t_0+s_0} \wedge^2(W_r) \oplus \bigoplus_{1 \leq r < r' \leq t_0+s_0} W_r \wedge W_{r'}\end{equation} where as bilinear $K[u]$-modules $(\wedge^2(W_r), a_V) \cong (\wedge^2(W_r), a_{W_r})$ for all $1 \leq r \leq t_0+s_0$, and $(W_r \wedge W_{r'}, a_V) \cong (W_r, b) \otimes (W_{r'}, b)$ for all $1 \leq r < r' \leq t_0+s_0$.

If $2 \nmid n$, then it follows from Lemma \ref{lemma:basiclemmaonAV} that $\wedge^2(V) = \operatorname{Ker} \varphi_V \oplus \langle \beta_V \rangle$, where $\operatorname{Ker} \varphi_V \cong L_G(\varpi_2)$. It is clear in this case that $\lambda'(1) = \lambda(1) - 1$ and $\lambda'(d) = \lambda(d)$ for all $d > 1$, as in Theorem \ref{thm:MAINTHMA} (i). 

We will next describe the values of $\lambda'$ in the case where $2 \mid n$, using Lemma \ref{lemma:vperpmodvdescription}. Note that in this case $\langle \beta_V \rangle^\perp = \operatorname{Ker} \varphi_V$ (Lemma \ref{lemma:basiclemmaonAV} (ii)). We will first show that \begin{equation}\label{eq:KERXAlPHAm1}\operatorname{Ker} X_{\wedge^2(V)}^{2^{\alpha}-1} \subseteq \langle \beta_V \rangle^\perp\end{equation} and \begin{equation}\label{eq:KERXAlPHA}\operatorname{Ker} X_{\wedge^2(V)}^{2^{\alpha}} \not\subseteq \langle \beta_V \rangle^\perp.\end{equation} 

Clearly $W_r \wedge W_{r'} \subseteq \langle \beta_V \rangle^\perp$ for all $1 \leq r < r' \leq t_0+s_0$, so $\operatorname{Ker} X_{W_r \wedge W_{r'}}^{2^{\alpha}-1} \subseteq \langle \beta_V \rangle^\perp$. Furthermore $\operatorname{Ker} X_{\wedge^2(W_r)}^{2^{\alpha}-1} \subseteq \operatorname{Ker} \varphi_{W_r} = \wedge^2(W_r) \cap \operatorname{Ker} \varphi_V$ for all $1 \leq r \leq t_0+s_0$ by Lemma \ref{lemma:hesselinkmainlemma} (ii), so~\eqref{eq:KERXAlPHAm1} follows. 

Let $1 \leq r \leq t_0+s_0$ be such that $\nu_2(d_{\pi(r)}) = \alpha$. Then $\operatorname{Ker} X_{\wedge^2(W_r)}^{2^{\alpha}} \not\subseteq \operatorname{Ker} \varphi_{W_r} = \wedge^2(W_r) \cap \operatorname{Ker} \varphi_V$ by Lemma \ref{lemma:hesselinkmainlemma} (iii), which proves~\eqref{eq:KERXAlPHA}.

For all $1 \leq r \leq t_0+s_0$, let $\delta_r \in \wedge^2(W_r)$ be as in Lemma \ref{lemma:hesselinkmainlemma} (i), so $X^{2^{\alpha}-1} \delta_r = \beta_{W_r}$ and $\varphi_{W_r}(\delta_r) = \varphi_V(\delta_r) = d_{\pi(r)}/2^{\alpha}$. Set $\delta = \delta_1 + \cdots + \delta_{t_0+s_0}$. Then $$X^{2^{\alpha}-1} \delta = \beta_{W_1} + \cdots + \beta_{W_{t_0+s_0}} = \beta_V$$ and furthermore $\varphi_V(\delta) = d_{\pi(1)}/2^{\alpha} + \cdots + d_{\pi(t_0+s_0)}/2^{\alpha} = n/2^{\alpha}$. Thus \begin{equation}\label{eq:statementondelta}\delta \in \langle \beta_V \rangle^\perp \text{ if and only if } 2 \mid \frac{n}{2^{\alpha}}.\end{equation} With~\eqref{eq:KERXAlPHAm1} --~\eqref{eq:statementondelta} and Lemma \ref{lemma:vperpmodvdescription}, we conclude that the values of $\lambda'$ are given in terms of $\lambda$ as in (i) -- (iii) of Theorem \ref{thm:MAINTHMA}.

Next we consider the values of $\varepsilon$ and prove (iv). Let $d \geq 1$ be such that $\varepsilon(d) = 1$. Then $d > 1$, and there exists $v \in \operatorname{Ker} X_{\wedge^2(V)}^{d}$ such that $a_V(X^{d-1}v,v) \neq 0$. Furthermore by~\eqref{eq:thmCdirsum} and Lemma \ref{lemma:basicXdlinear} (iv), we can choose $v$ such that $v \in \wedge^2(W_r)$ for some $1 \leq r \leq t_0+s_0$, or $v \in W_r \wedge W_{r'}$ for some $1 \leq r < r' \leq t_0+s_0$. 

Suppose first that $v \in \wedge^2(W_r)$ for some $1 \leq r \leq t_0$. Then $$\wedge^2(W_r) = \wedge^2(A_r) \oplus \wedge^2(B_r) \oplus A_r \wedge B_r,$$ so $v = v' + v''$, where $v' \in \operatorname{Ker} X_{\wedge^2(A_r) \oplus \wedge^2(B_r)}^{d}$ and $v'' \in \operatorname{Ker} X_{A_r \wedge B_r}^{d}$. The bilinear $K[u]$-module $(\wedge^2(A_r) \oplus \wedge^2(B_r), a_V)$ is a paired module (Lemma \ref{lemma:pairedaltsquare}). Hence $a_V(X^{d-1}v',v') = 0$ by Lemma \ref{lemma:pairedjordan} and then $a_V(X^{d-1}v,v) = a_V(X^{d-1}v'',v'') \neq 0$ by Lemma \ref{lemma:basicXdlinear} (iv). Thus by Lemma \ref{lemma:epsilonequivalence}, there is a Jordan block of size $d$ in $A_r \wedge B_r \cong V_{d_{\pi(r)}} \otimes V_{d_{\pi(r)}}$. Now it follows from Theorem \ref{thm:GPXtensorsquare} that $d = 2^{\beta}$ for some $2^{\beta} > 1$ occurring in the consecutive-ones binary expansion of $d_{\pi(r)}$, so (iv)(a) holds. 

If $v \in \wedge^2(W_r)$ for some $t_0+1 \leq r \leq t_0+s_0$, then by Lemma \ref{lemma:epsilonequivalence} there is a Jordan block of size $d$ in $\wedge^2(W_r) \cong \wedge^2(V_{2d_{\pi(r)}})$. In other words, case (iv)(b) holds.

Suppose that $v \in W_r \wedge W_{r'}$ for some $1 \leq r < r' \leq t_0+s_0$. If $1 \leq r \leq t_0$, then $W_r \cong W(d_{\pi(r)})$ is a paired module, and thus so is $(W_r \wedge W_{r'}, a_V) \cong W(d_{\pi(r)}) \otimes (W_{r'}, a_V)$ (Lemma \ref{lemma:tensorpaired}). But in that case $a_V(X^{d-1}v,v) = 0$ by Lemma \ref{lemma:pairedjordan}, contradiction. Thus we must have $t_0+1 \leq r \leq t_0+s_0$, and so $(W_r \wedge W_{r'}, a_V) \cong V(2d_{\pi(r)}) \otimes V(2d_{\pi(r')})$. By Theorem \ref{thm:formtensorproducts} and Lemma \ref{lemma:epsilonequivalence}, the fact that $a_V(X^{d-1}v,v) \neq 0$ implies that $\beta = \nu_2(d_{\pi(r)}) = \nu_2(d_{{\pi(r')}})$, and furthermore $d = 2^{\beta+1}d'$, where $d'$ is the unique odd Jordan block size in $V_{d_{\pi(r)}/2^{\beta}} \otimes V_{d_{\pi(r')} / 2^{\beta}}$. In other words, we are in case (iv)(c).

For the converse of (iv), we consider (iv)(a) -- (iv)(c). In case (iv)(a), we have $d = 2^{\beta}$ for some $2^{\beta} > 1$ occurring in the consecutive-ones binary expansion of $d_{\pi(r)}$ for some $1 \leq r \leq t_0$. Since $(A_r \wedge B_r, a_V) \cong (A_r \otimes A_r^*, b_{A_r})$ as bilinear $K[u]$-modules by Lemma \ref{lemma:pairedaltsquare}, it follows from Theorem \ref{thm:MAINTHMA} that $a_V(X^{d-1}v,v) \neq 0$ for some $v \in \operatorname{Ker} X_{A_r \wedge B_r}^{d}$. Thus $\varepsilon(d) = 1$ in this case. 

For statement (iv)(b), let $d > 1$ be a Jordan block size of $u$ in $\wedge^2(V_{2d_{\pi(r)}})$ for some $t_0+1 \leq r \leq t_0+s_0$. Note that $d$ occurs in $\wedge^2(V_{2d_{\pi(r)}})$ with odd multiplicity by Lemma \ref{lemma:blockmultiplicitieswedge}. If $d_{\pi(r)}$ is even, then $(\wedge^2(W_r), a_V)$ is non-degenerate (Lemma \ref{lemma:basiclemmaonAV} (ii)), and thus $a_V(X^{d-1}v,v) \neq 0$ for some $v \in \operatorname{Ker} X_{\wedge^2(W_r)}^{d}$ by Lemma \ref{lemma:basicepsilonoddeven} (i). In the case where $d_{\pi(r)}$ is odd, it follows from Lemma \ref{lemma:basiclemmaonAV} that $\wedge^2(W_r) = \operatorname{Ker} \varphi_{W_r} \oplus \langle \beta_{W_r} \rangle$, where $(\operatorname{Ker} \varphi_{W_r}, a_V)$ is non-degenerate. The multiplicity of $d$ in $\operatorname{Ker} \varphi_{W_r}$ is the same as in $\wedge^2(W_r)$, in particular the multiplicity is odd. Hence $a_V(X^{d-1}v,v) \neq 0$ for some $v \in \operatorname{Ker} X_{\operatorname{Ker} \varphi_{W_r}}^{d}$ by Lemma \ref{lemma:basicepsilonoddeven} (i). We conclude then that $\varepsilon(d) = 1$.

In case (iv)(c), we have $d = d'2^{\beta+1}$, where $\beta = \nu_2(d_{\pi(r)}) = \nu_2(d_{{\pi(r')}})$ for some $t_0+1 \leq r < r' \leq t_0+s_0$ and $d'$ is the unique odd Jordan block size in $V_{d_{\pi(r)}/2^{\beta}} \otimes V_{d_{\pi(r')}/2^{\beta}}$. We have $(W_r \wedge W_{r'}, a_V) \cong (W_r,b) \otimes (W_{r'},b) \cong V(2d_{\pi(r)}) \otimes V(2d_{\pi(r')})$ as bilinear $K[u]$-modules. Hence $V(2d)$ is an orthogonal direct summand of $(W_r \wedge W_{r'}, a_V)$ by Theorem \ref{thm:formtensorproducts}, and so it follows from Lemma \ref{lemma:epsilonequivalence} that $a_V(X^{d-1}v,v) \neq 0$ for some $v \in \operatorname{Ker} X_{W_r \wedge W_{r'}}^{d}$. Thus $\varepsilon(d) = 1$, which completes the proof of (iv).

Next we calculate $\varepsilon'$ and prove claims (v) and (vi). If $2 \nmid n$, then $\wedge^2(V) = \operatorname{Ker} \varphi_V \oplus \langle \beta_V \rangle$, where $\operatorname{Ker} \varphi_V \cong L_G(\varpi_2)$ and $\beta_V \in \operatorname{rad} a_V$ (Lemma \ref{lemma:basiclemmaonAV}). In this case it is clear that $\varepsilon'(d) = \varepsilon(d)$ for all $d \geq 1$. If $2 \mid n$ and $\alpha = 0$, then $\varepsilon'(d) = \varepsilon(d)$ for all $d \geq 1$ by Lemma \ref{lemma:vperpmodvdescription} (ii). This completes the proof of (v), so we will consider (vi) and suppose for the rest of the proof that $2 \mid n$ and $\alpha > 0$. 

Combining~\eqref{eq:KERXAlPHAm1},~\eqref{eq:KERXAlPHA}, and Lemma \ref{lemma:vperpmodvdescription} (iii) -- (iv), we see that $\varepsilon'(d) = \varepsilon(d)$ for all $d \neq 2^{\alpha}, 2^{\alpha}-2$, as is claimed by (vi). We prove (vi)(a) next, that is, we show that $\varepsilon(2^{\alpha}) = 1$. Let $1 \leq r \leq t_0+s_0$ be such that $\nu_2(d_{\pi(r)}) = \alpha$. If $1 \leq r \leq t_0$, then $2^{\alpha} > 1$ occurs in the consecutive-ones binary expansion of $d_{\pi(r)}$, and thus $\varepsilon(2^{\alpha}) = 1$ by (iv)(a). If $t_0+1 \leq r \leq t_0+s_0$, then $2^{\alpha} > 1$ occurs as the smallest Jordan block size in $\wedge^2(V_{2d_{\pi(r)}})$ (Lemma \ref{lemma:minblockinwedge2}), and so $\varepsilon(2^{\alpha}) = 1$ by (iv)(b).

For (vi)(b), suppose first that $\nu_2(d_{\pi(r)}) = \alpha$ for some $1 \leq r \leq t_0$. By Lemma \ref{lemma:hesselinkmainlemma}, there exists $v \in \operatorname{Ker} \varphi_{W_r}$ such that $X^{2^{\alpha}}v = 0$ and $a_V(X^{2^{\alpha}-1}v,v) \neq 0$. Then $v \in \operatorname{Ker} \varphi_V = \langle \beta_V \rangle^\perp$, so we conclude that $\varepsilon'(2^{\alpha}) = 1$.

For the other direction of (vi)(b), suppose that $\nu_2(d_{\pi(r)}) > \alpha$ for all $1 \leq r \leq t_0$. We will show that $\varepsilon'(2^{\alpha}) = 0$, which is equivalent to the claim that $a_V(X^{2^{\alpha}-1}v,v) = 0$ for all $v \in \operatorname{Ker} X_{\operatorname{Ker} \varphi_V}^{2^{\alpha}}$. First we describe $\operatorname{Ker} X_{\operatorname{Ker} \varphi_V}^{2^{\alpha}}$. If $\nu_2(d_{\pi(r)}) > \alpha$, then $\operatorname{Ker} X_{\wedge^2(W_r)}^{2^{\alpha}} \subseteq \operatorname{Ker} \varphi_{W_r} \subseteq \operatorname{Ker} \varphi_V$ by Lemma \ref{lemma:hesselinkmainlemma} (ii). If $\nu_2(d_{\pi(r)}) = \alpha$, then $\varphi_V(\delta_r) = d_{\pi(r)}/2^{\alpha} = 1$, so $\operatorname{Ker} X_{\wedge^2(W_r)}^{2^{\alpha}} = \operatorname{Ker} X_{\operatorname{Ker} \varphi_{W_r}}^{2^{\alpha}} \oplus \langle \delta_r \rangle$. Furthermore, for all $1 \leq r < r' \leq t_0+s_0$ we have $W_r \wedge W_{r'} \subseteq \operatorname{Ker} \varphi_V$. 

Thus any $v \in \operatorname{Ker} X_{\operatorname{Ker} \varphi_V}^{2^{\alpha}}$ can be written in the form $$v = \sum_{\substack{1 \leq r \leq t_0+s_0 \\ \nu_2(d_{\pi(r)}) > \alpha}} z_r + \sum_{\substack{1 \leq r \leq t_0+s_0 \\ \nu_2(d_{\pi(r)}) = \alpha}} \left( z_r + \mu_r \delta_r \right) + \sum_{1 \leq r < r' \leq t_0+s_0} z_{r,r'},$$  where $z_r \in \operatorname{Ker} X_{\operatorname{Ker} \varphi_{W_r}}^{2^{\alpha}}$ for all $1 \leq r \leq t_0+s_0$, $z_{r,r'} \in \operatorname{Ker} X_{W_r \wedge W_{r'}}^{2^{\alpha}}$ for all $1 \leq r < r' \leq t_0+s_0$, and \begin{equation}\label{eq:sumofcoefs0}\sum_{\substack{1 \leq r \leq t_0+s_0 \\ \nu_2(d_{\pi(r)}) = \alpha}} \mu_r = 0.\end{equation} 

Now $W_r \cong V(2d_{\pi(r)})$ for all $1 \leq r \leq t_0+s_0$ such that $\nu_2(d_{\pi(r)}) = \alpha$, so we conclude that $a_V(X^{2^{\alpha}-1}z_r,z_r) = 0$ for all $1 \leq r \leq t_0+s_0$ by Lemma \ref{lemma:hesselinkmainlemma} (iv) -- (v). We will show next that $a_V(X^{2^{\alpha}-1}z_{r,r'},z_{r,r'}) = 0$ for all $1 \leq r < r' \leq t_0+s_0$. If $1 \leq r \leq t_0$, then $(W_r \wedge W_{r'}, a_V) \cong W(d_{\pi(r)}) \otimes (W_{r'}, b)$ is a paired module (Lemma \ref{lemma:tensorpaired}), and so $a_V(X^{2^{\alpha}-1}z_{r,r'},z_{r,r'}) = 0$ by Lemma \ref{lemma:pairedjordan}. If $t_0+1 \leq r \leq t_0+s_0$, then $(W_r \wedge W_{r'}, a_V) \cong V(2d_{\pi(r)}) \otimes V(2d_{{\pi(r')}})$. By~\eqref{eq:indreseq0} the smallest Jordan block size in $V_{2d_{\pi(r)}} \otimes V_{2d_{{\pi(r')}}}$ is $\geq 2^{\alpha+1}$, so $a_V(X^{2^{\alpha}-1}z_{r,r'},z_{r,r'}) = 0$ by Lemma \ref{lemma:epsilonequivalence}. Hence by Lemma \ref{lemma:basicXdlinear} (iv) we get \begin{align*}a_V(X^{2^{\alpha}-1}v,v) &= \sum_{\substack{1 \leq r \leq t_0+s_0 \\ \nu_2(d_{\pi(r)}) = \alpha}} \mu_r^2 a_V(X^{2^{\alpha}-1}\delta_r, \delta_r) \\ &= \sum_{\substack{1 \leq r \leq t_0+s_0 \\ \nu_2(d_{\pi(r)}) = \alpha}} \mu_r^2 a_V(\beta_{W_r}, \delta_r) \\ &= \sum_{\substack{1 \leq r \leq t_0+s_0 \\ \nu_2(d_{\pi(r)}) = \alpha}} \mu_r^2\end{align*} which equals zero by~\eqref{eq:sumofcoefs0}. This completes the proof of (vi)(b).

What remains is to prove (vi)(c) and (vi)(d), so suppose that $\alpha > 1$. For (vi)(d), if $2 \mid \frac{n}{2^{\alpha}}$, then $\delta \in \langle \beta_V \rangle^\perp$ by~\eqref{eq:statementondelta} and thus $\varepsilon'(2^{\alpha}-2) = \varepsilon(2^{\alpha}-2) = 0$ by Lemma \ref{lemma:vperpmodvdescription} (iii). Similarly, if $2 \nmid \frac{n}{2^{\alpha}}$, then $\delta \not\in \langle \beta_V \rangle^\perp$ by~\eqref{eq:statementondelta} and thus $\varepsilon'(2^{\alpha}-2) = 1$ by Lemma \ref{lemma:vperpmodvdescription} (iv). This completes the proof of (vi) and the theorem.\end{proof}

In the following we show with small examples how Theorem \ref{thm:MAINTHMC} is applied. Let $G = \Sp(V, b)$, where $\dim V = 2n$ for some $n \geq 2$. Let $u \in G$ be a unipotent element and $V \cong V_{d_1}^{n_1} \oplus \cdots \oplus V_{d_t}^{n_t} \oplus V_{2d_{t+1}}^{n_{t+1}} \oplus \cdots \oplus V_{2d_{t+s}}^{n_{t+s}}$ as $K[u]$-modules, where $d_i$ and $n_i$ are as in Theorem \ref{thm:MAINTHMC}. Equivalently by Theorem \ref{thm:hesselinkepsilon}, we have $V \cong W(d_1)^{n_1/2} \perp \cdots \perp W(d_t)^{n_t/2} \perp V(2d_{t+1})^{n_{t+1}} \perp \cdots \perp V(2d_{t+s})^{n_{t+s}}$ as bilinear $K[u]$-modules. Let $\alpha = \nu_2(\gcd(d_1, \ldots, d_{t+s}))$ as in Theorem \ref{thm:MAINTHMC}. Set $\varepsilon := \varepsilon_{\wedge^2(V), a_V}$ and $\varepsilon' := \varepsilon_{L_G(\varpi_2), a_V}$.



\begin{esim}If $n = 2$ and $V \cong V(4)$, then $\wedge^2(V) \cong V_2 \oplus V_4$ by Theorem \ref{thm:GowLaffey}. We have $\varepsilon(2) = 1$, $\varepsilon(4) = 1$ by Theorem \ref{thm:GowLaffey} (iv)(b), so $(\wedge^2(V), a_V) \cong V(2) \perp V(4)$ as bilinear $K[u]$-modules. Now $\alpha = 1$, so $L_G(\varpi_2) \cong V_4$ by Theorem \ref{thm:MAINTHMC} (applying rule (iii)(c) from Theorem \ref{thm:MAINTHMA}). Hence $(L_G(\varpi_2), a_V) \cong V(4)$ as bilinear $K[u]$-modules.\end{esim}

\begin{esim}If $n = 4$ and $V \cong W(4)$, then $V \cong V_4 \oplus V_4$ and so $\wedge^2(V) \cong V_2^2 \oplus V_4^6$ by Theorem \ref{thm:GowLaffey}. In this case we have a consecutive-ones binary expansion $4 = 2^2$, so by Theorem \ref{thm:MAINTHMC} we find that $\varepsilon(4) = 1$ and $\varepsilon(2) = 0$. Hence $(\wedge^2(V), a_V) \cong W(2) \perp V(4)^6$ as bilinear $K[u]$-modules. Now $\alpha = 1$, so $L_G(\varpi_2) \cong V_2^3 \oplus V_4^5$ by Theorem \ref{thm:MAINTHMC} (applying rule (iii)(a) from Theorem \ref{thm:MAINTHMA}). From Theorem \ref{thm:MAINTHMC} (vi) we conclude that $\varepsilon'(2) = 1$ and $\varepsilon'(4) = 1$, so $(L_G(\varpi_2), a_V) \cong V(2)^3 \perp V(4)^5$ as bilinear $K[u]$-modules.\end{esim}

\begin{esim}If $n = 6$ and $V \cong V(4)^3$, then $\wedge^2(V) \cong V_2^3 \oplus V_4^{15}$ by Theorem \ref{thm:GowLaffey}. We have $\varepsilon(2) = 1$, $\varepsilon(4) = 1$ by Theorem \ref{thm:GowLaffey} (iv)(b), so $(\wedge^2(V), a_V) \cong V(2)^3 \perp V(4)^{15}$ as bilinear $K[u]$-modules. Now $\alpha = 1$, so $L_G(\varpi_2) \cong V_2^2 \oplus V_4^{15}$ by Theorem \ref{thm:MAINTHMC} (applying rule (iii)(c) from Theorem \ref{thm:MAINTHMA}). From Theorem \ref{thm:MAINTHMC} (vi), we conclude that $\varepsilon'(2) = 0$ and $\varepsilon'(4) = 1$, so $(L_G(\varpi_2), a_V) \cong W(2) \perp V(4)^{15}$ as bilinear $K[u]$-modules.\end{esim}

\begin{esim}\label{examples:thmC}In Table \ref{table:examplesofTHMC3}, we illustrate Theorem \ref{thm:MAINTHMC} for $2 \leq n \leq 8$. For $n > 3$, we have only included the cases where $\alpha > 0$. As in Example \ref{esim:EXTHMA}, for a bilinear $K[u]$-module $(W,b)$, we use $({d_1}_{\varepsilon_1}^{n_1}, \ldots, {d_t}_{\varepsilon_t}^{n_t})$ to denote that $W \cong V_{d_1}^{n_1} \oplus \cdots \oplus V_{d_t}^{n_t}$ as $K[u]$-modules and $\varepsilon_{W, b}(d_i) = \varepsilon_i$ for $1 \leq i \leq t$. It is straightforward to see from the results of Section \ref{section:unipclassesinfo} that if $(V, b)$ corresponds to $({d_1}_{\varepsilon_1}^{n_1}, \ldots, {d_t}_{\varepsilon_t}^{n_t})$, then $\alpha = \nu_2(\gcd(d_1', \ldots, d_t'))$, where $d_i' = d_i$ if $\varepsilon_i = 0$ and $d_i' = d_i/2$ if $\varepsilon_i = 1$. Note that the examples in Table \ref{table:examplesofTHMC3} illustrate all possible cases of (iv) -- (vi) in Theorem \ref{thm:MAINTHMC}.\end{esim}


\begin{table}[!htbp]
\centering
\caption{Example cases of Theorem \ref{thm:MAINTHMC} for $2 \leq n \leq 8$, see Example \ref{examples:thmC}.}\label{table:examplesofTHMC3}
\footnotesize
\begin{tabular}{| c | l | l | l | l |}
\hline
$n$              & $(V,b)$       & $(\wedge^2(V), a_V)$         & $(L_G(\varpi_2), a_V)$ & $\alpha$ \\ \hline
				
$n = 2$ & $(4_1)$ & $(2_1, 4_1)$ & $(4_1)$ & $1$ \\
        & $(2_1^2)$ & $(1_0^{2}, 2_1^{2})$ & $(2_1^{2})$ & $0$ \\
        & $(2_0^2)$ & $(1_0^{2}, 2_1^{2})$ & $(1_0^{2}, 2_1)$ & $1$ \\
        & $(1_0^2, 2_1)$ & $(1_0^{2}, 2_0^{2})$ & $(2_0^{2})$ & $0$ \\			
& & & & \\
	$n = 3$ & $(6_1)$ & $(1_0, 6_1, 8_1)$ & $(6_1, 8_1)$ & $0$  \\
        & $(2_1, 4_1)$ & $(1_0, 2_1, 4_1^{3})$ & $(2_1, 4_1^{3})$ & $0$  \\
        & $(1_0^2, 4_1)$ & $(1_0, 2_1, 4_1^{3})$ & $(2_1, 4_1^{3})$ & $0$  \\
        & $(3_0^2)$ & $(1_0, 3_0^{2}, 4_1^{2})$ & $(3_0^{2}, 4_1^{2})$ & $0$  \\
        & $(2_1^3)$ & $(1_0^{3}, 2_1^{6})$ & $(1_0^{2}, 2_1^{6})$ & $0$  \\
        & $(1_0^2, 2_1^2)$ & $(1_0^{3}, 2_1^{6})$ & $(1_0^{2}, 2_1^{6})$ & $0$  \\
        & $(1_0^2, 2_0^2)$ & $(1_0^{3}, 2_1^{6})$ & $(1_0^{2}, 2_1^{6})$ & $0$  \\
        & $(1_0^4, 2_1)$ & $(1_0^{7}, 2_0^{4})$ & $(1_0^{6}, 2_0^{4})$ & $0$  \\
& & & & \\				
$n = 4$ & $(8_1)$ & $(4_1, 8_1^{3})$ & $(2_1, 8_1^{3})$ & $2$ \\
        & $(4_1^2)$ & $(2_1^{2}, 4_1^{6})$ & $(1_0^{2}, 4_1^{6})$ & $1$ \\
        & $(4_0^2)$ & $(2_0^{2}, 4_1^{6})$ & $(2_1^{3}, 4_1^{5})$ & $2$ \\
        & $(2_0^2, 4_1)$ & $(1_0^{2}, 2_1^{3}, 4_1^{5})$ & $(1_0^{4}, 2_1, 4_1^{5})$ & $1$ \\
        & $(2_0^4)$ & $(1_0^{4}, 2_1^{12})$ & $(1_0^{6}, 2_1^{10})$ & $1$ \\
& & & & \\				
$n = 6$ & $(12_1)$ & $(2_1, 4_1, 12_1, 16_1^{3})$ & $(4_1, 12_1, 16_1^{3})$ & $1$ \\
        & $(4_1, 8_1)$ & $(2_1, 4_1^{2}, 8_1^{7})$ & $(4_1^{2}, 8_1^{7})$ & $1$ \\
        & $(2_0^2, 8_1)$ & $(1_0^{2}, 2_1^{2}, 4_1, 8_1^{7})$ & $(1_0^{2}, 2_1, 4_1, 8_1^{7})$ & $1$ \\
        & $(6_0^2)$ & $(1_0^{2}, 2_1^{2}, 6_0^{2}, 8_1^{6})$ & $(1_0^{2}, 2_1, 6_0^{2}, 8_1^{6})$ & $1$ \\
        & $(4_1^3)$ & $(2_1^{3}, 4_1^{15})$ & $(2_0^{2}, 4_1^{15})$ & $1$ \\
        & $(2_0^2, 4_1^2)$ & $(1_0^{2}, 2_1^{4}, 4_1^{14})$ & $(1_0^{2}, 2_1^{3}, 4_1^{14})$ & $1$ \\
        & $(2_0^2, 4_0^2)$ & $(1_0^{2}, 2_1^{4}, 4_1^{14})$ & $(1_0^{2}, 2_1^{3}, 4_1^{14})$ & $1$ \\
        & $(2_0^4, 4_1)$ & $(1_0^{4}, 2_1^{13}, 4_1^{9})$ & $(1_0^{4}, 2_1^{12}, 4_1^{9})$ & $1$ \\
        & $(2_0^6)$ & $(1_0^{6}, 2_1^{30})$ & $(1_0^{6}, 2_1^{29})$ & $1$ \\
& & & & \\	
$n = 8$ & $(16_1)$ & $(8_1, 16_1^{7})$ & $(6_1, 16_1^{7})$ & $3$ \\
        & $(4_1, 12_1)$ & $(2_1^{2}, 4_1^{2}, 12_1^{5}, 16_1^{3})$ & $(1_0^{2}, 4_1^{2}, 12_1^{5}, 16_1^{3})$ & $1$ \\
        & $(2_0^2, 12_1)$ & $(1_0^{2}, 2_1^{3}, 4_1, 12_1^{5}, 16_1^{3})$ & $(1_0^{4}, 2_1, 4_1, 12_1^{5}, 16_1^{3})$ & $1$ \\
        & $(8_1^2)$ & $(4_1^{2}, 8_1^{14})$ & $(3_0^{2}, 8_1^{14})$ & $2$ \\
        & $(8_0^2)$ & $(4_0^{2}, 8_1^{14})$ & $(4_0^{2}, 6_1, 8_1^{13})$ & $3$ \\
        & $(4_1^2, 8_1)$ & $(2_1^{2}, 4_1^{7}, 8_1^{11})$ & $(1_0^{2}, 4_1^{7}, 8_1^{11})$ & $1$ \\
        & $(4_0^2, 8_1)$ & $(2_0^{2}, 4_1^{7}, 8_1^{11})$ & $(2_0^{2}, 3_0^{2}, 4_1^{5}, 8_1^{11})$ & $2$ \\
        & $(2_0^2, 4_1, 8_1)$ & $(1_0^{2}, 2_1^{3}, 4_1^{6}, 8_1^{11})$ & $(1_0^{4}, 2_1, 4_1^{6}, 8_1^{11})$ & $1$ \\
        & $(2_0^4, 8_1)$ & $(1_0^{4}, 2_1^{12}, 4_1, 8_1^{11})$ & $(1_0^{6}, 2_1^{10}, 4_1, 8_1^{11})$ & $1$ \\
        & $(4_1, 6_0^2)$ & $(1_0^{2}, 2_1^{3}, 4_1^{5}, 6_0^{2}, 8_1^{10})$ & $(1_0^{4}, 2_1, 4_1^{5}, 6_0^{2}, 8_1^{10})$ & $1$ \\
        & $(2_0^2, 6_0^2)$ & $(1_0^{4}, 2_1^{4}, 6_0^{10}, 8_1^{6})$ & $(1_0^{6}, 2_1^{2}, 6_0^{10}, 8_1^{6})$ & $1$ \\
        & $(4_1^4)$ & $(2_1^{4}, 4_1^{28})$ & $(1_0^{2}, 2_0^{2}, 4_1^{28})$ & $1$ \\
        & $(4_0^4)$ & $(2_0^{4}, 4_1^{28})$ & $(2_0^{4}, 3_0^{2}, 4_1^{26})$ & $2$ \\
        & $(2_0^2, 4_1^3)$ & $(1_0^{2}, 2_1^{5}, 4_1^{27})$ & $(1_0^{4}, 2_1^{3}, 4_1^{27})$ & $1$ \\
        & $(2_0^4, 4_1^2)$ & $(1_0^{4}, 2_1^{14}, 4_1^{22})$ & $(1_0^{6}, 2_1^{12}, 4_1^{22})$ & $1$ \\
        & $(2_0^4, 4_0^2)$ & $(1_0^{4}, 2_1^{14}, 4_1^{22})$ & $(1_0^{6}, 2_1^{12}, 4_1^{22})$ & $1$ \\
        & $(2_0^6, 4_1)$ & $(1_0^{6}, 2_1^{31}, 4_1^{13})$ & $(1_0^{8}, 2_1^{29}, 4_1^{13})$ & $1$ \\
        & $(2_0^8)$ & $(1_0^{8}, 2_1^{56})$ & $(1_0^{10}, 2_1^{54})$ & $1$ \\	
\hline		
\end{tabular}
\end{table}

\section{Overgroups of distinguished unipotent elements}\label{section:applications}

Let $G$ be a simple algebraic group over $K$. One approach towards understanding the subgroup structure of $G$ is to classify subgroups by the elements that they contain. See for example the survey \cite{SaxlSpecial} for some results in this direction and their applications. To give a specific example, all connected reductive subgroups containing a \emph{regular unipotent element} of $G$ are known by the results in \cite{SaxlSeitz, TestermanZalesski}. Overgroups of regular unipotent elements were studied further in \cite[Section 3]{GuralnickMalle}, motivated by an application to the inverse Galois problem. 

In the PhD thesis of the present author, the main result classifies all maximal closed connected subgroups $G$ that contain a \emph{distinguished unipotent element}, in any characteristic $p > 0$. Recall that a unipotent element of $G$ is \emph{distinguished}, if its centralizer in $G$ does not contain a non-trivial torus.

In this section, we keep our assumption that $\operatorname{char} K = 2$, and apply our main results to classify some subgroups of $\Sp(V,b)$ that contain distinguished unipotent elements. The results of Proposition \ref{prop:applicationAirr}, Proposition \ref{prop:applicationtensor}, and Proposition \ref{prop:applicationCirr} below appeared first in the PhD thesis of the present author. However, using our results we are able to give proofs which are shorter and do not rely on many case-by-case calculations. 

The following definition is convenient for describing distinguished unipotent elements in $\Sp(V,b)$.

\begin{maar}\label{def:distinguishedunip}
Let $u$ be a generator of a cyclic $2$-group and let $(V,b)$ be a bilinear $K[u]$-module. We say that \emph{$u$ acts on $(V,b)$ as a distinguished unipotent element}, if one of the following equivalent conditions hold:
	\begin{enumerate}[\normalfont (i)]
		\item The image of $u$ in $\Sp(V,b)$ is a distinguished unipotent element of $\Sp(V,b)$.
		\item $(V,b) \cong V(2k_1)^{d_1} \perp \cdots \perp V(2k_t)^{d_t}$ as bilinear $K[u]$-modules where $0 < k_1 < \cdots < k_t$ and $d_i \leq 2$ for all $1 \leq i \leq t$.
		\item Every Jordan block size $d$ of $u$ on $V$ is even, has multiplicity at most two, and $\varepsilon_{V,b}(d) = 1$.
		\item The bilinear $K[u]$-module $(V,b)$ does not have any orthogonal direct summands of the form $W(m)$ for $m > 0$.
	\end{enumerate}
\end{maar}

The equivalence of the conditions in Definition \ref{def:distinguishedunip} is seen as follows. The equivalence of (i) and (ii) is given by \cite[Proposition 6.1]{LiebeckSeitzClass}, while the equivalence of (ii) and (iii) follows from Lemma \ref{lemma:epsilonequivalence}. The fact that (ii) implies (iv) is immediate from the Hesselink normal form (Theorem \ref{thm:hesselinkform}). If (iv) holds, then it follows from Theorem \ref{thm:hesselinkform} that $(V,b) \cong V(2k_1)^{d_1} \perp \cdots \perp V(2k_t)^{d_t}$ as bilinear $K[u]$-modules for some integers $0 < k_1 < \cdots < k_t$ and $d_i > 0$. If $d_i > 2$, then from the isomorphism $V(2k_i)^3 \cong W(2k_i) \perp V(2k_i)$ of bilinear $K[u]$-modules (Lemma \ref{lemma:threeorthogonals}) we see that $W(2k_i)$ occurs as an orthogonal direct summand of $(V,b)$, contradiction. Thus $d_i \leq 2$ for all $1 \leq i \leq t$, which proves that (iv) implies (ii).


We shall need the following easy lemma, after which we will be able to prove the main results of this section.

\begin{lemma}\label{lemma:easiestlemma}
Let $G$ be a simple algebraic group and let $f: G \rightarrow \Sp(V,b)$ be a non-trivial representation of $G$. If $u \in G$ is a unipotent element that acts on $(V,b)$ as a distinguished unipotent element, then $u$ is a distinguished unipotent element of $G$.
\end{lemma}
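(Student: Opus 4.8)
The statement asserts that if a unipotent element $u \in G$ acts as a distinguished unipotent element on $(V,b)$ via a non-trivial representation $f: G \to \Sp(V,b)$, then $u$ is distinguished in $G$. The natural strategy is contrapositive: assume $u$ is \emph{not} distinguished in $G$, and show that $u$ cannot act on $(V,b)$ as a distinguished unipotent element. If $u$ is not distinguished, then by definition its centralizer $C_G(u)$ contains a non-trivial torus $S$. I would fix a non-trivial cocharacter $\mu: K^\times \to S \leq C_G(u)$, so that the image of $\mu$ is a non-trivial torus commuting with $u$.

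The key step is to analyze the restriction of $f$ to the subgroup $\langle u \rangle \cdot \mathrm{Im}(\mu)$, or more simply to exploit the weight-space decomposition of $V$ under $\mathrm{Im}(\mu)$. Since $u$ commutes with $\mu(K^\times)$, the element $u$ preserves each weight space $V_i = \{ v \in V : \mu(s)v = s^i v \text{ for all } s \in K^\times \}$, and because $f$ is non-trivial and $V$ is a faithful-enough module, the torus acts non-trivially, so there is some non-zero weight space $V_i$ with $i \neq 0$. The $G$-invariant form $b$ pairs $V_i$ with $V_{-i}$ non-degenerately (since $\mu(K^\times) \leq \Sp(V,b)$), and $V_i$ with $V_j$ trivially when $i + j \neq 0$. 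Picking $i > 0$ with $V_i \neq 0$, the subspace $V_i \oplus V_{-i}$ is a non-degenerate $u$-invariant subspace on which $b$ restricts to give a paired-module structure: $V_i$ and $V_{-i}$ are totally singular $K[u]$-submodules in duality. By Lemma \ref{lemma:basicpairedproperty} (or Lemma \ref{lemma:pairedjordan}), $(V_i \oplus V_{-i}, b)$ is a paired $K[u]$-module, hence isomorphic to an orthogonal sum of modules $W(m)$. Thus $(V,b)$ has an orthogonal direct summand of the form $W(m)$ with $m > 0$, which by condition (iv) of Definition \ref{def:distinguishedunip} contradicts the assumption that $u$ acts on $(V,b)$ as a distinguished unipotent element.

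The remaining care-points are: (a) ensuring $\mu(K^\times)$ really acts non-trivially on $V$ — this follows since $f$ is non-trivial and $G$ is simple, so $\ker f$ is finite, whence $f(\mu(K^\times))$ is a non-trivial torus in $\Sp(V,b)$ and has a non-zero non-trivial weight; (b) verifying that $u$ genuinely restricts to a non-zero linear map on the chosen summand, but this is automatic once $V_i \neq 0$ since $u$ is invertible on all of $V$; and (c) confirming that the orthogonal decomposition $V = (V_i \oplus V_{-i}) \perp (V_i \oplus V_{-i})^\perp$ is $u$-stable, which holds because $u$ commutes with $\mu$ and $b$ is $u$-invariant. None of these requires heavy computation.

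I expect the main (minor) obstacle to be formulating the weight-space argument cleanly in the algebraic-group setting — specifically making sure the cocharacter is taken with values in $C_G(u)^\circ$ so that it genuinely commutes with $u$, and that the weight decomposition of $V$ under it is the standard $\Z$-graded one with $b$ pairing weight $i$ to weight $-i$. Once that framework is in place, the conclusion is immediate from Lemma \ref{lemma:basicpairedproperty} and condition (iv) of Definition \ref{def:distinguishedunip}, so the proof should be quite short.
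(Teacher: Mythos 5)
Your proof is correct, but it takes a substantially longer route than the paper's. The paper's proof is two lines: if $u$ is not distinguished in $G$, choose a non-trivial torus $S \leq C_G(u)$; since $G$ is simple and $f$ is non-trivial, $\ker f$ is finite, so $f(S)$ is a non-trivial torus centralizing $f(u)$ in $\Sp(V,b)$, and this directly violates condition (i) of Definition \ref{def:distinguishedunip} (distinguished means the centralizer contains no non-trivial torus). You instead push all the way through to condition (iv): you decompose $V$ into weight spaces for a cocharacter of $S$, observe that $b$ pairs weight $i$ with weight $-i$, and extract an explicit non-degenerate $u$-invariant paired summand $V_i \oplus V_{-i}$, hence an orthogonal summand $W(m)$. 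This is valid — the checks you flag (non-trivial weight from finiteness of $\ker f$, $u$-stability of the weight decomposition, total singularity of $V_i$ for $i \neq 0$) all go through — but it amounts to re-deriving, in this special case, one direction of the equivalence (i) $\Leftrightarrow$ (iv) that the paper has already established via Hesselink's classification. What your version buys is a concrete structural conclusion (an explicit paired summand coming from the torus action) without appealing to that equivalence; what the paper's version buys is brevity, since condition (i) is tailor-made for exactly this transport-of-structure argument.
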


\begin{proof}If $u$ is not a distinguished unipotent element of $G$, then $u$ is centralized by some non-trivial torus $S < G$. In this case $f(S)$ is a non-trivial torus centralizing $f(u)$, so $f(u)$ is not a distinguished unipotent element of $\Sp(V,b)$.\end{proof}

\begin{prop}\label{prop:applicationAtilt}
Let $G = \SL(V)$, where $n = \dim V$ is even. A unipotent element $u \in G$ acts on $(V \otimes V^*, b_V)$ as a distinguished unipotent element if and only if $n = 2$ and $V \cong V_2$ as $K[u]$-modules.
\end{prop}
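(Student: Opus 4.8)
The plan is to treat the two directions separately. For the ``if'' direction, when $n=2$ and $V\cong V_2$ we have $\alpha=1$ and $V\otimes V^*\cong V_2^2$ as $K[u]$-modules by Theorem \ref{thm:tensordecompchar2}(ii); since $2=2^1>1$ occurs in the consecutive-ones binary expansion of $d_1=2$, Theorem \ref{thm:MAINTHMA}(iv) together with Theorem \ref{thm:hesselinkepsilon} gives $(V\otimes V^*,b_V)\cong V(2)^2$ (this is also recorded as the first example following Theorem \ref{thm:MAINTHMA}). This bilinear $K[u]$-module has the form required in Definition \ref{def:distinguishedunip}(ii), so $u$ acts on $(V\otimes V^*,b_V)$ as a distinguished unipotent element.

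For the ``only if'' direction, suppose $u$ acts on $(V\otimes V^*,b_V)$ as a distinguished unipotent element; then by Definition \ref{def:distinguishedunip}(iii) every Jordan block size of $u$ on $V\otimes V^*$ is even and has multiplicity at most two. Write $V\cong V_{d_1}^{n_1}\oplus\cdots\oplus V_{d_t}^{n_t}$ as $K[u]$-modules with $0<d_1<\cdots<d_t$. Since every $K[u]$-module is self-dual, $V\otimes V^*\cong V\otimes V$ as $K[u]$-modules, and $V_{d_t}\otimes V_{d_t}$ is a direct summand of it, so any Jordan block occurring in $V_{d_t}\otimes V_{d_t}$ (with its multiplicity) occurs in $V\otimes V^*$. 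I will first show that this forces $d_t=2$. If $d_t$ is odd, then $2^0$ appears in its consecutive-ones binary expansion, so by Theorem \ref{thm:GPXtensorsquare} the module $V_{d_t}\otimes V_{d_t}$ has a Jordan block of size $1$, which is odd --- a contradiction. Hence $d_t$ is even; if moreover $d_t\geq 4$, write $d_t=2m$ with $m\geq 2$.

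The key technical ingredient, and the step I expect to require the most care, is the following claim: for every $m\geq 2$, the module $V_m\otimes V_m$ has a Jordan block of multiplicity at least $2$. This should follow directly from Theorem \ref{thm:GPXtensorsquare}: writing the consecutive-ones binary expansion $m=\sum_{i=1}^{k}(-1)^{i+1}2^{e_i}$, if $k=1$ then $m=2^{e_1}$ and the unique block $V_{2^{e_1}}$ has multiplicity $2^{e_1}=m\geq 2$, while if $k\geq 2$ then the block $V_{2^{e_{k-1}}}$ has multiplicity $2^{e_{k-1}}-2^{e_k+1}\geq 2$, using $e_{k-1}>e_k+1$ and $e_k\geq 0$; one just has to be careful to handle both cases of $k$ and to check the sign in the multiplicity formula. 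Applying Theorem \ref{thm:formtensorproducts}(i) to $V_{2m}\otimes V_{2m}$, this module is obtained from $V_m\otimes V_m$ by doubling every block size and every multiplicity, so it has a block of multiplicity at least $4>2$, hence so does $V\otimes V^*$ --- contradicting that $u$ acts as a distinguished unipotent element. Therefore $d_t=2$, so $V\cong V_1^{a}\oplus V_2^{b}$ with $b\geq 1$. A direct computation with Theorem \ref{thm:tensordecompchar2} then gives $V\otimes V^*\cong V\otimes V\cong V_1^{a^2}\oplus V_2^{2ab+2b^2}$; the size-$1$ blocks being odd forces $a=0$, and then the multiplicity $2b^2$ of $V_2$ being at most $2$ forces $b=1$. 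Thus $V\cong V_2$ and $n=2$, completing the proof.
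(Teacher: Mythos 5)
Your proof is correct, and the ``only if'' direction takes a genuinely different route from the paper. The paper first invokes Lemma \ref{lemma:easiestlemma} together with the fact that the only distinguished unipotent classes of $\SL(V)$ are the regular ones (\cite[Proposition 3.5]{LiebeckSeitzClass}), which immediately reduces everything to the single Jordan type $V\cong V_n$; the rest is then one application of Theorem \ref{thm:GPXtensorsquare}, using the same multiplicity estimate $d_{k-1}=2^{e_{k-1}}-2^{e_k+1}\geq 2^{e_k+1}$ that you use. You instead work with an arbitrary unipotent element, isolate the largest block $V_{d_t}$, rule out $d_t$ odd via the block of size $1$ in $V_{d_t}\otimes V_{d_t}$, rule out $d_t\geq 4$ via your multiplicity claim for $V_m\otimes V_m$ combined with the doubling of sizes and multiplicities from $V_m\otimes V_m$ to $V_{2m}\otimes V_{2m}$ (which is really~\eqref{eq:indreseq} in the proof of Theorem \ref{thm:formtensorproducts}(i), rather than its bare statement), and finish with a direct computation for $V\cong V_1^a\oplus V_2^b$. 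Your argument is longer and needs the extra case analysis, but it is self-contained at the level of $K[u]$-modules and does not rely on the classification of distinguished unipotent classes in $\SL(V)$; the paper's reduction buys a much shorter case analysis at the cost of that external input. All the individual steps you flag as delicate (the sign in $d_{k-1}$, the case $k=1$ where the multiplicity is $2^{e_1}=m\geq 2$, and the final count $V\otimes V\cong V_1^{a^2}\oplus V_2^{2ab+2b^2}$) check out.
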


\begin{prop}\label{prop:applicationAirr}
Let $G = \SL(V)$ and set $n = \dim V$, where $n > 1$. A unipotent element $u \in G$ acts on $(L_G(\varpi_1 + \varpi_{n-1}), b_V)$ as a distinguished unipotent element if and only if $V \cong V_n$ as $K[u]$-modules and $n \in \{2,3,5\}$.
\end{prop}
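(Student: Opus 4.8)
The plan is to combine the explicit description of $L_G(\varpi_1+\varpi_{n-1})$ as a bilinear $K[u]$-module — which is exactly what Theorem \ref{thm:MAINTHMA} provides — with the characterization of ``acts as a distinguished unipotent element'' in Definition \ref{def:distinguishedunip}(iii): every Jordan block size $d$ of $u$ on $L_G(\varpi_1+\varpi_{n-1})$ must be even, have multiplicity at most two, and satisfy $\varepsilon'(d)=1$. So the proof is really a two-way check. First I would handle the ``if'' direction: when $V\cong V_n$ with $n\in\{2,3,5\}$, compute $V\otimes V^*\cong V_n\otimes V_n$ using Theorem \ref{thm:GPXtensorsquare} (or just Theorem \ref{thm:tensordecompchar2}), read off $L_G(\varpi_1+\varpi_{n-1})$ via rules (i)--(iii) of Theorem \ref{thm:MAINTHMA}, and check $\varepsilon'$ via (iv)--(vi). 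For $n=2$: $V_2\otimes V_2\cong V_2^2$, $n$ even with $\alpha=1$ and $n/2^\alpha=1$ odd so rule (iii)(c) gives $L_G(\varpi_1+\varpi_1)\cong V_2$, and $\varepsilon'(2)=\varepsilon(2)=1$ since $2$ occurs in the consecutive-ones expansion $2=2^1$; indeed $(L_G,b_V)\cong V(2)$, distinguished. For $n=3$: $V_3\otimes V_3\cong V_1\oplus V_4^2$ (consecutive-ones $3=2^2-2^0$, so block sizes $1$ and $4$, with $d_1=2^0$ contributing multiplicity $2^0=1$ and $d_2=2^2$ contributing $2^2-2^1=2$); $n$ odd so rule (i) removes one $V_1$, giving $L_G\cong V_4^2$ with $\varepsilon'(4)=1$; this is $V(4)^2$, distinguished. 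For $n=5$: consecutive-ones $5=2^3-2^2+2^0$ gives $V_5\otimes V_5\cong V_1\oplus V_4^2\oplus V_8^2$ (using $d_3=2^0$ multiplicity $1$, $d_2=2^2$ multiplicity $2^2-2^4+\dots$ — one checks it is $2$, $d_1=2^3$ multiplicity $2$); rule (i) removes one $V_1$ leaving $L_G\cong V_4^2\oplus V_8^2$, with $\varepsilon'(4)=\varepsilon'(8)=1$, so $(L_G,b_V)\cong V(4)^2\perp V(8)^2$, distinguished.

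For the ``only if'' direction I would argue that any other $u$ fails one of the conditions in Definition \ref{def:distinguishedunip}(iii). The cleanest route is: if $u$ acts distinguishedly on $L_G(\varpi_1+\varpi_{n-1})$, then by Lemma \ref{lemma:easiestlemma} $u$ is a distinguished unipotent element of $G=\SL(V)$, which forces $V\cong V_n$ as $K[u]$-modules (the distinguished unipotent classes of $\SL(V)$ are precisely the regular ones). So it remains to rule out all $n\notin\{2,3,5\}$ when $V\cong V_n$. Here I distinguish by parity. If $n$ is odd, then by rule (i) of Theorem \ref{thm:MAINTHMA}, $L_G(\varpi_1+\varpi_{n-1})$ differs from $V_n\otimes V_n$ only by removing one trivial summand, so for $n>1$ odd with $n\neq 3,5$ one must exhibit a bad block in $V_n\otimes V_n$ — either an odd block size $>1$ (impossible in $V_n\otimes V_n$ apart from the unique odd block, which after removal of that one $V_1$ in rule (i)... wait, that odd block is precisely the one removed only when it equals $1$) or a block of multiplicity $\geq 3$. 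By Theorem \ref{thm:GPXtensorsquare} the multiplicities in $V_n\otimes V_n$ are $d_i=2^{e_i}-\sum_{j>i}(-1)^{i+j+1}2^{e_j+1}$; one checks these exceed $2$ as soon as the consecutive-ones expansion of $n$ is ``large enough'', and the unique odd Jordan block of $V_n\otimes V_n$ is odd of size $>1$ whenever $n$ is odd and $n>1$ is not a ``short'' case — this kills all odd $n>5$. The cases $n$ odd, $n\leq 5$, are just $n=3,5$ (and $n=1$ is excluded). If $n$ is even, write $\alpha=\nu_2(n)>0$ (here $\alpha=\nu_2(\gcd(n))=\nu_2(n)$ since there is one block). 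Rule (iii) then modifies block sizes near $2^\alpha$; but apart from those finitely many modified sizes, the blocks of $L_G$ agree with those of $V_n\otimes V_n$, and for $n$ even with $n\neq 2$ one checks via Theorem \ref{thm:GPXtensorsquare} and Example \ref{esim:wedge2power2}-type computations that $V_n\otimes V_n$ has a block of multiplicity $\geq 3$ that survives into $L_G(\varpi_1+\varpi_{n-1})$. For instance $n=4$ gives $V_4\otimes V_4\cong V_4^4$, and rule (iii)(b) gives $L_G\cong V_2\oplus V_4^3$, which has $V_4$ with multiplicity $3$, hence not distinguished; similar finite checks dispose of $n=6,8$ and the general even case follows because the multiplicities only grow.

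The honest way to organize the ``only if'' part, avoiding ad hoc growth estimates, is to invoke Lemma \ref{lemma:multiplicityatmosttwowedge}-style bookkeeping — but for tensor squares rather than exterior squares. Since $V_n\otimes V_n$ contains $\wedge^2(V_n)$ as a summand (indeed $V_n\otimes V_n\cong \wedge^2(V_n)\oplus S^2(V_n)$, and in characteristic two $S^2(V_n)$ is related to the Frobenius twist), I would instead argue directly from Theorem \ref{thm:GPXtensorsquare}: the multiplicity of the largest block $V_{2^{e_1}}$ in $V_n\otimes V_n$ is $d_1=2^{e_1}-2^{e_2+1}+2^{e_3+1}-\cdots$, and a short induction on $k$ (the number of terms in the consecutive-ones expansion) shows $d_1\geq 2^{e_1}-2^{e_2+1}\geq 2^{e_1-1}$ whenever $e_1\geq e_2+1$, hence $d_1\geq 2$ always and $d_1\geq 3$ unless $e_1\leq 1$, i.e. $n\in\{1,2,3\}$ after accounting for the $e_1=1$ ($n=2$) and $e_1=0$ ($n=1$) cornercases; but $n=3$ has $e_1=2$ so one must look more carefully — there $d_1=2^2-2\cdot 2^0\cdot(\text{sign})=4-2=2$, fine. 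Then the only remaining possibilities with all multiplicities $\leq 2$ are a small finite list, which one checks by hand reduces to $n\in\{2,3,5\}$ after the rule (i)/(iii) corrections; the key subtlety is that the rule (iii) corrections can only \emph{decrease} a multiplicity by at most $2$ and introduce new blocks of multiplicity $\leq 2$, so a block of multiplicity $\geq 5$ in $V_n\otimes V_n$ always survives with multiplicity $\geq 3$. The main obstacle is precisely this last combinatorial step: making the claim ``large $n$ forces a block of multiplicity $\geq 5$, hence $\geq 3$ in $L_G$'' fully rigorous and uniform, rather than checking $n=4,6,8,\dots$ one at a time. I expect the cleanest fix is an explicit lower bound on $d_1$ from Theorem \ref{thm:GPXtensorsquare} combined with the parity/valuation constraints coming from $\alpha=\nu_2(n)$, reducing everything to $n\leq 8$ which is then a finite check (conveniently already tabulated in Table \ref{table:examplesofTHMA}).
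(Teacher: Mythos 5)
Your overall strategy (reduce to $V\cong V_n$ via Lemma \ref{lemma:easiestlemma} and the fact that the distinguished classes of $\SL(V)$ are the regular ones, compute the three good cases explicitly, and rule out everything else by showing some Jordan block of $L_G(\varpi_1+\varpi_{n-1})$ has multiplicity $\geq 3$ or odd size) is exactly the paper's, and your ``if'' direction is correct. But the ``only if'' direction is not actually proved: at every decisive point you write ``one checks'' and you end by conceding that the uniform combinatorial step is an unresolved obstacle. That step is the entire content of the converse. Moreover, the route you sketch for closing it — bounding the multiplicity $d_1$ of the \emph{largest} block $V_{2^{e_1}}$ — does not work as stated: your inequality $d_1\geq 2^{e_1}-2^{e_2+1}\geq 2^{e_1-1}$ requires $e_1\geq e_2+2$, not $e_1\geq e_2+1$ (when $e_1=e_2+1$, which does occur in consecutive-ones expansions for $i<k-1$, the bound degenerates to $0$), and you yourself note it already fails to separate $n=3$. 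The assertion that ``the general even case follows because the multiplicities only grow'' is also not an argument — the multiplicities $d_i$ are not monotone in $n$ (e.g.\ $d_k=2^{e_k}$ equals $1$ for every odd $n$), so no finite list of checks at $n=4,6,8$ extends to all even $n$.

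The paper closes the gap by working from the \emph{smallest} exponent upward, which is the natural direction because the corrections of Theorem \ref{thm:MAINTHMA}(iii) only touch the blocks of size $2^{\alpha}=2^{e_k}$ and $2^{e_k}-1$ or $2^{e_k}-2$: all $d_i$ with $i<k$ survive unchanged into $L_G(\varpi_1+\varpi_{n-1})$ and hence must satisfy $d_i\leq 2$, while $d_k\leq 3$ (case (iii)(a) is excluded outright since it creates blocks of odd size $2^{\alpha}-1$). Since $d_k=2^{e_k}$, this forces $e_k\leq 1$; if $e_k=1$ then $d_{k-1}=2^{e_{k-1}}-2^{e_k+1}\geq 4$ for $k>1$, so $k=1$ and $n=2$; if $e_k=0$ then $d_{k-1}\leq 2$ forces $e_{k-1}=2$ (giving $n=3$ when $k=2$), then $d_{k-2}\leq 2$ forces $e_{k-2}=3$ (giving $n=5$ when $k=3$), and $k>3$ yields a contradiction from $d_{k-3}$. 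This pins down the exponents one at a time with no residual finite check and no appeal to Table \ref{table:examplesofTHMA}. To repair your write-up you would need to carry out this (or an equivalent) analysis explicitly; as it stands the converse direction is asserted rather than proved.
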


\begin{proof}[Proof of Proposition \ref{prop:applicationAtilt} and Proposition \ref{prop:applicationAirr}]
The only distinguished unipotent elements in $G$ are the regular unipotent elements \cite[Proposition 3.5]{LiebeckSeitzClass}, so by Lemma \ref{lemma:easiestlemma} we may assume that $V \cong V_n$ as $K[u]$-modules for some $n > 1$. An easy calculation with Theorem \ref{thm:GPXtensorsquare} and Theorem \ref{thm:MAINTHMA} shows the following:

	\begin{itemize}
		\item If $n = 2$, then $(V \otimes V^*, b_V) \cong V(2)^2$ and $(L_G(\varpi_1 + \varpi_{n-1}), b_V) \cong V(2)$.
		\item If $n = 3$, then $(L_G(\varpi_1 + \varpi_{n-1}), b_V) \cong V(4)^2$.
		\item If $n = 5$, then $(L_G(\varpi_1 + \varpi_{n-1}), b_V) \cong V(4)^2 \perp V(8)^2$.
	\end{itemize}

This proves sufficiency in Proposition \ref{prop:applicationAtilt} and Proposition \ref{prop:applicationAirr}. We show next that these are the only cases where $u$ acts on $(V \otimes V^*, b_V)$ or $(L_G(\varpi_1 + \varpi_{n-1}), b_V)$ as a distinguished unipotent element.

Let $n = \sum_{i = 1}^k (-1)^{i+1} 2^{e_i}$ be the consecutive-ones binary expansion of $n$, where $e_1 > \cdots > e_k \geq 0$. Note that $e_{k-1} > e_k+1$ if $k \geq 2$. By Theorem \ref{thm:GPXtensorsquare} $$V \otimes V^* \cong \bigoplus_{1 \leq i \leq k} V_{2^{e_i}}^{d_i}$$ as $K[u]$-modules, where $d_i = 2^{e_i} - \sum_{j = i+1}^k (-1)^{i+j} 2^{e_j+1}$ for all $1 \leq i \leq k$. 

For the other direction of Proposition \ref{prop:applicationAtilt}, suppose that $n$ is even and that $u$ acts on $(V \otimes V^*, b_V)$ as a distinguished unipotent element. Then $d_i \leq 2$ for all $1 \leq i \leq k$. We have $e_k > 0$ since $n$ is even. Thus if $k > 1$, then $d_{k-1} = 2^{e_{k-1}} - 2^{e_k+1} \geq 2^{e_k+1} > 2$, contradiction. Hence $k = 1$, so $n = 2^{e_1}$. Since $d_1 = 2^{e_1}$, we must have $n = 2$, as claimed by Proposition \ref{prop:applicationAtilt}.

For Proposition \ref{prop:applicationAirr}, suppose that $u$ acts on $(L_G(\varpi_1 + \varpi_{n-1}), b_V)$ as a distinguished unipotent element. Since $\nu_2(n) = e_k$, by Theorem \ref{thm:MAINTHMA} we have $d_k \leq 3$ and $d_i \leq 2$ for all $1 \leq i < k$. If $e_k > 1$, then $d_k = 2^{e_k} \geq 4$, contradiction. Thus $e_k \leq 1$.

Suppose that $e_k = 1$. If $k > 1$, then it follows from $e_{k-1} > e_k+1$ that $d_{k-1} = 2^{e_{k-1}} - 2^{e_k+1} \geq 4$, contradiction. Thus $k = 1$, and so $n = 2^{e_1} = 2$.

Consider next $e_k = 0$. Then $d_{k-1} = 2^{e_{k-1}} - 2 \leq 2$ implies that $e_{k-1} \leq 2$. But $e_{k-1} > e_k+1$, which forces $e_{k-1} = 2$. If $k = 2$, then $n = 2^2 - 2^0 = 3$. Suppose then that $k > 2$. In this case $d_{k-2} = 2^{e_{k-2}} - 6 \leq 2$, so we must have $e_{k-2} = 3$. If $k = 3$, then this gives $n = 2^3 - 2^2 + 2^0 = 5$. Finally if $k > 3$, then $d_{k-3} = 2^{e_{k-3}} - 9 \geq 7$, contradiction.\end{proof}

\begin{prop}\label{prop:applicationtensor}
Let $(V_1,b_1)$ and $(V_2,b_2)$ be non-degenerate alternating bilinear $K[u]$-modules, where $1 < \dim V_1 \leq \dim V_2$. Then $u$ acts on $(V_1, b_1) \otimes (V_2, b_2) = (V_1 \otimes V_2, b_1 \otimes b_2)$ as a distinguished unipotent element if and only if we have the following isomorphisms of bilinear $K[u]$-modules:
\begin{enumerate}[\normalfont (i)]
\item $(V_1, b_1) \cong V(2)$,
\item $(V_2, b_2) \cong V(2k_1) \perp \cdots \perp V(2k_t)$, where $0 < k_1 < \cdots < k_t$ are odd integers.
\end{enumerate}

Furthermore, if (i) and (ii) hold, then $(V_1, b_1) \otimes (V_2, b_2) \cong V(2k_1)^2 \perp \cdots \perp V(2k_t)^2$ as bilinear $K[u]$-modules.
\end{prop}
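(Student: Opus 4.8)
The plan is to characterize when $u$ acts distinguishedly on $(V_1 \otimes V_2, b_1 \otimes b_2)$ by reducing to the orthogonal decompositions of $(V_1,b_1)$ and $(V_2,b_2)$ into orthogonally indecomposable summands, and then invoking the classification of tensor products of such summands from Section~\ref{section:tensorproductbilinear}. Recall (Definition~\ref{def:distinguishedunip}(iv)) that $u$ acts as a distinguished unipotent element on a bilinear $K[u]$-module exactly when that module has no orthogonal direct summand of the form $W(m)$; equivalently (Theorem~\ref{thm:hesselinkform} and Theorem~\ref{thm:hesselinkepsilon}) all its Jordan block sizes are even with multiplicity at most two and $\varepsilon$-value $1$. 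Write $(V_1,b_1) \cong \bigperp_i Z_i$ and $(V_2,b_2) \cong \bigperp_j Z_j'$ with each $Z_i, Z_j'$ of the form $V(2\ell)$ or $W(\ell)$ (Theorem~\ref{thm:orthogonallyindecomposables}). Then $(V_1 \otimes V_2, b_1 \otimes b_2) \cong \bigperp_{i,j} Z_i \otimes Z_j'$, so $u$ acts distinguishedly on the tensor product if and only if it acts distinguishedly on each $Z_i \otimes Z_j'$.

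First I would handle the three cases for the type of the pair $(Z_i, Z_j')$. By Proposition~\ref{prop:pairedtensorcase}, if either factor is of the form $W(\ell)$ then $Z_i \otimes Z_j'$ is an orthogonal sum of $W$-type modules (in particular $W(d_1)$ appears for at least one $d_1 > 0$, since $V_d \otimes V_{d'}$ always has at least one Jordan block), hence $u$ never acts distinguishedly in that case. So necessity forces every $Z_i$ to be of the form $V(2\ell_i)$ and every $Z_j'$ of the form $V(2k_j')$. Now apply Theorem~\ref{thm:formtensorproducts} to $V(2\ell_i) \otimes V(2k_j')$: if $\nu_2(\ell_i) \neq \nu_2(k_j')$ the tensor product is a paired module (all $W$-type summands), which is bad; if $\nu_2(\ell_i) = \nu_2(k_j') = \alpha > 0$ there is again at least one $W(2d_i)^{k_i}$ summand with $i \neq j$ (whenever $V_{\ell_i/2^\alpha} \otimes V_{k_j'/2^\alpha}$ has more than one Jordan block size) — and more robustly, one checks that $W$-type summands are unavoidable unless $\alpha = 0$ and, moreover, unless the decomposition $V_{\ell_i} \otimes V_{k_j'}$ consists of a single odd Jordan block. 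By Lemma~\ref{lemma:uniqueoddchar2}, $V_{\ell_i} \otimes V_{k_j'}$ has exactly one odd Jordan block when $\ell_i, k_j'$ are odd; it has a unique Jordan block (necessarily that odd one) precisely when the tensor product is indecomposable, and a short analysis of Theorem~\ref{thm:tensordecompchar2} shows $V_\ell \otimes V_k$ with $1 < \ell \le k$ odd is indecomposable only if $\ell = 1$... wait — since $\dim V_1 > 1$ we have $\ell_i \ge 2$, even, contradiction. The resolution: when $\nu_2(\ell_i) = \nu_2(k_j') = 0$, i.e. $\ell_i, k_j'$ both odd, Theorem~\ref{thm:formtensorproducts}(iii) with $\alpha = 0$ gives $V(2\ell_i) \otimes V(2k_j') \cong V(2d_j)^{2} \perp \bigperp_{i \neq j} W(2d_i)^{k_i}$ where $d_j$ is the unique odd block of $V_{\ell_i} \otimes V_{k_j'}$; this has no $W$-summand iff $V_{\ell_i} \otimes V_{k_j'}$ is a single block, i.e. iff $\ell_i = 1$ or $k_j' = 1$.

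Putting this together: $u$ acts distinguishedly on $(V_1 \otimes V_2, b_1 \otimes b_2)$ iff every summand $Z_i$ of $(V_1,b_1)$ and $Z_j'$ of $(V_2,b_2)$ is of the form $V(2\ell)$ with $\ell$ odd, and for every pair $(i,j)$ we have $\ell_i = 1$ or $k_j' = 1$. Since $\dim V_1 > 1$, at least one $\ell_i$ is $\ge 2$; oddness forces $\ell_i \ge 3$, so for that $i$ we need $k_j' = 1$ for all $j$, i.e. $(V_2,b_2) \cong V(2)^{s}$. But then $\dim V_2 \le \dim V_1$ is violated unless... here I should be careful about which of $V_1, V_2$ is larger — actually by symmetry of the tensor product I may as well not assume an ordering for the internal argument, and then re-impose $\dim V_1 \le \dim V_2$ at the end. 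Redoing it symmetrically: if both $V_1$ and $V_2$ had an indecomposable summand of dimension $> 2$, say $V(2\ell_i)$ and $V(2k_j')$ with $\ell_i, k_j' \ge 3$ odd, then the pair $(i,j)$ violates "$\ell_i = 1$ or $k_j' = 1$", contradiction. So one of the two modules, say $(V_a, b_a)$, has all indecomposable summands equal to $V(2)$; since it is $V(2)$-isotypic and $\dim V_1 > 1$ forces both to be nonzero and in fact $V_1 \neq 0$, and $\dim V_1 \le \dim V_2$, the smaller one is $(V_1,b_1) \cong V(2)^{d}$ for some $d \ge 1$. If $d \ge 2$ then $(V_1,b_1)$ has $V(2)^2$ as a summand, but $V(2)^2 \otimes V(2k') \supseteq$ (by Example~\ref{example:v2tensor} applied twice, or rather by computing $V_2^2 \otimes V_2 \cong V_2^4$ and Theorem~\ref{thm:formtensorproducts}) — hmm, $V(2)^2 \otimes V(2)$: since $V_2 \otimes V_2 \cong V_2^2$, we get $V(2)^2 \otimes V(2) = (V(2)\otimes V(2))^2$ and $V(2) \otimes V(2) \cong V(2)^2$ ($\nu_2(1) = \nu_2(1) = 0$, unique odd block of $V_1\otimes V_1$ is $1$, so $V(2)\otimes V(2) \cong V(2)^2$), so $V(2)^2 \otimes V(2) \cong V(2)^4$ — fine, no $W$. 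So the constraint from $d \ge 2$ must come instead from $(V_2,b_2)$ having a large summand and the multiplicity blowing up: $V(2k') \otimes V(2)^2 \cong (V(2k')\otimes V(2))^2$, and if $k'$ even this is $W(2k')^2$ — bad. So if $d \ge 2$ then all $k_j'$ must be odd too, and then $V(2k_j') \otimes V(2)^2 \cong V(2k_j')^4$, which has a summand of multiplicity $4 > 2$ — bad. Hence $d = 1$, giving (i) $(V_1,b_1) \cong V(2)$. Then $(V_1 \otimes V_2) \cong \bigperp_j V(2) \otimes V(2k_j')$, which by Example~\ref{example:v2tensor} is distinguished iff every $k_j'$ is odd, giving (ii) and the final displayed isomorphism $\bigperp_j V(2k_j')^2$. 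I expect the main obstacle to be the bookkeeping in the case $\nu_2(\ell_i) = \nu_2(k_j')$ of Theorem~\ref{thm:formtensorproducts}, specifically showing cleanly that a $W$-type summand or a multiplicity exceeding two is forced unless one factor is $V(2)$; I would streamline this by working directly with Example~\ref{example:v2tensor} for the $V(2)$ factor and using Proposition~\ref{prop:pairedtensorcase} plus Theorem~\ref{thm:formtensorproducts}(ii)--(iii) to kill all other configurations, rather than a fully general tensor computation.
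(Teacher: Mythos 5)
Your strategy is the same as the paper's: decompose both modules into orthogonally indecomposable summands, use Proposition \ref{prop:pairedtensorcase} to exclude $W$-type factors, analyse $V(2\ell)\otimes V(2k)$ via Theorem \ref{thm:formtensorproducts}, and finish with Example \ref{example:v2tensor}. The conclusion you reach for a pair of indecomposables ($\ell,k$ both odd and $\min(\ell,k)=1$) agrees with the paper's, which gets there faster by noting that a distinguished module has at most two orthogonally indecomposable summands while $V_{2\ell}\otimes V_{2k}$ has exactly $2\ell$ Jordan blocks (the dimension of the $u$-fixed space), forcing $\ell=1$ at once; your detour through the $\nu_2$ cases and Lemma \ref{lemma:uniqueoddchar2} works, but the fact you leave as ``a short analysis'' --- that $V_\ell\otimes V_k$ is a single Jordan block iff $\min(\ell,k)=1$ --- is exactly this fixed-space count and should be stated.

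Two steps in your write-up are genuinely imprecise. First, the opening reduction ``distinguished on the whole iff distinguished on each $Z_i\otimes Z_j'$'' is false in the ``if'' direction: for $V_1\cong V(2)$ and $V_2\cong V(2)^2$ every piece $V(2)\otimes V(2)\cong V(2)^2$ is distinguished, yet the whole is $V(2)^4\cong V(2)^2\perp W(2)$ by Lemma \ref{lemma:threeorthogonals}. Only the ``only if'' direction holds (a $W(m)$ summand of a piece forces, in the whole, an odd block, or multiplicity $>2$, or $\varepsilon(m)=0$), and that is the direction you actually use --- but you must then treat cross-piece multiplicities explicitly, which brings me to the second point. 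Your symmetric argument shows only that \emph{one} of $V_1,V_2$ is $V(2)$-isotypic; you still need to exclude $V_2\cong V(2)^m$ with $V_1$ containing some $V(2k)$, $k\ge 3$ odd (then $m\ge 3$ and $V(2k)\otimes V(2)^m\cong V(2k)^{2m}$ has multiplicity $\ge 6$), and you need to force the $k_j$ in (ii) to be distinct (if $k_j=k_{j'}$ then $V_{2k_j}$ occurs with multiplicity $4$). Both are instances of the same multiplicity computation you already run for $d\ge 2$, so the proof is repairable, but as written the necessity of (i) and of the strict inequalities $k_1<\cdots<k_t$ is not established.
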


\begin{proof}Suppose that $u$ acts on $(V_1, b_1) \otimes (V_2, b_2)$ as a distinguished unipotent element. Then $u$ must act on $(V_i, b_i)$ as a distinguished unipotent element for $i = 1,2$. Indeed, if $(V_i,b_i)$ had any orthogonal direct summands of the form $W(m)$, then so would $(V_1, b_1) \otimes (V_2, b_2)$ by Proposition \ref{prop:pairedtensorcase}. 

We consider first the case where $(V_i,b_i)$ are orthogonally indecomposable, so suppose that $(V_1,b_1) \cong V(2l)$ and $(V_2,b_2) \cong V(2k)$ for some $1 \leq l \leq k$. Then by Theorem \ref{thm:formtensorproducts} there are at most two indecomposable summands in $V(2l) \otimes V(2k)$, as otherwise some summand would have multiplicity $> 2$ or some $W(m)$ would occur as an orthogonal direct summand. Thus $(V_1,b_1) \cong V(2)$, since the number of indecomposable summands in the $K[u]$-module $V_{2l} \otimes V_{2k}$ is $2l$. By Example \ref{example:v2tensor}, we must have $k$ odd and $(V_1, b_1) \otimes (V_2, b_2) \cong V(2k)^2$.

For the general case, let $V_1 = W_1 \perp \cdots \perp W_s$ and $V_2 = W_1' \perp \cdots \perp W_t'$, where $W_i$ and $W_j'$ are orthogonally indecomposable $K[u]$-modules for all $1 \leq i \leq s$ and $1 \leq j \leq t$. Clearly $u$ acts as a distinguished unipotent element on $(W_i \otimes W_j', b_1 \otimes b_2)$ for all $1 \leq i \leq s$ and $1 \leq j \leq t$, so it follows from the indecomposable case that $(W_i \otimes W_j', b_1 \otimes b_2) \cong V(2k)^2$ for some $k$ odd, where $(W_i,b_1) \cong V(2)$ and $(W_j',b_2) \cong V(2k)$, or $(W_i,b_1) \cong V(2k)$ and $(W_j',b_2) \cong V(2)$. From this and the fact that $\dim V_1 \leq \dim V_2$, it is straightforward to see that $(V_1, b_1) \cong V(2)$ and $(V_2, b_2) \cong V(2k_1) \perp \cdots \perp V(2k_t)$, where $0 < k_1 < \cdots < k_t$ are odd integers, as claimed by the proposition. 

The other direction of the proposition is immediate from Example \ref{example:v2tensor}, which shows that $(V_1, b_1) \otimes (V_2, b_2) \cong V(2k_1)^2 \perp \cdots \perp V(2k_t)^2$ as bilinear $K[u]$-modules.\end{proof}

\begin{prop}\label{prop:applicationCtilt}
Let $G = \Sp(V,b)$, where $\dim V = 2n$ and $n$ is even. A unipotent element $u \in G$ acts on $(\wedge^2(V), a_V)$ as a distinguished unipotent element if and only if $n = 2$ and $(V,b) \cong V(4)$ as bilinear $K[u]$-modules.
\end{prop}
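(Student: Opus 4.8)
The plan is to reduce Proposition \ref{prop:applicationCtilt} to a small arithmetic computation using the results already established, exactly in the spirit of the proof of Propositions \ref{prop:applicationAtilt} and \ref{prop:applicationAirr}. First I would invoke the classification of distinguished unipotent elements of $\Sp(V,b)$: by \cite[Proposition 6.1]{LiebeckSeitzClass} a unipotent $u \in G$ is distinguished in $G = \Sp(V,b)$ exactly when $(V,b) \cong V(2k_1)^{e_1} \perp \cdots \perp V(2k_r)^{e_r}$ with $0 < k_1 < \cdots < k_r$ and $e_i \leq 2$ (Definition \ref{def:distinguishedunip}). By Lemma \ref{lemma:easiestlemma}, if $u$ acts on $(\wedge^2(V), a_V)$ as a distinguished unipotent element, then $u$ is already distinguished in $G$, so we may assume $(V,b)$ has this form from the outset. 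Then $\wedge^2(V)$ decomposes, via Lemma \ref{lemma:wedge2ofkumod} (or equivalently \eqref{eq:thmCdirsum}), into pieces $\wedge^2(V(2k_i))$ and tensor products $V(2k_i) \otimes V(2k_j)$ (and $V(2k_i) \otimes V(2k_i)$ when $e_i = 2$), and Theorem \ref{thm:MAINTHMC}(iv) together with Theorem \ref{thm:formtensorproducts}, Theorem \ref{thm:GowLaffey}, and Lemma \ref{lemma:blockmultiplicitieswedge} controls all multiplicities and all values of $\varepsilon = \varepsilon_{\wedge^2(V),a_V}$.

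The sufficiency direction is a direct verification: for $(V,b) \cong V(4)$ we have $\wedge^2(V) \cong V_2 \oplus V_4$ by Theorem \ref{thm:GowLaffey} (Example \ref{esim:wedge2power2} with $\alpha = 2$ gives $\wedge^2(V_4) \cong V_2 \oplus V_4^{\,1}$, i.e.\ $V_2 \oplus V_4$), and by Theorem \ref{thm:MAINTHMC}(iv)(b) both $\varepsilon(2) = 1$ and $\varepsilon(4) = 1$; hence $(\wedge^2(V), a_V) \cong V(2) \perp V(4)$, which has all block sizes even (here $2$ and $4$), multiplicity one, and $\varepsilon = 1$ on both, so $u$ is distinguished on $\wedge^2(V)$ by Definition \ref{def:distinguishedunip}(iii). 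So the real content is the necessity direction.

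For necessity, suppose $u$ acts on $(\wedge^2(V), a_V)$ as a distinguished unipotent element, with $(V,b) \cong V(2k_1)^{e_1} \perp \cdots \perp V(2k_r)^{e_r}$ as above. I would rule out the offending configurations one at a time. (a) If some $k_i$ is even, then $\wedge^2(V(2k_i))$ already contains a Jordan block whose size is a power of two times an \emph{odd} number in a way that, by Lemma \ref{lemma:minblockinwedge2} and Lemma \ref{lemma:multiplicityatmosttwowedge}, forces a multiplicity $> 2$ unless $k_i \in \{2\}$ among the even values (Lemma \ref{lemma:multiplicityatmosttwowedge} says multiplicities in $\wedge^2(V_{2k})$ are all $\leq 2$ only for $k \in \{1,2,3,5\}$); combined with Lemma \ref{lemma:blockmultiplicitieswedge} (all multiplicities in $\wedge^2(V_{2k})$ are odd), "multiplicity $\leq 2$" forces "multiplicity $= 1$", so $k_i \in \{1,2,3,5\}$ is the only possibility for any single summand, and then the cross terms $V(2k_i) \otimes V(2k_j)$ and any $V(2k_i)^2$ piece must also contribute only even-size, multiplicity-$\leq 2$, $\varepsilon=1$ blocks. (b) A summand $V(2k_i)^2$ contributes $V(2k_i) \otimes V(2k_i)$, whose $K[u]$-structure is $V_{2k_i} \otimes V_{2k_i} \cong \mathrm{Ind}(V_{k_i}\otimes V_{k_i})^2$ by \eqref{eq:indreseq}; this always has a block of odd multiplicity appearing twice, hence some block of multiplicity $\geq 2$, and by Theorem \ref{thm:formtensorproducts}(iii) a summand $V(2d_j)^{2^{\alpha+1}}$ with $2^{\alpha+1} \geq 2$, forcing $e_i = 1$ already unless $k_i$ is very small — and a careful check (as in Example \ref{example:v2tensor}, \ref{example:v4tensor}, \ref{example:v6tensor}) shows $e_i = 2$ is impossible once $\dim V > 2$. (c) Any cross term with $r \geq 2$: by Theorem \ref{thm:formtensorproducts}, $V(2k_i) \otimes V(2k_j)$ with $k_i \neq k_j$ either is a paired module (orthogonal summands $W(m)$, contradicting Definition \ref{def:distinguishedunip}(iv)) or, when $\nu_2(k_i) = \nu_2(k_j)$, produces a $V(2d)^{2^{\alpha+1}}$ with $2^{\alpha+1}\geq 2$, and one checks the latter still leads to $W(m)$ summands overall or multiplicity $> 2$; together these eliminate $r \geq 2$. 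What remains is $r = 1$, $e_1 = 1$, $k_1 \in \{1,2,3,5\}$, i.e.\ $(V,b) \cong V(2k_1)$ with $k_1 \in \{1,2,3,5\}$; but $\dim V = 2n$ with $n$ even rules out $k_1$ odd, leaving $k_1 = 2$, i.e.\ $n = 2$ and $(V,b) \cong V(4)$.

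The main obstacle I anticipate is case (b)/(c): correctly tracking, for the small admissible values $k_i \in \{1,2,3,5\}$, exactly which tensor and exterior-square contributions introduce a $W(m)$ summand or a multiplicity exceeding $2$, so that no spurious "distinguished" configuration slips through. This is bookkeeping with Theorem \ref{thm:formtensorproducts}, Theorem \ref{thm:GowLaffey}, Theorem \ref{thm:MAINTHMC}(iv), and Lemmas \ref{lemma:blockmultiplicitieswedge}--\ref{lemma:multiplicityatmosttwowedge}, rather than anything conceptually deep; the cleanest packaging is probably: first show $r = 1$ (any two distinct summands, or a doubled summand, already break distinguishedness), then show $e_1 = 1$, then show $V(2k_1)$ distinguished on $\wedge^2$ forces $k_1 \in \{1,2,3,5\}$ via Lemma \ref{lemma:multiplicityatmosttwowedge}, then apply the parity constraint on $n$.
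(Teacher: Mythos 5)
Your proposal is correct and follows essentially the same route as the paper: reduce via Lemma \ref{lemma:easiestlemma} to $u$ distinguished in $G$, decompose $\wedge^2(V)$ orthogonally into the pieces $\wedge^2(W_i)$ and $W_i \wedge W_j$, use Theorem \ref{thm:formtensorproducts} (which the paper packages as Proposition \ref{prop:applicationtensor}) to eliminate cross terms and doubled summands, and use Lemmas \ref{lemma:minblockinwedge2}, \ref{lemma:blockmultiplicitieswedge} and \ref{lemma:multiplicityatmosttwowedge} to pin down the single orthogonally indecomposable summand. The only cosmetic difference is that the paper disposes of the surviving configuration $(V,b) \cong V(2) \perp V(2n-2)$ by observing that $\wedge^2(V)$ then contains $V_1^2$, i.e.\ Jordan blocks of the odd size $1$ --- the same fact your steps (b) and (c) invoke implicitly when they appeal to forced $W(m)$ summands.
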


\begin{prop}\label{prop:applicationCirr}
Let $G = \Sp(V,b)$, where $\dim V = 2n$ and $n \geq 2$. A unipotent element $u \in G$ acts on $(L_G(\varpi_2), a_V)$ as a distinguished unipotent element if and only if one of the following conditions hold:
	\begin{enumerate}[\normalfont (i)]
		\item $(V, b) \cong V(2n)$ as bilinear $K[u]$-modules and $n \in \{2,3,5\}$.
		\item $(V, b) \cong V(2) \perp V(2n-2)$ as bilinear $K[u]$-modules and $n \in \{2,6\}$.
	\end{enumerate}
\end{prop}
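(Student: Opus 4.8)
The strategy parallels the proof of Proposition \ref{prop:applicationAirr}. First I would reduce to a short list of Jordan types for $u$ on $V$. Since every distinguished unipotent element of $G = \Sp(V,b)$ has the form $V(2k_1)^{d_1} \perp \cdots \perp V(2k_m)^{d_m}$ with $0 < k_1 < \cdots < k_m$ and all $d_i \leq 2$ (Definition \ref{def:distinguishedunip}, equivalence of (i) and (ii)), and by Lemma \ref{lemma:easiestlemma} $u$ must be distinguished in $G$ in order to act distinguished on $(L_G(\varpi_2), a_V)$, I only need to examine such $(V,b)$. Then I would apply Theorem \ref{thm:MAINTHMC} to compute $\lambda'$ and $\varepsilon'$, i.e. the Hesselink normal form of $u$ on $(L_G(\varpi_2), a_V)$, and ask when every Jordan block size there is even, of multiplicity at most two, with $\varepsilon' = 1$ (using the equivalence of (iii) with (i) in Definition \ref{def:distinguishedunip}).

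For sufficiency, I would verify directly that the four claimed cases work. For $(V,b) \cong V(2n)$ with $n \in \{2,3,5\}$: here $V \cong V_{2n}$ as $K[u]$-modules with $\varepsilon_{V,b}(2n) = 1$, so in the notation of Theorem \ref{thm:MAINTHMC} we have $t = 0$, $s = 1$, $d_{t+1} = n$. Using $\wedge^2(V_{2n})$ computed via Theorem \ref{thm:GowLaffey} (and the fact from Lemma \ref{lemma:multiplicityatmosttwowedge} that $\wedge^2(V_{2n})$ has all block multiplicities $\leq 2$ exactly for $n \in \{1,2,3,5\}$), together with the rules for $\lambda'$ and part (iv)(b) and (vi) of Theorem \ref{thm:MAINTHMC}, one checks the resulting bilinear module is a direct sum of $V(2k)$'s with multiplicity $\leq 2$. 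For $(V,b) \cong V(2) \perp V(2n-2)$ with $n \in \{2,6\}$: when $n = 2$ this is $V(2) \perp V(2)$, and when $n = 6$ it is $V(2) \perp V(10)$; in both cases $s = 2$, $t = 0$, and I would compute $\wedge^2(V)$ using Lemma \ref{lemma:wedge2ofkumod}, then feed the data into Theorem \ref{thm:MAINTHMC}. The case $n = 6$, $V(2)\perp V(10)$, will need the tensor product $V(2)\otimes V(10) = V(2)\otimes V(2\cdot 5) \cong V(10)^2$ from Example \ref{example:v2tensor} (since $5$ is odd), and $\wedge^2(V(10))$, $\wedge^2(V(2))$ from Theorem \ref{thm:GowLaffey}.

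For necessity — which I expect to be the main obstacle — suppose $u$ acts distinguished on $(L_G(\varpi_2), a_V)$. By the reduction above, $(V,b) \cong V(2k_1)^{d_1}\perp\cdots\perp V(2k_m)^{d_m}$. I would first argue $m \leq 2$ and then pin down the parameters. The key constraints: (1) each $\wedge^2(V_{2k_i})$ summand contributes to $\wedge^2(V)$, and for multiplicities to stay $\leq 2$ one needs $k_i \in \{1,2,3,5\}$ by Lemma \ref{lemma:multiplicityatmosttwowedge}; moreover if $d_i = 2$ then $\wedge^2(V_{2k_i})^2$ appears, which doubles multiplicities, forcing $\wedge^2(V_{2k_i})$ to have all multiplicities exactly $1$, hence $k_i = 1$ (from the explicit decompositions: $\wedge^2(V_2) = V_1$ is the only one with a single block, using Example \ref{esim:wedge2power2} and Theorem \ref{thm:GowLaffey}); (2) cross terms $V(2k_i)\otimes V(2k_j)$ for $i \neq j$, or $V(2k_i)\otimes V(2k_i)$ when $d_i = 2$, must by Theorem \ref{thm:formtensorproducts} produce only $V(\cdot)$ or $W(\cdot)$ summands of multiplicity $\leq 2$ — and any $W(m)$ summand is fatal since (iv) of Definition \ref{def:distinguishedunip} would be violated, so we need $\nu_2(k_i) = \nu_2(k_j)$ for all pairs and the number of indecomposable summands of $V_{2k_i}\otimes V_{2k_j}$ bounded; (3) finally the passage from $\wedge^2(V)$ to $L_G(\varpi_2) = \langle\beta_V\rangle^\perp/\langle\beta_V\rangle$ via rules (i)–(iii) of Theorem \ref{thm:MAINTHMA} and (vi) of Theorem \ref{thm:MAINTHMC} can remove blocks or flip an $\varepsilon$ value — I must check this passage does not destroy the distinguished property, and also that it does not accidentally create it when $\wedge^2(V)$ itself was not distinguished (this is why $n = 6$, $V(2)\perp V(10)$ survives even though $\wedge^2$ has a $W$-summand: the subquotient operation at $\alpha$ fixes it). Carefully organizing the case analysis on $m$, on which $k_i$ equal $1$ versus lie in $\{3,5\}$ (these have $\nu_2 = 0$), and tracking the $\alpha = \nu_2(\gcd)$-level correction from Theorem \ref{thm:MAINTHMC}(vi), is the delicate part; I would structure it as: $m = 1$ with $d_1 = 1$ (giving $n \in \{2,3,5\}$ after checking $k_1 \in\{1,3,5\}$ all work and $k_1 = 2$ fails because $\wedge^2(V_4) = V_4\oplus V_8^3$ has a block of multiplicity $3$), $m = 1$ with $d_1 = 2$ (forces $k_1 = 1$, $n = 2$), $m = 2$ (forces both $k_i$ with $d_i = 2$ to equal $1$, and analysis of the single allowed non-trivial parameter gives $n = 6$ via $V(2)\perp V(10)$ and excludes everything else), and $m \geq 3$ (impossible).
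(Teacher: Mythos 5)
Your overall strategy is the same as the paper's: reduce via Lemma \ref{lemma:easiestlemma} and Definition \ref{def:distinguishedunip} to $(V,b)$ an orthogonal sum of $V(2k_i)$'s, use the fact that each cross term $W_r\wedge W_{r'}\cong V(2k_i)\otimes V(2k_j)$ embeds as a non-degenerate subspace of $(L_G(\varpi_2),a_V)$ together with Theorem \ref{thm:formtensorproducts} (the paper packages this as Proposition \ref{prop:applicationtensor}) to force all $k_i$ odd with at most one exceeding $1$, then bound the surviving parameter by Lemma \ref{lemma:multiplicityatmosttwowedge} and finish the few remaining cases by explicit computation. Your remark that the $W(m)$-obstruction must be tested inside $L_G(\varpi_2)$ rather than inside $\wedge^2(V)$ (since the subquotient at level $2^{\alpha}$ can remove such a summand, as in the $n=6$ case) is exactly the point the paper makes by embedding $W_r\wedge W_{r'}$ into $\langle\beta_V\rangle^\perp/\langle\beta_V\rangle$.

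Two details as written are wrong, though. First, in the case $m=1$, $d_1=1$ you have $(V,b)\cong V(2k_1)=V(2n)$, so $k_1=n$; here $k_1=2$ does \emph{not} fail: $\wedge^2(V_4)\cong V_2\oplus V_4$ (the decomposition $V_4\oplus V_8^3$ you quote is $\wedge^2(V_8)$), and $(L_G(\varpi_2),a_V)\cong V(4)$ is distinguished --- this is precisely case (i) of the proposition with $n=2$. The case excluded on multiplicity grounds is $k_1=4$. As stated, your list ``$k_1\in\{1,3,5\}$ work, $k_1=2$ fails'' contradicts your own conclusion $n\in\{2,3,5\}$ and would strike a case that belongs in the answer while admitting $n=1$, which is outside the hypothesis. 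Second, your step (1) claims that $d_i=2$ forces every multiplicity in $\wedge^2(V_{2k_i})$ to be $1$ and ``hence $k_i=1$'' because $\wedge^2(V_2)$ is the only one with a single block; but multiplicity one is not the same as a single block --- $\wedge^2(V_4)$, $\wedge^2(V_6)$ and $\wedge^2(V_{10})$ also have every multiplicity equal to one, so this step does not pin down $k_i$. The correct (and available) route to $d_i=2\Rightarrow k_i=1$ is the one you sketch in step (2): the cross term $V(2k_i)\otimes V(2k_i)$ embeds non-degenerately in $L_G(\varpi_2)$ and is distinguished only for $k_i=1$ by Theorem \ref{thm:formtensorproducts}. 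With these two repairs the plan goes through and essentially coincides with the paper's proof.
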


\begin{proof}[Proof of Proposition \ref{prop:applicationCtilt} and Proposition \ref{prop:applicationCirr}]
Let $u \in G$ be unipotent. We can assume that $u$ is a distinguished unipotent element (Lemma \ref{lemma:easiestlemma}), in which case $(V,b) = W_1 \perp \cdots \perp W_t$, where $(W_i,b) \cong V(2d_i)$ for all $1 \leq i \leq t$, for some integers $d_i > 0$. 

If $t > 1$, then $(W_i \wedge W_j, a_V) \cong (W_i, b) \otimes (W_j, b)$ is a non-degenerate subspace of $(\wedge^2(V), a_V)$ for all $1 \leq i < j \leq t$. In fact, since $W_i \wedge W_j$ is contained in $\langle \beta_V \rangle^\perp$ and $\left( W_i \wedge W_j \right) \cap \langle \beta_V \rangle = 0$, it follows that $(W_i \wedge W_j, a_V)$ embeds into $(L_G(\varpi_2), a_V)$ as a non-degenerate subspace. 

Thus if $u$ acts on $(\wedge^2(V), a_V)$ or $(L_G(\varpi_2), a_V)$ as a distinguished unipotent element, then $u$ acts on $(W_i \wedge W_j, a_V) \cong (W_i, b) \otimes (W_j, b)$ as a distinguished unipotent element. In this case, by Theorem \ref{prop:applicationtensor} the integers $d_i$ and $d_j$ are odd, and furthermore $d_i = 1$ or $d_j = 1$. Consequently if $u$ acts on $(\wedge^2(V), a_V)$ or $(L_G(\varpi_2), a_V)$ as a distinguished unipotent element, then $(V,b) \cong V(2n)$ or $n$ is even and $(V,b) \cong V(2) \perp V(2n-2)$.

Suppose first that $(V,b) \cong V(2n)$. If $n \geq 2$ is odd, then $V_1$ occurs in $\wedge^2(V_{2n})$ with multiplicity one by Lemma \ref{lemma:minblockinwedge2}. Thus if $u$ acts on $(L_G(\varpi_2), a_V)$ as a distinguished unipotent element, then each Jordan block size in $\wedge^2(V_{2n})$ has multiplicity at most $2$, and thus $n \in \{2,3,5\}$ by Lemma \ref{lemma:multiplicityatmosttwowedge}. Suppose next that $n$ is even, and let $\alpha = \nu_2(n)$. By Lemma \ref{lemma:minblockinwedge2} and Theorem \ref{thm:MAINTHMC}, as $K[u]$-modules $\wedge^2(V) \cong V_{2^{\alpha}} \oplus W$ and $L_G(\varpi_2) \cong V_{2^{\alpha}-2} \oplus W$, where $W$ has no Jordan blocks of size $2^{\alpha}$. Therefore if $u$ acts on $(\wedge^2(V), a_V)$ or $(L_G(\varpi_2), a_V)$ as a distinguished unipotent element, then each Jordan block size in $\wedge^2(V_{2n})$ has multiplicity at most $2$, and so $n = 2$ by Lemma \ref{lemma:multiplicityatmosttwowedge}.


Next we consider the other possibility, which is that $n$ is even and $(V,b) \cong V(2) \perp V(2n-2)$. By Lemma \ref{lemma:minblockinwedge2}, as $K[u]$-modules $\wedge^2(V_{2n-2}) \cong V_1 \oplus W$, where $W$ has no Jordan blocks of size $1$. Hence \begin{equation}\label{eq:finaleq2n2}\wedge^2(V) \cong \wedge^2(V_2) \oplus \wedge^2(V_{2n-2}) \oplus \left( V_2 \otimes V_{2n-2} \right) \cong V_1^2 \oplus W \oplus V_{2n-2}^2\end{equation} as $K[u]$-modules. Thus $u$ does not act on $(\wedge^2(V), a_V)$ as a distinguished unipotent element, since it has Jordan blocks of size $1$ in $\wedge^2(V)$. 

Note that $L_G(\varpi_2) \cong W \oplus V_{2n-2}^2$ as $K[u]$-modules by~\eqref{eq:finaleq2n2} and Theorem \ref{thm:MAINTHMC}. Thus if $u$ acts on $(L_G(\varpi_2), a_V)$ as a distinguished unipotent element, each Jordan block size in $\wedge^2(V_{2n-2}) \cong V_1 \oplus W$ has multiplicity at most two, and so $n \in \{2,3,4,6\}$ by Lemma \ref{lemma:multiplicityatmosttwowedge}. Here $n = 3$ is ruled out since we are assuming that $n$ is even. For $n = 4$, a calculation with Theorem \ref{thm:GowLaffey} shows that $L_G(\varpi_2) \cong V_6^3 \oplus V_8$ as $K[u]$-modules, so $u$ does not act on $(L_G(\varpi_2), a_V)$ as a distinguished unipotent element.

We still need to check that in the cases listed $u$ does indeed act as a distinguished unipotent element. To this end, a straightforward computation with Theorem \ref{thm:GowLaffey} and Theorem \ref{thm:MAINTHMC} shows the following.

\begin{itemize}
\item If $n = 2$ and $(V,b) \cong V(4)$, then $(\wedge^2(V), a_V) \cong V(2) \perp V(4)$.
\item If $n = 2$ and $(V,b) \cong V(4)$, then $(L_G(\varpi_2), a_V) \cong V(4)$.
\item If $n = 2$ and $(V,b) \cong V(2)^2$, then $(L_G(\varpi_2), a_V) \cong V(2)^2$.
\item If $n = 3$ and $(V,b) \cong V(6)$, then $(L_G(\varpi_2), a_V) \cong V(6) \perp V(8)$.
\item If $n = 5$ and $(V,b) \cong V(10)$, then $(L_G(\varpi_2), a_V) \cong V(6) \perp V(8) \perp V(14) \perp V(16)$.
\item If $n = 6$ and $(V,b) \cong V(2) \perp V(10)$, then $(L_G(\varpi_2), a_V) \cong V(6) \perp V(8) \perp V(10)^2 \perp V(14) \perp V(16)$.
\end{itemize}

This completes the proof of Proposition \ref{prop:applicationCtilt} and Proposition \ref{prop:applicationCirr}.\end{proof}

\bibliographystyle{alpha-abbrvsort-volume}
\bibliography{bibliography}

\end{document}